\newsavebox{\pullback}
\sbox\pullback{%
\begin{tikzpicture}%
\draw (0,0) -- (1ex,0ex);%
\draw (1ex,0ex) -- (1ex,1ex);%
\end{tikzpicture}}
\newcommand{\Adams}{\mathrm{Ad}}
\newcommand{\bR}{{\mathbb{R}}}
\newcommand{\bC}{{\mathbb{C}}}
\newcommand{\bE}{{\mathbb{E}}}
\newcommand{\bF}{{\mathbb{F}}}
\newcommand{\bQ}{{\mathbb{Q}}}
\newcommand{\bS}{{\mathbb{S}}}
\newcommand{\bZ}{{\mathbb{Z}}}
\newcommand{\cC}{{\mathcal{C}}}
\newcommand{\cD}{{\mathcal{D}}}
\newcommand{\defo}{\mathrm{def}}
\newcommand{\cCf}{{\mathcal{C}^{\textrm{fil}}}}
\newcommand{\cA}{\mathcal{A}}
\newcommand{\cP}{\mathcal{P}}
\newcommand{\cE}{\mathcal{E}}
\newcommand{\Hom}{\mathrm{Hom}}
\newcommand{\Ext}{\mathrm{Ext}}
\newcommand{\Mod}{\mathrm{Mod}}
\newcommand{\Tot}{\mathrm{Tot}}
\newcommand{\Sp}{{\mathrm{Sp}}}
\newcommand{\Syn}{{\mathrm{Syn}}}
\newcommand{\Stable}{\mathrm{Stable}}
\newcommand{\Comod}{\mathrm{Comod}}
\newcommand{\E}{\mathrm{E}}
\newcommand{\one}{\mathbf{1}}
\newcommand{\fil}{\text{fil}}
\newcommand{\Fil}{\text{fil}}
\renewcommand{\Re}{\operatorname{Re}}
\newcommand{\Fun}{\mathrm{Fun}}
\newcommand{\id}{\mathrm{id}}
\newcommand{\BP}{\mathrm{BP}}
\newcommand{\MU}{\mathrm{MU}}
\newcommand{\fp}{\mathrm{fp}}
\newcommand{\Gr}{\mathrm{Gr}}
\newcommand{\Map}{\mathrm{Map}}
\newcommand{\h}{{\mathrm{H}}}
\newcommand{\fin}{\mathrm{fin}}
\newcommand{\cell}{\mathrm{cell}}
\newcommand{\SH}{\mathrm{SH}}
\newcommand{\hfp}{{\mathbb{F}_p}}
\newcommand{\starstar}{\star}
\newcommand{\starstarstar}{*,*,*}
\newcommand{\nubp}{\BP^{\hfp}}
\newcommand{\nubpbp}{\BP_{\starstar}\BP^{\hfp}}
\newcommand{\nubpbptwo}{\BP_{\starstar}\BP^{\bF_2}}
\newcommand{\degree}[1]{\vert #1\vert}
\newcommand{\customheader}[1]{
  \expandafter\def\csname @oddhead\endcsname{#1}
  \expandafter\def\csname @evenhead\endcsname{#1}
}
\newcommand{\defaultheader}{
  \def\@oddhead{\normalfont\scriptsize\hfil\leftmark\hfil\thepage}
  
  \def\@evenhead{\normalfont\scriptsize\thepage\hfil\rightmark\hfil}
}
\newcommand{\colim}{\operatorname{colim}}
\theoremstyle{definition}
\newtheorem{theorem}{Theorem}[section]
\newtheorem*{theorem*}{Theorem}%
\newtheorem{proposition}[theorem]{Proposition}
\newtheorem{lemma}[theorem]{Lemma}
\newtheorem{corollary}[theorem]{Corollary}
\newtheorem{definition}[theorem]{Definition}
\newtheorem{remark}[theorem]{Remark}
\newtheorem{example}[theorem]{Example}
\newtheorem{notation}[theorem]{Notation}
\newtheorem{assumption}[theorem]{Assumption}
\definecolor{chartgray}{gray}{0.5}
\definecolor{darkcyan}{rgb}{0, 0.7, 0.7}
\definecolor{truecyan}{rgb}{0, 1, 1}
\definecolor{darkgreen}{rgb}{0, 0.65, 0}
\definecolor{truemagenta}{rgb}{1, 0, 1}
\definecolor{amber}{rgb}{1.0, 0.75, 0.0}
\definecolor{fuchsia}{rgb}{0.75, 0.0, 1.0}
\definecolor{darkcyan}{rgb}{0, 0.7, 0.7}
\definecolor{electricpurple}{rgb}{0.75, 0.0, 1.0}
\title[The Synthetic Adams-Novikov Spectral Sequence]{Stable Comodule Deformations and the Synthetic Adams-Novikov Spectral Sequence}
\author{Jake Francis Baer, Maxwell Johnson, and Peter Marek}
\date{\today}
\subjclass[2010]{55Q10, 55T15, 55Q45,  
55P42}
\keywords{synthetic spectra, stable comodules, deformations of categories, Adams-Novikov spectral sequence, Adams spectral sequence, stable homotopy groups}
\begin{document}

\begin{abstract}
We study the Adams-Novikov spectral sequence in $\bF_p$-synthetic spectra, computing the synthetic analogs of $\BP$ and its cooperations to identify the synthetic Adams-Novikov $\E_2$-page, computed in a range with a synthetic algebraic Novikov spectral sequence. We then identify deformations associated to the Cartan-Eilenberg and algebraic Novikov spectral sequences in terms of stable comodule categories, categorifying an algebraic Novikov spectral sequence result of Gheorghe-Wang-Xu. We then apply Isaksen-Wang-Xu methods in $\bF_2$-synthetic spectra to deduce differentials in the $p=2$ synthetic Adams-Novikov for the sphere, producing almost entirely algebraic computations through the 45-stem.   
\end{abstract}
\maketitle
\setcounter{tocdepth}{1}
\tableofcontents

\section{Introduction}

\numberwithin{theorem}{subsection}

A prevailing technique in recent work on stable homotopy theory is the categorification of Adams spectral sequences into stable $\infty$-categories which record both the homotopy of classical spectra as well as the data of the spectral sequence pages and differentials. This technique was pioneered following the identification of the $p$-complete cellular motivic category over $\bC$ with a deformation associated to the Adams-Novikov spectral sequence based on complex cobordism $\mathrm{MU}$ \cite{GWX21}. These techniques have resulted in numerous theoretical and computational advancements, including the computation of stable homotopy groups of spheres (\cite{Isa19}, \cite{IWX20}, \cite{Bur20},\cite{BX23}), obstruction theories for multiplicative structures (\cite{Bur22}), and resolving classical questions about manifolds \cite{BHS23}. More recently, questions about identifying naturally occurring deformations have also led to the construction of novel spectral sequences \cite{AKBBK23}.

\subsection{Deformations by Spectral Sequences} The main setting of our paper is Pstr\k{a}gowski's theory of $E$-synthetic spectra, which serves as a deformation of classical stable homotopy theory encoding information about the $E$-Adams spectral sequence for an Adams-type ring spectrum $E$. However, historically the deformation theoretic approach to spectral sequences began first with the realization that $p$-complete, cellular stable motivic homotopy theory over $\bC$ encoded information about the Adams-Novikov spectral sequence for $\BP$ \cite{HKO11}, \cite{Ghe18},\cite{Isa19}, that this similarity upgraded to a categorical equivalence \cite{GWX21}, and later that its cellular objects could be constructed with entirely topological input \cite{GIKR18}. For the reader familiar with motivic methods, all instances of $\BP$-synthetic spectra in this article could be replaced with $\bC$-motivic spectra. We choose the synthetic formalism because we work primarily with $\hfp$-synthetic spectra which have no motivic counterpart.

\bigskip

The motivic version of the story may be summarized as follows: there is a self-map $\tau$ of the $p$-completed motivic sphere spectrum $\bS_{\bC}$ which acts on all objects in the category. The objects for which this map is an equivalence form a subcategory equivalent to the category of $p$-complete spectra, which is referred to as the generic fiber, taking $\tau=u$ for any unit $u\in \mathbb{A}^1$. Alternatively, one could set ``$\tau=0$'' by considering the category of modules over the ring of its cofiber $\bS_{\bC}/\tau$ (sometimes denoted $C\tau$), whose projection from $\bS_{\bC}$ is a map of $\bE_\infty$-rings \cite{Ghe18}. This category of modules was shown to be equivalent to a derived category of comodules over the Hopf algebroid $\BP_*\BP$ \cite{GWX21}, and its homotopy groups encode the $\E_2$-page of the Adams-Novikov spectral sequence \cite{Isa19}, \cite{Ghe18}. Pstr\k{a}gowski's theory of synthetic spectra \cite{Pst22} then extended the idea of deforming the category of spectra with respect to the Adams spectral sequence of an arbitrary Adams-type ring spectrum $E$. It is this final theory in which we will primarily work, as well as analogs of the construction in \cite{GIKR18}. We review the salient features of both in Section~\ref{syn}.

\subsection{The aNSS and the CESS}

A key difference between the identifications of the special fiber of the $\bC$-motivic deformation in \cite{GWX21} and \cite{Pst22} is that the former proves that the algebraic Novikov spectral sequence (aNSS), a spectral sequence for computing the $\E_2$-page of the Adams-Novikov spectral sequence (ANSS), appears as the $\hfp^{\mathrm{mot}}$-Adams spectral sequence for $\bS_{\bC}/\tau$. This extra result is a key input for the computations in \cite{IWX20}, which we review in Section \ref{iwxstuff}. One of the motivations for writing this article was the conjecture, which we prove, that a similar result can be established for $\hfp$-synthetic spectra and the Cartan-Eilenberg spectral sequence (CESS). We briefly review the aNSS and CESS at the beginning of Section \ref{deformationsection}. Our result follows from the identification \cite{Mar83}, \cite{Pal01}, \cite{Bel20} of the CESS with the $\cP_*$-Adams spectral sequence in $\Stable(\cA_*)$ and the following general result:

\begin{theorem}[\ref{ClambdaSS}]
 Let $R$ be an $\bE_1$-ring spectrum in $\Sp$ and $X$ be any spectrum. The $\nu_E(R)$-Adams spectral sequence for $\nu_EX/\tau$ in $\mathrm{Syn}_{E}$ is isomorphic to the $E_*R$-Adams spectral sequence for $E_*X$ in $\mathrm{Stable}_{E_*E}$.   
\end{theorem}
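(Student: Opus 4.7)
The plan is to reduce the theorem to two basic compatibilities: the symmetric monoidal nature of the $\bone/\tau$-localization of $\Syn_E$, and the identification of its special fiber with the stable comodule category.

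First I would observe that, because $\nu_E X/\tau$ is already an $\bone/\tau$-module in $\Syn_E$, the $\nu_E R$-Adams spectral sequence for $\nu_E X/\tau$ computed in $\Syn_E$ coincides with the $(\nu_E R/\tau)$-Adams spectral sequence for $\nu_E X/\tau$ computed in $\Mod_{\bone/\tau}(\Syn_E)$. This follows from the fact that $-\otimes \bone/\tau$ is a symmetric monoidal localization, so the cobar complex $(\nu_E R)^{\otimes(\bullet+1)} \otimes \nu_E X/\tau$ in $\Syn_E$ is carried isomorphically to the cobar complex $(\nu_E R/\tau)^{\otimes_{\bone/\tau}(\bullet+1)} \otimes_{\bone/\tau} \nu_E X/\tau$ in $\Mod_{\bone/\tau}(\Syn_E)$; the two $\Tot$-towers, hence their spectral sequences, agree.

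Second, I would invoke the symmetric monoidal equivalence $\Mod_{\bone/\tau}(\Syn_E) \simeq \Stable_{E_*E}$ reviewed in Section~\ref{syn} (the synthetic analog of Gheorghe--Wang--Xu). Under this equivalence, for any spectrum $Y$ the object $\nu_E Y/\tau$ corresponds to the comodule $E_*Y$; in particular the $\bE_1$-ring $\nu_E R/\tau$ corresponds to the comodule algebra $E_*R$. Since the Adams spectral sequence is the spectral sequence of the $\Tot$-tower of the cobar cosimplicial object on a ring, and since symmetric monoidal colimit-preserving equivalences preserve such towers termwise, the $(\nu_E R/\tau)$-Adams spectral sequence for $\nu_E X/\tau$ is transported to the $E_*R$-Adams spectral sequence for $E_*X$ in $\Stable_{E_*E}$. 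Composing with the first step yields the claimed isomorphism of spectral sequences.

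The main obstacle, and the real content of the proof, lies in the second paragraph: one has to verify carefully that the equivalence $\Mod_{\bone/\tau}(\Syn_E) \simeq \Stable_{E_*E}$ is symmetric monoidal, sends $\nu_E(-)/\tau$ to $E_*(-)$ at the level of objects and module structures, and matches the algebra structures on $\nu_E R/\tau$ and $E_*R$. Granted this, the two cobar resolutions are literally identified, and the isomorphism of spectral sequences is formal; the remaining compatibilities (functoriality of the cobar construction under symmetric monoidal functors, preservation of $\Tot$-towers) are standard.
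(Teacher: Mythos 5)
Your proposal is correct and is essentially the same argument as the paper's: both proceed by applying the symmetric monoidal, colimit-preserving functor $(-)/\tau\colon \Syn_E \to \Stable_{E_*E}$ to the cobar resolution and using that it carries $\nu_E(-)$ to $E_*(-)$. The only cosmetic difference is that you factor this functor through the intermediate category $\Mod(\Syn_E;\bS_E/\tau)$, making explicit the base-change identification $(\nu R/\tau)^{\otimes_{\bone/\tau}(\bullet+1)} \otimes_{\bone/\tau} \nu X/\tau \simeq \nu R^{\otimes(\bullet+1)}\otimes \nu X/\tau$, whereas the paper applies the composite functor in one step.
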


\begin{corollary}[\ref{cessexample}]
The $\nubp:=\nu_{\bF_p}\BP$-Adams spectral sequence for $\nu_{\bF_p}X/\lambda\in\Syn_{\bF_p}$ is isomorphic to the Cartan-Eilenberg spectral sequence computing $$\Ext_{\cA_*}^{*,*}(\bF_p,\h_*(X))$$ associated to the extension of Hopf algebras $\cP_*\to \cA_*\to \cE_*$. At the level of $\E_2$-pages, the isomorphism looks as follows:
\begin{equation*}
    \begin{tikzcd}
       \Ext_{\cP_*}^{s,t}(\bF_p,\Ext_{\cE_*}^{u,*}(\bF_p,\h_*(X))) \ar[d,Rightarrow,"\mathrm{CESS}"'] & \Ext_{\nubpbp}^{s,t-u,t}(\nubp_{\star},\nubp_{\star}(\nu_{\bF_p} X/\lambda)) \ar[l,"\cong"'] \ar[d,Rightarrow,"\nubp-\mathrm{Adams \textbf{ } SS}"] \\
       \Ext_{\cA_*}^{s+u,t}(\bF_p,\h_*(X)) & \pi_{t-s-u,t}(\nu_{\bF_p} X/\lambda) \ar[l,"\cong"] 
    \end{tikzcd}
\end{equation*}
\end{corollary}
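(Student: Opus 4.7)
The corollary is obtained as a direct specialization of Theorem~\ref{ClambdaSS} together with the classical identification of the Cartan--Eilenberg spectral sequence with the $\cP_*$-Adams spectral sequence inside the stable category of $\cA_*$-comodules.

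First, I would apply Theorem~\ref{ClambdaSS} to the pair $(E, R) = (\h\bF_p, \BP)$. Under this substitution $\nu_E R = \nubp$, $\nu_E X/\tau = \nu_{\bF_p} X/\lambda$, $E_*R = \h_*\BP = \cP_*$, and $E_*E = \cA_*$. The theorem then produces the top horizontal isomorphism of $\E_2$-pages and, more generally, an isomorphism of spectral sequences between the $\nubp$-Adams spectral sequence for $\nu_{\bF_p} X/\lambda$ in $\Syn_{\bF_p}$ and the $\cP_*$-Adams spectral sequence for $\h_*X$ in $\Stable(\cA_*)$. This gives both vertical arrows and the upper horizontal isomorphism of the displayed diagram, at the level of spectral sequences.

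Second, I would invoke the identification of the latter $\cP_*$-Adams spectral sequence with the Cartan--Eilenberg spectral sequence of the extension $\cP_* \to \cA_* \to \cE_*$, as established in \cite{Mar83, Pal01, Bel20} and cited in the paragraph preceding Theorem~\ref{ClambdaSS}. The $\cP_*$-based Adams resolution in $\Stable(\cA_*)$ unwinds to the standard change-of-rings double complex computing $\Ext_{\cA_*}(\bF_p, \h_*X)$ by first taking $\cE_*$-Ext and then $\cP_*$-Ext, yielding the $\E_2$-page
\[
\Ext_{\cP_*}^{s,t}\bigl(\bF_p,\, \Ext_{\cE_*}^{u,*}(\bF_p, \h_*X)\bigr)
\]
on the upper-left of the diagram.

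Third, to finish the bottom row I would recall (from the review of Pstr\k{a}gowski's synthetic spectra in Section~\ref{syn}) that the bigraded synthetic homotopy of $\nu_{\bF_p} X/\lambda$ is canonically identified with $\Ext_{\cA_*}^{*,*}(\bF_p, \h_*X)$, with the weight matching the internal $\cA_*$-degree $t$ and the stem matching $t - (s+u)$; this produces the bottom isomorphism $\pi_{t-s-u,t}(\nu_{\bF_p} X/\lambda) \cong \Ext_{\cA_*}^{s+u,t}(\bF_p, \h_*X)$. Commutativity of the square is then automatic from the naturality of all constructions involved.

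The only real work is the trigrading bookkeeping: matching the $\nubp$-Adams filtration on the synthetic side to the outer $\cP_*$-degree $s$, the shift by $u$ arising from the $\cE_*$-Ext to the total $\cA_*$-homological degree, and the synthetic weight to the internal degree $t$. Since these conventions are fixed once the weight on $\nubp$ is specified, this amounts to unwinding definitions rather than genuine computation, and constitutes the main (but essentially bookkeeping) step in turning the outline into a complete proof.
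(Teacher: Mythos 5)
Your proposal matches the paper's argument: Example~\ref{cessexample} is obtained exactly by specializing Proposition~\ref{ClambdaSS} to $(E,R) = (\hfp, \BP)$ and then invoking the Margolis--Palmieri--Belmont identification of the $\cP_*$-Adams spectral sequence in $\Stable(\cA_*)$ with the Cartan--Eilenberg spectral sequence for the conormal extension $\cP_*\to\cA_*\to\cE_*$, while the bottom-row identification is Theorem~\ref{stablethm} with the degree shift $\pi_{a,w}(\nu X/\lambda)\cong\Ext^{w-a,w}_{\cA_*}(\bF_p,\h_*X)$. The degree bookkeeping you flag is indeed the only remaining content, and your description of how $s$, $t$, $u$ line up with the synthetic trigrading is consistent with the paper's conventions.
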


The choice of grading here is made with respect to the traditional grading of the CESS: $s$ corresponds to the $\cP_*$-filtration degree, $t$ corresponds to internal degree, and $u$ corresponds to the $\cE_*$-filtration degree. We also choose to use the parameter $\lambda$ in $\Syn_{\bF_p}$ rather than $\tau$. See the discussion in Section~\ref{HFpsubsection}.

\bigskip

This provides the complementary result to \cite[Thm. 8.3]{GWX21}, with the roles of $\BP$ and $\hfp$ reversed. Our methods differ from theirs, however, as we furnish a direct comparison between cobar-type resolutions using results of \cite{Bel20} which describe Cartan-Eilenberg spectral sequences as certain Adams spectral sequences in stable comodules.

\subsection{Synthetic ANSS}

To emulate the IWX-method in $\Syn_{\bF_p}$, we need the analog of the motivic Adams spectral sequence in $\Syn_{\bF_p}$. For us, this is the \textit{synthetic Adams-Novikov spectral sequence} (synthetic ANSS). Classically, the construction of the ANSS first starts with the computation of the Hopf algebroid $(\BP_*,\BP_*\BP)$, originally due to \cite{BP66}. Let $\nubp_{\star}:=\pi_{\star}(\nu_{\hfp}\BP)$ and $\nubpbp:=\pi_{\star}(\nu_{\hfp}\BP\otimes \nu_{\hfp}\BP)$. We first compute the $\bF_p$-synthetic version $(\nubp_{\star},\nubpbp)$ of this Hopf algebroid:

\begin{theorem}[\ref{BPhomotopytheorem}, \ref{BPhopfalgebroidprop}]
As $\bZ_{(p)}[\lambda]$-algebras,
\begin{equation*}
    \begin{split}
    &\nubp_{\starstar}\cong \bZ_{(p)}[\lambda,h,v_1,v_2,\ldots]/(\lambda h=p), \\
    &\nubpbp\cong \nubp_{\starstar}[t_1,t_2,\ldots]. 
    \end{split}
\end{equation*}
where $v_i\in\pi_{2p^i-2,2p^i-1}$, $t_i\in\pi_{2p^i-2,2p^i-2}$, $\lambda\in\pi_{0,-1}$, $h\in\pi_{0,1}$, and $p\in\pi_{0,0}\cong\bZ_{(p)}$. Moreover, the Hopf algebroid structure maps for $\nubpbp$ are completely determined by the $\lambda$-localization functor $\lambda^{-1}:\Syn_{\hfp}\to\Sp$.
\end{theorem}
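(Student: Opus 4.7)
The plan is to compute $\pi_{\starstar}\nubp$ and $\pi_{\starstar}(\nubp\otimes\nubp)$ by combining the two ``endpoints'' of the deformation: the $\lambda$-localization, which recovers classical $\pi_\ast\BP$ and $\pi_\ast(\BP\otimes\BP)$ by Pstr\k{a}gowski's description of the generic fiber, and the reduction $\nubp/\lambda$, which recovers the $\hfp$-Adams $\E_2$-page. These endpoints are interpolated by a $\lambda$-Bockstein spectral sequence that collapses because the classical $\hfp$-Adams spectral sequence for $\BP$ does.

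I would first compute $\pi_{\starstar}(\nubp/\lambda)$. Since $\h_\ast\BP\cong \cA_\ast\square_{\cP_\ast}\bF_p$ as an $\cA_\ast$-comodule, a Milnor--Moore change of rings identifies the $\hfp$-Adams $\E_2$-page with $\Ext^{\ast,\ast}_{\cP_\ast}(\bF_p,\bF_p)\cong \bF_p[q_0,q_1,q_2,\ldots]$, where $q_i$ lies in Adams bidegree $(s,t)=(1,2p^i-1)$. Pstr\k{a}gowski's identification of $\pi_{\starstar}(\nu X/\lambda)$ with the $\hfp$-Adams $\E_2$-page of $X$ (bigraded so that a filtration-$s$ class in stem $k$ lies in $\pi_{k,k+s}$) then places $q_0\in\pi_{0,1}$ and $q_i\in\pi_{2p^i-2,2p^i-1}$ for $i\geq 1$.

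Next I would run the $\lambda$-Bockstein spectral sequence $\bF_p[\lambda]\otimes \pi_{\starstar}(\nubp/\lambda)\Rightarrow \pi_{\starstar}\nubp$, which converges because $\nubp$ is $\lambda$-complete. Its differentials encode the differentials in the classical $\hfp$-Adams spectral sequence for $\BP$, and the latter collapses at $\E_2$ by matching the Novikov calculation $\BP_\ast=\bZ_{(p)}[v_1,v_2,\ldots]$. Thus the BSS collapses at $\E_1$. The only nontrivial multiplicative extension is $\lambda h = p$, forced by the fact that $q_0$ detects $p$ in classical Adams filtration $1$; choosing lifts $h\in\pi_{0,1}$ of $q_0$ and $v_i\in\pi_{2p^i-2,2p^i-1}$ of $q_i$, normalized against the classical Quillen generators, yields the stated presentation of $\nubp_{\starstar}$ as a $\bZ_{(p)}[\lambda]$-algebra.

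The analogous strategy computes $\nubpbp$ once one knows $\nubp\otimes\nubp\simeq \nu_{\hfp}(\BP\otimes\BP)$, which follows from a synthetic K\"unneth (or filtered colimit) argument for the Adams-type spectrum $\BP$ since $\h_\ast\BP$ is flat over $\bF_p$. The Adams $\E_2$-page then picks up the polynomial generators $t_i$ in filtration zero, giving $t_i\in\pi_{2p^i-2,2p^i-2}$, and the BSS collapses for the same reason. For the Hopf algebroid statement, the presentation shows $\nubp_{\starstar}$ is $\lambda$-torsion-free, hence so is $\nubpbp\cong\nubp_{\starstar}[t_1,t_2,\ldots]$; each structure map is therefore injective upon $\lambda$-inversion and is uniquely determined by its image in the classical $(\BP_\ast,\BP_\ast\BP)$ Hopf algebroid. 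The main obstacle I anticipate is verifying the $\lambda$-BSS collapse on the tensor product together with the synthetic K\"unneth compatibility, and matching lifts of $v_i$ and $t_i$ to canonical Quillen-type generators so the classical structure maps transfer cleanly; everything else is essentially formal given the framework and the classical collapse for the $\BP$-Adams SS.
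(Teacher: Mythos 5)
Your plan follows the spirit of the paper's argument in one key respect — using \cite[Thm.~9.19]{BHS19} and the $\lambda$-Bockstein to transfer the collapsing classical $\hfp$-Adams spectral sequence for $\BP$ into a computation of synthetic homotopy, and then leveraging $\lambda$-torsion-freeness to pin down the Hopf algebroid structure via $\lambda$-inversion. But there is a genuine gap at the convergence step.

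You assert that the $\lambda$-Bockstein spectral sequence ``converges because $\nubp$ is $\lambda$-complete.'' This is false. By the analogue of Lemma~\ref{lambdacomp}, $(\nubp)_\lambda^\wedge \simeq \nu_{\hfp}(\BP_p^\wedge)$, and since $\pi_0\BP = \bZ_{(p)}$ while $\pi_0\BP_p^\wedge = \bZ_p^\wedge$, the map $\nubp\to(\nubp)_\lambda^\wedge$ is not an equivalence. The $\lambda$-BSS on $\nubp/\lambda$ therefore computes $\pi_{\starstar}\nu(\BP_p^\wedge) \cong \bZ_p^\wedge[\lambda,h,v_1,\ldots]/(\lambda h = p)$ (this is precisely Lemma~\ref{nuBPlemma}), with $p$-complete coefficients, not the stated $\bZ_{(p)}[\lambda,h,v_1,\ldots]/(\lambda h = p)$. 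Indeed, if you multiply out the collapsing $\bF_p[\lambda]$-module $\E_\infty$-page with the hidden extension $\lambda h = p$, you land in $\bZ_p^\wedge$ in $\pi_{0,0}$. Your method alone cannot distinguish $\bZ_{(p)}$ from $\bZ_p^\wedge$.

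The paper's fix, which your argument is missing, is the arithmetic fracture square of Lemma~\ref{fracture}: one glues the $\lambda$-complete corner $\bZ_p^\wedge[\lambda,h,v_1,\ldots]/(\lambda h=p)$ (computed as you describe) against the $\lambda$-inverted corner $\bZ_{(p)}[\lambda^{\pm1},v_1,\ldots]$ over the common rationalization $\bZ_p^\wedge[\lambda^{\pm1},v_1,\ldots]$, and the pullback produces the $\bZ_{(p)}$ coefficients. Once you add the fracture square, the rest of your proposal — the Künneth observation that $\nu_{\hfp}(\BP\otimes\BP)\simeq\nubp\otimes\nubp$ (which follows from $\nu_{\hfp}$ being symmetric monoidal, cf.\ Proposition~\ref{Fpomnibus}, rather than from flatness of $\h_*\BP$), and the torsion-freeness argument for the Hopf algebroid structure — goes through essentially as in Lemmas~\ref{nuBPlemma}--\ref{BPhopfalgebroidprop}.
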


In particular, the formulas for the structure maps of the Hopf algebroid \linebreak $(\nubp_\starstar,\nubpbp)$ are the same as the classical ones up to multiples of $\lambda$ and $h$. For more, see Theorem~\ref{BPformulathm} and Remark~\ref{BPformulaexampleremark}. The algebra structure of the $\bF_p$-synthetic analog of $\BP_*$ is the classical one tensored with $\bZ_{(p)}[\lambda]$, except for the relation $\lambda h=p$. This relation expresses the fact that modulo $\lambda$, $h$ detects $p$ in $\bF_p$-Adams filtration 1. 

\bigskip

With this computation at hand, we can identify the $\E_2$-page (see Prop. \ref{synANSSe2page}) of the synthetic ANSS as $\Ext$ groups isomorphic to $$\Ext_{\nubpbp}^{\starstarstar}(\nubp_{\starstar},\nubp_{\starstar}X).$$ The synthetic ANSS converges to $\pi_{\star}(X_{\nubp}^{\wedge})$. The nilpotent completion $X_{\nubp}^{\wedge}$ turns out to be a $p$-localization (see Theorem~\ref{synANSSconvergencethm}).

\bigskip

The $\E_2$-page of the classical ANSS is difficult to compute and is often studied via the aNSS. Likewise, for the synthetic ANSS, its $\E_2$-page is accessible via the \textit{synthetic algebraic Novikov spectral sequence} (synthetic aNSS). We prove that for the $\bF_p$-synthetic sphere spectrum $\bS_{\bF_p}$, the synthetic aNSS is a $\lambda$-Bockstein spectral sequence:

\begin{theorem}[\ref{lambdabockstein}]
\label{lambdabocksteinintro}
The synthetic algebraic Novikov spectral sequence for $X=\bS_{\bF_p}$ is isomorphic to a $\lambda$-Bockstein spectral sequence. In particular, there is a 1-1 correspondence between classical aNSS differentials $d_r^{\mathrm{cl}}(x)=y$ and synthetic aNSS differentials $d_r^{\mathrm{syn}}(x)=\lambda^{r-1}y$.    
\end{theorem}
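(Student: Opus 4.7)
The plan is to model the synthetic aNSS as the $\lambda$-adic filtration spectral sequence on the cobar complex $\Omega^{\bullet}_{\nubpbp}(\nubp_{\starstar})$, whose cohomology is the synthetic ANSS $E_2$-page for $\bS_{\bF_p}$ by Proposition \ref{synANSSe2page}. Since powers of $\lambda$ give a complete descending filtration on this cobar complex, the associated spectral sequence is precisely a $\lambda$-Bockstein SS with $E_1$-page $\bF_p[\lambda] \otimes H^*(\Omega^{\bullet}_{\nubpbp}/\lambda)$. The goal is then to identify $H^*(\Omega^{\bullet}_{\nubpbp}/\lambda)$ with the classical aNSS $E_1$-page and the higher Bockstein differentials with the classical aNSS differentials scaled by $\lambda^{r-1}$.

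First, I would compute the mod-$\lambda$ Hopf algebroid. By Theorem \ref{BPhomotopytheorem}, $\nubp_{\starstar}\cong \bZ_{(p)}[\lambda,h,v_1,v_2,\ldots]/(\lambda h=p)$, so modulo $\lambda$ we obtain $\nubp_{\starstar}/\lambda \cong \bF_p[h, v_1, v_2, \ldots]$, and by Theorem \ref{BPhopfalgebroidprop}, $\nubpbp/\lambda \cong \nubp_{\starstar}/\lambda[t_1, t_2, \ldots]$. By the explicit structure maps of Theorem \ref{BPformulathm} and Remark \ref{BPformulaexampleremark}, the synthetic coproducts differ from the classical ones only by multiples of $\lambda$ and $h$. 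Setting $\lambda = 0$ therefore recovers the Hopf algebroid structure on the associated graded of $(\BP_*, \BP_*\BP)$ with respect to its $I=(p,v_1,v_2,\ldots)$-adic filtration, under the identification $h^i \leftrightarrow v_0^i$. Taking cobar cohomology, this identifies the $E_1$-page of the $\lambda$-Bockstein SS with $\bF_p[\lambda] \otimes E_1^{\mathrm{aNSS}}$.

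For the differentials, I would invoke the standard higher-Bockstein construction: given a class $x$ on the $E_1$-page, choose a lift $\tilde{x}$ to a cochain in $\Omega^{\bullet}_{\nubpbp}(\nubp_{\starstar})$; then $d_{\mathrm{syn}}(\tilde{x}) \equiv \lambda^{r-1} y \pmod{\lambda^r}$ for the smallest $r$ with nonzero residue, and $d_r^{\mathrm{syn}}(x) := \lambda^{r-1} y$. The hard part will be the mod-$\lambda^r$ compatibility: carefully verifying via Theorem \ref{BPformulathm} that the synthetic cobar differential, reduced modulo $\lambda^r$, agrees with the classical cobar differential modulo $I^r$ under the matching $h^i \leftrightarrow v_0^i$. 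Once this compatibility is established, $y$ is precisely the target $d_r^{\mathrm{cl}}(x)$ of the classical aNSS, producing the desired bijection $d_r^{\mathrm{cl}}(x) = y \leftrightarrow d_r^{\mathrm{syn}}(x) = \lambda^{r-1} y$.
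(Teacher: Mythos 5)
Your plan opens by asserting that the synthetic aNSS can be ``modeled as the $\lambda$-adic filtration spectral sequence on the cobar complex.'' That assertion is precisely the nontrivial content of the theorem, not a starting point. In Section~\ref{synalgnsssection} the synthetic aNSS is \emph{defined} by filtering the $\nubpbp$-cobar complex by powers of the ideal $J=(h,v_1,v_2,\ldots)$, in analogy with the classical $I$-adic construction, \emph{not} by powers of $\lambda$. A priori the $J$-adic and $\lambda$-adic filtrations yield different spectral sequences, and your proposal then proves facts about the $\lambda$-adic one without ever relating it back to the $J$-adic one; as written this is circular.

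The missing step is the degree constraint, which is both what the paper uses and what would justify your modeling move. Using Lemma~\ref{assgradlemma} and change-of-rings, the synthetic aNSS $E_2$-page for $\bS_{\bF_p}$ is identified with $\Ext_{\Gr_*(\BP_*\BP)}(\Gr_*(\BP_*),\Gr_*(\BP_*))[\lambda]$, where a classical generator $x\in\Ext^{f,u,t}$ sits in quad-degree $(f,u,t,w)$ with $w=u+t$. Since $d_r$ raises $u$ by $r-1$ while fixing $t$ and $w$, the weight forces the target to be $\lambda^{r-1}$ times a generator, which is exactly the Bockstein form. The same weight bookkeeping (for a monomial in $\nubpbp$ of bidegree $(t,w)$, the $J$-filtration equals $(w-t)$ plus the $\lambda$-adic valuation) is what shows the $J$-adic and $\lambda$-adic filtrations differ by a bidegree-dependent shift and hence give isomorphic SS's; without making this observation explicit, your construction floats free of the definition. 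Your supporting claim that $(\nubp_{\starstar}/\lambda,\nubpbp/\lambda)$ is isomorphic as a Hopf algebroid to $(\Gr_*(\BP_*),\Gr_*(\BP_*\BP))$ (under $h\leftrightarrow a_0$, $v_i\leftrightarrow a_i$, $t_j\leftrightarrow b_j$) is a correct and pleasant companion fact, provable by the same weight argument, but on its own it does not close the gap.
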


\begin{remark}
    Some authors have used the term \textit{rigid} (e.g. \cite{BX23}) to describe a spectral sequence whose classes and differentials are formally determined by another spectral sequence by adjoining a new parameter $\tau$ (or $\lambda$) and replacing classes killed by differentials with $\tau$-torsion. This is equivalent to our usage of the term Bockstein.
\end{remark}

This allows us to compute the $\E_2$-page of the synthetic ANSS for the sphere using the classical aNSS for the sphere. We discuss more details of the computation in Section~\ref{ANSSsphere}.

\subsection{Deformations of the aNSS and the CESS}

We also prove new deformation results in the stable homotopy theory of comodules over Hopf algebroids. Let $(\bF_p[\tau],\cA_{\starstar}^\BP)$ denote the $\BP$-synthetic dual Steenrod algebra \cite[Sec. 6.2]{Pst22} at a prime $p$. Let $\Stable(\Gamma)^{\cell}$ denote the smallest full subcategory of Hovey's \cite{Hov04} stable comodule category $\Stable(\Gamma)$ for a Hopf algebroid $(A,\Gamma)$ containing all spheres and closed under colimits. For what follows, let $\cP_*:=\h_*\BP$, considered as a commutative algebra object of either $\Stable(\cA_*)$ or $\Stable(\BP_*\BP)$ and for a symmetric monoidal category $\cC$ with $p$-completion, let $\cC_{ip}:=\Mod(\cC;\one_p^{\wedge})$.

\bigskip

Using the machinery developed in \cite[App. C]{BHS20} and a twisted $t$-structure on $\Stable(\Gamma)$ (see Section~\ref{tstructuresection}), we ``reconstruct'' the categories of synthetic stable comodules by identifying them with Adams spectral sequence-based deformations (denoted $\Adams_R(\cC)$, see  Definition~\ref{adamsdef}) analogous to those first studied in \cite{GIKR18}:

\begin{theorem}[\ref{cedef}]
\label{stabledefintro}
    The $\infty$-category of stable comodules over $\cA_{\starstar}^\BP$ is a deformation of $\Stable(\cA_*)$ with respect to the Cartan-Eilenberg spectral sequence for the extension $\cP_*\to \cA_*\to \cE_*$
    and is reconstructed by the equivalence
    \[
    \Stable(\cA_{\starstar}^\BP)^{\cell} \simeq \Adams_{\cP_*}(\Stable(\cA_*))
    \]
    of $\Stable(\cA_*)$-linear symmetric monoidal stable $\infty$-categories. This deformation has a parameter $\tau$ whose generic fiber recovers $\Stable(\cA_*)$.
\end{theorem}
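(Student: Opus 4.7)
The plan is to realize the theorem as an application of the deformation-reconstruction machinery of \cite[App.~C]{BHS20}. That framework takes a symmetric monoidal stable $\cC$-linear category $\cD$ that is compactly generated by bigraded twists of its unit and equipped with a parameter $\tau$ whose localization recovers $\cC$, and identifies $\cD$ with the Adams-type deformation $\Adams_R(\cC)$ provided that the $\tau$-cofiber on the unit realizes an $\bE_\infty$-algebra equivalent to $R$. I would apply this recognition principle with $\cC = \Stable(\cA_*)_{ip}$, $R = \cP_*$, and $\cD = \Stable(\cA_{\starstar}^{\BP})^{\cell}$, and read off the theorem from the output.

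First I would record the basic deformation data. The class $\tau \in \pi_{0,-1}$ of the $\BP$-synthetic dual Steenrod algebra gives a natural self-map of the unit in $\Stable(\cA_{\starstar}^{\BP})^{\cell}$, and compact generation by bigraded spheres is immediate from the definition of the cellular subcategory. The generic fiber identification $\tau^{-1}\Stable(\cA_{\starstar}^{\BP})^{\cell} \simeq \Stable(\cA_*)_{ip}$ follows from Pstr\k{a}gowski's computation of $\cA_{\starstar}^{\BP}$ in \cite[Sec.~6.2]{Pst22}, since inverting $\tau$ on the defining Hopf algebroid returns $\cA_*$ and passing to stable comodules commutes with this localization. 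The twisted $t$-structure of Section~\ref{tstructuresection} then supplies the Postnikov data needed to fit into the BHS framework: its heart should match the bigraded $\cA_*$-comodules equipped with the $\cP_*$-filtration coming from the extension $\cP_* \to \cA_* \to \cE_*$, which is exactly the heart one expects for $\Adams_{\cP_*}(\Stable(\cA_*)_{ip})$.

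The crux of the argument, and the main obstacle, is the identification of the special fiber: one needs a monoidal equivalence between $\bone/\tau$ in $\Stable(\cA_{\starstar}^{\BP})^{\cell}$ and $\cP_* = \h_*\BP$ viewed as a commutative algebra in $\Stable(\cA_*)_{ip}$ through the Hopf algebroid extension $\cP_* \to \cA_* \to \cE_*$. I expect this to fall out of the twisted $t$-structure combined with the explicit description of $\cA_{\starstar}^{\BP}$ modulo $\tau$: the $\tau$-Bockstein tower on the unit realizes the CESS filtration, so its bottom layer is forced to be $\cP_*$ with the correct algebra structure. Once this monoidal comparison of special fibers is established, the hypotheses of \cite[App.~C]{BHS20} are satisfied and the desired $\Stable(\cA_*)$-linear symmetric monoidal equivalence with $\Adams_{\cP_*}(\Stable(\cA_*))$ follows, along with the assertion that the generic fiber recovers $\Stable(\cA_*)$.
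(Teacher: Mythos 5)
Your outline correctly identifies the \cite[App.~C]{BHS20} recognition machinery as the engine, and the supporting statements about compact generation, the $\tau$-parameter, and the generic fiber calculation are all in the right direction. However, the step you call the ``crux'' misstates what the framework requires, and the way you propose to discharge it would not work.

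The recognition theorem produces an equivalence $\cC_\defo \simeq \Mod(\cC^\Fil, i_*\one_{\cC_\defo})$ once the deformation-pair axioms are verified. To identify this with $\Adams_{\cP_*}(\Stable(\cA_*)) = \Mod(\Stable(\cA_*)^\Fil; \Gamma_{\cP_*}(\hfp))$, the task is to exhibit an equivalence of $\bE_\infty$-algebra objects in filtered $\Stable(\cA_*)$ between the \emph{entire filtered object} $i_*(\bF_p[\tau])$ and the d\'ecalated $\cP_*$-Adams tower $\Gamma_{\cP_*}(\hfp) = \Tot(\tau_{\geq *}(\cP_*^{\bullet+1}))$. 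You instead propose to identify the cofiber $\bone/\tau$ with $\cP_*$, which is not what is needed and is in fact false: the cofiber of $\tau$ on the unit $\bF_p[\tau]$ is $\bF_p$, a graded comodule whose $\Ext_{\cA_{\starstar}^\BP}$-groups encode the CESS $\E_2$-page, not $\cP_*$ itself. Conflating the augmenting ring $R=\cP_*$ that defines the Adams deformation with the associated graded $\Gr_*\Gamma_{R}(\one)$ of the resulting tower is the core confusion.

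Even if one corrects the target, the assertion that the identification ``falls out of the twisted $t$-structure'' skips the real work. The paper computes $i_*(\bF_p[\tau])$ by first establishing the nilpotent-completion equivalence $\bF_p[\tau] \simeq \Tot(\cP_*[\tau]^{\bullet+1})$ in $\Stable(\cA_{\starstar}^\BP)$ (Proposition~\ref{synAnilpotentcomplete}, proved via Mathew's criterion), which lets one commute $i_*$ past the totalization. It then proves an explicit $\Ext$ computation over $\cA_{\starstar}^\BP$ for each $\cP_*[\tau]^{\otimes(m+1)}$ (Lemma~\ref{synAextlemma}), uses connectivity to produce a factorization through the truncation $\tau_{\geq *}\cP_*^{m+1}$, and checks this map is an equivalence by realizing it to the identity and noting the homotopy groups are $\tau$-free. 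None of these ingredients are present in your proposal, so as written there is a genuine gap at the step you yourself flag as the main obstacle.
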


\begin{theorem}[\ref{andef}]
\label{BPdeformationtheoremintro}
    The $\infty$-category of stable comodules over $\nubpbp$ is a deformation of $\Stable(\BP_*\BP)$ with respect to the algebraic Novikov spectral sequence and is reconstructed by the equivalence
    \[
  \Stable(\nubpbp)^{\cell}_{ip} \simeq \Adams_{\cP_*}(\Stable(\BP_*\BP))
    \]
    of $\Stable(\BP_*\BP)$-linear symmetric monoidal stable $\infty$-categories. This deformation has a parameter $\lambda$ whose generic fiber recovers $\Stable(\BP_*\BP)_{ip}$.
\end{theorem}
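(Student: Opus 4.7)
The plan is to follow the same template as Theorem~\ref{cedef} but with the base category shifted from $\Stable(\cA_*)$ to $\Stable(\BP_*\BP)$, with $\cP_* = \h_* \BP$ now viewed as a commutative algebra in $\Stable(\BP_*\BP)$. The strategy has three layers: first, construct the Adams deformation $\Adams_{\cP_*}(\Stable(\BP_*\BP))$ as in \cite[App.~C]{BHS20} using the Adams tower built from $\cP_*$; second, produce a symmetric monoidal functor to $\Stable(\nubpbp)^{\cell}_{ip}$ that sends spheres to the $\nu_{\hfp}$-synthetic analogs of the $\BP_*\BP$-spheres; and third, check this functor is an equivalence after restricting to the cellular, $p$-complete part. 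Throughout, I would make systematic use of the twisted $t$-structure on $\Stable(\Gamma)$ from Section~\ref{tstructuresection}, because the deformation parameter $\lambda$ sits in degree $(0,-1)$ and only becomes visible once one keeps track of the internal Hopf-algebroid grading separately from the homological grading.

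The key computational input is the Hopf-algebroid calculation $\nubpbp \cong \nubp_{\starstar}[t_1, t_2, \ldots]$ with $\nubp_{\starstar} \cong \bZ_{(p)}[\lambda,h,v_1,v_2,\ldots]/(\lambda h = p)$ from Theorem~\ref{BPhomotopytheorem}. This tells us that modding out by $\lambda$ returns the pair $(\bF_p \otimes \BP_*,\,\bF_p \otimes \BP_*\BP)$, whose stable comodule category is identified with an Adams-type category built from $\cP_* = \h_*\BP$ as an algebra in $\Stable(\BP_*\BP)$ (this is exactly the special fiber identification mirroring the CESS side). The corresponding Amitsur/cobar resolution of $\cP_*$ in $\Stable(\BP_*\BP)$ is precisely what the universal property of $\Adams_{\cP_*}(-)$ matches against the $\nu_{\hfp}\BP$-based resolution of the $\bF_p$-synthetic unit, and this comparison of resolutions is what gives a symmetric monoidal $\Stable(\BP_*\BP)$-linear functor. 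Fullness, faithfulness, and essential surjectivity onto the cellular subcategory then follow in the standard way once one checks that the functor preserves compact generators and is conservative on the heart of the twisted $t$-structure — both reducing to the Hopf-algebroid identification.

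The generic fiber computation is where the $ip$ subscript arises and is what I expect to be the main obstacle. Inverting $\lambda$ in $\nubp_{\starstar}$ kills the distinction between $\lambda h$ and $p$, but does \emph{not} automatically make $p$ invertible; instead, since $h$ detects $p$ in Adams filtration $1$ modulo $\lambda$, the $\lambda$-localization sends $\nubp_{\starstar}$ onto a version of $\BP_*$ that remembers the $p$-adic filtration. Concretely, $\lambda^{-1}\nubp_{\starstar} = \BP_*[\lambda^{\pm 1}]$, but the relevant subcategory of cellular, $p$-complete stable comodules recovers $\Stable(\BP_*\BP)_{ip}$ rather than $\Stable(\BP_*\BP)$ itself. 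I would handle this by showing that the $\lambda$-localization functor is compatible with $p$-completion on both sides, and by invoking the universal property of $\Adams_{\cP_*}$ to identify the generic fiber with $\Stable(\BP_*\BP)_{ip}$ directly. The main technical hurdle is checking that passing to cellular objects commutes with both $p$-completion and with the $\Adams_{\cP_*}$ construction in a way compatible with the twisted $t$-structure; this likely requires the explicit compact generation statement for $\Stable(\nubpbp)^{\cell}_{ip}$ coming from the bigraded sphere objects, together with the Hopf-algebroid structure described in Theorem~\ref{BPhopfalgebroidprop}.

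Finally, the $\Stable(\BP_*\BP)$-linear symmetric monoidal structure on the equivalence should be automatic from the universal property of the Adams deformation, provided that the comparison functor is built as the Amitsur totalization of the $\cP_*$-resolution — this is essentially the same argument as in Theorem~\ref{cedef} with $(\cA_*, \cP_*)$ replaced by $(\BP_*\BP, \cP_*)$, with the crucial (and only real) new point being the control of $p$-completion noted above.
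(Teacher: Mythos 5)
Your high-level template is the right one: verify the deformation-pair axioms using the twisted $t$-structure, invoke the \cite{BHS20} recognition machinery, and compare the resulting filtered object with the Adams tower for $\cP_*$ in $\Stable(\BP_*\BP)$. But two substantive things are off.

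First, you misdiagnose where the $p$-completion comes from. It has nothing to do with the $\lambda$-localization ``remembering the $p$-adic filtration'' via $\lambda h = p$: as you yourself note, $\lambda^{-1}\nubp_{\starstar} \cong \BP_*[\lambda^{\pm 1}]$, and the generic fiber of the uncompleted category $\Stable(\nubpbp)^{\cell}$ is $\Stable(\BP_*\BP)$, not its $p$-completion. The $ip$ is forced from the other direction: the unit $\Gamma_{\cP_*}\BP_*$ of $\Adams_{\cP_*}(\Stable(\BP_*\BP))$ realizes to the $\cP_*$-nilpotent completion of $\BP_*$, and by Theorem~\ref{BPcompletiontheorem} (via the \cite{Man21} machinery applied to the twisted $t$-structure) this is the $p$-completion $(\BP_*)^{\wedge}_p$. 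So the right-hand side is intrinsically $p$-complete, and one must $p$-complete $\Stable(\nubpbp)^{\cell}$ to match it. This is why Proposition~\ref{synBPnilpotentcomplete} is a required ingredient, not an optional convenience.

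Second, you gloss over the computational heart of the argument. After invoking \cite[Prop.\ C.20]{BHS20} you are left needing to identify the filtered object $i_*(\nubp_{\starstar})^{\wedge}_p$ with $\Gamma_{\cP_*}\BP_* = \Tot(\tau_{\geq *}\cP_*^{\bullet+1})$. Commuting $i_*$ past the totalization reduces this to a levelwise comparison, and the comparison is settled by an $\Ext$ computation: Lemma~\ref{synBPextlemma} shows that $\Ext^{*,*,*}_{\nubpbp}(\nubp_{\starstar},\cP_*[\lambda]^{\otimes(m+1)})\cong\cP_*[\lambda]^{\otimes m}$, proved by running the synthetic algebraic Novikov spectral sequence of Section~\ref{synalgnsssection} and observing that it collapses because $J$ acts by zero on $\cP_*[\lambda]$. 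These homotopy groups are concentrated in the right Chow degree and are $\lambda$-free, which is exactly what lets one build the factorization $i_*(\cP_*[\lambda]^{m+1})_n \to \tau_{\geq n}\cP_*^{m+1}$ and check it is an equivalence after realization. Merely invoking the Hopf-algebroid isomorphism $\nubpbp\cong\nubp_{\starstar}[t_1,t_2,\dots]$ is not enough to produce this identification; you need the synthetic aNSS collapse.
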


The reconstruction aspects of the above results are entirely analogous to the reconstructions of $(\SH(\bC)^{\cell})^{\wedge}_{2}$ and $\SH(\bR)^{\mathrm{AT}}_{ip}$ in \cite{GIKR18} and \cite{BHS20} respectively. 

For both categories, we can also provide a new description of the special fibers\footnote{The $\text{E}^{\text{CESS}}_2$ and $\text{E}_1^{\text{aNSS}}$ notations are slightly abusive. The precise statements are given in Theorems \ref{cedef} and \ref{andef}.}
    \begin{align*}
    \Mod(\Stable(\cA_{\starstar}^\BP)^{\cell}; \bF_p[\tau]/\tau)&\simeq \Mod(\Stable(\cA_*)^\Gr; \text{E}^{\text{CESS}}_2)\\
    \Mod(\Stable(\nubpbp)_{ip}^{\cell}; (\nubp_\starstar)_p^{\wedge}/\lambda)&\simeq \Mod(\Stable(\BP_*\BP)^\Gr; \text{E}_2^{\text{aNSS}})
    \end{align*}
    using \cite[Prop. C.2]{BHS20}. The algebraic objects over which we are taking modules are known to be isomorphic, although the underlying categories in which these modules live are not. It seems likely nonetheless that both of these special fibers are closely related as specializations of a larger theory described briefly in Remark \ref{bisyn}.

\bigskip

In view of Theorem~\ref{stabledefintro}, it seems likely that a result similar to \cite[Thm. A.8]{BHS23} would recover the Burklund-Xu result \cite[Thm. 2.8]{BX23} that the motivic Cartan-Eilenberg spectral sequence is a $\tau$-Bockstein spectral sequence, though we do not seek to establish this in the present article.

\bigskip

In the same way that Pstr\k{a}gowski's category $\Syn_E$ is a deformation of $\Sp$ with respect to the $E$-Adams spectral sequence, Theorems~\ref{stabledefintro} and~\ref{BPdeformationtheoremintro} say that $\Stable(\cA_{\starstar}^\BP)^{\cell}$ is a deformation of $\Stable(\cA_*)$ with respect to the CESS and, up to a $p$-completion, $\Stable(\nubpbp)^{\cell}$ is a deformation of $\Stable(\BP_*\BP)$ with respect to the aNSS. This means that synthetic tools can be used to study the categories $\Stable(\cA_*)$ and $\Stable(\BP_*\BP)$, just as they have been used to study $\Sp$.

\bigskip

As a consequence of Lemma~\ref{qlemma}, combined with the results Theorem~\ref{stabledefintro}, Theorem~\ref{BPdeformationtheoremintro}, Corollary~\ref{cessexample}, \cite[Thm. 1.17]{GWX21}, and \cite{Pst22}, the cofiber sequences
\begin{align*}              &\bF_p[\tau]\xrightarrow{\tau}\bF_p[\tau]\xrightarrow{i}\bF_p[\tau]/\tau\xrightarrow{q_1}\Sigma\bF_p[\tau] & \in\Stable(\cA_{\starstar}^\BP)^{\cell} \\
    &\nubp_\starstar\xrightarrow{\lambda}\nubp_\starstar\xrightarrow{i}\nubp_\starstar/\lambda\xrightarrow{q_2}\Sigma\nubp_{\starstar} & \in\Stable(\nubpbp)^\cell_{ip}
\end{align*}

where $\nubp_\starstar$ is implicitly $p$-complete, set up correspondences
\begin{equation*}
\begin{tikzcd}[cramped]
\left\{ 
 \begin{tabular}{c}
     classes in aNSS $\E_2$-page \\[5pt]
      which detect image of \\[5pt]
     $(q_1)_\starstar:\E_2(\bS_{\BP}/\tau)\to\E_2(\Sigma\bS_{\BP})$ \\[5pt]
     for $\bF_p^{\BP}$-Adams SS in $\Syn_{\BP}$      
 \end{tabular}
\right\} &
\longleftrightarrow &
\left\{ 
\begin{tabular}{c}
      differentials in the  \\[5pt]
    $\BP^{\bF_p}$-Adams SS \\[5pt]
    for $\bS_{\bF_p}/\lambda\in\Syn_{\bF_p}$
 \end{tabular}
\right\}  
\end{tikzcd}
\end{equation*}

\begin{equation*}
\begin{tikzcd}[cramped]
\left\{ 
 \begin{tabular}{c}
     classes in CESS $\E_2$-page \\[5pt]
     which detect image of \\[5pt]
     $(q_2)_\starstar:\E_2(\bS_{\bF_p}/\lambda)\to\E_2(\Sigma\bS_{\bF_p})$ \\[5pt]
     for $\BP^{\bF_p}$-Adams SS in $\Syn_{\bF_p}$      
 \end{tabular}
\right\} &
\longleftrightarrow &
\left\{ 
\begin{tabular}{c}
      differentials in the  \\[5pt]
    $\bF_p^{\BP}$-Adams SS \\[5pt]
    for $\bS_{\BP}/\tau\in\Syn_{\BP}$
 \end{tabular}
\right\}  
\end{tikzcd}
\end{equation*}

In the context of $\Stable(\cA^{\mathrm{mot}}_\starstar)$ for the $\bC$-motivic dual Steenrod algebra, $(q_1)_\starstar$ and algebraic Novikov differentials were determined by \cite{aNSS} up to topological degree 110 at $p=2$. We use this to compute Cartan-Eilenberg differentials and to determine classes which detect the image of $(q_2)_{\starstar}$ in Section~\ref{smodlambda}.

\subsection{The IWX-Comparison Method}\label{iwxstuff}

As was shown in \cite{Ghe18}, the natural map $\bS_{\bC}\to \bS_{\bC}/\tau$ is one of $\bE_\infty$-rings. The homotopy groups of the former encode the stable homotopy groups of spheres and the latter's homotopy groups are isomorphic to the Adams-Novikov $\E_2$-page. Moreover, one can run the $\hfp^{\mathrm{mot}}$-Adams spectral sequence for both, and get a comparison map from the motivic Adams spectral sequence to the algebraic Novikov spectral sequence. Because inverting $\tau$ in the motivic Adams spectral sequence recovers the classical Adams spectral sequence \cite{DI10}, this innocuously constructed map allows one to transfer differentials back and forth between the two spectral sequences. This idea was first used, to great effect, in \cite{IWX20} to drastically extend our understanding of the stable homotopy groups of spheres at the prime $2$. We show in Section~\ref{ANSSsphere} that our results allow for similar comparisons to be made between the Adams-Novikov spectral sequence and the Cartan-Eilenberg spectral sequence.

\subsection{Computational Results}

We conclude by using the IWX-method in $\Syn_{\bF_2}$ to algebraically compute the 2-primary synthetic Adams-Novikov spectral sequence for the sphere out to the 45-stem. We take as our input data the Isaksen-Wang-Xu computation of the algebraic Novikov spectral sequence \cite{aNSS}. Using the results of Section~\ref{BPsynanalogsection}, we can compute the $\E_2$-page of the synthetic ANSS with the synthetic algebraic NSS. It remains to compute differentials in the synthetic Adams-Novikov spectral sequence. Using the results from Section~\ref{Clambdasection}, we know that the synthetic ANSS for $\bS_{\bF_2}/\lambda$ is isomorphic to the Cartan-Eilenberg spectral sequence for a particular extension of the Steenrod algebra. In Section~\ref{smodlambda}, we apply the results from Section~\ref{defcomod} to compute the synthetic ANSS for $\bS_{\bF_2}/\lambda$ out to the 45-stem. Finally in Section~\ref{ANSSsphere}, we compute the synthetic ANSS for $\bS_{\bF_2}$ by comparison to the synthetic ANSS for $\bS_{\bF_2}/\lambda$.\footnote{To see interactive synthetic Adams-Novikov charts for $\bS_{\bF_2}$ and $\bS_{\bF_2}/\lambda$, we refer the reader to \cite{BJM25}.}

\subsection*{Organization}

In Section~\ref{recollections}, we recall the salient features of Pstr\k{a}gowski's category of synthetic spectra, record some folklore results about the category $\Syn_{\hfp}$ specifically, and review the filtered-object approach to synthetic spectra with an eye toward the work of \cite{BHS20} on recognizing deformations. Next in Section~\ref{BPsynanalogsection}, we compute the structure of the $\hfp$-synthetic analogs of $\BP$ and its Hopf algebroid of cooperations, leading to an identification of the $\E_2$-page of the $\nu_{\hfp}\BP$-Adams spectral sequence. We also construct an $\bF_p$-synthetic version of the algebraic Novikov spectral sequence. Section~\ref{Clambdasection} describes spectral sequences for $\bS_{E}/\tau$ in terms of Adams spectral sequences in stable comodules, allowing us to identify several examples including the classical Cartan-Eilenberg spectral sequence for the extension
\[
\cP_*\to \cA_* \to \cE_*.
\]
In Section~\ref{tstructuresection}, we produce twisted $t$-structures generally on stable $\infty$-categories $\cC$ with respect to a compact object $K\in\cC$ and an automorphism $F:\cC\to\cC$. These are applied in particular to $\cC=\Stable(\Gamma)^{\cell}$ and $\cC=\Syn_{E}^{\cell}$. In Section~\ref{defcomod}, we identify the categories of stable comodules over $\cA_{\starstar}^\BP$ and $\nubpbp$ as deformations of their classical analogues by the Cartan-Eilenberg and algebraic Novikov spectral sequence respectively. In Section~\ref{smodlambda}, we compute the synthetic Adams-Novikov spectral sequence for $\bS_{\bF_2}/\lambda$ through the 45-stem. In Section~\ref{ANSSsphere}, we apply all previous results to the computation of the synthetic Adams-Novikov spectral sequence for the sphere through the 45-stem. In Appendix~\ref{nilcompappendix}, we recall the results of \cite{Man21} and use them, together with the results of Section~\ref{tstructuresection}, to prove completion results in $\Stable(\Gamma)^{\cell}$ and $\Syn_{\bF_p}$ for several Hopf algebroids $(A,\Gamma)$ of interest.

\subsection{Notations and Conventions}

We will work freely with the higher categorical language of \cite{HTT} and \cite{HA}. To avoid clutter, we fix the following notations and conventions throughout the paper:
\begin{enumerate}
    \item When considering graded objects as a whole, we use $\ast$ to denote a single grading and $\starstar$ to denote a bigrading.

    \item We abuse notation to identify an abelian group $A$ with its image under the Eilenberg Mac-Lane functor, commonly denoted $\h A$. In particular, we will write $\hfp$ for the spectrum $\h\bF_p$, with the exception of denoting $\hfp$-homology $\h_*X$.

    \item We sometimes refer to the synthetic analog $\nu_EX$ of a spectrum $X$ as $X^E$. In particular, we do this for $\bF_p^{\BP}:=\nu_{\BP}\bF_p$ and $\BP^{\bF_p}:=\nu_{\bF_p}\BP$.

    \item We will write $\nubpbp$ for the Hopf algebroid of $\nubp$-cooperations and similarly $\cA_{\starstar}^\BP$ for those of $\hfp^{\BP}$.
\end{enumerate}

\subsection{Acknowledgements}

The authors would like to thank Gabriel Angelini-Knoll, William Balderrama, Eva Belmont, Robert Burklund, Myungsin Cho, Dan Isaksen, Mike Mandell, Piotr Pstr\k{a}gowski, Doug Ravenel, Noah Riggenbach, and Zhouli Xu for helpful conversations regarding the material in this paper.

\section{Recollections on Synthetic Spectra and Filtered Objects}
\label{recollections}

\subsection{Synthetic Spectra}
\label{syn}

Given an Adams-type ring spectrum $E$ \cite[Def. 3.14]{Pst22}, one can categorify its Adams spectral sequence as Pstr\k{a}gowski's category of synthetic spectra $\mathrm{Syn}_{E}$, which is a presentably symmetric monoidal stable $\infty$-category. The synthetic construction comes equipped with a functor $\nu_E:\Sp\to \Syn_E$ assigning to each spectrum $X$ its synthetic analog $\nu_E X$ (abbreviated $\nu X$ where there is no ambiguity) whose bigraded homotopy groups record both the original homotopy groups $\pi_*(X)$ and the $E$-Adams spectral sequence for $X$, in a sense made precise by \cite[Appendix A]{BHS23}.  We recall some salient features of $\nu_E$ below, which appear in \cite{Pst22} as Lemmas 4.4 and 4.23 and Corollary 4.28.

\begin{proposition}[\cite{Pst22}]
\label{nufacts}
    The functor $\nu_E:\Sp\to \Syn_E$
    \begin{itemize}
        \item[(a)] is fully faithful,
        \item[(b)] preserves filtered colimits,
        \item[(c)] preserves cofiber sequences $X\to Y\to Z$ which induce $E_*(-)$ short exact sequences,
        \item[(d)] and is lax symmetric monoidal.
    \end{itemize}
        Moreover, for any $X\simeq \colim X_\alpha$ which is a filtered colimit of finite spectra with finitely generated projective $E_*$ homology and any $Y\in \Sp$, the natural map $\nu_E X \otimes \nu_E Y\to \nu_E (X\otimes Y)$ is an equivalence.
\end{proposition}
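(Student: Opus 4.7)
The plan is to work from the explicit model of $\Syn_E$ as the $\infty$-category of hypercomplete spectrum-valued sheaves on the site $\Sp_E^{\fp}$, equipped with the Grothendieck topology whose covers are finite jointly $E_*$-epimorphic families, following Pstr\k{a}gowski. In this model $\nu_E(X)$ is the sheafification of the presheaf $P\mapsto\tau_{\geq 0}\Map_\Sp(P,X)$, and each part of the statement reduces to a compatibility check between Yoneda, the connective cover, and this sheafification.

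For (a), the key observation is that when the target $P$ lies in $\Sp_E^{\fp}$, the presheaf $Q\mapsto\tau_{\geq 0}\Map_\Sp(Q,P)$ is already a sheaf: any local section vanishes globally because maps out of an $E_*$-epimorphic cover detect $E_*$-data. Yoneda then gives $\Map_{\Syn_E}(\nu P,\nu Q)\simeq\Map_\Sp(P,Q)$ for $P,Q\in\Sp_E^{\fp}$. To extend to arbitrary $X,Y\in\Sp$, I would write $X\simeq\colim X_\alpha$ as a filtered colimit of finite spectra and invoke (b).

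For (b), objects of $\Sp_E^{\fp}$ are compact in $\Sp$, so $\Map_\Sp(P,-)$ commutes with filtered colimits, $\tau_{\geq 0}$ preserves filtered colimits of spectra, and sheafification is a left adjoint; hence $\nu_E$ preserves filtered colimits. For (c), if a cofiber sequence $X\to Y\to Z$ induces a short exact sequence $E_*(X)\hookrightarrow E_*(Y)\twoheadrightarrow E_*(Z)$, then flatness of $E_*(P)$ for $P\in\Sp_E^{\fp}$ preserves the short exactness after smashing; the resulting long exact sequence of $\pi_*\Map_\Sp(P,-)$ splits into short exact sequences in non-negative degrees, so $\tau_{\geq 0}\Map_\Sp(P,-)$ remains a cofiber sequence on presheaves and therefore on sheaves, since sheafification is exact.

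For (d), the Day convolution of the smash product on $\Sp_E^{\fp}$ endows $\Syn_E$ with a symmetric monoidal structure for which the Yoneda embedding is strong symmetric monoidal; composing with $\tau_{\geq 0}$, which is only lax symmetric monoidal, produces the lax structure on $\nu_E$. For the moreover claim, by (b) it suffices to treat a single $P\in\Sp_E^{\fp}$, since both sides are tensor products commuting with filtered colimits in each variable: here the Day convolution formula, combined with the fact that $P$ is dualizable and has projective $E$-homology, yields $\nu_E P\otimes\nu_E Y\simeq\nu_E(P\otimes Y)$ without non-trivial descent corrections. The main obstacle is verifying that $\tau_{\geq 0}$ interacts correctly with Day convolution in this last step; this is exactly where $E_*$-projectivity of $P$ is used essentially, guaranteeing the K\"unneth-type identification that makes the comparison map an equivalence rather than merely a connective cover.
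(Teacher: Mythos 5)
The paper supplies no proof here: Proposition~\ref{nufacts} is imported directly from Pstr\k{a}gowski's paper, and the surrounding text explicitly locates it as Lemmas 4.4 and 4.23 and Corollary 4.28 of \cite{Pst22}. So you are attempting to reconstruct arguments that the paper simply cites, which is a legitimate exercise but means there is no in-paper proof to compare against.

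There are two real gaps in your reconstruction. First, in part (a) you reduce full faithfulness for arbitrary $X,Y$ to the case $P,Q\in\Sp_E^\fp$ by writing $X$ as a filtered colimit. But for a general Adams-type $E$, an arbitrary spectrum is \emph{not} a filtered colimit of $E$-finite projective spectra; that decomposition is guaranteed only for $E$ itself (this is the definition of Adams-type) and happens to hold for every $X$ when $E=\hfp$ because all $\bF_p$-modules are projective. Pstr\k{a}gowski's full-faithfulness argument does not proceed by writing sources as colimits of $\Sp_E^\fp$-objects; it analyzes the unit of the relevant adjunction directly. Second, and more seriously, your argument for (c) does not work. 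You assert that because $E_*X\hookrightarrow E_*Y\twoheadrightarrow E_*Z$ is short exact, ``the resulting long exact sequence of $\pi_*\Map_\Sp(P,-)$ splits into short exact sequences in non-negative degrees, so $\tau_{\geq 0}\Map_\Sp(P,-)$ remains a cofiber sequence on presheaves.'' Neither clause is correct: exactness of $E_*(-)$ says nothing about the boundary maps in $\pi_*\Map_\Sp(P,-)$ for an arbitrary $P\in\Sp_E^\fp$, and the connective covers $\tau_{\geq 0}\Map_\Sp(P,X)\to\tau_{\geq 0}\Map_\Sp(P,Y)\to\tau_{\geq 0}\Map_\Sp(P,Z)$ genuinely fail to be a cofiber sequence at the presheaf level. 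The actual mechanism is that sheafification does essential work here: the $E_*$-surjectivity of $Y\to Z$ guarantees that $\nu Y\to\nu Z$ becomes $\pi_0$-surjective \emph{locally} (on the $E_*$-epimorphism covers that define the topology), and only after sheafifying does the sequence become a cofiber sequence. Your remark ``and therefore on sheaves, since sheafification is exact'' presupposes the presheaf-level statement you have not established, so the appeal to exactness of sheafification goes in the wrong direction. Parts (b) and (d) and the outline of the moreover clause are fine in spirit.
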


\begin{notation}
    Such an $X_\alpha$ is said to be $E$-finite projective \cite[Def. 3.13]{Pst22}; the full subcategory of such is denoted $\Sp^\fp_E$.
\end{notation}

\begin{notation}
 There is a naturally bigraded family of spheres in $\Syn_E$ which we denote by $\bS_{E}^{t,w}=\Sigma^{t-w}\nu \bS^{w}$. They induce bigraded suspension functors $\Sigma^{t,w}(-)=(-)\otimes \bS^{t,w}_E$ and their mapping groups are denoted $\pi_{t,w}X=[\bS^{t,w},X]$. We often just write $\bS_E$ instead of $\bS_E^{0,0}$. We will often restrict attention to the subcategory $\Syn_E^\cell\subset \Syn_E$ which is the smallest subcategory of $\Syn_E$ containing the bigraded spheres $\bS^{t,w}$ and closed under colimits.    
\end{notation}

\begin{remark}
    It has recently been shown in \cite{Law24} that when $E$ is connective $\Syn_E\simeq \Syn_E^\cell$.
\end{remark}

Because the functor $\nu$ does not commute with the formal suspension in the stable $\infty$-categories $\Sp$ and $\Syn_E$, there is an induced universal comparison map $\tau:\Sigma^{0,-1}\bS_E\to \bS_E$ which controls much of the structure of the synthetic category. As $\tau$ is a self-map of the monoidal unit, it acts all on $E$-synthetic spectra $\tau:\Sigma^{0,-1}X\to X$.

\begin{remark}
    We use the notation $\tau:\Sigma^{0,-1}\bS_E\to\bS_E$ in this section when working with a general Adams-type ring spectrum $E$. In Section~\ref{HFpsubsection} and throughout the rest of this paper, we use $\lambda$ instead if $E=\hfp$.
\end{remark}

\begin{theorem}[\cite{Pst22}]
    There is a symmetric monoidal and fully faithful functor $Y:\Sp \to \Syn_E$ such that the map $\tau:\Sigma^{0,-1}Y(X)\to Y(X)$ is an equivalence for all $X$. Moreover, all such $\tau$-invertible synthetic spectra are in the essential image of $Y$. As a result, $Y$ provides an embedding of spectra inside of $\Syn_E$ consisting of the $\tau$-invertible objects.
\end{theorem}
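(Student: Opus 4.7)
The plan is to construct $Y$ as the composition $L_\tau \circ \nu_E$, where $L_\tau \colon \Syn_E \to \Syn_E[\tau^{-1}]$ denotes the $\tau$-periodic localization. Since $\tau$ is a self-map of the monoidal unit $\bS_E$, the full subcategory $\Syn_E[\tau^{-1}] \subseteq \Syn_E$ of $\tau$-invertible objects sits as a smashing, symmetric monoidal localization of $\Syn_E$, computed by tensoring against $\tau^{-1}\bS_E := \colim(\bS_E \xrightarrow{\tau} \Sigma^{0,1}\bS_E \xrightarrow{\tau} \cdots)$. The composition $L_\tau \circ \nu_E$ is a priori only lax symmetric monoidal, but because $\nu_E$ is strong symmetric monoidal on $E$-finite projectives (Proposition~\ref{nufacts}) and every spectrum is a filtered colimit of such, the smashing property of $L_\tau$ combined with the colimit-preservation of $\nu_E$ upgrades $Y$ to a strong symmetric monoidal functor.

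Next I would verify essential surjectivity onto the $\tau$-invertible subcategory, working cellularly. The key observation is that multiplication by $\tau$ induces, after $\tau$-inversion, an equivalence $L_\tau(\bS_E^{t,w}) \simeq L_\tau(\bS_E^{t-w,0}) \simeq Y(\bS^{t-w})$, so every $\tau$-inverted bigraded sphere lies in the image of $Y$. Since the bigraded spheres generate $\Syn_E^\cell$ under colimits and $Y$ preserves colimits, the essential image contains a set of generators of $\Syn_E[\tau^{-1}] \cap \Syn_E^\cell$, hence exhausts it.

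For full faithfulness, it suffices by compactness of finite spectra to check that the natural map $\pi_s(X) \to \pi_{s,0}(\tau^{-1}\nu_E X)$ is an isomorphism for every $X \in \Sp^\fp_E$. Since $\nu_E$ preserves filtered colimits, we have $\tau^{-1}\nu_E X \simeq \colim_k \Sigma^{0,k}\nu_E X$, and the homotopy of this colimit is the colimit of the bigraded homotopy groups of $\nu_E X$ along multiplication by $\tau$.

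The main obstacle is this bigraded homotopy computation. It rests on Pstr\k{a}gowski's sheaf-theoretic construction: the synthetic analog $\nu_E X$ corresponds to a sheaf of spectra on $\Sp^\fp_E$ whose bigraded homotopy encodes the $E$-Adams filtration on $\pi_\ast(X)$, while $\tau$ realizes the transition between adjacent filtration levels. Inverting $\tau$ collapses the filtration and identifies the resulting sheaf with the representable $P \mapsto \Map_{\Sp}(P, X)$, from which the identification $\pi_{s,0}(\tau^{-1}\nu_E X) \cong \pi_s(X)$ is immediate. Combining this with essential surjectivity yields the equivalence $\Sp \simeq \Syn_E[\tau^{-1}]$, which is precisely the embedding of spectra as the $\tau$-invertible subcategory of $\Syn_E$.
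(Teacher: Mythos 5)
The paper records this result from \cite{Pst22} without a proof, so there is no in-paper argument to compare against; I will evaluate your proposal on its own. Setting $Y = L_\tau \circ \nu_E$ and verifying the three properties separately is a reasonable alternative to Pstr\k{a}gowski's direct sheaf-theoretic definition of $Y$, but your monoidality step rests on a false claim. You assert that every spectrum is a filtered colimit of $E$-finite projectives. This fails for general Adams-type $E$: a filtered colimit of spectra with finite projective $E_*$-homology has flat $E_*$-homology, so for $E = \BP$ the mod-$p$ Moore spectrum $\bS/p$, with $\BP_*(\bS/p) = \BP_*/p$, admits no such presentation. Your reduction is valid only when $E = \hfp$, where $\Sp_{\hfp}^{\fp} \simeq \Sp^{\fin}$, but the theorem concerns arbitrary Adams-type $E$. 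The correct ingredient, which you do not cite, is Pstr\k{a}gowski's lemma that the cofiber of the lax structure map $\nu_E X \otimes \nu_E X' \to \nu_E(X \otimes X')$ is $\tau$-power-torsion for arbitrary $X, X'$, so that it becomes an equivalence after inverting $\tau$; the same lemma is what makes $L_\tau \circ \nu_E$ exact, a fact your reduction-to-spheres in the full-faithfulness step tacitly requires, since $\nu_E$ alone does not preserve cofiber sequences.

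Two further gaps. Your essential-surjectivity argument explicitly works cellularly, so it shows only that cellular $\tau$-invertible objects lie in the image of $Y$, whereas the theorem asserts this for every $\tau$-invertible synthetic spectrum; the equality $\Syn_E \simeq \Syn_E^{\cell}$ is not available for general (in particular, non-connective) Adams-type $E$ and in any case postdates \cite{Pst22}. More fundamentally, your full-faithfulness step is circular: the claim that inverting $\tau$ identifies $\nu_E X$ with the sheaf $P \mapsto \Map_{\Sp}(P,X)$ is precisely the comparison $\tau^{-1}\nu_E \simeq Y$ that needs to be proved, and cannot simply be ``read off'' from the construction. In \cite{Pst22} the functor $Y$ is defined directly as this spectral Yoneda functor, its full faithfulness follows because it restricts the Yoneda embedding into hypercomplete sheaves, and only afterward is $\tau^{-1}\nu_E \simeq Y$ established as a separate comparison; your outline conflates the definition with the theorem.
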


The functor $X\mapsto \tau^{-1}X$ is known as the realization functor and is a symmetric monoidal left adjoint to $Y$. The effect of inverting $\tau$ is often referred to as the \textit{generic fiber} of such a deformation of spectra. We now turn to the special fiber obtained informally by setting $\tau=0$. Let $\bS_{E}/\tau$ denote the cofiber of $\tau:\Sigma^{0,-1}\bS_E\to \bS_E$. To state the result, we first recall that to a (graded) Hopf algebroid $(A,\Gamma)$ there is an associated category of stable comodules $\Stable(\Gamma)$ constructed by inverting the homotopy equivalences of chain complexes detected by projectives \cite{Hov04}. The following is an amalgamation of results in Sections 4.5 and 4.6 of \cite{Pst22}:

\begin{theorem}
\label{stablethm}
There is an adjunction
\[
\chi_*:\Mod(\Syn_E; \bS_E/\tau)\leftrightarrows \Stable(E_*E):\chi^*
\]
 whose left adjoint is fully faithful. On synthetic spectra in the image of $\nu$, the composition $\chi_L(-\otimes \bS_E/\tau)$ corresponds to taking $E_*$ homology and hence takes the bigraded shifts of the cofiber of $\tau$ to those of the unit of $\Stable(E_*E)$, inducing an equivalence on cellular objects. In addition, there is an isomorphism of abelian groups
    \[
    [\Sigma^{t,w}\nu X,\nu Y/\tau]\cong \Ext^{w-t,w}_{E_*E}(E_*X, E_*Y).
    \]
\end{theorem}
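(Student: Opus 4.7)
The plan is to assemble Pstr\k{a}gowski's constructions from [Pst22, Sections 4.5--4.6]. The synthetic $t$-structure on $\Syn_E$ has heart equivalent to the abelian category of $E_*E$-comodules, and $\bS_E/\tau$ sits in that heart. I would begin by recognizing $\bS_E/\tau$ as an $\bE_\infty$-algebra in $\Syn_E$ whose homotopy is concentrated in a single weight, so that $\Mod(\Syn_E;\bS_E/\tau)$ inherits a $t$-structure whose heart matches that of $\Stable(E_*E)$. This is the conceptual foundation: $\bS_E/\tau$-modules should be ``discrete'' comodule-like objects, and the adjunction should reflect this.

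The right adjoint $\chi^*$ would be constructed via the universal property of $\Stable(E_*E)$ as a stabilization of the derived category of $E_*E$-comodules, sending a comodule to its Eilenberg--MacLane realization in $\Mod(\Syn_E;\bS_E/\tau)$. The corresponding left adjoint $\chi_*$ is then characterized by sending $\nu X\otimes \bS_E/\tau$ to $E_*X$ for $X\in \Sp^\fp_E$. Since the cellular subcategory of $\Mod(\Syn_E;\bS_E/\tau)$ is generated under colimits by objects of this form and $\Stable(E_*E)$ is generated by the corresponding $E_*X$, the fully faithfulness of $\chi_*$ on cellular objects reduces to checking that $\chi_*$ induces isomorphisms on mapping spectra between these compact generators. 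To carry this out, I would construct a Bockstein-style resolution of $\nu Y/\tau$ using the cofiber sequence $\Sigma^{0,-1}\nu Y\xrightarrow{\tau}\nu Y\to \nu Y/\tau$ together with a lift along $\nu$ of a free $E_*E$-resolution of $E_*Y$, applying Proposition~\ref{nufacts}(c) at each stage to ensure the correct $E_*$-homology, and the last clause of Proposition~\ref{nufacts} to reduce $\nu X\otimes \nu P$ to $\nu(X\otimes P)$ for $X\in\Sp^\fp_E$.

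For the Ext formula, the free/forgetful adjunction for $\bS_E/\tau$-modules gives
\[
[\Sigma^{t,w}\nu X,\nu Y/\tau]_{\Syn_E} \;\cong\; [\Sigma^{t,w}\nu X\otimes \bS_E/\tau,\nu Y/\tau]_{\Mod(\Syn_E;\bS_E/\tau)},
\]
and fully faithfulness of $\chi_*$ identifies the right-hand side with $\Ext^{w-t,w}_{E_*E}(E_*X,E_*Y)$; the bidegree reads off the mapping grading in $\Stable(E_*E)$, with $w$ recording internal degree and $w-t$ recording Adams/cohomological degree. The main obstacle is the compatibility of the synthetic $t$-structure with the monoidal product on $\Syn_E$: without the finite-projective hypothesis on $X$, $\nu X/\tau$ need not be concentrated in a single weight of the heart, so a direct comparison to a single comodule fails. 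Restricting to $X\in\Sp^\fp_E$ and extending by colimits is exactly what makes the comparison on cellular objects an equivalence, matching the scope of the stated theorem.
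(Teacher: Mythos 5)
The paper gives no argument for this statement; it is a recollection of results proved by Pstr\k{a}gowski, whose approach is sheaf-theoretic. In his framework $\Syn_E$ is a category of (hypercomplete, additive) sheaves of spectra on $\Sp^{\fp}_E$, and $\Stable(E_*E)$ admits an analogous description as sheaves on a site of finite projective $E_*E$-comodules. The functor $\chi_*$ is induced by the homology functor $E_* : \Sp^{\fp}_E \to \Comod^{\fp}_{E_*E}$ between the sites, and the fully faithfulness claim is established as a statement about this restriction of sheaves. Your plan takes a genuinely different route, building $\chi^*$ ``by hand'' and arguing for fully faithfulness via a generation-by-compacts argument; this is worth comparing, but it contains gaps.

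The central gap is in the fully faithfulness step. You propose to check fully faithfulness of $\chi_*$ on compact generators of the form $\nu X\otimes\bS_E/\tau$ with $X\in\Sp^\fp_E$, and then conclude by colimit-generation. This pattern does work formally \emph{provided} (i) the source category is generated under colimits by those compacts, and (ii) $\chi_*$ sends compacts to compacts (so that mapping spectra out of compacts commute with colimits on both sides). Condition (ii) is plausible since $E_*X$ is dualizable in $\Stable(E_*E)$ for $X$ finite projective, but you do not state or use it. Condition (i), however, is only true for the \emph{cellular} subcategory of $\Mod(\Syn_E;\bS_E/\tau)$, not for the whole module category. So even after filling in (ii), your argument would establish fully faithfulness only on cellular objects, whereas the theorem asserts fully faithfulness of $\chi_*$ on all of $\Mod(\Syn_E;\bS_E/\tau)$ and then separately deduces an equivalence on cellular objects. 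You are silently strengthening ``cellular'' to ``all'' without justification; the sheaf-theoretic proof is precisely what lets one conclude fully faithfulness globally.

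Two further points need care. First, the proposal to ``lift along $\nu$ a free $E_*E$-resolution of $E_*Y$'' is not straightforward: maps between free $E_*E$-comodules need not be in the image of $E_*$-homology, so they need not lift topologically. The standard workaround is to use the cobar resolution $E_*E^{\otimes\bullet}$, which lifts canonically to $E^{\otimes\bullet}$; if you intend a Bockstein-style argument, you should phrase it that way. Second, the construction of $\chi^*$ via a ``universal property of $\Stable(E_*E)$ as a stabilization of the derived category of comodules'' is imprecise -- Hovey's $\Stable(\Gamma)$ is not the ordinary derived category of comodules nor its naive stabilization, but a localization of chain complexes at homotopy equivalences detected on projectives. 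Once the adjunction is in place your derivation of the Ext formula from the free/forgetful adjunction plus fully faithfulness is fine, but it inherits whatever scope your fully faithfulness argument actually achieves.
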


We see that the cofiber of $\tau$ records the $\E_2$ page of the $E$-Adams spectral sequence. In fact, the $\tau$-Bockstein spectral sequence encodes the entirety of the $E$-Adams, see \cite{BHS23}[Thm. 9.19].

\subsection{Synthetic Spectra over $\hfp$}
\label{HFpsubsection}
When $E=\hfp$, we can make significantly stronger statements than for general Adams-type homology theories. Because it is conventional to associate the map $\tau$ with motivic (and therefore $\mathrm{MU}$- or $\BP$-synthetic) homotopy theory, we will write $\lambda$ for the analogous map in $\Syn_{\hfp}$. We recall some results that are special to synthetic spectra over $\hfp$ below.

\begin{proposition}
\label{Fpomnibus}
    The following stronger statements hold for the category of $\hfp$-synthetic spectra:
    \begin{enumerate}
        \item[(a)] the functor $\nu_{\hfp}$ is symmetric monoidal,
        \item[(b)] and the inclusion $\Mod(\Syn_{\hfp}; \bS_{\bF_p}/\lambda)\hookrightarrow \Stable(\cA_*)$ is an equivalence.
    \end{enumerate}
\end{proposition}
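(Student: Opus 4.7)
The plan is to handle the two parts separately, each reducing to a structural fact peculiar to $E=\hfp$. For part (a), I apply the final clause of Proposition \ref{nufacts}: the monoidal comparison map $\nu_E X \otimes \nu_E Y \to \nu_E(X\otimes Y)$ is an equivalence whenever $X$ is a filtered colimit of $E$-finite projective spectra. Since $E_*=\bF_p$ is a field, every finite CW spectrum has $\hfp$-homology that is a finite-dimensional, hence finitely generated projective, $\bF_p$-module, so $\Sp^\fp_{\hfp}$ contains every finite spectrum. Every spectrum is a filtered colimit of finite spectra, so the comparison map is an equivalence for all $X, Y\in\Sp$. Combined with the lax symmetric monoidal structure supplied by Proposition \ref{nufacts}(d), this promotes $\nu_{\hfp}$ to a genuinely symmetric monoidal functor.

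For part (b), I start from Theorem \ref{stablethm}, which provides a fully faithful left adjoint
\[
\chi_*:\Mod(\Syn_{\hfp};\bS_{\bF_p}/\lambda)\hookrightarrow \Stable(\cA_*)
\]
whose restriction to cellular objects is an equivalence onto $\Stable(\cA_*)^{\cell}$. It therefore suffices to show both sides agree with their cellular subcategories. On the source, I invoke Law's theorem \cite{Law24} (noted above as a remark) to identify $\Syn_{\hfp}=\Syn_{\hfp}^{\cell}$ since $\hfp$ is connective, so $\Mod(\Syn_{\hfp};\bS_{\bF_p}/\lambda)$ coincides with its cellular subcategory. On the target, I must show $\Stable(\cA_*)=\Stable(\cA_*)^{\cell}$.

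The main obstacle is this latter generation statement, and my approach exploits that $\cA_*$ is a connected graded $\bF_p$-Hopf algebra: every simple graded $\cA_*$-comodule is isomorphic to an internal-degree shift of the trivial comodule $\bF_p$. Consequently, every finite-dimensional $\cA_*$-comodule admits a finite composition series whose subquotients are shifts of the unit, exhibiting it as an iterated extension of such and hence as a cellular object of $\Stable(\cA_*)$. The fundamental theorem of comodules writes any $\cA_*$-comodule as a filtered colimit of its finite-dimensional subcomodules, and since $\Stable(\cA_*)^{\cell}$ is closed under colimits in $\Stable(\cA_*)$, the compact generators of $\Stable(\cA_*)$ (built from bounded-below comodule resolutions by cofree objects) all lie in the cellular subcategory. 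This forces $\Stable(\cA_*)=\Stable(\cA_*)^{\cell}$. Combined with the restricted equivalence from Theorem \ref{stablethm}, this yields (b).
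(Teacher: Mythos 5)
Part (a) is essentially identical to the paper's proof: both identify $\Sp^{\fp}_{\hfp}$ with the full category of finite spectra using the field property of $\bF_p$, then invoke the final clause of Proposition~\ref{nufacts}.

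For part (b) you take a genuinely different route. The paper simply cites \cite{HPS97}[Thm.\ 2.3.1] as a black box for the statement that $\Stable(\cA_*)$ is generated by shifts of the unit, and then observes that full faithfulness of $\chi_*$ (Theorem~\ref{stablethm}) plus target cellularity forces essential surjectivity. You attempt a self-contained proof of cellularity of $\Stable(\cA_*)$: since $\cA_*$ is connected graded, the socle argument shows every simple comodule is a shift of $\bF_p$, so finite-dimensional comodules are cellular via composition series, and arbitrary comodules are cellular as filtered colimits. That chain of reasoning is correct. However, the final step --- passing from ``all discrete comodules are cellular'' to ``all of $\Stable(\cA_*)$ is cellular'' --- rests on your parenthetical assertion that the compact generators are ``built from bounded-below comodule resolutions by cofree objects,'' which is precisely the nontrivial content of Hovey's construction that the HPS citation is doing work for. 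One needs to know that $\Stable(\cA_*)$ is compactly generated by objects that can be built from (shifts of) finite-dimensional comodules by finitely many cofiber sequences; that is true but deserves either a precise reference into \cite{Hov04} or more argument than a parenthetical remark. A second small point: your invocation of Law's theorem to show the source module category is cellular is correct but superfluous. Once $\Stable(\cA_*)=\Stable(\cA_*)^{\cell}$ is established, the fully faithful $\chi_*$ restricting to an equivalence onto the cellular subcategory already forces it to be essentially surjective; no independent cellularity of the source is needed. In short, your route is more elementary in spirit but buys that at the cost of an underjustified step at exactly the place where the paper chose to cite.
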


\begin{proof}
 The first observation is that $\Sp_{\hfp}^\fp\simeq \Sp^\fin$ where the right-hand side is the category of finite spectra: the projectivity requirement is trivial for $\bF_p$-vector spaces and finiteness of a spectrum implies the finite generation of its mod $p$ homology. Because every spectrum is a filtered colimit of finite spectra, (a) follows from Proposition \ref{nufacts}. Statement (b) then follows from the fact that the inclusion is essentially surjective onto the cellular subcategory and $\Stable(\cA_*)$ is cellular \cite{HPS97}[Thm. 2.3.1].
\end{proof}

\subsection{Filtered Models}
\label{filtered}

The results in this section are collected from \cite{GIKR18}, \cite{Hed20}, and \cite{BHS20}. Any results not therein are certainly known to experts; we make no claims of originality. Let $\bZ_{\geq}$ denote the category of integers with unique morphisms $m\to n$ whenever $m\geq n$. Given a category $\cC$, a filtered object in $\cC$ is the data of a functor 
\[X_*:\bZ_\geq\to \cC,\] 
and these compile into a category $\cC^\Fil = \Fun(\bZ_\geq, \cC)$. We may consider a filtered object to be a diagram in $\cC$, and taking colimits yields a functor $\Re:\cC^\Fil \to \cC$ known as realization. Assuming that $\cC$ has a zero object, $\Re$ has a collection of sections $Y_k$, for $-\infty < k \leq \infty$, which are determined by:
\[
Y_kX_w = \begin{cases}
X & w\leq k\\
0 & \text{otherwise}
\end{cases}
\]
where the connecting maps are identity where possible and $0$ otherwise. This is still perfectly well defined for $k=-\infty$, but is the $0$ functor and, of course, no longer a section of $\Re$. 

\begin{remark}
    Where we feel notational confusion may be possible, we will write $\Re^\fil$ to distinguish the filtered-realization functor from that of other deformations.
\end{remark}

\begin{lemma}[\cite{glasman16}]
If $\cC$ is symmetric monoidal and its tensor product preserves colimits in each variable, then via Day convolution the category $\cC^\Fil$ is again symmetric monoidal, with unit $Y_0\one$. The tensor product on $\cC^\fil$ will then also preserve colimits in each variable.
\end{lemma}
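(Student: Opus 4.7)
The plan is to realize $\cC^{\Fil}$ as a Day convolution symmetric monoidal category over a suitable indexing category, then read off the unit from the general Day convolution recipe. First I would equip the indexing category $\bZ_{\geq}$ with a symmetric monoidal structure given by addition, with monoidal unit $0$. The ordering on $\bZ_{\geq}$ is compatible with addition (if $m_1\geq n_1$ and $m_2\geq n_2$, then $m_1+m_2\geq n_1+n_2$), so addition genuinely defines a functor $\bZ_{\geq}\times\bZ_{\geq}\to\bZ_{\geq}$ and promotes to a symmetric monoidal structure on the poset.

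Once $\bZ_{\geq}$ is viewed as a small symmetric monoidal $\infty$-category and $\cC$ has all small colimits with tensor product preserving colimits separately in each variable, I would invoke the Day convolution construction of \cite{glasman16} (equivalently, \cite{HA}, Section 2.2.6) to obtain a symmetric monoidal structure on $\Fun(\bZ_{\geq},\cC)$. Because Day convolution is defined as a left Kan extension along the monoidal product of the source, the tensor product on $\cC^{\Fil}$ is computed as a colimit (pointwise in $\bZ_{\geq}$) of tensor products in $\cC$, and therefore automatically preserves colimits in each variable whenever $\otimes_{\cC}$ does. This disposes of the final clause of the lemma for free.

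To identify the unit, I would appeal to the general fact that the unit of the Day convolution structure on $\Fun(\cD,\cC)$ is $\Hom_{\cD}(\mathbf{1}_{\cD},-)\otimes \one_{\cC}$, that is, the representable functor at the monoidal unit copowered with $\one_{\cC}$. Specializing to $\cD=\bZ_{\geq}$, the mapping space $\Hom_{\bZ_{\geq}}(0,n)$ is contractible when $n\leq 0$ and empty when $n>0$, so this functor sends $n$ to $\one_{\cC}$ for $n\leq 0$ and to the zero object for $n>0$, with identities where possible and zero maps otherwise. This is exactly the description of $Y_0\one$ given in the preceding paragraph.

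There is essentially no obstacle beyond importing the Day convolution framework; the only subtle point is being careful with conventions for arrow direction in $\bZ_{\geq}$, since the choice $m\to n\Leftrightarrow m\geq n$ is what forces the unit to be the ``downward'' filtered object $Y_0\one$ (concentrated in degrees $\leq 0$) rather than its dual supported in degrees $\geq 0$.
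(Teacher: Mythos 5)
Your proposal is correct and is essentially the standard unfolding of the citation to Glasman (which is all the paper provides here). The three steps — equipping $\bZ_{\geq}$ with the additive symmetric monoidal structure, invoking Day convolution as a left Kan extension along addition to get both the monoidal structure and preservation of colimits in each variable, and identifying the unit via $\Hom_{\bZ_{\geq}}(0,-)\cdot\one_{\cC}$ — are exactly what that reference delivers, and your bookkeeping with the arrow-direction convention $m\to n\Leftrightarrow m\geq n$ correctly produces $Y_0\one$ supported in degrees $\leq 0$ with identity/zero connecting maps. The only implicit hypothesis worth flagging is that $\cC$ must have small colimits for the Kan extension to exist; in this paper that is automatic since $\cC$ is taken presentably symmetric monoidal in the surrounding discussion.
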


The category of filtered objects also comes equipped with a distinguished autoequivalence $[1]:\cC^\Fil \to \cC^\Fil$ determined by the shift $X[1]_w=X_{w+1}$ distinct from the formal suspension $\Sigma$, which is computed levelwise in $\cC^\Fil$. We will write $\Sigma^{t,w}:=\Sigma^t\circ [w]$. 

\subsection{Associated Graded and $\tau$}

Given a filtered object $X\in \cC^\Fil$, we may take levelwise cofibers of the internal connecting maps to get an object of $\cC^\Gr$. This assignment is functorial and symmetric monoidal. We will denote this functor by $\Gr_*:\cC^\Fil\to \cC^\Gr=\Fun(\bZ,\cC)$ where $\bZ$ is the discrete category of natural numbers. The goal of this section is to understand how we can construct the functor $\Gr_*$ by tensoring with a certain object $\one/\tau$ in $\cC^\Fil$.

\bigskip

Given any filtered object $X\in \cC^\Fil$, the internal maps of $X$ compile to give an important self-map $\tau_X:\Sigma^{0,-1}X\to X$ depicted as:

\[\begin{tikzcd}
	{...} & {...} \\
	{X_{w+2}} & {X_{w+1}} \\
	{X_{w+1}} & {X_{w}} \\
	{X_{w}} & {X_{w-1}} \\
	{...} & {...}
	\arrow["{f_{w+2}}", from=2-1, to=2-2]
	\arrow["{f_{w+2}}"', dashed, from=2-1, to=3-1]
	\arrow["{f_{w+1}}", dashed, from=2-2, to=3-2]
	\arrow["{f_{w+1}}"', dashed, from=3-1, to=4-1]
	\arrow["{f_w}", dashed, from=3-2, to=4-2]
	\arrow["{f_{w+1}}", from=3-1, to=3-2]
	\arrow["{f_{w}}"', from=4-1, to=4-2]
	\arrow[dashed, from=1-2, to=2-2]
	\arrow[dashed, from=1-1, to=2-1]
	\arrow[dashed, from=4-2, to=5-2]
	\arrow[dashed, from=4-1, to=5-1]
\end{tikzcd}\]

 where the maps internal to the filtered spectrum are dashed and the components of $\tau_X$ are solid. The cofiber of $\tau_X$, which we will denote $X/\tau$, has trivial connecting maps as a filtered object, and little is lost by forgetting this information and considering $X/\tau$ as a graded object in $\cC$. We will denote $\tau_\one$ simply by $\tau$.

\begin{lemma}
There is an equivalence $\tau_X\simeq \id_X\otimes \tau$ so that $X/\tau\simeq X\otimes \one/\tau$. As graded objects, $\Gr_*X\simeq X/\tau$.
\end{lemma}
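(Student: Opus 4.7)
The plan is to establish the three statements in order: the first identifies the internal connecting map $\tau_X$ with $\id_X \otimes \tau$, and the remaining two follow formally from this together with the cofiber description of $X/\tau$.

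For the first claim, both $\tau_X$ and $\id_X \otimes \tau$ are natural transformations $\Sigma^{0,-1}X \to X$ in the variable $X$, and both agree at $X = \one$: by construction, $\tau = \tau_\one$ is the natural transformation assembled from the connecting maps of the unit $\one = Y_0\one_\cC$. The assignment $X \mapsto \id_X \otimes \tau$ is tensor-linear in $X$ essentially by definition, and a short check shows that $X \mapsto \tau_X$ is as well: for a Day-convolution tensor product $X \otimes Y$, the internal connecting natural transformation is assembled componentwise from those of $X$ and $Y$, giving $\tau_{X \otimes Y} \simeq \tau_X \otimes \id_Y$. A colimit-preserving, tensor-linear natural transformation of endofunctors of the form $(-)\otimes A \Rightarrow (-)\otimes B$ is determined by its value at the unit, so agreement at $\one$ forces agreement for all $X$. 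Alternatively, one verifies equality on a generating family $\{Y_k F : k \in \bZ,\, F \in \cC\}$ via a direct levelwise calculation using the Day convolution formula, and extends to general $X$ by colimit-preservation on both sides.

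Given the first claim, the second is immediate, since $\otimes$ preserves cofibers in each variable:
\[
X/\tau := \operatorname{cofib}(\tau_X) \simeq \operatorname{cofib}(\id_X \otimes \tau) \simeq X \otimes \operatorname{cofib}(\tau) = X \otimes \one/\tau.
\]
For the third, a direct computation shows $\one/\tau$ is supported in a single filtration level with value $\one_\cC$ and has all connecting maps equal to zero, so in particular $\tau_{\one/\tau}$ is null. Hence $\tau_{X/\tau} \simeq \id_X \otimes \tau_{\one/\tau}$ is also null, so $X/\tau$ has vanishing connecting maps and is canonically a graded object. Computing the $w$-th level of $X/\tau \simeq X \otimes \one/\tau$ via Day convolution recovers $\operatorname{cofib}(X_{w+1} \to X_w) = \Gr_w X$, as required. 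The main technical point is the tensor-linearity identification for $\tau_X$ in the first step, essentially a Yoneda-style observation that must be formulated with care; once it is established, the remaining statements are formal.
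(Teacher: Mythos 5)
Your outline is sound and considerably more detailed than the paper's proof, which is a one-sentence deferral to \cite{GIKR18} with the remark that at the graded level both constructions are cofibers of the same map. You should be aware, though, that your first argument has a genuine gap that you yourself half-acknowledge: it is not enough to observe the single coherence $\tau_{X\otimes Y}\simeq \tau_X\otimes \id_Y$; invoking the principle that a ``$\cC^\fil$-linear'' natural transformation between the colimit-preserving endofunctors $(-)\otimes A$ and $(-)\otimes B$ is determined by its restriction to the unit requires promoting $\tau_{(-)}$ to a $\cC^\fil$-linear natural transformation, which is genuine homotopy-coherent data (not a property verifiable by a ``short check'') and in practice amounts to reproving the lemma. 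Your alternative route --- compute both sides levelwise via the Day convolution formula on the generating family $\{Y_kF\}$ and extend by colimits --- is the correct and standard way to close this, and the extension step is fine since two natural transformations between colimit-preserving functors that agree on a family generating under colimits agree everywhere. For the third claim, your chain $\tau_{X/\tau}\simeq \id_X\otimes\tau_{\one/\tau}$ is better justified by applying the first claim twice ($\tau_Z\simeq\id_Z\otimes\tau$ for $Z=X\otimes\one/\tau$, then regrouping) rather than reusing the tensor-linearity of $\tau_{(-)}$; the verification that $\tau_{\one/\tau}$ is null by inspection of the filtration levels is exactly right. In short: correct in outline, the first route needs either to be discarded or done properly, the second route is the proof.
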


\begin{proof}
    This was known already to \cite{GIKR18} and amounts to the observation that at the graded level, both constructions are defined to be the cofiber of the same map.
\end{proof}

It is also valuable to understand the effect of inverting $\tau$ on an object. 
\begin{lemma}
    There is an equivalence $\tau^{-1}X\simeq Y_{\infty}(\Re(X))$. 
\end{lemma}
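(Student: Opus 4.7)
The plan is to compute $\tau^{-1}X$ directly as a sequential colimit inside $\cC^\fil$, evaluate the result levelwise, and then identify it with the constant filtered diagram $Y_\infty(\Re(X))$. First, because $\tau$ acts as the self-map $\id_X\otimes\tau$ with $\tau:\Sigma^{0,-1}\one\to\one$, and since the tensor product on $\cC^\fil$ preserves colimits in each variable, I would write
\[
\tau^{-1}X\;\simeq\;\colim\Bigl(X\xrightarrow{\tau}\Sigma^{0,1}X\xrightarrow{\tau}\Sigma^{0,2}X\xrightarrow{\tau}\cdots\Bigr).
\]
Since colimits in the functor category $\cC^\fil=\Fun(\bZ_\geq,\cC)$ are computed pointwise, this reduces the problem to computing the colimit of this sequence at each filtration level $w$.

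Second, I would show that at filtration level $w$ the above sequence unwinds to the chain of internal connecting maps of $X$,
\[
X_w\to X_{w-1}\to X_{w-2}\to\cdots,
\]
because the shift $\Sigma^{0,n}$ reindexes the filtered object, and the component of $\tau$ at a given level is by construction the internal filtered structure map of $X$ at that level. By cofinality, the colimit of this chain agrees with $\colim_{\bZ_\geq}X$, which is by definition $\Re(X)$. Hence $(\tau^{-1}X)_w\simeq \Re(X)$ for every $w$.

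Third, I would identify the internal connecting maps of $\tau^{-1}X$ with the identity on $\Re(X)$ under these equivalences. Indeed, the transition $(\tau^{-1}X)_w\to(\tau^{-1}X)_{w-1}$ is induced by the filtered structure on $\colim_n\Sigma^{0,n}X$, and under the identification of both sides with $\Re(X)$, this transition corresponds to a cofinal inclusion of chains and so becomes the identity in the colimit. This exhibits $\tau^{-1}X$ as the constant filtered object at $\Re(X)$, which is precisely $Y_\infty(\Re(X))$.

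The main obstacle, such as it is, is bookkeeping the shift conventions so that the direction in which $\tau$ cofinalizes the filtration matches the colimit direction on $\bZ_\geq$, and to ensure the pointwise colimit formula is applied correctly in the $\infty$-categorical setting. Once this is pinned down, the argument reduces to a purely diagrammatic cofinality observation and a naturality check on the transition maps.
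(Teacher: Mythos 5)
Your proof is correct and follows the same essential route as the paper's (which simply notes that colimits in functor categories are computed levelwise, so $\tau^{-1}$ performs the same colimit as $\Re$ in each filtered degree). You have just spelled out the sequential colimit presentation of $\tau^{-1}X$, the levelwise computation, and the cofinality/naturality observations that the paper leaves implicit.
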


\begin{proof}
    This was implicit in \cite{GIKR18} and follows from noting that since colimits in functor categories are evaluated levelwise, $\tau^{-1}$ acts by taking the same colimit as $\Re$ but in every filtered degree.
\end{proof}

Because the functor $Y_\infty$ is a fully faithful embedding of $\cC$ into $\cC^\Fil$, it is common to identify the two functors $\Re$ and $\tau^{-1}$. 

\subsection{Deformations via Filtered Objects}
\label{deformationsection}

From here on we will assume that $\cC$ is presentably symmetric monoidal and that $\cC^\fil$ has been endowed with its Day convolution symmetric monoidal structure.

\begin{definition}[\cite{BHS20}]
    A tower functor is a lax monoidal functor $T:\cC\to \cC^\Fil$.
\end{definition}

\begin{example}
\label{adamstowerexample}
    For every $\bE_1$-ring $R$ in $\cC$ we can construct a lax monoidal Adams tower $T_R:\cC\to \cC^\Fil$. Explicitly this can be constructed by transporting the cobar cosimplicial object $X\otimes R^{\bullet+1}$ to filtered objects via the $\infty$-categorical Dold-Kan correspondence \cite[Thm. 1.2.4.1]{HA}.
\end{example}

 We now specialize to the case that $\cC$ has a $t$-structure which will remain implicit. In what follows, denote by $\Gamma_R(-):\cC\to\cCf$ the functor 
 \[
 X\mapsto \Tot(\tau_{\geq *}(X\otimes R^{\bullet + 1}))
 \]
We will use the notation $X^\wedge_R:=\Re(\Gamma_R(X))$. We often refer to $\Gamma_R$ as the decalage of the tower in Example~\ref{adamstowerexample}. This construction was first considered \cite{GIKR18}.

\begin{remark}
    The usage of the notation $X^\wedge_R$ is slightly abusive. When $\cC=\Sp$ with its usual $t$-structure, we see that $\Re(\Gamma_RX)$ will only necessarily agree with the $R$-nilpotent completion when $X$ is bounded below.
\end{remark}

\begin{definition}
\label{adamsdef}
    Given a stably symmetric monoidal $\cC$ with $t$-structure and a $\bE_n$-ring $R$ in $\cC$, the $R$-Adams deformation, denoted $\Adams_R(\cC)$, is defined to be the stable $\infty$-category of modules $\Mod(\cC^\Fil; \Gamma_R(\one))$.
\end{definition}

\begin{proposition}
    The category $\Adams_R(\cC)$ satisfies the following additional properties:
    \begin{enumerate}
        \item The category $\Adams_R(\cC)$ is $\bE_{n-1}$-monoidal,
        \item The category of $\tau$-invertible objects $\Mod(\cC_R; \tau^{-1}\one)$ is equivalent to \linebreak $\Mod(\cC;\one^\wedge_R)$.
        \item The $t$-structure homotopy group spectral sequence of $\Gamma_RX/\tau$ encodes the Adams spectral sequence for $\pi^t_*X$ starting at $\E_2$.
    \end{enumerate}
\end{proposition}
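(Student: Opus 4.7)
The plan is to dispatch the three assertions in order, leveraging the constructions already developed in Section~\ref{deformationsection}. For (1), I would first observe that the cobar cosimplicial object $R^{\bullet+1}$ carries a natural $\bE_n$-algebra structure in $\Fun(\Delta,\cC)$ induced from that on $R$. The connective-cover filtration $\tau_{\geq *}$ (with respect to the ambient $t$-structure on $\cC$) and the totalization functor $\Tot$ are both lax symmetric monoidal for the Day convolution structure on $\cCf$, so their composite promotes $\Gamma_R(\one)=\Tot(\tau_{\geq *}R^{\bullet+1})$ to an $\bE_n$-algebra in $\cCf$. The conclusion that $\Adams_R(\cC)=\Mod(\cCf;\Gamma_R(\one))$ is $\bE_{n-1}$-monoidal then follows from the standard result that modules over an $\bE_n$-algebra in a presentably symmetric monoidal $\infty$-category form an $\bE_{n-1}$-monoidal category \cite{HA}.

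For (2), I would begin by identifying $\tau^{-1}\Gamma_R(\one)$. Combining the earlier lemma $\tau^{-1}X \simeq Y_\infty(\Re(X))$ with the definition $\one^\wedge_R := \Re(\Gamma_R(\one))$ yields $\tau^{-1}\Gamma_R(\one) \simeq Y_\infty(\one^\wedge_R)$. Since $Y_\infty:\cC\to\cCf$ is a symmetric monoidal, fully faithful embedding whose essential image is exactly the $\tau$-invertible objects, modules in the $\tau$-inverted filtered category over $Y_\infty(\one^\wedge_R)$ are identified, via $Y_\infty$, with modules in $\cC$ over $\one^\wedge_R$. This yields the asserted equivalence with $\Mod(\cC,\one^\wedge_R)$.

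For (3), the key identification is that $\Gamma_R(X) = \Tot(\tau_{\geq *}(X\otimes R^{\bullet+1}))$ is the Levine d\'ecalage of the Adams tower $X\otimes R^{\bullet+1}$, viewed as a cosimplicial object. Under this identification, the $t$-structure homotopy group spectral sequence of the graded object $\Gamma_RX/\tau$ corresponds to the $R$-Adams spectral sequence for $X$, reindexed by the d\'ecalage shift so that its $E_2$-page appears as the $E_1$-page of the associated spectral sequence of the filtered object. The main obstacle here is careful bookkeeping with the d\'ecalage convention and the associated reindexing of pages, gradings, and differentials, but the substantive comparison has already been carried out in \cite{GIKR18}, \cite{Hed20}, and \cite[App. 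A]{BHS19}, and I would simply appeal to those sources rather than redo it in detail.
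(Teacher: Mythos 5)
Your proposal is correct and follows the same route the paper implicitly takes: the paper simply cites \cite{GIKR18} and \cite[Prop.\ C.2, Cons.\ C.9]{BHS20} for all three parts, and your three arguments (lax monoidality of $\Tot(\tau_{\geq *}(-))$ plus the module-category $\bE_{n-1}$ result from \cite{HA} for (1); the identification $\tau^{-1}\Gamma_R(\one)\simeq Y_\infty(\one^\wedge_R)$ and full faithfulness of $Y_\infty$ for (2); the d\'ecalage comparison with deferral to the literature for (3)) are precisely what those cited references establish. The only small discrepancy is bibliographic: for (3) the paper points to \cite[Cons.\ C.9]{BHS20}, whereas you cite \cite[App.\ A]{BHS19} and \cite{Hed20}, but these cover the same d\'ecalage bookkeeping.
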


\begin{proof}
    The observations (1) and (2) were first observed in \cite{GIKR18} and were proven in general as \cite[Prop. C.2]{BHS20}. The claim (3) follows from computations analogous to \cite[Prop. 3.3]{GIKR18} and the general form appears as \cite[Cons. C.9]{BHS20}.
\end{proof}

\begin{remark}
    The notation $\Adams_{R}(\cC)$ comes from the fact that these deformations encode generalized Adams Spectral Sequences. The even variant of the category $\Adams_{\MU}(\Sp)$ was first studied in \cite{GIKR18}, where it was shown to recover the cellular motivic category over $\bC$ after $2$-completion. More generally, for $E$ an Adams-Type ring spectrum, the category $\Adams_E(\Sp)$ is nearly equivalent to the category of cellular $E$-synthetic spectra \cite[C.22]{BHS20}. Explicitly there is an equivalence $\Adams_E(\Sp)\simeq \Mod(\Syn_E^\cell; (\bS_E)^\wedge_\tau)$.         
\end{remark}


\begin{example}\label{CESSdef}
    Let $\cC=\Stable(\cA_*)$ and let $R=\cP_*$ as an object in the heart. Then if we let $X=\one=\hfp$ be the unit in $\Stable(\cA_*)$ we may consider the cofiber sequence
    \[
    \Gamma_{\cP_*}\hfp \xrightarrow{\tau} \Gamma_{\cP_*}\hfp \to \Gamma_{\cP_*}\hfp/\tau\to \Sigma\Gamma_{\cP_*}\hfp
    \]
    After applying trigraded homotopy groups, the term $\pi_{-*,*,*}\Gamma_{\cP_*} \hfp/\tau$ may be identified with the $\E_2$-page of the Cartan-Eilenberg spectral sequence as a consequence of Theorem \ref{cedef}.
\end{example}

\begin{example}\label{aNSSdef}
    Let $\cC=\Stable(\BP_*\BP)$ and let $R=\cP_*$ as an object in the heart. Then if we let $X=\one=\BP_*$ be the unit in $\Stable(\BP_*\BP)$ we may consider the cofiber sequence
    \[
    \Gamma_{\cP_*}\BP_* \xrightarrow{\lambda} \Gamma_{\cP_*}\BP_* \to \Gamma_{\cP_*}\BP_*/\lambda\to \Sigma\Gamma_{\cP_*}\BP_*
    \]
    After applying trigraded homotopy groups, the term $\pi_{-*,*,*}\Gamma_{\cP_*} \BP_*/\lambda$ may be identified with the $\E_2$-page of the algebraic Novikov spectral sequence as a consequence of Theorem \ref{andef}.
\end{example}

\subsection{Recognizing Deformations}
\label{recognitionsection}
Our primary reason for using filtered deformation machinery is the following recognition theorem which is the key lemma reducing our results in Section~\ref{deformationsection} to algebraic computations.

\begin{definition}[\cite{BHS20}]
A 1-parameter deformation pair is a pair of presentably symmetric monoidal stable $\infty$-categories $(\cC,\cC_{\defo})$ together with
\begin{itemize}
    \item[(a)] a symmetric monoidal left adjoint $\Re:\cC_{\defo}\to \cC$ known as the realization,
    \item[(b)] a symmetric monoidal left adjoint $c:\cC\to \cC_{\defo}$ providing $\cC_{\defo}$ with a $\cC$-enrichment,
    \item[(c)] a homomorphism $i:\bZ\to \pi_0\mathrm{Pic}(\cC_{\defo})$,
    \item[(d)] and a set of compact dualizable $\{K_{\alpha}\}_{\alpha \in I}$ for $\cC$.
\end{itemize}
This structure needs to satisfy the following:
\begin{itemize}
    \item[(i)] The functor $c$ is a section of $\Re$,
    \item[(ii)] The objects $i(n)$ realize to $\one_{\cC}$,
    \item[(iii)] The objects $\{K_\alpha\otimes i(n)\}$ generate $\cC_{\defo}$ as $\alpha,n$ range,
    \item[(iv)] The functor $\Re$ induces an equivalence
    \[
    \Map(i(n),i(m))\xrightarrow{\simeq} \Map(\one_{\cC},\one_{\cC})
    \]
    whenever $n\leq m$.
\end{itemize}
\end{definition}

Our interest in deformation pairs comes from the following recognition theorem which allows us to reconstruct the deformation using filtered objects.

\begin{theorem}[\cite{BHS20}]
\label{recognitiontheorem}
    For any deformation pair $(\cC,\cC_{\defo})$ there is a functor $i_*:\cC_{\defo}\to \cC^\Fil$ inducing a $\cC$-linear symmetric monoidal equivalence
    \[
    \Mod(\cC^\Fil;i_*\one_{\cC_{\defo}})\simeq \cC_{\defo}
    \]
\end{theorem}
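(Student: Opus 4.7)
First, condition (h) promotes the map $i:\bZ\to\pi_0\mathrm{Pic}(\cC_{\defo})$ to a genuine functor $\bZ_\leq\to\cC_{\defo}$, since the equivalence $\Re:\Map(i(n),i(m))\xrightarrow{\simeq}\Map(\one_\cC,\one_\cC)$ uniquely determines each connecting morphism by demanding that it realize to the identity. I would then define $i_*:\cC_{\defo}\to\cC^\fil$ on objects by
\[
(i_*X)_n := \Re(i(-n)\otimes X),
\]
with structure maps induced by the comparison morphisms $i(-m)\to i(-n)$ for $m\geq n$ supplied by the previous step. Day convolution on $\cC^\fil$, combined with the symmetric monoidality of $\Re$ and the equivalence $i(n+m)\simeq i(n)\otimes i(m)$ coming from $\mathrm{Pic}(\cC_{\defo})$, makes $i_*$ lax symmetric monoidal. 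Consequently $A := i_*\one_{\cC_{\defo}}$ is a commutative algebra in $\cC^\fil$, and $i_*$ refines to a functor $\tilde{i}_*:\cC_{\defo}\to \Mod(\cC^\fil, A)$ that is $\cC$-linear via the section $c$ (condition (e)).

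\textbf{Reduction to generators.} Both $\cC_{\defo}$ and $\Mod(\cC^\fil, A)$ are presentably symmetric monoidal stable $\infty$-categories, and $\tilde{i}_*$ preserves colimits --- either by the adjoint functor theorem applied to a right adjoint built from totalizations of cobar-type objects, or directly from the fact that Day convolution and $\Re$ are each colimit-preserving. By condition (g), the set $\{K_\alpha\otimes i(n)\}$ generates $\cC_{\defo}$ under colimits; on the module side, a generating set is provided by free $A$-modules on appropriate filtered shifts of the $K_\alpha$. I would first verify that $\tilde{i}_*(K_\alpha\otimes i(n))$ is equivalent to such a free module --- using compactness of $K_\alpha$ (so that $\Re$ commutes with the tensor $K_\alpha\otimes-$) and condition (f) (which ensures $\Re(i(n))\simeq\one_\cC$) --- which reduces the problem to checking fully faithfulness on the full subcategory spanned by these generators.

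\textbf{Fully faithfulness and the main obstacle.} The crux is comparing
\[
\Map_{\cC_{\defo}}(K_\alpha\otimes i(n), K_\beta\otimes i(m)) \quad\text{and}\quad \Map_{\Mod(\cC^\fil, A)}(\tilde{i}_*(K_\alpha\otimes i(n)),\, \tilde{i}_*(K_\beta\otimes i(m))).
\]
By dualizability of the $K_\alpha$ and invertibility of the $i(n)$, both sides collapse to mapping out of the respective units, and the module-side spectrum is computed levelwise as $\Re(K_\alpha^\vee\otimes K_\beta\otimes i(m-n-k))$ for varying filtered index $k$. The main obstacle I expect is matching this tower of filtered pieces against the natural tower on the mapping object $\Map_{\cC_{\defo}}(\one, K_\alpha^\vee\otimes K_\beta\otimes i(m-n))$ induced by the diagram $\{i(k)\}$; condition (h) is the key input here, ensuring that applying $\Re$ at each stage extracts exactly the expected filtered piece rather than losing information. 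Once this identification is in hand, fully faithfulness on generators --- and hence the equivalence --- follows by a density argument, and the $\cC$-linear symmetric monoidal structure is transported automatically by the construction of $\tilde{i}_*$.
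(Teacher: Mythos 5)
The paper does not supply its own proof of this statement: it is stated as a quotation of \cite[Prop.\ C.20]{BHS20}, and the line immediately after the theorem says as much. So the comparison must be against the BHS20 construction, which the present paper does use explicitly in its proofs of Theorems \ref{cedef} and \ref{andef}.

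Your proposed formula for $i_*$ is wrong in a way that kills the whole argument. You set $(i_*X)_n := \Re(i(-n)\otimes X)$. But $\Re$ is symmetric monoidal, and condition (f) of a deformation pair says $\Re(i(k))\simeq \one_{\cC}$ for every $k$. Hence
\[
\Re(i(-n)\otimes X)\simeq \Re(i(-n))\otimes \Re(X)\simeq \one_{\cC}\otimes \Re(X)\simeq \Re(X)
\]
independently of $n$, and by condition (h) the connecting maps realize to the identity on $\Re(X)$. So your candidate $i_*X$ is the \emph{constant} filtered object on $\Re(X)$, which remembers nothing about $X$ beyond its generic fiber and cannot possibly be part of a fully faithful functor. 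The resulting algebra $i_*\one_{\cC_{\defo}}$ would be the constant filtered unit, whose modules are just $\cC^{\fil}$ itself, not $\cC_{\defo}$.

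The correct construction, as used in the proofs of Theorems \ref{cedef} and \ref{andef} in this paper, takes the $\cC$-\emph{enriched} mapping object into $X$ rather than a realization of a tensor: $(i_*X)_n \simeq \Map^{\cC}_{\cC_{\defo}}(i(n),X)$, equivalently $c^!(i(-n)\otimes X)$ where $c^!$ is the right adjoint to the $\cC$-linear structure functor $c:\cC\to\cC_{\defo}$, not the realization left adjoint $\Re$. This $c^!$ genuinely varies with $n$ (it is the filtered decomposition of $X$ by $\tau$-power torsion/divisibility), which is what makes the resulting algebra in $\cC^{\fil}$ nontrivial. Once $i_*$ is defined this way, the structure of your fully-faithfulness argument --- reduce to the generating objects $K_\alpha\otimes i(n)$, compare mapping spectra using dualizability and condition (h) --- is the right shape; but it must be run against $c^!$, not $\Re$.
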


Which appears as Proposition C.20 in loc. cit.

\subsection{Spectral Sequences and $\tau$-Bocksteins}
In this section, we recall the construction of the spectral sequence associated with a filtered object. We do this in order to describe the comparison with the $\tau$-Bockstein. Given a filtered spectrum $X\in \Sp^\fil$ we easily obtain an exact couple by applying the functor $\pi_{*}$ to the extended diagram
\[\begin{tikzcd}
	{...} & {X_2} & {X_1} & {X_0} & {X_{-1}} & {X_{-2}} & {...} \\
	& {\Gr_2X} & {\Gr_1X} & {\Gr_0X} & {\Gr_{-1}X} & {\Gr_{-2}X} 
	\arrow[from=1-4, to=1-5]
	\arrow[from=1-5, to=1-6]
	\arrow[from=1-6, to=1-7]
	\arrow[from=1-1, to=1-2]
	\arrow[from=1-2, to=1-3]
	\arrow[from=1-3, to=1-4]
 \arrow[from=1-2, to=2-2]
	\arrow[from=1-3, to=2-3]
	\arrow[from=1-4, to=2-4]
	\arrow[from=1-5, to=2-5]
 \arrow[from=1-6, to=2-6]
 \arrow[dashed, from=2-6, to=1-5]
	\arrow[dashed, from=2-5, to=1-4]
	\arrow[dashed, from=2-4, to=1-3]
	\arrow[dashed, from=2-3, to=1-2]
 \arrow[dashed, from=2-2, to=1-1]
\end{tikzcd}\]
and putting $E^{*,*}=\pi_*\Gr_*X=\pi_{*,*}(X/\tau)$ and $D^{*,*}=\pi_*X_*=\pi_{*,*}X$. This, however, is exactly the exact couple used to form the $\tau$-Bockstein by applying $\pi_{*,*}$ to the cofiber sequence 
\[\begin{tikzcd}
	{\Sigma^{0,-1}X} && X \\
	& {X/\tau} &
	\arrow[from=1-3, to=2-2]
	\arrow["\tau", from=1-1, to=1-3]
	\arrow[dashed, from=2-2, to=1-1]
\end{tikzcd}\]
As a result, unlike the case of synthetic spectra where the comparison to the $\tau$-Bockstein is rather involved, the comparison to the $\tau$-Bockstein is essentially tautological from the filtered perspective. In particular, as in \cite[Thm. 9.19]{BHS23}, there are cofiber sequences
\begin{align*}
    X/\tau\xrightarrow{\tau^r}&X/\tau^{r+1}\to X/\tau^r\xrightarrow{\beta}\Sigma X/\tau \\
    X&\xrightarrow{\tau} X\xrightarrow{i} X/\tau\xrightarrow{q} \Sigma X
\end{align*}
which satisfy $\beta_*(x)=-y$ and $i_*(\overline{z})=z$, where $d_r(x)=\tau^ry$ is a $\tau$-Bockstein differential and $\overline{z}\in\pi_{*,*}(X)$ is a non-$\tau$-divisible element detected by $z\in\pi_{*,*}(X/\tau)$. These same constructions generalize to any setting with an appropriate notion of homology or (multi-graded) homotopy groups. As a result, we have the following lemma, which is a generalization of \cite[Prop. 2.14]{IKLYZ23}:

\begin{lemma}
\label{qlemma}
    Let $\cC$ be a stable $\infty$-category equipped with an exact functor $\pi_{\star}:\cC\to \cA$ valued in an abelian category. Then if $(\E_r,d_r)$ is the spectral sequence associated to the a filtered object $X\in \cC^\fil$ and $x\in\pi_{\starstar}(X/\tau)$ survives to the $\E_r$ page, then $x$ supports a $\tau$-Bockstein differential $d_r(x)=\tau^r y$ if and only if in the sequence
    \[
    X\xrightarrow{\tau} X\xrightarrow{i} X/\tau\xrightarrow{q} \Sigma X
    \]
    we have that $q_\star(x)\in\pi_\starstar(\Sigma X)$ is detected by $-\tau^{r-1}y$.
\end{lemma}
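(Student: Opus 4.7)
The plan is to identify the Bockstein condition $d_r(x)=\tau^r y$ with the detection condition by carefully relating three cofiber sequences: the sequence $X \xrightarrow{\tau} X \xrightarrow{i} X/\tau \xrightarrow{q} \Sigma X$, its $\tau^r$-analogue $X \xrightarrow{\tau^r} X \xrightarrow{i_r} X/\tau^r \xrightarrow{q_r} \Sigma X$, and the Bockstein sequence $X/\tau \to X/\tau^{r+1} \to X/\tau^r \xrightarrow{\beta} \Sigma X/\tau$ recorded in the excerpt. Under the hypothesis that $x$ survives to the $\E_r$-page, $x$ admits a lift $\tilde x \in \pi_\starstar(X/\tau^r)$ along the canonical projection $\pi \colon X/\tau^r \to X/\tau$, and by the excerpt the assertion $d_r(x) = \tau^r y$ is equivalent to $\beta_\star(\tilde x) = -y$ in $\pi_\starstar(\Sigma X/\tau)$.

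The first step is to compare $q$ with $q_r$. The factorization $\tau^r = \tau \circ \tau^{r-1}$ produces a map of cofiber sequences from the $\tau^r$-sequence to the $\tau$-sequence whose middle vertical is the identity and whose outer verticals are $\tau^{r-1}$. Compatibility of the connecting maps forces $q \circ \pi = \tau^{r-1} \circ q_r$, so that applying $\pi_\starstar$ to $\tilde x$ yields $q_\star(x) = \tau^{r-1} \cdot q_{r,\star}(\tilde x)$ in $\pi_\starstar(\Sigma X)$.

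The second step is to compare $q_r$ with $\beta$. Applying the octahedral axiom to the composition $X \xrightarrow{\tau} X \xrightarrow{\tau^r} X$ realizes the Bockstein sequence as a cofiber-of-cofibers sequence whose connecting map is by construction the composite $\Sigma i \circ q_r$, giving the identity $\beta = \Sigma i \circ q_r$. Combining this with the first step and using that $\ker((\Sigma i)_\star) = \tau \cdot \pi_\starstar(\Sigma X)$ from the long exact sequence of the $\tau$-cofiber sequence yields the chain of equivalences
\[
\beta_\star(\tilde x) = -y \iff q_{r,\star}(\tilde x) \equiv -\hat y \pmod{\tau} \iff q_\star(x) \equiv -\tau^{r-1} \hat y \pmod{\tau^r},
\]
where $\hat y \in \pi_\starstar(\Sigma X)$ is any lift of $y$. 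The third condition is precisely the statement that $q_\star(x)$ is detected by $-\tau^{r-1} y$, and reading the equivalences backwards supplies the ``if'' direction.

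The main obstacle will be verifying the octahedral identity $\beta = \Sigma i \circ q_r$ with the correct sign convention, so that the minus sign in the lemma matches the sign in the excerpt's defining formula $\beta_\star(\tilde x) = -y$. Once this identity is pinned down, the remainder of the argument is two short diagram chases, and the ``if and only if'' follows from the exactness arguments above.
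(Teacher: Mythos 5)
Your proof is correct and follows essentially the same route as the paper. The paper assembles a single $3\times 4$ commutative diagram of cofiber sequences whose two right-hand squares encode precisely your two steps: the comparison via the factorization $\tau^r=\tau\circ\tau^{r-1}$ gives $q\circ\pi=\tau^{r-1}\circ q_r$, and the top square (which you obtain from the octahedron applied to $\tau^{r+1}=\tau^r\circ\tau$) gives $\beta=\Sigma i\circ q_r$; the paper's ``$f$'' is your $q_r$, and it then reads off the same chain of identities $q_\star(x)=\tau^{r-1}q_{r,\star}(\tilde x)$ with $i_\star(q_{r,\star}(\tilde x))=\beta_\star(\tilde x)=-y$, importing the sign from the cofiber sequences recorded just before the lemma.
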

\begin{proof}
    Similar to \cite[Prop. 2.14]{IKLYZ23}, both directions of the proof will use the following commutative diagram of horizontal cofiber sequences:
\[\begin{tikzcd}
	{X/\tau} & {X/\tau^{r+1}} & {X/\tau^r} & {\Sigma X/\tau} \\
	X & X & {X/\tau^r} & {\Sigma X} \\
	X & X & {X/\tau} & {\Sigma X}
	\arrow["q", from=3-3, to=3-4]
	\arrow["\tau", from=3-1, to=3-2]
	\arrow["i", from=3-2, to=3-3]
	\arrow["f", from=2-3, to=2-4]
	\arrow["{\tau^r}", from=2-1, to=2-2]
	\arrow[from=2-2, to=2-3]
	\arrow["{\tau^{r-1}}"', from=2-1, to=3-1]
	\arrow["\mathrm{Id}"', from=2-2, to=3-2]
	\arrow[from=2-3, to=3-3]
	\arrow["{\tau^{r-1}}", from=2-4, to=3-4]
	\arrow["\beta", from=1-3, to=1-4]
	\arrow["i", from=2-1, to=1-1]
	\arrow[from=2-2, to=1-2]
	\arrow["\mathrm{Id}", from=2-3, to=1-3]
	\arrow["i"', from=2-4, to=1-4]
	\arrow["{\tau^r}", from=1-1, to=1-2]
	\arrow[from=1-2, to=1-3]
\end{tikzcd}\]
For the forwards direction, since $x$ survives to the $\E_r$-page, $x$ lifts to $\pi_\starstar (X/\tau^r$). Note that $\beta_\starstar(x)=-y$ so that $q_\starstar(x)=\tau^{r-1}f(x)$ is detected by $$\tau^{r-1}i(f(x))=\tau^{r-1}\beta(x)=-\tau^{r-1}y.$$ For the backwards direction, note that $f(x)$ is detected by $-y$ so that $\beta(x)=i(f(x))=-y$ and, hence, $d_r(x)=\tau^ry$.
\end{proof}

\section{The Synthetic Adams-Novikov Spectral Sequence}
\label{BPsynanalogsection}

For an $\bF_p$-synthetic spectrum $X$, we will let $X_{\starstar}:=\pi_{\starstar}(X)$ and $X_{\starstar}X:=\pi_{\starstar}(X\otimes X)$. The goal in this section is to compute the bigraded Hopf algebroid $(\nubp_{\starstar},\nubpbp)$ and to give tools for computing the $\E_2$-page of the synthetic Adams-Novikov spectral sequence in terms of this Hopf algebroid. Throughout this section, we fix a prime $p$.

\subsection{Homotopy of the synthetic analogs of $\BP$ and $\BP\otimes \BP$}

We first compute $\nubp_{\starstar}$ and $\nubpbp$ as $\bZ_{(p)}[\lambda]$-algebras. Our strategy is to use the fracture square

\begin{equation}
\label{fracture_diagram}
    \begin{tikzcd}
        X \ar[r] \ar[d] & X_{\lambda}^{\wedge} \ar[d] \\
        X[\lambda^{-1}] \ar[r] & (X_{\lambda}^{\wedge})[\lambda^{-1}] \\
    \end{tikzcd}
\end{equation}

for an arbitrary $X\in\Syn_{\hfp}$. Though the existence of this fracture square has likely been known by experts in synthetic spectra, we reproduce a proof of its existence generally in $\Syn_E$ by imitating \cite[Section 6]{DFHH14}:

\begin{lemma}
\label{fracture}
    The commutative square (1) for $X\in\Syn_E$ is a pullback square.
\end{lemma}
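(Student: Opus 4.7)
The plan is to verify that the square is a pullback by checking that its two horizontal fibers agree. Let $F := \mathrm{fib}(X \to X_\lambda^\wedge)$ denote the fiber of the $\lambda$-completion unit map. Because inverting $\lambda$ is a smashing localization given by $(-) \otimes \bS_E[\lambda^{-1}]$, it is an exact functor, and therefore the fiber of the bottom horizontal map $X[\lambda^{-1}] \to (X_\lambda^\wedge)[\lambda^{-1}]$ is precisely $F[\lambda^{-1}]$. Consequently, it suffices to exhibit an equivalence $F \simeq F[\lambda^{-1}]$, i.e.\ to show that $\lambda$ acts invertibly on $F$.

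To produce this equivalence, I would invoke the standard theory of adic completion with respect to the element $\lambda \in \pi_{0,-1}(\bS_E)$ in the presentably symmetric monoidal stable $\infty$-category $\Syn_E$. In this setup, the $\lambda$-complete objects form a Bousfield localization of $\Syn_E$, whose orthogonal (the kernel of completion) consists precisely of the $\lambda$-periodic objects, namely those $Z$ for which $\lambda : Z \to Z$ is an equivalence, equivalently those with $Z \otimes \bS_E/\lambda \simeq 0$. Applying $\lambda$-completion to the fiber sequence $F \to X \to X_\lambda^\wedge$ yields the fiber sequence $F_\lambda^\wedge \to X_\lambda^\wedge \to (X_\lambda^\wedge)_\lambda^\wedge$; since completion is idempotent, the right-hand map is an equivalence, so $F_\lambda^\wedge \simeq 0$. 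Hence $F$ lies in the kernel of completion, i.e.\ $F$ is $\lambda$-periodic, which yields the desired equivalence $F \simeq F[\lambda^{-1}]$.

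Combining the two observations, exactness of $\lambda$-inversion gives a fiber sequence
\[
F[\lambda^{-1}] \longrightarrow X[\lambda^{-1}] \longrightarrow (X_\lambda^\wedge)[\lambda^{-1}],
\]
and the identification $F \simeq F[\lambda^{-1}]$ matches this with the fiber sequence defining $F$. This is equivalent to the original square being a pullback. The main obstacle is verifying that $\lambda$-completion in $\Syn_E$ enjoys the black-box properties used above (idempotence, and identification of the kernel with the $\lambda$-periodic subcategory); but since $\lambda$ is a self-map of the monoidal unit and $\Syn_E$ is presentably symmetric monoidal, these follow from the general theory of adic completion in such categories as developed in \cite{HA} Section 7.3 (and recalled in the deformation setup of \cite{BHS20}), with no synthetic input required beyond the fact that $\lambda$ is a map of the unit.
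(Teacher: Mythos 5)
Your proof is correct and takes a genuinely different route from the paper's. The paper proceeds by first showing that the class of $\bS_E$-acyclics agrees with the class of $(\bS_E[\lambda^{-1}]\vee\bS_E/\lambda)$-acyclics (so every object is local for the wedge), and then invoking a synthetic version of the general fracture square theorem \cite[Section 6, Prop.\ 2.2]{DFHH14} as a black box, using the observation that $(X/\lambda)[\lambda^{-1}]\simeq 0$. You instead verify the pullback condition directly: identify the two horizontal fibers, observe that the map between them is the $\lambda$-localization unit $F\to F[\lambda^{-1}]$ by exactness of the smashing localization, and then show $F=\mathrm{fib}(X\to X_\lambda^\wedge)$ is $\lambda$-periodic by applying the (exact, idempotent) completion functor and identifying its kernel with the $\lambda$-periodic objects. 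The paper's version is shorter on the page because it outsources the fracture mechanism, whereas yours reproves it in the special case at hand; in exchange, your argument is self-contained, makes the role of exactness and idempotence of the two localizations explicit, and avoids having to spell out what "a synthetic version of" the cited proposition means. Both approaches rest on the same two underlying facts (that $\lambda$-completion agrees with $\bS_E/\lambda$-Bousfield localization and that $\lambda$-inversion is smashing), so the content is equivalent. One small notational slip: $\lambda$ is a map $\Sigma^{0,-1}Z\to Z$ rather than an endomorphism of $Z$, so "$\lambda:Z\to Z$ is an equivalence" should be read as "$\lambda:\Sigma^{0,-1}Z\to Z$ is an equivalence," but this does not affect the argument.
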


\begin{proof}
    We first show that $X\simeq L_{\bS_E}X\simeq L_{(\bS_E[\lambda^{-1}]\vee \bS_{E}/\lambda)}$, where $L$ denotes Bousfield localization. The first equivalence is obvious. For the second, we show that the class of $\bS_E$-acyclics agrees with the class of $(\bS_E[\lambda^{-1}]\vee \bS_{E}/\lambda)$-acyclics. Note that $X$ is $\bS_E$-acyclic if and only if $X\simeq 0$. So immediately, $\bS_E$-acyclic implies $(\bS_E[\lambda^{-1}]\vee \bS_{E}/\lambda)$-acyclic. For the other direction, $X$ being $(\bS_E[\lambda^{-1}]\vee \bS_{E}/\lambda)$-acyclic is equivalent to both 
    
    $$
    \begin{aligned}
     X[\lambda^{-1}]&\simeq 0,\\
      X/\lambda &\simeq 0.  
    \end{aligned}
    $$
    
    If $X/\lambda\simeq 0$, then $\lambda:X\to\Sigma^{0,1}X$ is an equivalence so that $X[\lambda^{-1}]\simeq X$. Hence, $X\simeq 0$.

    \bigskip

    Since $(X/\lambda)[\lambda^{-1}]\simeq 0$ is true for any synthetic spectrum $X$, the fracture square follows from a synthetic version of \cite[Section 6, Prop. 2.2]{DFHH14}.
\end{proof}

The most mysterious part of this square is the $\lambda$-completion $X_{\lambda}^{\wedge}$. For synthetic analogs of ordinary spectra, the $\lambda$-completion is straightforward:

\begin{lemma}
\label{lambdacomp}
If $Y$ is a spectrum, there is an equivalence $\nu_E(Y)_{\lambda}^{\wedge}\xrightarrow{\simeq}\nu_E( Y_E^{\wedge})$. 
\end{lemma}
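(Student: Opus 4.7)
The plan is to construct a natural comparison map $\alpha: \nu_E(Y) \to \nu_E(Y_E^{\wedge})$ from the unit of $E$-nilpotent completion $Y \to Y_E^{\wedge}$, and to verify that $\alpha$ exhibits the target as the $\lambda$-completion of $\nu_E(Y)$. This reduces to two checks: (i) $\alpha$ induces an equivalence after smashing with $\bS_E/\lambda$, and (ii) the target $\nu_E(Y_E^{\wedge})$ is itself $\lambda$-complete.

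For (i), the map $Y \to Y_E^{\wedge}$ is an $E_*$-equivalence. By Theorem~\ref{stablethm}, both $\nu_E(Y)/\lambda$ and $\nu_E(Y_E^{\wedge})/\lambda$ are determined by the $E_*E$-comodule $E_*Y$, so $\nu_E(Y)/\lambda \to \nu_E(Y_E^{\wedge})/\lambda$ is an equivalence. Consequently, the cofiber $C$ of $\alpha$ satisfies $C/\lambda \simeq 0$, making $\lambda$ act invertibly on $C$, and hence $C_\lambda^{\wedge} \simeq 0$. Applying $\lambda$-completion to the cofiber sequence $\nu_E(Y) \to \nu_E(Y_E^{\wedge}) \to C$ then yields $\nu_E(Y)_\lambda^{\wedge} \xrightarrow{\simeq} \nu_E(Y_E^{\wedge})_\lambda^{\wedge}$.

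For (ii), I would present $Y_E^{\wedge}$ as the totalization of the $E$-Amitsur cosimplicial object $E^{\otimes \bullet +1}\otimes Y$. Since $E$ is a filtered colimit of $E$-finite projective spectra, Proposition~\ref{nufacts} gives an identification $\nu_E(E^{\otimes n+1}\otimes Y) \simeq \nu_E(E)^{\otimes n+1}\otimes \nu_E(Y)$. Granted that $\nu_E$ preserves this particular totalization, the object $\nu_E(Y_E^{\wedge})$ is then the $\nu_E(E)$-nilpotent completion of $\nu_E(Y)$ inside $\Syn_E$, which one identifies with its $\lambda$-completion by matching associated graded layers through Theorem~\ref{stablethm}.

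The main obstacle is step (ii): both the interchange of $\nu_E$ with the cosimplicial totalization and the comparison of the $\nu_E(E)$-adic filtration with the $\lambda$-adic filtration require nontriviality. The first would presumably use a bounded-below or finite-type hypothesis on $Y$ together with the exactness properties in Proposition~\ref{nufacts}, while the second would proceed by induction on the cosimplicial degree, using Theorem~\ref{stablethm} at each stage. An alternative route would be to characterize $\lambda$-completeness via bigraded homotopy groups and argue directly that $\pi_{\starstar}\nu_E(Y_E^{\wedge})$ satisfies this criterion, appealing to the convergence of the $E$-Adams spectral sequence for $Y$.
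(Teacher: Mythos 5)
Your step (i) is exactly the paper's argument: both reduce to observing that the comparison map becomes an equivalence after $(-)/\lambda$, because $Y\to Y_E^\wedge$ is an $E_*$-isomorphism and $(-)/\lambda$ remembers only $E_*$-homology (Theorem~\ref{stablethm}), and a map between $\lambda$-complete objects that is an equivalence modulo $\lambda$ is an equivalence.

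The divergence is in step (ii), the $\lambda$-completeness of $\nu_E(Y_E^\wedge)$, which you correctly flag as the crux. The paper does not reprove this; it cites \cite[Prop.~A.13]{BHS19}. Your proposed from-scratch route has two gaps as written. First, your worry about whether $\nu_E$ interchanges with $\Tot$ is misplaced --- $\nu_E$ preserves limits, since it is a Yoneda-style embedding into sheaves on $\Sp^\fp_E$ and the inclusion of sheaves into presheaves creates limits --- but the identification $\nu_E(E^{\otimes n+1}\otimes Y) \simeq \nu_E(E)^{\otimes n+1}\otimes \nu_E(Y)$ via Proposition~\ref{nufacts} does require the (true but not free) check that $E^{\otimes n+1}$ is a filtered colimit of $E$-finite projectives for Adams-type $E$. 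Second and more seriously, the identification of $\Tot(\nu_E(E)^{\otimes\bullet+1}\otimes\nu_E(Y))$ with the $\lambda$-completion of $\nu_E(Y)$ is precisely the content of the BHS proposition, and ``matching associated graded layers'' does not close that gap. Note also that the lemma is stated for arbitrary $Y$, so your alternative route through convergence of the $E$-Adams spectral sequence would smuggle in a connectivity hypothesis the statement does not carry. Citing the BHS result, as the paper does, is the clean way out.
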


\begin{proof}
Note that $\lambda$-completion is the same as $\bS_{E}/\lambda$-Bousfield localization and $\nu_E(Y_E^{\wedge})$ is $\lambda$-complete by \cite[Prop. A.13]{BHS23}. Hence there exists a natural map $\nu_E(Y)_{\lambda}^{\wedge}\to\nu_E(Y_E^{\wedge})$ fitting in the commutative diagram
\begin{equation*}
    \begin{tikzcd}
        \nu_E(Y) \ar[r] \ar[dr] & \nu_E(Y)_{\lambda}^{\wedge} \ar[d,dashed] \\
              & \nu_E(Y_E^{\wedge}) 
    \end{tikzcd}
\end{equation*}

with the diagonal map being induced by the $E$-nilpotent completion $Y\to Y_E^{\wedge}$. It suffices to check this is an equivalence after applying $(-)/\lambda$. This is true because the top map becomes an equivalence by definition and because the map $Y\to Y_E^{\wedge}$ induces an isomorphism on Adams $\E_2$-pages.   
\end{proof}

\begin{corollary}
If $Y$ is a bounded-below spectrum, there is an equivalence $\nu_{\hfp}(Y)_{\lambda}^{\wedge}\xrightarrow{\simeq}\nu_{\hfp}( Y_p^{\wedge})$.    
\end{corollary}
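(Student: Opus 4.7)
The plan is to apply Lemma~\ref{lambdacomp} with $E=\hfp$ and then to identify the $\hfp$-nilpotent completion with the $p$-completion under the stated connectivity hypothesis. Specializing the lemma gives an equivalence
\[
\nu_{\hfp}(Y)_{\lambda}^{\wedge} \xrightarrow{\simeq} \nu_{\hfp}(Y_{\hfp}^{\wedge}),
\]
so all that remains is to show $Y_{\hfp}^{\wedge} \simeq Y_p^{\wedge}$ when $Y$ is connective and then to transport this equivalence through the symmetric monoidal functor $\nu_{\hfp}$.

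The key input for the identification of completions is a classical result of Bousfield: for a connective spectrum $Y$, the $\hfp$-Bousfield localization (which agrees with $\hfp$-nilpotent completion in this range by the usual convergence of the $\hfp$-Adams tower on connective spectra) coincides with the ordinary $p$-completion $Y_p^{\wedge}$. One may either quote this directly or, more concretely, check that the canonical map $Y_p^{\wedge} \to Y_{\hfp}^{\wedge}$ is an $\hfp$-equivalence between $\hfp$-local objects, using connectivity to ensure $\lim^1$-vanishing in the relevant Milnor sequences.

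Having identified $\nu_{\hfp}(Y_{\hfp}^{\wedge}) \simeq \nu_{\hfp}(Y_p^{\wedge})$, the corollary follows by composition with the equivalence from Lemma~\ref{lambdacomp}. I do not anticipate any serious obstacle: the only mild care needed is to confirm that the comparison map $\nu_{\hfp}(Y)_\lambda^\wedge \to \nu_{\hfp}(Y_p^\wedge)$ produced in this way agrees with the one coming from the composite of the $\lambda$-completion unit and the functorial image of $Y \to Y_p^\wedge$. This is immediate from the naturality of the construction in Lemma~\ref{lambdacomp}, since the identification $Y_{\hfp}^\wedge \simeq Y_p^\wedge$ is natural in $Y$.
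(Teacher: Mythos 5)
Your proof is correct and coincides with the paper's intended argument: specialize Lemma~\ref{lambdacomp} to $E=\hfp$ and then invoke Bousfield's identification $Y_{\hfp}^{\wedge}\simeq Y_p^{\wedge}$ for connective $Y$. The paper leaves this corollary unproved precisely because it follows this immediately.
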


From now on, we let $\nu Y$ denote $\nu_{\hfp}Y$. If we want to understand the bigraded homotopy of $\nu Y$ for a bounded-below spectrum $Y$, Lemmas~\ref{fracture}, \ref{lambdacomp}  allow us to understand $\pi_{\starstar}\nu Y$ in terms of
\begin{equation*}
    \begin{split}
        \pi_{\starstar}\nu Y[\lambda^{-1}]&\cong \pi_*Y[\lambda^{\pm 1}], \\
        \pi_{\starstar}(\nu Y_{\lambda}^{\wedge})&\cong \pi_{\starstar}(\nu (Y_p^{\wedge})), \\
        \pi_{\starstar}(\nu Y_{\lambda}^{\wedge}[\lambda^{-1}])&\cong \pi_{*}(Y_p^{\wedge})[\lambda^{\pm 1}],
    \end{split}
\end{equation*}

and the maps between them. The first and third isomorphisms follows from the equivalence $\Syn_{\hfp}[\lambda^{-1}]\simeq\Sp$, with $\pi_k$ living in bidegree $(k,k)$. The homotopy of $\nu (Y_p^{\wedge})$ can be computed using knowledge of the $\hfp$-Adams spectral sequence of $Y$ and \cite[Thm. 9.19]{BHS23}. We do this first for $Y=\BP$:

\begin{lemma}
\label{nuBPlemma}
As a $\bZ_p^{\wedge}[\lambda]$-algebra,
\begin{equation*}
    \pi_{\starstar}(\nu(\BP)_{\lambda}^{\wedge})\cong\pi_{\starstar}(\nu(\BP_p^{\wedge}))\cong \bZ_p^{\wedge}[\lambda,h,v_1,v_2,\ldots]/(\lambda h=p)
\end{equation*}
where $v_i\in\pi_{2p^i-2,2p^i-1}$ and $p\in\pi_{0,0}\cong\bZ_p^{\wedge}$.
\end{lemma}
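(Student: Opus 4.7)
The first equivalence $\pi_{\starstar}(\nu(\BP)_{\lambda}^{\wedge}) \cong \pi_{\starstar}(\nu(\BP_p^{\wedge}))$ is immediate from the Corollary to Lemma~\ref{lambdacomp}, so the content lies in identifying $\pi_{\starstar}(\nu(\BP_p^{\wedge}))$ as a $\bZ_p^{\wedge}[\lambda]$-algebra. The plan is to run the $\lambda$-Bockstein spectral sequence on $\nu(\BP_p^{\wedge})$, match it with the classical $\hfp$-Adams spectral sequence for $\BP_p^{\wedge}$ via \cite[Thm.~9.19]{BHS19}, and then resolve the resulting multiplicative extensions using the generic fiber.

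To compute the input, I would apply Theorem~\ref{stablethm} to obtain $\pi_{\starstar}(\nu(\BP_p^{\wedge})/\lambda) \cong \Ext^{\star,\star}_{\cA_*}(\bF_p, \h_*\BP)$. Combining the standard identification $\h_*\BP \cong \cA_* \square_{\cE_*} \bF_p$ with a change-of-rings argument rewrites this as $\Ext^{\star,\star}_{\cE_*}(\bF_p, \bF_p) \cong \bF_p[h, v_1, v_2, \ldots]$, where $h$ and $v_i$ sit in Adams filtration $1$ and internal degrees $0$ and $2p^i - 2$. Translating to the synthetic bigrading (a class of stem $n$ and filtration $s$ lives in $\pi_{n, n+s}$) places $h \in \pi_{0,1}$ and $v_i \in \pi_{2p^i - 2, 2p^i - 1}$ on $\nu(\BP_p^{\wedge})/\lambda$.

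Next I would invoke the classical collapse of the $\hfp$-Adams spectral sequence for $\BP_p^{\wedge}$: the bigraded ranks of $\bF_p[h, v_1, v_2, \ldots]$ already match the associated graded of $\bZ_p^{\wedge}[v_1, v_2, \ldots]$ under the $p$-adic filtration, with $h$ detecting $p$. Through \cite[Thm.~9.19]{BHS19} this collapse transports to the $\lambda$-Bockstein, so the associated graded of $\pi_{\starstar}(\nu(\BP_p^{\wedge}))$ is $\bF_p[\lambda, h, v_1, v_2, \ldots]$.

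The last step, which I expect to be the main obstacle, is to pin down the hidden multiplicative extensions. The classical detection of $p$ by $h$ forces $p \in \pi_{0,0}(\nu(\BP_p^{\wedge}))$ to be $\lambda$-divisible exactly once; choosing this divisor as the generator yields the relation $\lambda h = p$. After picking lifts of each $v_i$ to $\pi_{2p^i - 2, 2p^i - 1}(\nu(\BP_p^{\wedge}))$, I would rule out any further hidden extensions by inverting $\lambda$: combining $\Syn_{\hfp}[\lambda^{-1}] \simeq \Sp$ with the fracture square of Lemma~\ref{fracture} identifies $\lambda^{-1}\pi_{\starstar}(\nu(\BP_p^{\wedge}))$ with $\bZ_p^{\wedge}[v_1, v_2, \ldots][\lambda^{\pm 1}]$, which is precisely the $\lambda^{-1}$-localization of the target presentation $\bZ_p^{\wedge}[\lambda, h, v_1, v_2, \ldots]/(\lambda h - p)$ (since $h = p/\lambda$ after inversion). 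The combination of this generic fiber with the $\lambda$-adic Bockstein filtration forces the ring to be as claimed. The difficulty is exactly in ensuring that no exotic extensions escape detection on either side; playing the $\lambda$-complete filtration against the $\lambda^{-1}$-localization is what closes the argument.
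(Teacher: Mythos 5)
Your proposal is essentially the paper's proof: both hinge on the collapse of the classical $\hfp$-Adams spectral sequence for $\BP$, transport this through \cite[Thm.~9.19]{BHS19} to identify the associated graded as $\bF_p[\lambda,h,v_1,v_2,\ldots]$, and then impose the relation $\lambda h = p$. The only divergence is the final bookkeeping: the paper produces a surjection $\bZ_p^{\wedge}[\lambda,h,v_1,\ldots]/(\lambda h - p)\twoheadrightarrow \pi_{\starstar}(\nu(\BP_p^{\wedge}))$ and concludes by a dimension count, whereas you rule out further hidden extensions by comparing with the $\lambda^{-1}$-localization through the fracture square of Lemma~\ref{fracture}. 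Both closings are sound; the $\lambda$-inversion argument is perhaps a bit more conceptual, while the dimension count is more self-contained, but the structure of the argument is the same.
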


\begin{proof}
Note first that the $\hfp$-Adams spectral sequence for $\BP$ collapses at the $\E_2$-page. Using the $\nu\hfp$-Adams spectral sequence and \cite[Thm. A.8]{BHS23}, this says that
\begin{equation*}
    {}_{\nu\hfp}\E_2^{\starstarstar}\cong {}_{\nu\hfp}\E_{\infty}^{\starstarstar}\cong \bF_p[\lambda,h,v_1,v_2,\ldots].
\end{equation*}

Passing on to $\pi_{\starstar}(-)$, by \cite[Thm. 9.19(4)]{BHS23} we get a surjective $\bZ_p^{\wedge}[\lambda]$-algebra map $$\bZ_p^{\wedge}[\lambda,h,v_1,v_2,\ldots]\to\pi_{\starstar}(\nu(\BP_p^{\wedge})).$$ The relation $\lambda h=p$ must hold because of the isomorphism of short exact sequences
\begin{equation*}
    \begin{tikzcd}
        0 \ar[r] & \pi_{0,1} \ar[r,"\cdot\lambda"] \ar[d,"\cong"] & \pi_{0,0} \ar[r] \ar[d,"\cong"] & \Ext^{0,0}_{\cA_*}(\bF_p,\h_*(\BP_p^{\wedge})) \ar[r] \ar[d,"\cong"] & 0 \\
        0 \ar[r] & \bZ_p^{\wedge} \ar[r,"\cdot p"] & \bZ_p^{\wedge} \ar[r] & \bF_p \ar[r] & 0
    \end{tikzcd}
\end{equation*}
which induces a surjective map $\bZ_p^{\wedge}[\lambda,h,v_1,v_2,\ldots]/(\lambda h=p)\to\pi_{\starstar}(\nu(\BP_p^{\wedge}))$. By a dimension count of both sides, this map must be an isomorphism.
\end{proof}

\begin{lemma}
As a $\bZ_p^{\wedge}[\lambda]$-algebra,
\begin{equation*}
    \pi_{\starstar}((\nubp\otimes\nubp)_{\lambda}^{\wedge})\cong\pi_{\starstar}(\nu((\BP\otimes\BP)_p^{\wedge}))\cong\pi_{\starstar}((\nubp)_{\lambda}^{\wedge})[t_1,t_2,\ldots]
\end{equation*}
where $t_i\in\pi_{2p^i-2,2p^i-2}$.  
\end{lemma}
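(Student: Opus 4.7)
The plan is to mimic the proof of Lemma~\ref{nuBPlemma}, but reducing the computation to the $\BP$ case via the classical identification $\BP_*\BP \cong \BP_*[t_1,t_2,\ldots]$. First I would apply Proposition~\ref{Fpomnibus}(a), which gives that $\nu_{\hfp}$ is strictly symmetric monoidal, to obtain $\nubp \otimes \nubp \simeq \nu(\BP \otimes \BP)$; since $\BP \otimes \BP$ is connective, Lemma~\ref{lambdacomp} then supplies the first displayed isomorphism $(\nubp \otimes \nubp)^\wedge_\lambda \simeq \nu((\BP\otimes \BP)^\wedge_p)$.

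To compute the bigraded homotopy of the right-hand side, I would run the $\nu_{\hfp}$-Adams spectral sequence for $\nu(\BP\otimes\BP)$. Because $\BP\otimes\BP$ splits as a wedge of suspensions of $\BP$ as a $\BP$-module (indexed by monomials in the $t_i$'s), its classical $\hfp$-Adams spectral sequence collapses at $\E_2$, with $\E_2$-page $\bF_p[h,v_1,v_2,\ldots][t_1,t_2,\ldots]$ where $h$ has bidegree matching $p$ in Adams filtration $1$ and $t_i$ sits in topological degree $2p^i-2$ and Adams filtration $0$. Applying \cite[Thm. A.8]{BHS19} then identifies the synthetic $\E_2$-page as $\bF_p[\lambda, h, v_1, v_2, \ldots][t_1, t_2, \ldots]$; collapse at $\E_2$ is preserved since reduction mod $\lambda$ recovers the classical $\E_2$-page, which already has no room for differentials.

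Next, by \cite[Thm. 9.19(4)]{BHS19}, a choice of lifts of $h$, the $v_i$, and the $t_j$ to $\pi_{\starstar}(\nu((\BP\otimes\BP)^\wedge_p))$ produces a surjective $\bZ_p^\wedge[\lambda]$-algebra map
\[
\bZ_p^\wedge[\lambda,h,v_1,v_2,\ldots][t_1,t_2,\ldots]\twoheadrightarrow \pi_{\starstar}(\nu((\BP\otimes\BP)^\wedge_p)).
\]
The relation $\lambda h=p$ of Lemma~\ref{nuBPlemma} is inherited via the unit map $\nubp\to \nubp\otimes \nubp$. The classes $t_j$ live in weight equal to topological degree, i.e.\ Adams filtration zero, so they are their own lifts to homotopy and contribute no additional multiplicative extension; moreover, multiplication by $t_j$ is already an isomorphism at the $\E_\infty$-level between the corresponding graded pieces, ruling out hidden extensions crossing $t_j$'s.

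The main obstacle is precisely ruling out such hidden multiplicative extensions in the synthetic setting: one must confirm that no element $t_j t_k$ or $t_j v_i$ is secretly divisible by $\lambda$ or $h$. I would handle this by invoking the fracture square of Lemma~\ref{fracture}: the $\lambda$-inverted side is $\pi_*((\BP\otimes\BP)^\wedge_p)[\lambda^{\pm 1}] = \bZ_p[v_i][t_j][\lambda^{\pm 1}]$ by the classical computation, while the reduction mod $\lambda$ is the $\E_\infty$-page above. A bidegreewise rank comparison between the proposed presentation $\bZ_p^\wedge[\lambda,h,v_i][t_j]/(\lambda h=p)$ and these two fracture pieces forces the surjection to be injective, completing the identification and fixing the claimed bidegrees $|t_i|=(2p^i-2,2p^i-2)$.
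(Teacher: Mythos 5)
Your proposal is correct and takes essentially the same route as the paper: the paper simply remarks that "the proof is very similar to the proof of Lemma~\ref{nuBPlemma}" with the $\E_2$-page $\bF_p[\lambda,h,v_1,\ldots,t_1,t_2,\ldots]$ in place of $\bF_p[\lambda,h,v_1,\ldots]$, and you have spelled out exactly those steps — monoidality of $\nu_{\hfp}$ and Lemma~\ref{lambdacomp}, collapse of the classical Adams spectral sequence via the $\BP$-module splitting of $\BP\otimes\BP$, the surjection from \cite[Thm.~9.19(4)]{BHS19}, and the concluding dimension/rank count.
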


\begin{proof}
The proof is very similar to the proof of Lemma~\ref{nuBPlemma}, using instead that
\begin{equation*}
    {}_{\nu\hfp}\E_2^{\starstarstar}\cong {}_{\nu\hfp}\E_{\infty}^{\starstarstar}\cong \bF_p[\lambda,h,v_1,v_2\ldots,t_1,t_2,\ldots],
\end{equation*}
where the $t_i$ are detected in Adams filtration 0.
\end{proof}

From the classical homotopy of $\BP$ and $\BP_*\BP$, we easily get that
\begin{equation*}
    \begin{split}
        \pi_{\starstar}(\nubp[\lambda^{-1}])&\cong\bZ_{(p)}[\lambda^{\pm 1},v_1,v_2,\ldots], \\
        \pi_{\starstar}((\nubp)_{\lambda}^{\wedge}[\lambda^{-1}])&\cong\bZ_{p}^{\wedge}[\lambda^{\pm 1},v_1,v_2,\ldots], \\
        \pi_{\starstar}((\nubp\otimes\nubp)[\lambda^{-1}])&\cong \bZ_{(p)}[\lambda^{\pm 1},v_1,v_2,\ldots,t_1,t_2,\ldots], \\
        \pi_{\starstar}((\nubp\otimes\nubp)_{\lambda}^{\wedge}[\lambda^{-1}])&\cong \bZ_{p}^{\wedge}[\lambda^{\pm 1},v_1,v_2,\ldots,t_1,t_2,\ldots].
    \end{split}
\end{equation*}

Using this, the previous two lemmas, and Lemma~\ref{fracture}, we can identify $\nubp_{\starstar}$ and $\nubpbp$:

\begin{theorem}
\label{BPhomotopytheorem}
As $\bZ_{(p)}[\lambda]$-algebras,
\begin{equation*}
    \begin{split}
    &\nubp_{\starstar}\cong \bZ_{(p)}[\lambda,h,v_1,v_2,\ldots]/(\lambda h=p), \\
    &\nubpbp\cong \nubp_{\starstar}[t_1,t_2,\ldots]. 
    \end{split}
\end{equation*}
\end{theorem}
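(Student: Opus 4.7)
The plan is to apply the fracture pullback square of Lemma~\ref{fracture} to $Y = \nubp$ and then to $Y = \nubp \otimes \nubp$, and compute the resulting pullbacks of bigraded rings on homotopy. All three non-pullback corner terms have already been identified in the displayed formulas immediately preceding the theorem, so the remaining work is algebraic.

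First I would apply $\pi_{\starstar}$ to the fracture square for $\nubp$. The map
\[
\pi_{\starstar}(\nubp[\lambda^{-1}]) \longrightarrow \pi_{\starstar}((\nubp)_\lambda^\wedge[\lambda^{-1}])
\]
is, by the displayed computations, the base change of the inclusion $\bZ_{(p)} \hookrightarrow \bZ_p^\wedge$ along the inclusion of polynomial rings in $\lambda^{\pm 1}, v_1, v_2, \ldots$, so it is injective in every bidegree. The resulting Mayer--Vietoris long exact sequence therefore degenerates into short exact sequences and identifies $\nubp_{\starstar}$ with the ordinary pullback of bigraded rings
\[
\bZ_{(p)}[\lambda^{\pm 1}, v_1, v_2, \ldots] \;\times_{\bZ_p^\wedge[\lambda^{\pm 1}, v_1, v_2, \ldots]}\; \bZ_p^\wedge[\lambda, h, v_1, v_2, \ldots]/(\lambda h - p).
\]

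Next, to identify this pullback as $\bZ_{(p)}[\lambda, h, v_1, v_2, \ldots]/(\lambda h - p)$, I would observe that the generators $\lambda$ and $v_i$ lift canonically from both factors and that the class $h$ is sent to $p\lambda^{-1}$ under $\lambda$-inversion, which already lies in $\bZ_{(p)}[\lambda^{\pm 1}, v_i]$. A $\bZ_p^\wedge$-basis of the bottom-right term is given by the monomials $\lambda^a v^{\vec c}$ with $a \geq 0$ together with $h^b v^{\vec c}$ with $b \geq 1$; these basis elements land on distinct monomials of the bottom-left with respective coefficients $1$ and $p^b$. Consequently, an element is in the pullback if and only if its $\bZ_p^\wedge$-coefficients in this basis lie in $\bZ_p^\wedge \cap \bQ = \bZ_{(p)}$, which yields the claimed presentation as a $\bZ_{(p)}[\lambda]$-algebra.

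The argument for $\nubpbp$ is formally identical using the displayed calculations of $\pi_{\starstar}((\nubp \otimes \nubp)_\lambda^\wedge)$ and $\pi_{\starstar}((\nubp \otimes \nubp)[\lambda^{-1}])$. The classes $t_i$ sit in bidegree $(2p^i - 2, 2p^i - 2)$, are detected in $\hfp$-synthetic Adams filtration zero, and appear with coefficient $1$ on both sides of the fracture square; they therefore contribute freely and commute with the previous generators, so the pullback identifies with $\nubp_{\starstar}[t_1, t_2, \ldots]$. The only step that requires genuine care is the degeneration of the Mayer--Vietoris sequence into short exact sequences, and this is resolved immediately by the bidegreewise injectivity of $\bZ_{(p)} \hookrightarrow \bZ_p^\wedge$ noted above.
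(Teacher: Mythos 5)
Your proposal follows the same route as the paper: apply the fracture square of Lemma~\ref{fracture}, read off the three known corners, and identify $\nubp_{\starstar}$ (resp.\ $\nubpbp$) with the resulting pullback. The paper's own proof is terser — it simply asserts the two algebraic pullback squares and declares the theorem to follow — so the extra detail you give on why $\pi_{\starstar}$ of the homotopy pullback is an honest pullback of bigraded rings, and on the explicit monomial-by-monomial identification of that pullback, is a genuine improvement in rigor rather than a different method.

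One small imprecision worth correcting: the Mayer--Vietoris sequence associated to a homotopy pullback square does not degenerate because one of the maps into the corner is \emph{injective}; the relevant criterion is that, in each bidegree, the map $\pi_{\starstar}(\nubp[\lambda^{-1}]) \oplus \pi_{\starstar}((\nubp)^{\wedge}_{\lambda}) \to \pi_{\starstar}((\nubp)^{\wedge}_{\lambda}[\lambda^{-1}])$ is \emph{surjective}, which forces the boundary maps $\pi_{*+1}(D)\to\pi_*(\nubp)$ to vanish. Surjectivity does hold here — in a fixed bidegree the target is a free $\bZ_p^{\wedge}$-module on monomials $\lambda^a v^{\vec c}$, the first summand surjects onto the $\bZ_{(p)}$-span of those monomials, the second hits $p^{b}\cdot\bZ_p^{\wedge}$ in the $\lambda^{-b}v^{\vec c}$ component for $b\geq 1$ and all of $\bZ_p^{\wedge}$ for $a\geq 0$, and $\bZ_{(p)} + p^{b}\bZ_p^{\wedge} = \bZ_p^{\wedge}$ — so the conclusion is unaffected, but the stated justification should be replaced. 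You also refer to the $\bZ_p^{\wedge}$-basis $\{\lambda^a v^{\vec c}\}_{a\geq 0}\cup\{h^b v^{\vec c}\}_{b\geq 1}$ as a basis of the ``bottom-right'' term, when in the paper's layout this is the basis of the $\lambda$-completion (the top-right corner); after that relabeling the identification of the pullback with $\bZ_{(p)}[\lambda,h,v_i]/(\lambda h = p)$ via $\bZ_p^{\wedge}\cap\bQ = \bZ_{(p)}$ is correct, as is the parallel treatment of the $t_i$.
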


\begin{proof}
By Lemma~\ref{fracture} and the fact that $\nubp_{\starstar}$ and $\nubpbp$ are concentrated in  topological degrees, $\nubp_{\starstar}$ and $\nubpbp$ each fit in pullback squares of $\bZ_{(p)}[\lambda]$-algebras

\begin{equation*}
    \begin{tikzcd}
        \nubp_{\starstar} \ar[r] \ar[d] & \bZ_p^{\wedge}[\lambda,h,v_1,v_2,\ldots]/(\lambda h=p) \ar[d] \\
        \bZ_{(p)}[\lambda^{\pm 1},v_1,v_2,\ldots] \ar[r] & \bZ_{p}^{\wedge}[\lambda^{\pm 1},v_1,v_2,\ldots] \\
    \end{tikzcd}
\end{equation*}

\begin{equation*}
    \begin{tikzcd}
        \nubpbp \ar[r] \ar[d] & \bZ_p^{\wedge}[\lambda,h,v_1,v_2,\ldots,t_1,t_2,\ldots]/(\lambda h=p) \ar[d] \\
        \bZ_{(p)}[\lambda^{\pm 1},v_1,v_2,\ldots,t_1,t_2,\ldots] \ar[r] & \bZ_{p}^{\wedge}[\lambda^{\pm 1},v_1,v_2,\ldots,t_1,t_2,\ldots]. \\
    \end{tikzcd}
\end{equation*}
\end{proof}

\subsection{Hopf algebroid structure of $(\nubp_{\starstar},\nubpbp)$}

Working over the ground ring $\bZ_{(p)}[\lambda]$, we can use Theorem~\ref{BPhomotopytheorem} and the $\lambda$-localization map to describe the remaining Hopf algebroid structure of $(\nubp_{\starstar},\nubpbp)$. Because the homotopy groups of $\nubp$ and $\nubp\otimes\nubp$ are both $\lambda$-torsion free, for all integers $k,s$ we have inclusions $\nubp_{k,k+s}\hookrightarrow\BP_k$ and $\nubp_{k,k+s}\nubp\hookrightarrow\BP_k\BP$ induced by $\lambda^{-1}:\Syn_{\hfp}\to\Sp$. This implies that the remaining Hopf algebroid structure maps are entirely determined by their classical analogs:

\begin{proposition}
\label{BPhopfalgebroidprop}
The $\bZ_{(p)}[\lambda]$-module maps
\begin{equation*}
    \begin{split}
       \eta_L,\eta_R: &\nubp_{\starstar}\to\nubpbp \\
       \epsilon: &\nubpbp\to \nubp_{\starstar} \\
       \Delta: &\nubpbp\to\nubpbp\otimes_{\nubp_{\starstar}}\nubpbp \\
       c:&\nubpbp\to\nubpbp
    \end{split}
\end{equation*}

associated to the Hopf algebroid $(\nubp_{\starstar},\nubpbp)$ are determined completely by the commuting diagrams
\begin{figure}[h!]
  \begin{subfigure}{0.5\textwidth}
    \centering
    \begin{tikzcd}
      \nubp_{\starstar} \ar[r,"\lambda^{-1}"] \ar[d,"\eta_L"'] & \BP_* \ar[d,"\eta_L"] \\
      \nubpbp \ar[r,"\lambda^{-1}"'] & \BP_*\BP 
    \end{tikzcd}
  \end{subfigure}%
  \begin{subfigure}{0.5\textwidth}
    \centering
    \begin{tikzcd}
      \nubp_{\starstar} \ar[r,"\lambda^{-1}"] \ar[d,"\eta_R"'] & \BP_* \ar[d,"\eta_R"] \\
      \nubpbp \ar[r,"\lambda^{-1}"'] & \BP_*\BP 
    \end{tikzcd}
  \end{subfigure}
\end{figure}

\begin{figure}[h!]
    \centering
    \begin{tikzcd}
      \nubpbp \ar[r,"\lambda^{-1}"] \ar[d,"\Delta"'] & \BP_*\BP \ar[d,"\Delta"] \\
    \nubpbp\otimes_{\nubp_{\starstar}}\nubpbp \ar[r,"\lambda^{-1}"'] & \BP_*\BP\otimes_{\BP_{*}}\BP_{*}\BP 
    \end{tikzcd}
\end{figure}

\begin{figure}[h!]
  \begin{subfigure}{0.5\textwidth}
    \centering
    \begin{tikzcd}
      \BP_{\starstar} \BP^{\bF_p} \ar[r,"\lambda^{-1}"] \ar[d,"\epsilon"'] & \BP_*\BP \ar[d,"\epsilon"] \\
      \nubp_{\starstar} \ar[r,"\lambda^{-1}"'] & \BP_* 
    \end{tikzcd}
  \end{subfigure}%
  \begin{subfigure}{0.5\textwidth}
    \centering
    \begin{tikzcd}
      \BP_{\starstar}\BP^{\bF_p} \ar[r,"\lambda^{-1}"] \ar[d,"c"'] & \BP_*\BP \ar[d,"c"] \\
    \nubpbp \ar[r,"\lambda^{-1}"'] & \BP_*\BP 
    \end{tikzcd}
  \end{subfigure}
\end{figure}
\end{proposition}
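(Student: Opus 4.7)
The strategy is to combine two ingredients: naturality of Hopf algebroid structure maps under symmetric monoidal functors (which yields commutativity of the squares), and the $\lambda$-torsion freeness implied by Theorem~\ref{BPhomotopytheorem} (which yields uniqueness).

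First, for commutativity, $\BP$ is a commutative ring spectrum expressible as a filtered colimit of $\bF_p$-finite projective spectra, so by Proposition~\ref{Fpomnibus}(a) the symmetric monoidal functor $\nu_{\bF_p}$ endows $\nubp$ with a canonical commutative algebra structure in $\Syn_{\bF_p}$. The standard spectrum-level maps defining the cooperations (the two unit inclusions into $\BP\otimes\BP$, the unit in the middle factor of $\BP\otimes\BP\otimes\BP$, the multiplication, and the swap) have synthetic analogs obtained by applying $\nu_{\bF_p}$ and smashing, and these descend on bigraded homotopy to the maps $\eta_L,\eta_R,\Delta,\epsilon,c$ for $(\nubp_{\starstar},\nubpbp)$. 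The realization $\lambda^{-1}:\Syn_{\bF_p}\to\Sp$ is symmetric monoidal and, since $\BP$ is connective, satisfies $\lambda^{-1}\nubp\simeq \BP$. Naturality of each cooperation under this symmetric monoidal functor immediately produces the commutative squares in the statement.

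For uniqueness, I would observe that every target in the left-hand column of the squares is $\lambda$-torsion free. This is immediate from Theorem~\ref{BPhomotopytheorem} for $\nubp_{\starstar}$ and $\nubpbp$, where the only relation $\lambda h=p$ introduces no torsion. For the target $\nubpbp\otimes_{\nubp_{\starstar}}\nubpbp$ of $\Delta$, it follows from the fact that $\nubpbp\cong\nubp_{\starstar}[t_1,t_2,\ldots]$ is a free, hence flat, $\nubp_{\starstar}$-module. Thus in every bidegree the horizontal map $\lambda^{-1}:M_{\starstar}\to M_*$ at the bottom of each square is injective, since it factors each $\pi_{t,w}$-component into the $\lambda$-inverted module whose value in every weight agrees with the classical $\pi_t$. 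A map into a $\lambda$-torsion free bigraded module is then determined by its composition with $\lambda^{-1}$, and combined with commutativity this shows each synthetic structure map is uniquely determined by its classical analog together with the bigrading data.

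The only substantive technical point is the $\lambda$-torsion freeness of $\nubpbp\otimes_{\nubp_{\starstar}}\nubpbp$, which drops out cleanly from the polynomial presentation of $\nubpbp$ over $\nubp_{\starstar}$. Everything else is a formal consequence of the symmetric monoidality of $\nu_{\bF_p}$ and $\lambda^{-1}$ together with the identification $\lambda^{-1}\nubp\simeq\BP$ on connective input.
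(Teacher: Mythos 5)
Your proof is correct and follows essentially the same route the paper takes: the paper's preceding paragraph argues that because $\nubp_{\starstar}$ and $\nubpbp$ are $\lambda$-torsion free, the components of the $\lambda$-localization map $\lambda^{-1}:\Syn_{\hfp}\to\Sp$ induce injections $\nubp_{k,k+s}\hookrightarrow\BP_k$ and $\nubp_{k,k+s}\nubp\hookrightarrow\BP_k\BP$, from which the uniqueness of the structure maps follows once one observes that the squares commute by naturality under the (symmetric monoidal) functors $\nu_{\hfp}$ and $\lambda^{-1}$. One small but real improvement in your write-up: for the coproduct $\Delta$ the relevant target is $\nubpbp\otimes_{\nubp_{\starstar}}\nubpbp$, whose $\lambda$-torsion freeness the paper does not explicitly address; you correctly supply it by noting $\nubpbp\cong\nubp_{\starstar}[t_1,t_2,\ldots]$ is free, hence flat, over $\nubp_{\starstar}$. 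A minor quibble: the isomorphism $\lambda^{-1}\nu_{\hfp}\BP\simeq\BP$ does not actually require connectivity of $\BP$ — it is Pstr\k{a}gowski's general fact that $\tau^{-1}\circ\nu_E\simeq\mathrm{id}_{\Sp}$ — so you can drop that hypothesis from your argument.
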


In particular, the formulas for each of these maps are the same as the classical ones \cite{Rav86}, up to multiples of $\lambda$ and terms which contain multiples of $h$ instead of multiples of $p$. In fact, we can precisely write down these synthetic formulas. Classically, the formulas for several of these maps are more easily described in terms of Hazewinkel's log coefficient generators $l_n\in\BP_*\otimes\bQ$ rather than $v_n$. Synthetically, we define $l_n\in\nubp_{\starstar}\otimes\bQ$ in terms of $v_n\in\nubp_{\starstar}$ as follows:

\begin{definition}
\label{lndef}
    Let $l_n\in\nubp_{2p^n-2,2p^n-1}\otimes\bQ$ for $n\geq 0$ be defined recursively as
    \begin{align*}
        pl_0=h, & \hspace{.5cm}pl_1 = v_1, \\
        pl_n = v_n + \sum_{0<i<n} &\lambda^{p^i}l_iv_{n-i}^{p^i},\hspace{.5cm} n\geq 2.     
    \end{align*}
\end{definition}

The $\lambda$-inversion functor $\lambda^{-1}:\nubp_{\starstar}\otimes\bQ\to\BP_*\otimes\bQ$ after tensoring with $\bQ$ sends $l_0\mapsto 1$ and $l_n\mapsto l_n$ for $n\geq 1$. By Proposition~\ref{BPhopfalgebroidprop} and the formulas in \cite{Rav76} and \cite{Rav86}, we get the following synthetic formulas:

\begin{theorem}
\label{BPformulathm}
On generators $v_n\in\nubp_{\starstar}$, $l_n\in\nubp_{\starstar}\otimes\bQ$, and $t_n\in\nubpbp$, the structure maps of Proposition~\ref{BPhopfalgebroidprop} are determined by the formulas
\begin{align*}
    \eta_L(v_n)&=v_n, \\
    \eta_R(l_n)&=\sum_{i=0}^{n}l_it_{n-i}^{p^i}, \\
    \epsilon(v_n)=v_n,&\hspace{.5cm} \epsilon(t_n)=0, \\
    \sum_{i+j=n}l_i(\Delta(t_j))^{p^i}&=\sum_{h+i+j=n}l_h(t_i^{p^h}\otimes t_j^{p^{h+i}}), \\
    \sum_{h+i+j=n}l_ht_i^{p^h}&c(t_j)^{p^{h+i}}=l_n.
\end{align*}
\end{theorem}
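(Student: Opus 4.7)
The plan is to reduce the problem to a combination of bidegree bookkeeping and comparison with Ravenel's classical formulas, exploiting Proposition~\ref{BPhopfalgebroidprop} as the main structural input. The point is that each of the structure maps $\eta_L, \eta_R, \epsilon, \Delta, c$ is uniquely determined by its $\lambda$-localization, so it suffices to exhibit elements on the right-hand side of each formula that (i) lie in the correct bidegree and (ii) recover the classical formula after inverting $\lambda$. The injectivity of $\lambda^{-1}$ needed to conclude uniqueness follows from Theorem~\ref{BPhomotopytheorem}: both $\nubp_\starstar$ and $\nubpbp$ are free (and hence $\lambda$-torsion-free) as $\bZ_{(p)}[\lambda]$-modules, so the maps $\lambda^{-1}:\nubp_{t,w}\to\BP_t$ and $\lambda^{-1}:\nubp_{t,w}\nubp\to\BP_t\BP$ are injective on each bidegree summand.

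First I would verify that Definition~\ref{lndef} is internally consistent, i.e.\ that each $l_n$ lies in the stated bidegree $(2p^n-2, 2p^n-1)$. The only nontrivial check is on the right-hand side of $pl_n = v_n + \sum_{0<i<n}\lambda^{p^i}l_i v_{n-i}^{p^i}$: using $|v_j|=(2p^j-2,2p^j-1)$, $|l_i|=(2p^i-2,2p^i-1)$, and $|\lambda|=(0,-1)$, a short computation gives $|l_i v_{n-i}^{p^i}| = (2p^n-2, 2p^n + p^i - 1)$, and the factor $\lambda^{p^i}$ is precisely what is needed to land in bidegree $(2p^n-2, 2p^n-1)$. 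After inverting $\lambda$, the $\lambda^{p^i}$ become units and $h$ is identified with $p$ so that $l_0=1$, whence the recursion collapses to Hazewinkel's classical definition of the log coefficients in $\BP_*\otimes\bQ$.

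Next I would verify the five formulas one at a time. The identity $\eta_L(v_n)=v_n$ is immediate. For $\eta_R$, $\Delta$, and $c$, I would just perform the analogous bidegree check on each monomial $l_h t_{i_1}^{p^{a_1}}\cdots t_{i_k}^{p^{a_k}}$ and observe that no additional $\lambda$-factor is required (the exponents on the $t$'s are precisely tuned so that the internal and topological degrees match automatically, because $|t_j| = (2p^j-2, 2p^j-2)$ has slope $1$). After $\lambda^{-1}$, all three formulas become Ravenel's Theorems A2.1.27 and related (see \cite{Rav86}), using $l_0 = 1$ and the classical recursion. The formulas $\epsilon(v_n)=\epsilon(t_n)=0$ hold already over $\bZ_{(p)}[\lambda]$ since the classical $\epsilon$ kills these elements and $\lambda^{-1}$ is injective on positive total degree.

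The only real obstacle is the bookkeeping in the second coordinate for the $\Delta$ and $c$ formulas, where one must confirm that all the $l_h$-factors with their implicit $\lambda$-powers (coming through Definition~\ref{lndef}) combine consistently on both sides of the equation so that the formula is well-defined before applying $\lambda^{-1}$. This, however, is purely formal: once each monomial is shown to live in the same bidegree as $l_n$ or $l_h$, the relation holds because it does so after inverting $\lambda$ (by Ravenel) and the inclusion into $\BP_*\BP\otimes\bQ$ is injective. No genuine deformation-theoretic obstruction arises, because Proposition~\ref{BPhopfalgebroidprop} has already guaranteed that such $\lambda$-refined lifts exist uniquely.
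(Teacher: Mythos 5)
Your proposal is correct and follows essentially the same strategy as the paper's proof: reduce to Ravenel's classical formulas via the $\lambda$-inversion map, then check that the bidegrees (weights) on both sides of each formula agree so that $\lambda$-torsion-freeness of $\nubp_\starstar$ and $\nubpbp$ guarantees the lift is unique. You have merely spelled out the bidegree bookkeeping and the injectivity argument in more detail than the paper, which states them in one sentence.
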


\begin{proof}
    This essentially follows from the formulas in \cite{Rav76} and \cite{Rav86} and a careful check that the weights of both sides of each formula coincide.
\end{proof}

\begin{remark}
\label{BPformulaexampleremark}
    Though the formulas for $\eta_R$, $\Delta$, and $c$ in Theorem~\ref{BPformulathm} have no powers of $\lambda$ present, powers of $\lambda$ can pop up when describing those formulas in terms of $v_n$ instead of $l_n$. This is essentially a consequence of the definition of $l_n$ in Definition~\ref{lndef}. For example, working in $\nubp_{\starstar}\otimes\bQ$, the first three values of $\eta_R$ on $h$, $v_1$, and $v_2$ are
    \begin{align*}
        \eta_R(h)&=h, \\
        \eta_R(v_1)&=v_1+ht_1, \\
        \eta_R(v_2)&=v_2+ht_2+v_1t_1^p-\frac{\lambda^p}{p}\left(\sum_{j=0}^p\binom{p+1}{j}h^{p-j+1}v_1^jt_1^{p-j+1}\right).
    \end{align*}
    If we take $p=2$, for example, then via the relation $\lambda h=2$, this reduces to
    \begin{equation*}
        \eta_R(v_2)=v_2+ht_2-5v_1t_1^2-2ht_1^3-3\lambda v_1^2t_1,
    \end{equation*}
    which makes sense over $\bZ_{(2)}$.
\end{remark}

\subsection{Synthetic Adams-Novikov SS}

As a consequence of Theorem~\ref{BPhomotopytheorem}, the Hopf algebroid $(\nubp_{\starstar},\nubpbp)$ is flat. In the usual way, this gives a nice identification of the $\E_2$-page of the $\nubp$-Adams spectral sequence in terms of Ext groups:

\begin{proposition}
\label{synANSSe2page}
Suppose $X$ is an $\bF_p$-synthetic spectrum. Then the $\E_2$-page of the $\nubp$-Adams spectral sequence for $X$ is isomorphic to
\begin{equation*}
    \E_2=\Ext_{\nubpbp}^{\starstarstar}(\nubp_{\starstar},\nubp_{\starstar}X).
\end{equation*}
\end{proposition}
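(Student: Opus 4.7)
The plan is to follow the standard pattern for identifying the $E_2$-page of an Adams-type spectral sequence associated to a ring spectrum whose cooperations form a flat Hopf algebroid, adapted to the bigraded synthetic setting. The crucial input is already in hand: by Theorem~\ref{BPhomotopytheorem}, $\nubpbp \cong \nubp_{\starstar}[t_1,t_2,\ldots]$ is a polynomial algebra over $\nubp_{\starstar}$, hence free, hence flat as a $\nubp_{\starstar}$-module on either side.

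First I would set up the synthetic $\nubp$-Adams spectral sequence as the spectral sequence associated to the cosimplicial object $\nubp^{\otimes\bullet+1}\otimes X$ in $\Syn_{\hfp}$, or equivalently to its associated tower obtained by decalage. The $E_1$-page is
\[
E_1^{s,\starstar} \cong \pi_{\starstar}\bigl(\nubp^{\otimes s+1}\otimes X\bigr),
\]
and the $d_1$-differential is the alternating sum of the coface maps induced by the unit $\bS_{\bF_p}\to \nubp$.

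Next I would identify the $E_1$-page with the cobar complex. Using the flatness of $\nubpbp$ over $\nubp_{\starstar}$ established above, a routine induction on $s$ (building $\nubp^{\otimes s+1}\otimes X$ as a tensor product of copies of $\nubp$ and using that smashing with $\nubp$ corresponds on homotopy to tensoring with $\nubpbp$ over $\nubp_{\starstar}$) yields a Künneth-type isomorphism
\[
\pi_{\starstar}\bigl(\nubp^{\otimes s+1}\otimes X\bigr) \cong \nubpbp \otimes_{\nubp_{\starstar}} \cdots \otimes_{\nubp_{\starstar}} \nubpbp \otimes_{\nubp_{\starstar}} \nubp_{\starstar}X,
\]
with $s$ copies of $\nubpbp$. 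Under this isomorphism, the cosimplicial structure maps induce the comodule cobar complex for $\nubp_{\starstar}X$, regarded as a left comodule over the flat Hopf algebroid $(\nubp_{\starstar},\nubpbp)$ via the coaction coming from the left unit of $\nubp\otimes \nubp\otimes X$.

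Finally, taking cohomology gives the $E_2$-page, and by the standard identification of the cobar complex of a comodule over a flat Hopf algebroid with a resolution computing $\Ext$ in the (abelian) category of $\nubpbp$-comodules (see, e.g., Ravenel's green book A1.2.12), one obtains
\[
E_2^{s,\starstar} \cong \Ext_{\nubpbp}^{s,\starstar}(\nubp_{\starstar},\nubp_{\starstar}X),
\]
which reassembles into the trigraded Ext statement. The main technical obstacle is keeping the three gradings (cobar degree, topological degree, weight) consistent — in particular, checking that the Künneth isomorphism on $\pi_{\starstar}$ is weight-preserving, which follows because $\nubpbp$ is concentrated in nonnegative weight and is free of finite rank in each bidegree over $\nubp_{\starstar}$, so flat base change commutes with the bigraded homotopy of synthetic smash products. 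Beyond this bookkeeping, no new ideas beyond the classical Adams-Novikov argument are required.
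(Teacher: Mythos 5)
Your proposal is correct and matches what the paper intends. The paper does not write out a proof of Proposition~\ref{synANSSe2page}; it states only that the identification follows ``in the usual way'' once the flatness of the Hopf algebroid $(\nubp_{\starstar},\nubpbp)$ is established from Theorem~\ref{BPhomotopytheorem}. Your argument — set up the cosimplicial $\nubp$-resolution, use flatness of $\nubpbp$ over $\nubp_\starstar$ to kill the $\Tor$ terms in the K\"unneth spectral sequence and identify the $E_1$-page with the cobar complex, then pass to cohomology and invoke the standard identification of the cobar complex with a relative-injective resolution computing $\Ext$ over a flat Hopf algebroid — is exactly the classical route being invoked, and it does transport to the synthetic setting because $\Syn_{\bF_p}$ is generated under colimits by the bigraded spheres, so the comparison of the two trigraded homology theories can be checked there. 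The one small point worth making explicit in a write-up is that flatness is also what guarantees the coaction map $\nubp_\starstar X \to \nubpbp\otimes_{\nubp_\starstar}\nubp_\starstar X$ is well-defined, i.e.\ that $\nubp_\starstar X$ is genuinely a $\nubpbp$-comodule; but this is the same standard consequence of flatness you are already using, so no new idea is needed.
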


Whenever $X$ is bounded below in the sense that there's an $n\in\bZ$ such $\pi_{k,w}X=0$ for $k<n$, the $\nubp$-Adams spectral sequence turns out to converge to the homotopy of the $p$-localization of $X$. This is analogous to the classical situation, where the $\BP$-Adams spectral sequence for a bounded-below spectrum converges to the homotopy of its $p$-localization. We state the main result here, but defer the proof to Appendix~\ref{nilcompappendix}, which uses the machinery of \cite{Man21}:

\begin{theorem}(\ref{synANSSconvergencethm})
For every bounded-below $X\in\Syn_{\bF_p}$,
    $$
    X_{\nubp}^{\wedge}\simeq X_{(p)}
    $$
    where $X_{\nubp}^{\wedge}$ denotes $\nubp$-nilpotent completion and $p$-localization is taken with respect to $p\in\pi_{0,0}\bS_{\bF_p}\cong\bZ$.    
\end{theorem}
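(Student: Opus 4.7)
The plan is to combine Mantovani's machinery for nilpotent completions from \cite{Man21} with the computation of $\nubp_{\starstar}$ in Theorem~\ref{BPhomotopytheorem} and the fracture-square analysis from Section~\ref{HFpsubsection}. Classically, for bounded-below $Y \in \Sp$ one has $Y_{\BP}^{\wedge} \simeq Y_{(p)}$, and the goal is to lift this identification to $\Syn_{\bF_p}$ while carefully accounting for the extra deformation parameter $\lambda$.

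First, I would apply Mantovani's criteria to identify the $\nubp$-nilpotent completion $X_{\nubp}^{\wedge}$ of a bounded-below $X$ with a Bousfield localization $L_{\nubp}X$. The $\bE_{\infty}$-ring $\nubp$ has an explicit description as a $\bZ_{(p)}[\lambda]$-algebra whose unit induces the reduction $\pi_{0,0}(\bS_{\bF_p}) \to \nubp_{0,0}/p$, which should place us in the range of applicability of Mantovani's recognition theorem. The bounded-below hypothesis on $X$ supplies the necessary convergence of the $\nubp$-cobar tower, much as connectivity does classically for $\BP$.

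Next, I would identify $L_{\nubp}X$ with $X_{(p)}$ by showing that $\nubp$ and $\bS_{\bF_p,(p)}$ are Bousfield-equivalent when restricted to bounded-below synthetic spectra. Via the fracture square of Lemma~\ref{fracture}, this question splits into a comparison after inverting $\lambda$ and after $\lambda$-completion. After inverting $\lambda$ we are working in $\Sp$ with $\nubp[\lambda^{-1}] \simeq \BP_{(p)}$, so the classical theorem (Ravenel) applies directly to $X[\lambda^{-1}]$. After $\lambda$-completion, it suffices to compare after further reducing modulo $\lambda$, which by Proposition~\ref{Fpomnibus}(b) lands in $\Stable(\cA_*)$; there $\nubp/\lambda \simeq \cP_*$ and we must verify the comodule-theoretic statement that $\cP_*$-completion agrees with $p$-completion on bounded-below cellular stable comodules. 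The latter is a Hopf-algebroid analog of the classical identification and should follow from the conditional convergence of the Cartan-Eilenberg spectral sequence together with the $t$-structure set up in Section~\ref{tstructuresection}.

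The main obstacle will be verifying the technical hypotheses of Mantovani's theorem for $\nubp$ acting on a bounded-below $X$, and, separately, controlling the interaction between the $\lambda$-adic and $\nubp$-adic towers so that the two reductions through the fracture square can be glued into a single equivalence without losing the bounded-below hypothesis. This is precisely where the twisted $t$-structure machinery developed in Section~\ref{tstructuresection} should be essential, as it provides uniform convergence statements in both $\Stable(\Gamma)^{\cell}$ and $\Syn_{\bF_p}$ needed to pass from completion of pieces to completion of $X$ itself.
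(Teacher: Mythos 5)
Your instinct to route through Mantovani's machinery is correct, and this is indeed what the paper does, but your execution misidentifies the relevant case of his theorem and then wanders into an unnecessary and unfinished fracture-square detour. The crucial algebraic input is that the unit map on $\pi_0^{\heartsuit}$ of the twisted $t$-structure (Lemma~\ref{Fpsyntstructurelemma}, Example~\ref{Fpsynexample}) reads
$$
\bZ[\lambda,h]/(\lambda h=p)\;=\;\pi_0^{\heartsuit}(\bS_{\bF_p})_*\;\longrightarrow\;\pi_0^{\heartsuit}(\nubp)_*\;=\;\bZ_{(p)}[\lambda,h]/(\lambda h=p),
$$
which is a \emph{localization} at $p$, not a reduction modulo $p$. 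Your phrase ``unit induces the reduction $\pi_{0,0}(\bS_{\bF_p})\to\nubp_{0,0}/p$'' puts you in the $\mathcal{J}=\emptyset$ regime of Assumption~\ref{pi0assumption}, but the truth is $\mathcal{I}=\emptyset$ with $\mathcal{J}=\pi_{0,0}\setminus(p)$. This is not cosmetic: the $\mathcal{J}=\emptyset$ side would produce a $p$-\emph{completion}, whereas the theorem's conclusion is a $p$-\emph{localization}. Once one sees we are on the $\mathcal{I}=\emptyset$ side, Theorems~\ref{I0BousEquivNil} and \ref{MooreObjectThm2} give $X_{\nubp}^{\wedge}\simeq L_{\nubp}X\simeq L_{(\bS_{\bF_p})_{(p)}}X$ directly, and then smashingness of localization at the $\otimes$-idempotent object $(\bS_{\bF_p})_{(p)}$ finishes the proof. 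That is the whole argument.

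By contrast, the fracture-square comparison you propose is not needed, and as written does not close: you correctly identify the $\lambda$-inverted piece via the classical Ravenel result, but then for the $\lambda=0$ piece you assert that ``$\cP_*$-completion agrees with $p$-completion on bounded-below cellular stable comodules,'' which is not what the paper's Proposition~\ref{Acompletiontheorem} says. That proposition shows $\cP_*$-nilpotent completion in $\Stable(\cA_*)$ is the \emph{identity} on bounded-below objects (because $\pi_0^{\heartsuit}(\cP_*)\cong\pi_0^{\heartsuit}(\bF_p)\cong\bF_p[h_0]$), and in any case $p$ acts by zero in the $\bF_p$-linear category $\Stable(\cA_*)$, so ``$p$-completion'' there is not a sensible target for the comparison. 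Finally, you yourself flag the real difficulty with your route: gluing the two halves of the fracture square back into an equivalence for $X$ while preserving bounded-belowness. This is precisely the step your proposal does not supply, and the paper's one-step application of Mantovani avoids it entirely.
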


\subsection{A synthetic algebraic Novikov SS}
\label{synalgnsssection}

Classically, the algebraic Novikov spectral sequence (aNSS) is a spectral sequence that tries to compute the $\E_2$-page
\begin{equation*}
    {}_{\mathrm{AN}}\E_2^{s,t}=\Ext_{\BP_*\BP}^{s,t}(\BP_*,\BP_*X)
\end{equation*}

of the Adams-Novikov spectral sequence (ANSS) for a spectrum $X$. The ANSS $\E_2$-page is the homology of the $\BP_*\BP$-cobar complex, which we can filter by powers of the ideal $I:=\ker (\BP_*\to \bF_p)=(p,v_1,v_2,\ldots)$. The resulting spectral sequence is the aNSS and it has $\E_2$-page
\begin{equation*}
    \E_2=\Ext_{\Gr_*(\BP_*\BP)}^{\starstarstar}(\Gr_*(\BP_*),\Gr_*(\BP_*X)),
\end{equation*}

where $\Gr_*(-)$ is the associated graded functor. The pair $(\Gr_*(\BP_*),\Gr_*(\BP_*\BP))$ is a bigraded Hopf algebroid satisfying $\Gr_*(\BP_*)\cong \bF_p[a_0,a_1,\ldots]$ with $a_i$ in bidegree $(u,t)=(1,2p^i-2)$ and $\Gr_*(\BP_*\BP)\cong \Gr_*(\BP_*)[b_1,b_2,\ldots]$ with $b_j$ in bidegree $(0,2p^j-2)$, where $u$ is the algebraic Novikov filtration and $t$ is internal degree. The $a_i$ and $b_j$ correspond to $v_i$ and $t_j$ respectively.

\bigskip

Using the results earlier in this section which identify the Hopf algebroid $(\nubp_{\starstar},\nubpbp)$, we can imitate the construction of the aNSS in $\Syn_{\hfp}$ to get what we call the \textit{synthetic algebraic Novikov spectral sequence} (synthetic aNSS). Our main result in this section is an identification of the synthetic aNSS of the sphere as a $\lambda$-Bockstein spectral sequence. This result implies that the synthetic aNSS and classical aNSS contain equivalent information. In particular, the $\E_2$-page of the synthetic ANSS can be computed using knowledge of classical aNSS differentials. We do this in Section~\ref{ANSSsphere}.

\bigskip

We first define the synthetic algebraic Novikov filtration of $\nubp_{\starstar}$, analogous to the classical one on $\BP_*$.

\begin{definition}
Let $J:=\ker (\nubp_{\starstar}\to\bF_p[\tau])=(h,v_1,v_2,\ldots)$ be the kernel of the map which kills $h$ and all of the $v_i$. We filter $\nubp_{\starstar}$ and $\nubpbp$ by letting $F^s\nubp_{\starstar}=J^s\cdot\nubp_{\starstar}$ and $F^s\nubpbp=J^s\cdot\nubpbp$.
\end{definition}

Immediately, we can identify the associated graded of $\nubp_{\starstar}$ and $\nubpbp$. It is a $\lambda$-extended version of the classical associated graded of $\BP$ and $\BP_*\BP$:

\begin{lemma}
\label{assgradlemma}
If $\Gr_*(-)$ is the associated graded functor, then
    \begin{align*}
        \Gr_*(\nubp_{\starstar})&\cong\bF_p[\lambda,a_0,a_1,\ldots]\cong \Gr_*(\BP_*)[\lambda], \\
        \Gr_*(\nubpbp)&\cong \Gr_*(\nubp_{\starstar})[b_1,b_2,\ldots]\cong\Gr_*(\BP_*\BP)[\lambda].
    \end{align*}
\end{lemma}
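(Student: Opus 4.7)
My plan is to deduce both isomorphisms directly from the ring presentation of Theorem~\ref{BPhomotopytheorem} by a standard initial-ideal computation. First I would write $\nubp_\starstar = \tilde R / (\lambda H - p)$, where $\tilde R = \bZ_{(p)}[\lambda][H, V_1, V_2, \ldots]$ is a genuine polynomial ring (with $\lambda$ treated as a scalar), under the identifications $H \leftrightarrow h$ and $V_i \leftrightarrow v_i$. Place $\tilde R$ in its natural grading with $\lambda$ in degree $0$ and each $H, V_i$ in degree $1$, so that the $\tilde J$-adic filtration for $\tilde J = (H, V_1, \ldots)$ has associated graded canonically equal to $\tilde R$ itself. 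This filtration descends along the quotient to the $J$-adic filtration on $\nubp_\starstar$.

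The key step is to determine the initial ideal of $(\lambda H - p)$ inside $\tilde R$. The element $\lambda H - p$ has degree-$0$ component $-p \in \bZ_{(p)}[\lambda]$, so its initial form is $-p$ in filtration degree $0$. Since $\tilde R$ is $p$-torsion-free, the initial form of any nonzero multiple $g \cdot (\lambda H - p)$ is simply $-p$ times the leading homogeneous component of $g$, and the initial ideal equals the principal ideal $(p) \subset \tilde R$. Applying the standard identity $\Gr_*(\tilde R/I) \cong \Gr_*(\tilde R)/\mathrm{in}(I)$ gives
\[
\Gr_*(\nubp_\starstar) \cong \tilde R/(p) = \bF_p[\lambda][H, V_1, V_2, \ldots] = \bF_p[\lambda, a_0, a_1, \ldots],
\]
and comparison with the classical formula $\Gr_*(\BP_*) = \bF_p[a_0, a_1, \ldots]$ produces the stated factorization as $\Gr_*(\BP_*)[\lambda]$. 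The second isomorphism follows because $\nubpbp = \nubp_\starstar[t_1, t_2, \ldots]$ and the $t_j$ sit in $J$-filtration $0$, so the $J$-adic associated graded simply gains polynomial generators $b_j$ in filtration $0$.

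The only real content is the initial-form computation: one must check that $p \notin \tilde J$, that multiplication by $\lambda H - p$ never produces cancellation with its $-p$ term, and that no further relations emerge in higher filtration. All of these follow from $\tilde R$ being a $p$-torsion-free polynomial ring. Conceptually, the effect of the relation $\lambda h = p$ is to push $p$ from filtration $0$ (where it sits in $\bZ_{(p)}[\lambda]$) up into filtration $1$, which in the associated graded forces $p = 0$ and explains why $\bZ_{(p)}[\lambda]$ collapses to $\bF_p[\lambda]$.
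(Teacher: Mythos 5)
The paper states this lemma without proof, introducing it with the word ``Immediately,'' so there is no argument in the source to compare against. Your argument supplies a valid and careful justification. The structure is right: identify $\Gr_{\tilde J}(\tilde R)$ with $\tilde R$ itself using the natural polynomial grading, observe that the $\tilde J$-adic filtration on $\tilde R$ descends to the $J$-adic filtration on the quotient, and then use $\Gr(\tilde R/I)\cong \Gr(\tilde R)/\mathrm{in}(I)$. The one step that genuinely requires an argument is $\mathrm{in}\bigl((\lambda H - p)\bigr)=(p)$, since in general the initial ideal of a principal ideal need not be generated by the initial form of the single generator. You handle this correctly: for any $g\in\tilde R$ with leading homogeneous part $g_{k_0}$ in degree $k_0$, the degree-$k_0$ part of $g(\lambda H-p)$ is $-pg_{k_0}$, which is nonzero because $\tilde R$ is a polynomial ring over $\bZ_{(p)}$ and hence $p$-torsion-free; hence every initial form in the ideal already lies in $(p)$. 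The treatment of $\nubpbp$ is also correct, since the $t_j$ generate a free polynomial extension and lie in $J$-adic filtration $0$. In short, your proof makes precise exactly the content the authors deemed immediate, and the conceptual takeaway you state at the end --- that the relation $\lambda h = p$ pushes $p$ into filtration $1$, which is why $\bZ_{(p)}[\lambda]$ collapses to $\bF_p[\lambda]$ in the associated graded --- is the right way to think about it.
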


\begin{remark}
Note that here the elements $a_i$ live in tridegree $(u,t,w)=(1,2p^i-2,2p^i-1)$ and the elements $b_j$ live in tridegree $(0,2p^n-2,2p^n-2)$, where $u$ is the synthetic algebraic Novikov filtration, $t$ is internal degree, and $w$ is synthetic weight. The elements $a_i$ and $b_j$ correspond to $v_i$ and $t_j$ respectively.
\end{remark}

We can filter the $\nubpbp$-cobar complex in an analogous way to the classical one. The resulting spectral sequence is the synthetic algebraic Novikov spectral sequence with $\E_2$-page
\begin{equation*}
    \E_2^{f,u,t,w}(X)=\Ext_{\Gr_*(\nubpbp)}^{f,t,w}(\Gr_*(\nubp_{\starstar}),\Gr_u(\nubp_{\starstar}X))
\end{equation*}

for an $\bF_p$-synthetic spectrum $X$. Scarily enough, this is a quad-graded spectral sequence with differentials $d_r:\E_r^{f,u,t,w}\to \E_r^{f+1,u+r-1,t,w}$, where $u$ denotes the synthetic algebraic Novikov filtration. Luckily when $X=\bS_{\bF_p}$, we can identify this spectral sequence as a $\lambda$-Bockstein spectral sequence:

\begin{theorem}
\label{lambdabockstein}
    The synthetic algebraic Novikov spectral sequence for $X=\bS_{\bF_p}$ is isomorphic to a $\lambda$-Bockstein spectral sequence.
\end{theorem}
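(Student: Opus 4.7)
The plan is to combine Lemma \ref{assgradlemma} with the defining relation $\lambda h = p$ in $\nubp_{\starstar}$ to exhibit the synthetic aNSS as a $\lambda$-deformation of the classical aNSS for the sphere. The argument has two parts: identifying the $E_2$-page as a free $\bF_p[\lambda]$-extension of its classical counterpart, and then tracking how classical differentials acquire $\lambda$-factors when transported to the synthetic setting.

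For the $E_2$-page, I would apply flat base change. Lemma \ref{assgradlemma} gives $\Gr_*(\nubpbp) \cong \Gr_*(\BP_*\BP)[\lambda]$ with $\lambda$ a central polynomial generator, and similarly for $\Gr_*(\nubp_{\starstar})$. Since $\bF_p \to \bF_p[\lambda]$ is flat, a change-of-rings argument yields an isomorphism
\begin{equation*}
\Ext_{\Gr_*(\nubpbp)}^{f,t,w}(\Gr_*(\nubp_{\starstar}), \Gr_*(\nubp_{\starstar})) \cong \Ext_{\Gr_*(\BP_*\BP)}^{f,t}(\Gr_*(\BP_*), \Gr_*(\BP_*)) \otimes_{\bF_p} \bF_p[\lambda],
\end{equation*}
where powers of $\lambda$ record the appropriate synthetic weight shift.

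For the differentials, I would compare the filtered cobar complexes directly. The synthetic $\nubpbp$-cobar complex filtered by powers of $J = (h, v_1, v_2, \ldots)$ produces the synthetic aNSS, while the classical $\BP_*\BP$-cobar complex filtered by powers of $I = (p, v_1, v_2, \ldots)$ produces the classical aNSS. The relation $\lambda h = p$ says that modulo $\lambda$ the element $h$ plays the role of $p$, so the synthetic complex reduced modulo $\lambda$ matches the classical cobar complex (with $p$ replaced by $h$), while in integral weight it is a free $\bF_p[\lambda]$-extension. A classical differential $d_r^{\mathrm{cl}}(x) = y$ comes from a cochain equation $d(\tilde{x}) \equiv p^{r-1} \tilde{y} \pmod{I^{u+r}}$. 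Lifting to the synthetic cobar, this equation becomes $d(\tilde{x}_{\mathrm{syn}}) \equiv (\lambda h)^{r-1} \tilde{y}_{\mathrm{syn}} = \lambda^{r-1} h^{r-1} \tilde{y}_{\mathrm{syn}} \pmod{J^{u+r}}$, and the factor $h^{r-1} \in J^{r-1}$ accounts for the filtration jump exactly as $p^{r-1}$ did classically, leaving $\lambda^{r-1}$ as an overall scalar. This yields precisely $d_r^{\mathrm{syn}}(x) = \lambda^{r-1} y$.

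The main obstacle will be rigorously establishing that this accounts for all synthetic aNSS differentials, with no exotic ones arising from the richer synthetic structure. Given the $\lambda$-torsion-freeness of $\nubp_{\starstar}$ and $\nubpbp$ (Theorem \ref{BPhomotopytheorem}) and the fact that inverting $\lambda$ identifies the synthetic aNSS with the classical aNSS tensored with $\bF_p[\lambda^{\pm 1}]$ (via the fracture square of Lemma \ref{fracture} together with the identification $\lambda^{-1}\nubp \simeq \BP_{(p)}$), the synthetic aNSS is determined by its $\lambda$-torsion-free part and its reduction modulo $\lambda$. Combining this with the explicit cobar-level comparison above forces the Bockstein form by a standard comparison argument of spectral sequences.
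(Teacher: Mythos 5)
Your identification of the $E_2$-page via change-of-rings is exactly what the paper does, so Part 1 is fine. Your last paragraph also correctly isolates the structural reason the result should hold ($\lambda$-torsion-freeness plus agreement after inverting $\lambda$), but it is left as a gesture toward a ``standard comparison argument.''

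The problem is your Part 2, specifically the claim that a classical differential $d_r^{\mathrm{cl}}(x)=y$ corresponds to a cochain relation of the form $d(\tilde{x})\equiv p^{r-1}\tilde{y}\pmod{I^{u+r}}$. This is not what algebraic Novikov differentials look like: the aNSS is built from the filtration of the $\BP_*\BP$-cobar complex by powers of the multi-generated ideal $I=(p,v_1,v_2,\ldots)$, not from the single element $p$, so the aNSS is \emph{not} a $p$-Bockstein. A differential $d_r(x)=y$ only asserts that $d\tilde{x}$ lies in the appropriate $I$-adic filtration and that its associated-graded class is $y$; there is no reason $y$ should be a $p^{r-1}$-multiple (and in general it is not, e.g. targets involving $v_i$ rather than $p$). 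Consequently the deduction ``$\lambda h=p$ turns $p^{r-1}$ into $\lambda^{r-1}h^{r-1}$'' does not get off the ground, and the mechanism you offer for producing the factor $\lambda^{r-1}$ is spurious.

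What the paper does instead is a clean degree count, and it is worth seeing why it replaces your cobar-lifting step entirely. The point is that the $E_2$-page of the synthetic aNSS for $\bS_{\bF_p}/\lambda$ is the classical aNSS $E_2$-page sitting in quad-degrees $(f,u,t,w)$ with $w=u+t$: every generator $a_i$ has $(u,t,w)=(1,2p^i-2,2p^i-1)$ and every $b_j$ has $(0,2p^j-2,2p^j-2)$, all on the diagonal $w=u+t$. A synthetic aNSS differential preserves $(t,w)$ and sends $u\mapsto u+r-1$, which is incompatible with staying on the diagonal unless $r=1$; hence the spectral sequence for $\bS_{\bF_p}/\lambda$ collapses. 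For $\bS_{\bF_p}$ itself, whose $E_2$-page is the classical one freely tensored with $\bF_p[\lambda]$ (with $\lambda$ in weight $-1$), the same count shows a differential off a generator must land $\lambda^{r-1}$ below the diagonal in the target group, giving $d_r(x)=\lambda^{r-1}y$ automatically and with no hidden $\lambda$-extensions. This is the argument you want; it makes the ``standard comparison'' in your final paragraph concrete without appealing to any cobar-level normal form for the classical differentials.
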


\begin{proof}
    The proof is similar to the proof of \cite[Thm. 2.8]{BX23}. By Lemma~\ref{assgradlemma}, we have
    \begin{equation*}
    \begin{split}
        \E_2^{*,*,*,*}(\bS_{\bF_p})&\cong\Ext^{*,*,*,*}_{\Gr_*(\BP_*\BP)[\lambda]}(\Gr_*(\BP_*)[\lambda],\Gr_*(\BP_*)[\lambda]) \\
        &\cong \Ext^{*,*,*}_{\Gr_*(\BP_*\BP)}(\Gr_*(\BP_*),\Gr_*(\BP_*)[\lambda]) \\
        &\cong \Ext^{*,*,*}_{\Gr_*(\BP_*\BP)}(\Gr_*(\BP_*),\Gr_*(\BP_*))[\lambda]
    \end{split}
    \end{equation*}

    where the first isomorphism is a change-of-ring isomorphism along the 
    map 
    $$(\Gr_*(\BP_*)[\lambda],\Gr_*(\BP_*\BP)[\lambda])\to (\Gr_*(\BP_*),\Gr_*(\BP_*\BP))$$
    which kills $\lambda$ and the second isomorphism holds because $\Gr_*(\BP_*)[\lambda]$ is a trivial $\Gr_*(\BP_*\BP)$-comodule. For $\bS_{\bF_p}/\lambda$, the $\E_2$-page is
    \begin{equation*}
        \E_2^{*,*,*,*}(\bS_{\bF_p}/\lambda)\cong \Ext^{*,*,*}_{\Gr_*(\BP_*\BP)}(\Gr_*(\BP_*),\Gr_*(\BP_*)),
    \end{equation*}
    the classical aNSS $\E_2$-page, with a generator $x\in\Ext^{f,u,t}$ living in quad-degree $(f,u,t,u+t)$. This spectral sequence collapses at the $\E_2$-page for degree reasons. Hence for $\bS_{\bF_p}$, differentials must be of the form $d_r(x)=\lambda^{r-1}y$. There are no $\lambda$-hidden extensions to worry about, so this finishes the proof.
\end{proof}

Classically, via work of Miller \cite{Mil81}, the $\E_2$-page of the aNSS for the sphere is isomorphic to the $\E_2$-page of the Cartan-Eilenberg spectral sequence associated with the extension of Hopf algebras $\cP_*\to\cA_*\to \cE_*$:
\begin{equation*}
    \Ext_{\Gr_*(\BP_*\BP)}^{\starstarstar}(\Gr_*(\BP_*),\Gr_*(\BP_*))\cong \Ext^{*,*}_{\cP_*}(\bF_p,\Ext_{\cE_*}^{*}(\bF_p,\bF_p)).
\end{equation*}

One can prove this \cite[Thm. 4.4.4, A1.3.12]{Rav86} by doing a change-of-ring isomorphism  along the map of Hopf algebroids $(\Gr_*(\BP_*),\Gr_*(\BP_*\BP))\to (\bF_p,\cP_*)$ and noticing that $\Gr_*(\BP_*)\cong \Ext_{\cE_*}^{*,*}(\bF_p,\bF_p)$. Curiously enough, the $\E_2$-page of the synthetic aNSS for $X=\bS_{\bF_p}$ corresponds to the $\E_2$-page of the $\bC$-motivic Cartan Eilenberg spectral sequence associated to the extension of Hopf algebras $\cP_*[\tau]\to\mathcal{A}_{\starstar}^{\BP}\to \cE_*[\tau]$:

\begin{proposition}
There is an isomorphism
\begin{equation*}
    \Ext_{\Gr_*(\nubpbp)}^{*,*,*,*}(\Gr_*(\nubp_{\starstar}),\Gr_*(\nubp_{\starstar}))\xrightarrow{\cong} \Ext_{\cP_*[\tau]}^{\starstarstar}(\bF_p[\tau],\Ext_{\cE_*[\tau]}^{*}(\bF_p[\tau],\bF_p[\tau]))
\end{equation*}
sending $\lambda$ to $\tau$.
\end{proposition}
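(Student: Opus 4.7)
The plan is to mirror Miller's classical proof of the aNSS-to-CESS $\E_2$-page isomorphism, exploiting the fact that the synthetic picture is obtained from the classical one by base change along $\bF_p \to \bF_p[\lambda]$. The heart of the argument is two applications of flat base change: one to strip $\lambda$ off on the left, and one to re-absorb $\tau$ on the right, with Miller's classical identification sandwiched in between.

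First I would invoke Lemma~\ref{assgradlemma} to identify
\[
(\Gr_*(\nubp_{\starstar}), \Gr_*(\nubpbp)) \;\cong\; (\Gr_*(\BP_*), \Gr_*(\BP_*\BP)) \otimes_{\bF_p} \bF_p[\lambda],
\]
with $\lambda$ adjoined as a primitive of the obvious bidegree. Since $\bF_p[\lambda]$ is flat over $\bF_p$, tensoring the cobar complex for the classical Hopf algebroid with $\bF_p[\lambda]$ yields the cobar complex for the synthetic one, giving
\[
\Ext_{\Gr_*(\nubpbp)}^{*,*,*,*}(\Gr_*(\nubp_{\starstar}), \Gr_*(\nubp_{\starstar})) \;\cong\; \Ext_{\Gr_*(\BP_*\BP)}^{*,*,*}(\Gr_*(\BP_*), \Gr_*(\BP_*))[\lambda].
\]

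Next I would apply Miller's classical identification \cite{Mil81}, recalled immediately above the proposition, to rewrite the right-hand factor as $\Ext_{\cP_*}^{*,*}(\bF_p, \Ext_{\cE_*}^{*}(\bF_p, \bF_p))$. Running the flat base-change argument in reverse along $\bF_p \to \bF_p[\lambda]$ for the Hopf algebras $\cE_*$ and then $\cP_*$ allows the trailing $[\lambda]$ to be absorbed back inside:
\[
\Ext_{\cP_*}^{*,*}(\bF_p, \Ext_{\cE_*}^{*}(\bF_p, \bF_p))[\lambda] \;\cong\; \Ext_{\cP_*[\lambda]}^{*,*,*}(\bF_p[\lambda], \Ext_{\cE_*[\lambda]}^{*}(\bF_p[\lambda], \bF_p[\lambda])).
\]
Relabeling $\lambda$ as $\tau$ then produces the right-hand side of the proposition, and by construction the generator $\lambda$ on the left is carried to $\tau$ on the right.

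The main obstacle is essentially bookkeeping: one must verify that the $\cP_*[\lambda]$-comodule structure on $\Ext_{\cE_*[\lambda]}^{*}(\bF_p[\lambda], \bF_p[\lambda])$ induced by the base-changed extension $\cP_*[\lambda] \to \cA_{\starstar}^{\BP} \to \cE_*[\lambda]$ is itself the base change of the classical $\cP_*$-comodule structure on $\Ext_{\cE_*}^{*}(\bF_p, \bF_p)$, so that Miller's isomorphism tensors up cleanly, and that the quad-grading $(f,u,t,w)$ on the synthetic aNSS $\E_2$-page matches the tri-grading on the CESS $\E_2$-page together with the $\tau$-weight on the right. Both compatibilities follow from the fact that $\lambda$ is a primitive adjoined trivially to every Hopf algebra in sight, and the degree assignment $\lambda \in \pi_{0,-1}$ on the left agrees with the $\tau$-weight on the right, so no grading gets twisted in passing through Miller's identification.
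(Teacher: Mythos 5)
Your proposal follows the same underlying strategy as the paper's proof---identify the synthetic associated-graded Hopf algebroid as the classical one with $\lambda$ adjoined, apply Miller's change-of-rings argument, and match $\lambda$ with $\tau$---but reorganizes the order of operations. You strip $\lambda$ off by flat base change, invoke Miller's classical aNSS--CESS $\E_2$-page isomorphism as a black box, and then re-absorb $\lambda$ via two more flat base changes (one for $\cE_*$, one for $\cP_*$). The paper instead performs the change-of-rings isomorphism directly along the morphism of $\bF_p[\lambda]$-augmented Hopf algebroids
\[
(\Gr_*(\BP_*)[\lambda],\Gr_*(\BP_*\BP)[\lambda])\longrightarrow(\bF_p[\lambda],\cP_*[\lambda]),
\]
then relabels $\lambda$ as $\tau$, and only at the end rewrites $\Gr_*(\BP_*)[\tau]$ as $\Ext_{\cE_*[\tau]}^*(\bF_p[\tau],\bF_p[\tau])$. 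Both routes are correct and rely on the same two ingredients (change-of-rings and the identification $\Gr_*(\BP_*)\cong\Ext_{\cE_*}^*(\bF_p,\bF_p)$), but the paper's ordering makes the $\cP_*[\tau]$-comodule structure on the coefficient module automatically the base change of the classical one---it is inherited directly from the quotient morphism of $\bF_p[\lambda]$-augmented Hopf algebroids---so the comodule-structure compatibility you flag as the ``main obstacle'' never needs a separate verification. Your sandwich argument shifts that burden onto checking explicitly that Miller's isomorphism is $\bF_p[\lambda]$-linear and respects coactions; your justification for this (that $\lambda$ is adjoined primitively and trivially everywhere) is correct but is exactly the extra step the paper's ordering avoids.
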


\begin{proof}
Consider the composition of maps
\begin{equation*}
\begin{split}
(\Gr_*(\nubp_{\starstar}),\Gr_*(\nubpbp))&\xrightarrow{\cong}(\Gr_*(\BP_*)[\lambda],\Gr_*(\BP_*\BP)[\lambda]) \\
&\xrightarrow{} (\bF_p[\lambda],\cP_*[\lambda]) \\
&\xrightarrow{\cong}(\bF_p[\tau],\cP_*[\tau])    
\end{split}
\end{equation*}

of Hopf algebroids, where the last map changes coordinates from $\lambda$ to $\tau$. By change-of-rings \cite[A1.3.12]{Rav86}, we get isomorphisms
\begin{equation*}
\begin{split}
  \Ext_{\Gr_*(\nubpbp)}^{*,*,*,*}(\Gr_*(\nubp_{\starstar}),\Gr_*(\nubp_{\starstar}))&\cong\Ext_{\cP_*[\tau]}^{\starstarstar}(\bF_p[\tau],\Gr_*(\BP_*)[\tau]) \\
  &\cong\Ext_{\cP_*[\tau]}^{\starstarstar}(\bF_p[\tau],\Ext_{\cE_*[\tau]}^{*}(\bF_p[\tau],\bF_p[\tau]))  
\end{split}
\end{equation*}

where the last isomorphism follows from the fact that
\begin{equation*}
    \begin{split}
        \Gr_*(\BP_*)[\tau]&\cong\Ext_{\cE_*}^{*}(\bF_p,\bF_p)[\tau] \\
        &\cong \Ext_{\cE_*[\tau]}^{*}(\bF_p[\tau],\bF_p[\tau]).
    \end{split}
\end{equation*}
\end{proof}


\begin{remark}
\label{bisyn}
The result of this and \cite[Thm. 2.8]{BX23} is that we (almost) have a Miller square-type diagram of spectral sequences
\begin{equation*}
    \begin{tikzcd}
        \Ext^{\starstarstar}_{\Gr_*(\BP_*\BP)}(\Gr_*(\BP_*),\Gr_*(\BP_*))[\lambda] \ar[r,"\cong"] \ar[d, Rightarrow,"\lambda\mathrm{-BSS}"'] & \Ext_{\cP_*}^{*,*}(\bF_p,\Ext_{\cE_*}^{*}(\bF_p,\bF_p))[\tau] \ar[d, Rightarrow,"\tau\mathrm{-BSS}"] \\
        \Ext_{\nubpbp}^{\starstarstar}(\nubp_{\starstar},\nubp_{\starstar}) \ar[d, Rightarrow,"\BP^{\bF_p}\mathrm{-Adams\text{ }SS}"'] & \Ext^{\starstarstar}_{\mathcal{A}_\starstar^{\BP}}(\bF_p[\tau],\bF_p[\tau]) \ar[d, Rightarrow, "\bF_p^{\BP}\mathrm{-Adams\text{ }SS}"] \\
        \pi_{\starstar}(\bS_{\bF_p}) & \pi_{\starstar}(\bS_{\BP})
    \end{tikzcd}
\end{equation*}

This fails to be a Miller square because $\bS_{\bF_p}\in\Syn_{\hfp}$ and $\bS_{\BP}\in\Syn_{\BP}$ are in different categories of synthetic spectra. However, we conjecture this could be remedied by considering both objects in a category of \textit{bisynthetic spectra} which simultaneously deforms the $\BP^{\bF_p}$- and $\bF_p^{\BP}$-Adams spectral sequences. The first mention of such a category occurred in the introduction of \cite{BHS20}. The authors intend to produce a model for such a category in future work.   
\end{remark}

\numberwithin{theorem}{section}

\section{\texorpdfstring{Adams spectral sequences for $\bS_E/\tau$-modules}{Adams spectral sequences for cofiber of tau modules}}
\label{Clambdasection}
Let $R$ be an $\bE_1$-ring spectrum and let $E$ be an Adams-type homology theory. In this section we characterize the $\nu_E(R)$-based Adams spectral sequence for $\bS_E/\tau$ in $\mathrm{Syn}_E$ and identify it in a more familiar form in the case where $(E_*, E_*E)$ is a Hopf algebra. 

\begin{proposition}
\label{ClambdaSS}
The $\nu_E(R)$-Adams spectral sequence for $\nu_EX/\tau$ in $\mathrm{Syn}_{E}$ is isomorphic to the $E_*R$-Adams spectral sequence for $E_*X$ in $\mathrm{Stable}_{E_*E}$.
\end{proposition}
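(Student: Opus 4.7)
My plan is to leverage the equivalence of Theorem \ref{stablethm} to transport the cobar construction computing the left-hand spectral sequence into $\Stable(E_*E)$, where it becomes the cobar construction for the right-hand spectral sequence. More precisely, I would proceed in the following steps.

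First, I would realize both spectral sequences as homotopy groups of totalizations of cobar cosimplicial objects. The $\nu_E(R)$-Adams spectral sequence for $\nu_E X/\tau$ in $\Syn_E$ is associated to the cosimplicial object
\[
C^\bullet \;:=\; (\nu_E X/\tau) \otimes \nu_E(R)^{\otimes \bullet + 1}.
\]
Because $\nu_E X/\tau$ is a $\bS_E/\tau$-module and the tensor product in $\Syn_E$ agrees with the relative tensor product over $\bS_E/\tau$ whenever one factor is annihilated by $\tau$, I can rewrite this as a cosimplicial object in $\Mod(\Syn_E; \bS_E/\tau)$:
\[
C^\bullet \;\simeq\; (\nu_E X/\tau) \otimes_{\bS_E/\tau} (\nu_E(R)/\tau)^{\otimes_{\bS_E/\tau} \bullet + 1}.
\]
Analogously, the $E_*R$-Adams spectral sequence for $E_*X$ in $\Stable(E_*E)$ arises from the cosimplicial object
\[
D^\bullet \;:=\; E_*X \otimes_{E_*}(E_*R)^{\otimes_{E_*} \bullet + 1}.
\]

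Next, I would apply the symmetric monoidal left adjoint $\chi_*:\Mod(\Syn_E;\bS_E/\tau)\to\Stable(E_*E)$ of Theorem \ref{stablethm} to $C^\bullet$. Since $\chi_*$ is symmetric monoidal and takes $\nu_E Y/\tau$ to $E_*Y$ for $Y$ in the image of $\nu_E$, applying it termwise yields $\chi_*(C^\bullet)\simeq D^\bullet$, and the cosimplicial structure maps match because they are built out of the $\bE_1$-algebra structure of $\nu_E(R)$, which $\chi_*$ sends to the algebra structure on $E_*R$ coming from the ring structure of $R$. The fact that $\chi_*$ is fully faithful and that the relevant objects lie in the cellular subcategory (where $\chi_*$ is an equivalence) then implies that the induced map on totalizations and, more importantly, on the tower of partial totalizations giving rise to the spectral sequence, is an equivalence.

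Finally, I would extract the isomorphism of spectral sequences by applying bigraded homotopy groups to $C^\bullet$ and trigraded homotopy groups (or $\Ext$) to $D^\bullet$, matching bidegrees via the identification $[\Sigma^{t,w}\nu X,\nu Y/\tau]\cong \Ext^{w-t,w}_{E_*E}(E_*X,E_*Y)$ of Theorem \ref{stablethm}. The isomorphism of the associated exact couples, and hence of all pages and differentials, follows formally.

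The main subtlety, which I would need to check carefully, is that $\chi_*$ actually commutes with the cobar cosimplicial construction in a way compatible with how the spectral sequence is indexed: concretely, I need the relative tensor product $(\nu_E(R)/\tau)^{\otimes_{\bS_E/\tau} k}$ to correspond under $\chi_*$ to $(E_*R)^{\otimes_{E_*} k}$ for every $k$. This reduces to the monoidality of $\chi_*$ on cellular objects together with the identification $\chi_*(\nu_E(R)/\tau)\simeq E_*R$ as $\bE_1$-algebras, which is what makes the matching of cosimplicial structure maps (and thus of spectral sequences) work.
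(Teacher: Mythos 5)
Your proposal is correct and follows essentially the same route as the paper: both transport the cobar cosimplicial resolution across the symmetric monoidal functor $(-)/\tau$ from $\Syn_E$ to $\Stable(E_*E)$, using $\nu_E(Y)/\tau \simeq E_*Y$ to identify the target with the $E_*R$-cobar resolution of $E_*X$. The paper states this as a single application of the symmetric monoidal colimit-preserving functor $(-)/\tau$ to the resolution $\nu X \otimes \nu R^{\otimes\bullet+1}$, while you factor this through $\Mod(\Syn_E;\bS_E/\tau)$ and track the relative tensor products explicitly; this extra bookkeeping is sound but does not change the argument.
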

\begin{proof}
The $\nu_E(R)$-based Adams spectral sequence in $\mathrm{Syn}_E$ is given by applying the functor $\text{Hom}_{\mathrm{Syn_E}}(\nu_E(\bS), -)$ to the Moore complex for the cosimplicial resolution 
\begin{align*}
    \Delta &\to \mathrm{Syn}_{E}\\ n &\mapsto \nu X \otimes \nu R^{\otimes n + 1}
\end{align*}
and applying homotopy. Recall that $(-)/\tau$ is a symmetric monoidal colimit preserving functor from $\mathrm{Syn}_{E}$ to $\mathrm{Stable}_{E_*E}$ with $\nu_E(X)/\tau \cong E_*X$ \cite{Pst22}. So $(-)/\tau$ sends the synthetic $\nu_E(R)$-Adams resolution to the cosimplicial resolution
\begin{align*}
    \Delta &\to \mathrm{Stable}_{E_*E}\\
    n &\mapsto E_*X\otimes E_*R^{\otimes n + 1}
\end{align*}
associated to the $E_*R$-Adams spectral sequence for $E_*X$ in $\mathrm{Stable}_{E_*E}$. 
\end{proof}

Let $\Gamma$ be a Hopf algebra over a commutative ring $k$ and let $\Gamma \to \Sigma$ be a surjection of Hopf algebras for which the map $\Phi \coloneqq \Gamma \square_{\Sigma} k \to \Gamma$ is a map of $\Gamma$-comodule algebras. In \cite{tmf} Bruner-Rognes describe a Cartan-Eilenberg type spectral sequence for such a (not necessarily conormal) Hopf extension $\Phi \to \Gamma \to \Sigma$ which they call the Davis-Mahowald spectral sequence. This spectral sequence was first developed by Davis-Mahowald where they use it to compute $\Ext_{\mathcal{A}(2)}(M, \bF_2)$ for $\mathcal{A}(2)$-modules $M$ using the Davis-Mahowald spectral sequence for the non-normal map $\mathcal{A}(1) \to \mathcal{A}(2)$ of Hopf algebras.

\begin{remark}[\cite{Bel20}]
    If $\Phi$ is a $\Gamma$-comodule algebra and $M$ is a $\Gamma$-comodule then the $\Phi$-based Adams spectral sequence converging to $\Ext_{\Gamma}(k, M)$ in $\mathrm{Stable}(\Gamma)$ coincides with the Davis-Mahowald spectral sequence for the Hopf extension $\Phi \to \Gamma \to \Sigma$ starting at $\E_1$-page which has the form
    \begin{align*}
        \Ext_{\Sigma}(k, \overline{\Phi}^{\otimes s} \mathord{\otimes} M)
    \end{align*}
    where $\overline{\Phi}$ is the coaugmentation ideal. If $\Ext_{\Sigma}(k, \Phi)$ is flat as a $\Ext_{\Sigma}(k, k)$-module, then the $\E_2$-page has the form
    \begin{align*}
        \Ext_{\Ext_{\Sigma}(k, \Phi)}(\Ext_{\Sigma}(k, k), \Ext_{\Sigma}(k, M)).
    \end{align*}
\end{remark}

\begin{remark}
    In the category of stable comodules, $\Ext_{\Gamma}(M,N)$ is given as the homotopy groups of the mapping spectrum between $M$ and $N$. In analogy with stable homotopy theory, one might rewrite the $\E_2$-page above more recognizably as
    $\Ext_{\pi_{\star}\Phi}(\pi_{\star}(k), \pi_{\star}(M))$.
\end{remark}

\begin{proposition}[\cite{Bel20}]
For $\Phi \to \Gamma \to \Sigma$ a Hopf extension, the Davis-Mahowald spectral sequence for $\Phi \to \Gamma \to \Sigma$ is isomorphic to the spectral sequence given by filtering the cobar complex on $\Gamma$ by
\begin{align*}
    \Fil^s C^n_\Gamma = \{a_1 | \cdots | a_n \in C^n_\Gamma | \#[(a_1, ..., a_n)\cap G] \geq s\}
\end{align*}
for $G = \ker(\Gamma \to \Sigma)$.
\end{proposition}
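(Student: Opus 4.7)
The plan is to realize both spectral sequences as arising from a single filtered chain complex, namely the $\Gamma$-cobar complex $C_\Gamma^\bullet(M)$ equipped with the $G$-filtration of the statement. The previous proposition already identifies the Davis--Mahowald spectral sequence with the $\Phi$-based Adams spectral sequence in $\mathrm{Stable}(\Gamma)$, built from the Amitsur cosimplicial object $n\mapsto \Phi^{\otimes n+1}\otimes M$. So concretely, the task is to exhibit a filtered quasi-isomorphism between (a totalization of) this Amitsur object and $C_\Gamma^\bullet(M)$ with the $G$-filtration, and then compare the induced spectral sequences term by term.

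First I would unpack the associated graded of the $G$-filtration. In a Hopf extension $\Phi\to\Gamma\to\Sigma$ one has a standard left $\Sigma$-comodule, right $\Phi$-module splitting $\Gamma\cong \Phi\otimes\Sigma$. Under this identification $G=\ker(\Gamma\to\Sigma)$ becomes $\bar\Phi\otimes\Sigma$ and $\bar\Gamma/G\cong\bar\Sigma$. A cobar monomial in $\bar\Gamma^{\otimes n}\otimes M$ with exactly $s$ factors marked as lying in $G$ (in specified positions) then decomposes as $n-s$ tensor factors of $\bar\Sigma$ interleaved with $s$ factors of $\bar\Phi\otimes\Sigma$. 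After rearranging the extra $\Sigma$-pieces to live alongside the $\bar\Sigma$-pieces, $\mathrm{Fil}^s/\mathrm{Fil}^{s+1}$ on $C_\Gamma^n(M)$ becomes a direct sum over placements of $s$ positions (out of $n$) of a $\Sigma$-cobar term in $\bar\Phi^{\otimes s}\otimes M$, so that totalizing in $n$ for fixed $s$ recovers the $\Sigma$-cobar complex on $\bar\Phi^{\otimes s}\otimes M$.

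Next I would check that the cobar differential on $C_\Gamma^\bullet(M)$ preserves the $G$-filtration. Since $\Phi\subset\Gamma$ is a subcoalgebra and $G=\Phi^+\Gamma$, the coproduct $\Delta_\Gamma$ applied to an element of $G$ lands in $G\otimes\Gamma+\Gamma\otimes G$, so the cobar differential can only preserve or lower $G$-depth, never raise it. On the associated graded only the depth-preserving part survives, and this reduces to the $\Sigma$-cobar differential on coefficients in $\bar\Phi^{\otimes s}\otimes M$. This produces an isomorphism
\[
\E_1^{s,*}\cong \mathrm{Ext}_\Sigma^*(k,\bar\Phi^{\otimes s}\otimes M),
\]
which matches the Davis--Mahowald $\E_1$-page from the previous proposition. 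The $\E_1$-differential, controlled by the components of $\Delta_\Gamma$ that strictly decrease $G$-depth (i.e.\ land in $\bar\Phi\otimes\bar\Phi$ after suitable projection), then coincides with the cosimplicial coboundary of the Amitsur object $\Phi^{\otimes\bullet+1}\otimes M$.

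The main obstacle is upgrading the $\E_1$-identification to an isomorphism of the full spectral sequences. The cleanest way to do this is to construct an explicit map of filtered cochain complexes from a $\Sigma$-injective resolution of each $\bar\Phi^{\otimes s}\otimes M$, assembled cosimplicially into the Amitsur totalization, into $C_\Gamma^\bullet(M)$ with its $G$-filtration, and then to invoke the filtered comparison theorem for spectral sequences. Sign and grading bookkeeping in this comparison is the main technical nuisance, but no essential new idea is needed beyond the splitting $\Gamma\cong \Phi\otimes\Sigma$ and the classical change-of-rings identity $\mathrm{Ext}_\Gamma(k,\Phi\otimes -)\cong \mathrm{Ext}_\Sigma(k,-)$ for the cofree-$\Gamma$ object $\Phi$.
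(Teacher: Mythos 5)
The paper states this result by citation to \cite{Bel20} without reproducing a proof, so the point of comparison is whether your plan would actually succeed. The overall strategy — use the preceding proposition to reduce to the $\Phi$-based Adams spectral sequence in $\Stable(\Gamma)$, exploit the Milnor--Moore splitting $\Gamma\cong\Phi\otimes\Sigma$ to analyze the associated graded of the $G$-filtration, and then produce a filtered quasi-isomorphism to the Amitsur/Adams tower — is the right one, and matches the spirit of Belmont's argument. That said, there are two slips worth flagging.

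First, the direction of the filtration shift is stated backwards. With $\Fil^s C_\Gamma^n$ defined as monomials having \emph{at least} $s$ factors in $G$, this is a decreasing filtration, and for the spectral sequence to exist at all one needs $d(\Fil^s)\subseteq\Fil^s$, i.e.\ the cobar differential must \emph{preserve or raise} the number of $G$-factors. This is indeed what happens: since $G$ is a coideal, $\tilde\Delta(g)\in G\otimes\bar\Gamma+\bar\Gamma\otimes G$ for $g\in G$, so comultiplying a $G$-factor always produces at least one $G$-factor, and comultiplying a non-$G$-factor can only add $G$-factors. You write that the differential ``can only preserve or lower $G$-depth, never raise it,'' which, taken literally, would mean the filtration is not preserved and there is no spectral sequence; likewise $d_1$ is the part that \emph{increases} the $G$-count by exactly one, not the part that decreases it. Second, the closing appeal to ``the classical change-of-rings identity $\Ext_\Gamma(k,\Phi\otimes-)\cong\Ext_\Sigma(k,-)$ for the cofree-$\Gamma$ object $\Phi$'' mislabels $\Phi$: it is not cofree over $\Gamma$ (its $\Gamma$-Ext is precisely $\Ext_\Sigma(k,k)$, which is far from concentrated in degree $0$). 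The correct statement is that $\Phi=\Gamma\square_\Sigma k$ is \emph{coinduced} from the trivial $\Sigma$-comodule, and more generally $\Phi\otimes N\cong\Gamma\square_\Sigma N$ as $\Gamma$-comodules, from which the shear/change-of-rings isomorphism follows; one must also verify that this $\Gamma$-comodule structure on $\Phi\otimes N$ agrees with the one arising from the monoidal structure of $\Stable(\Gamma)$, which is part of what makes the filtered comparison nontrivial rather than pure bookkeeping. Neither slip changes the viability of the approach, but both would need correcting in a complete write-up.
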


\begin{example}\label{cessexample}
The $\nu_{\bF_2} (\BP)$-Adams spectral sequence for $\bS_{\bF_2}/\lambda$ in $\mathrm{Syn}_{\bF_2}$ is isomorphic to the Cartan-Eilenberg spectral sequence for the conormal extension
\begin{align*}
    \bF_2[\zeta_1^2, \zeta_2^2, ...] \to \bF_2[\zeta_1, \zeta_2, ...] \to E(\zeta_1, \zeta_2, ...).
\end{align*}
\end{example}

\begin{example}
The $\nu_{\bF_2} (\bZ)$-Adams spectral sequence for $\nu_{\bF_2} (ko) /\lambda$ in $\mathrm{Syn}_{\bF_2}$ is isomorphic to the Davis-Mahowald spectral sequence for the Hopf extension
\begin{align*}
    \mathcal{A}(1)_* \Box_{\mathcal{A}(0)_*} \bF_2 \to \mathcal{A}(1)_* \to \mathcal{A}(0)_*.
\end{align*}
\end{example}

\begin{example}
The $\nu_{\bF_2}(ko)$-Adams spectral sequence for $\nu_{\bF_2}(tmf) /\lambda$ in $\mathrm{Syn}_{\bF_2}$ is isomorphic to the Davis-Mahowald spectral sequence for the Hopf extension
\begin{align*}
    \mathcal{A}(2)_* \Box_{\mathcal{A}(1)_*} \bF_2 \to \mathcal{A}(2)_* \to \mathcal{A}(1)_*.
\end{align*}
\end{example}

\begin{example}
The $\nu_{\bF_2}(tmf)$-Adams spectral sequence for $\bS_{\bF_2}/\lambda$ in $\mathrm{Syn}_{\bF_2}$ is isomorphic to the Davis-Mahowald spectral sequence for the Hopf extension
\begin{align*}
    \mathcal{A}_* \Box_{\mathcal{A}(2)_*} \bF_2 \to \mathcal{A}_* \to \mathcal{A}(2)_*.
\end{align*}
\end{example}

\section{\texorpdfstring{Twisted $t$-structures of Stable $\infty$-Categories}{Twisted t-structures of Stable Infinity Categories}}
\label{tstructuresection}

Let $\cC$ be a stable $\infty$-category with a compact object $K$. By a twist of $\cC$ we mean an automorphism $F:\cC\to \cC$. We will notate by $\Sigma^{n,m}:\cC\to \cC$ the composition $\Sigma^n\circ F^m$. Note that $\Sigma^{n,m}$ will preserve compact objects for all $n,m$ as a composition of autoequivalences.

\begin{definition}
We say that $\cC$ is generated by $K$ under bigraded suspensions if the smallest subcategory of $\cC$ containing $\Sigma^{n,m}K$ for all $n,m\in \bZ$ and closed under colimits is $\cC$ itself.
\end{definition}

\begin{remark}
    We will mostly be working with cellular subcategories where the generation assumption is forcibly imposed.
\end{remark}

The goal of this section is to construct a ``twisted'' $t$-structure on $\cC$ whose connective part is detected by the mapping groups $[\Sigma^{n,m}K,-]$. To this end, fix $\cC,F,K$ as described above such that $K$ generates under colimits and put
\[
\cC_{\geq 0}=\{X\in \cC\mid [\Sigma^{n,m}K, X]=0\text{ whenever } n+m<0\}.
\]

\begin{lemma}
\label{twistediststructure}
    The category $\cC_{\geq 0}$ is the connective part of a $t$-structure on $\cC$ as soon as $K\in \cC_{\geq 0}$.
\end{lemma}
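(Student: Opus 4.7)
The plan is to invoke Lurie's general criterion \cite[Prop.~1.4.4.11]{HA} for producing $t$-structures on presentable stable $\infty$-categories. Applied to the set $S = \{\Sigma^{n,m}K : n+m \geq 0\}$, it yields an accessible $t$-structure on $\cC$ whose connective part $\cC'_{\geq 0}$ is by construction the smallest full subcategory of $\cC$ containing $S$ and closed under colimits and extensions. The remaining step is to identify $\cC'_{\geq 0}$ with the subcategory $\cC_{\geq 0}$ prescribed in the statement.

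For the inclusion $\cC'_{\geq 0} \subseteq \cC_{\geq 0}$, the translation formula
\[
[\Sigma^{a,b}K,\ \Sigma^{n,m}K] \cong [\Sigma^{a-n,b-m}K,\ K]
\]
together with the hypothesis $K \in \cC_{\geq 0}$ shows that each generator $\Sigma^{n,m}K$ with $n+m \geq 0$ lies in $\cC_{\geq 0}$. Compactness of $K$ (hence of all $\Sigma^{n,m}K$ since $F$ is an autoequivalence) gives closure of $\cC_{\geq 0}$ under filtered colimits, and the long exact sequence of bigraded mapping groups applied to any cofiber sequence gives closure under cofibers and extensions. Since every colimit in a stable $\infty$-category is built from filtered colimits and finite ones, this yields the first containment.

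For the reverse inclusion, pick $X \in \cC_{\geq 0}$ and form the fiber sequence $\tau'_{\geq 0}X \to X \to \tau'_{\leq -1}X$ provided by the $t$-structure just constructed. Since $K$ generates $\cC$ under bigraded suspensions and colimits, the functors $[\Sigma^{n,m}K,-]$ jointly detect equivalences, and it suffices to check $[\Sigma^{n,m}K,\tau'_{\leq -1}X] = 0$ for all $n,m \in \bZ$. When $n+m \geq 0$ this is immediate from $\Sigma^{n,m}K \in \cC'_{\geq 0}$ and the definition of $\cC'_{\leq -1}$. When $n+m < 0$ a short diagram chase on the long exact sequence uses the vanishing $[\Sigma^{n,m}K,X] = 0$ (by $X \in \cC_{\geq 0}$) together with the vanishing of $[\Sigma^{n,m}K,\tau'_{\geq 0}X]$ and $[\Sigma^{n-1,m}K,\tau'_{\geq 0}X]$, both of which follow from the first inclusion applied to $\tau'_{\geq 0}X \in \cC'_{\geq 0}$. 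Hence $\tau'_{\leq -1}X \simeq 0$ and $X \in \cC'_{\geq 0}$.

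The main obstacle is really bookkeeping: one must confirm that the hypotheses of \cite[Prop.~1.4.4.11]{HA} apply, i.e.\ that $\cC$ is presentable so that the small object/orthogonality machinery is available. In the cellular settings of principal interest (such as $\Stable(\Gamma)^{\cell}$ and $\Syn_E^{\cell}$) this is automatic from compact generation by the bigraded suspensions of $K$, while in greater generality it should be imposed as a standing assumption. The mild bootstrap in the reverse inclusion (which invokes the first) introduces no circularity because the two containments are proved independently.
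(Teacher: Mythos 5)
Your proof is correct and takes essentially the same route as the paper: both invoke \cite[Prop.~1.4.4.11]{HA} and reduce to proving $\cC'_{\geq 0}=\cC_{\geq 0}$ where $\cC'_{\geq 0}$ is the closure of $\{\Sigma^{n,m}K\mid n+m\geq 0\}$ under colimits and extensions, with the containment $\cC'_{\geq 0}\subseteq\cC_{\geq 0}$ argued identically via compactness of $K$ and long exact sequences. For the reverse containment your explicit diagram chase showing $\tau'_{\leq -1}X\simeq 0$ is a cleaner spelling-out of the paper's terser claim that $\cC_{\geq 0}$ is generated under colimits by the nonnegative shifts of $K$ because those shifts detect equivalences between connective objects.
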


\begin{proof}
    Write $\cC'_{\geq 0}$ for the closure of the objects $\Sigma^{n,m}K$, $n+m\geq 0$ in $\cC$ under colimits and extensions. We claim that $\cC'_{\geq 0}\simeq \cC_{\geq 0}$. Having shown this, the lemma follows from \cite[1.4.4.11]{HA}.

    \bigskip
    
    To prove the equivalence, first observe that $\cC_{\geq 0}$ contains all of the $\Sigma^{n+m}K$ for $n+m\geq 0$. To prove it is closed under colimits it suffices to show it is closed under arbitrary coproducts and cofibers. For the former, finite coproducts are immediate. Because all coproducts are filtered colimits of finite coproducts, we have all coproducts by the compactness of $K$. That we have all cofibers follows from the induced long exact sequence. Closure under extensions follows similarly. As a result we have $\cC'_{\geq 0}\subset \cC_{\geq 0}$. To conclude it suffices to show that the latter category is generated under colimits by the same shifts of $K$. We note that $\cC$ being generated by the shifts is equivalent to asking that $X\to Y$ is an equivalence if and only if the induced
    \[
    \Map(\Sigma^{n,m}K,X)\to \Map(\Sigma^{n,m}K,Y)
    \]
    is an equivalence of spaces for all $n,m$. Replacing mapping spaces by mapping spectra it suffices to check that
    \[
    \Map^\Sp(\Sigma^{0,m}K,X)\to \Map^\Sp(\Sigma^{0,m}K,Y)
    \]
    is an equivalence for all $m$. But $\pi_k\Map^\Sp(\Sigma^{0,m}K,X)=\pi_0\Map^\Sp(\Sigma^{k,m}K,X)$ so that it suffices to check for an isomorphism on the mapping groups $[\Sigma^{k,m}K,-]$. For $X,Y$ connective, the latter only depends on those where $k+m\geq 0$.
\end{proof}

\begin{definition}
\label{twistedtstructdef}
    The $(K,F)$-twisted $t$-structure on $\cC$ is the $t$-structure with connective part $\cC_{\geq 0}$ as described above. 
\end{definition}

In practice, we will drop $K,F$ from the notation. We wish to use twisted $t$-structures to apply the nilpotent completion results of \cite{Man21} and for defining deformations in Section~\ref{deformationsection}. The remainder of this section will work to verify his assumptions laid out in Section 2.1.1 of loc. cit.

\begin{lemma}
\label{leftcomplemma}
     The $(K,F)$-twisted $t$-structure is left and right complete; i.e., for all $X$ we have $X\simeq\lim_n \tau_{\leq n} X\simeq\colim_n\tau_{\geq n}X$.
\end{lemma}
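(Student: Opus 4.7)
My strategy is to invoke \cite[Prop.~1.2.1.19]{HA} and its dual, each of which reduces completeness in one direction to two checks: closure of the appropriate half of the $t$-structure under countable (co)products, and vanishing of the appropriate infinite intersection. The closure statements are automatic in our setting. Indeed, $\cC_{\geq 0}$ is closed under all colimits by its description in the proof of Lemma~\ref{twistediststructure}, while $\cC_{\leq 0}$ is the right orthogonal to $\cC_{\geq 1}$ and therefore closed under all limits. What remains is the two intersection conditions.

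For left completeness, suppose $X \in \bigcap_n \cC_{\geq n}$. Unwinding Definition~\ref{twistedtstructdef}, membership in $\cC_{\geq n}$ is equivalent to $[\Sigma^{a,m}K, X] = 0$ for all $(a,m)$ with $a+m < n$. Letting $n \to \infty$ forces this vanishing for every $(a,m) \in \bZ^2$, and the generation hypothesis on $K$ then implies $X \simeq 0$.

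For right completeness, suppose $X \in \bigcap_n \cC_{\leq -n}$. This is equivalent to $\tau_{\geq k} X \simeq 0$ for every $k \in \bZ$. A direct check using $K \in \cC_{\geq 0}$ shows that each generator $\Sigma^{a,m}K$ lies in $\cC_{\geq a+m}$, so the standard adjunction identity yields
\begin{equation*}
[\Sigma^{a,m}K, X] \cong [\Sigma^{a,m}K, \tau_{\geq a+m} X] = 0
\end{equation*}
for all $(a,m)$, and generation once more gives $X \simeq 0$.

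The main subtlety is ensuring the ambient hypothesis of \cite[Prop.~1.2.1.19]{HA} that $\cC$ admits countable (co)products; this is the only piece of data not immediate from the abstract setup of Lemma~\ref{twistediststructure}. In our intended applications, namely $\cC = \Stable(\Gamma)^\cell$ and $\cC = \Syn_E^\cell$, presentability is standard, so this obstacle disappears and the only real work is the intersection checks above.
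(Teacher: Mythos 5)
Your proof follows essentially the same route as the paper: invoke \cite[Prop.~1.2.1.19]{HA} (and its dual), reduce left/right completeness to the vanishing of the relevant infinite intersection, and conclude using generation by shifts of $K$. In fact your treatment of right completeness is \emph{more} careful than the paper's, which simply says ``the analogous argument applies''; your observation that each generator $\Sigma^{a,m}K$ lies in $\cC_{\geq a+m}$ (a consequence of $K\in\cC_{\geq 0}$) and the identification $[\Sigma^{a,m}K,X]\cong[\Sigma^{a,m}K,\tau_{\geq a+m}X]$ via the truncation adjunction is exactly what makes the dual argument go through, and it is good to spell that out.

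There is one slip in the verification of the side hypotheses of \cite[Prop.~1.2.1.19]{HA}. That proposition (for left completeness) requires $\cC_{\geq 0}$ to be stable under countable \emph{products}, and the dual (for right completeness) requires $\cC_{\leq 0}$ to be stable under countable \emph{coproducts}. You instead justify closure of $\cC_{\geq 0}$ under colimits and closure of $\cC_{\leq 0}$ under limits, neither of which is the condition needed (colimits do not include infinite products, nor limits infinite coproducts). Fortunately both of the correct conditions do hold here, but for different reasons than you give: $\cC_{\geq 0}$ is closed under products because it is cut out by the vanishing of the functors $[\Sigma^{n,m}K,-]$, which commute with products; and $\cC_{\leq 0}$ is closed under coproducts because finite coproducts agree with finite products in a stable $\infty$-category and arbitrary coproducts are filtered colimits of finite ones, so this follows from Lemma~\ref{filtcolimlemma} (which is proved independently of the present lemma). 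Your concluding remark about needing countable (co)products is well-taken and implicit in the paper: the generation hypothesis requires $\cC$ to be cocomplete, and the invocation of \cite[Thm.~1.4.4.11]{HA} in Lemma~\ref{twistediststructure} already presupposes presentability.
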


\begin{proof}
    By \cite[1.2.1.19]{HA} it suffices to show that the only object in $\cC_{\leq \infty}:=\cap_n\cC_{\geq n}$ is $0$. But due to colimit generation, an object $X$ is $0$ as soon as $[\Sigma^{n,m}K,X]=0$ for all $n,m$ as shown in the proof of Lemma~\ref{twistediststructure}. The analogous argument applies for right completeness.
\end{proof}

\begin{lemma}
\label{filtcolimlemma}
    The truncations $\tau_{\geq n}(-)$ in the $(K,F)$-twisted t-structure commute with filtered colimits.
\end{lemma}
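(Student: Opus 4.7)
The plan is to reduce to showing that $\cC_{\geq n}$ and $\cC_{<n}$ are each closed under filtered colimits. Given those closures, if $\{X_\alpha\}$ is a filtered diagram then applying $\colim$ to the truncation fiber sequences $\tau_{\geq n}X_\alpha \to X_\alpha \to \tau_{<n}X_\alpha$ yields a fiber sequence whose outer terms lie in $\cC_{\geq n}$ and $\cC_{<n}$ respectively, and uniqueness of the $t$-structure decomposition then identifies these with $\tau_{\geq n}(\colim X_\alpha)$ and $\tau_{<n}(\colim X_\alpha)$. Since $\Sigma$ and $F$ commute with colimits, it suffices to treat the cases $n=0$ and $n=-1$.

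The input driving both closures is that every $\Sigma^{n,m}K = \Sigma^n F^m K$ is compact: $\Sigma$ and $F$ are autoequivalences of $\cC$ and hence preserve compact objects. Consequently the functor $[\Sigma^{n,m}K,-]$ commutes with filtered colimits for all $(n,m)$, and the closure of $\cC_{\geq 0}$ under filtered colimits is immediate from its defining vanishing condition $[\Sigma^{n,m}K, X] = 0$ for $n+m<0$.

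For $\cC_{\leq -1}$, I would use the description established in the proof of Lemma~\ref{twistediststructure} that $\cC_{\geq 0}$ is generated under colimits and extensions by the collection $\{\Sigma^{n,m}K : n+m \geq 0\}$. Thus $\cC_{\leq -1}$ is exactly the right orthogonal to this set of compact generators, characterized by the vanishing of $\pi_k \Map^{\Sp}(\Sigma^{n,m}K, -)$ for all $k \geq 0$ and $n+m \geq 0$. Because each such mapping spectrum commutes with filtered colimits by compactness, and homotopy groups commute with filtered colimits of spectra, this vanishing is preserved under filtered colimits, yielding the desired closure. The only delicate point in the argument is the translation of the mapping-space definition of $\cC_{\leq -1}$ into a compactness-amenable bigraded Hom-vanishing condition; the remaining steps are routine applications of the compactness of $\Sigma^{n,m}K$ and the bookkeeping of the $t$-structure decomposition.
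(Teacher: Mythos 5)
Your argument is correct, and it reaches the same conclusion as the paper's proof by a slightly different route. The paper constructs the canonical comparison map $\colim \tau_{\geq n} X_\alpha \to \tau_{\geq n} \colim X_\alpha$ (using, as you do implicitly, that $\cC_{\geq n}$ is closed under filtered colimits so that the colimit of truncations lands in $\cC_{\geq n}$) and then checks directly that this map becomes an isomorphism after applying every $[\Sigma^{a,b}K,-]$; since $K$ is compact these groups commute with filtered colimits, and by the generation hypothesis they jointly detect equivalences. You instead verify closure of both $\cC_{\geq 0}$ and $\cC_{\leq -1}$ under filtered colimits and appeal to the uniqueness of the truncation fiber sequence $\tau_{\geq n}X \to X \to \tau_{<n}X$. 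The two approaches rest on the same inputs (compactness of $K$, generation by its bigraded shifts), so the difference is mostly one of packaging: the paper's version is leaner, since it never needs to characterize or prove closure of $\cC_{\leq -1}$, whereas yours has the small side benefit of simultaneously recording that $\tau_{\leq n}$ also commutes with filtered colimits. One minor observation: your right-orthogonal condition ``$\pi_k \Map^{\Sp}(\Sigma^{n,m}K, -)=0$ for all $k \geq 0$ and $n+m \geq 0$'' is redundant as stated, since $\pi_k\Map^{\Sp}(\Sigma^{n,m}K,-) \cong [\Sigma^{n+k,m}K,-]$ re-indexes the constraints to the single condition $[\Sigma^{a,b}K,Y]=0$ for $a+b\geq 0$; this is what you in fact use, and the ``delicate point'' you flag is the standard translation, not a genuine obstacle.
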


\begin{proof}
    Given a filtered diagram $X_\alpha$ we see that the compactness of $K$ implies that $\colim \tau_{\geq n} X_{\alpha}\to \tau_{\geq n} \colim X_{\alpha}$ induces equivalences on $[\Sigma^{n,m}K,-]$ which detect equivalences as they detect contractibility.
\end{proof}

We now specialize to the case that $\cC$ is presentably symmetric monoidal with compact unit and $K=\one$.

\begin{lemma}
\label{tmultlemma}
    The $(K,F)$-twisted $t$-structure on $\cC$ is multiplicative; i.e., if $X\in \cC_{\geq a}$ and $Y\in \cC_{\geq b}$ then $X\otimes Y\in \cC_{\geq a+b}$.
\end{lemma}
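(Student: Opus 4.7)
The plan is to reduce the claim, by closure properties of the connective part, to the case where both tensor factors are generators of the form $\Sigma^{n,m}\one$. First, fix $Y\in\cC_{\geq b}$ and consider the subcategory
\[
\cC^Y = \{X\in\cC : X\otimes Y\in\cC_{\geq a+b}\}.
\]
Because $\cC$ is presentably symmetric monoidal and stable, the functor $(-)\otimes Y$ preserves colimits and is exact, hence preserves extensions. By Lemma~\ref{leftcomplemma} and the proof of Lemma~\ref{twistediststructure}, $\cC_{\geq a+b}$ is the smallest full subcategory closed under colimits and extensions containing $\Sigma^{n,m}\one$ for $n+m\geq a+b$; in particular it is closed under colimits and extensions in $\cC$, so $\cC^Y$ is closed under colimits and extensions as well. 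Applying the same description of $\cC_{\geq a}$, it suffices to check that $\Sigma^{n,m}\one\otimes Y\in\cC_{\geq a+b}$ for all $n,m$ with $n+m\geq a$.

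Next, fix such a generator $\Sigma^{n,m}\one$. Repeating the argument with the variable $Y$, the subcategory of $\cC$ consisting of those $Y$ for which $\Sigma^{n,m}\one\otimes Y\in\cC_{\geq a+b}$ is again closed under colimits and extensions, so it suffices to verify the claim when $Y=\Sigma^{p,q}\one$ with $p+q\geq b$.

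The final step is the computation that
\[
\Sigma^{n,m}\one\otimes\Sigma^{p,q}\one \simeq \Sigma^{n+p,\,m+q}\one,
\]
which, by $(n+p)+(m+q)\geq a+b$, lies in $\cC_{\geq a+b}$ by definition. This identification uses that $\Sigma^n=(-)\otimes\Sigma^n\one$, together with the fact that the twist $F$ is $\cC$-linear (equivalently, symmetric monoidal, so that $F(\one)$ is an invertible object and $F^m\one\otimes F^q\one\simeq F^{m+q}\one$); this is the natural setting in which the $(K,F)$-twisted $t$-structure would be multiplicative, and it holds in the examples of interest ($\Syn_E^{\cell}$ and $\Stable(\Gamma)^{\cell}$) where $F$ is the weight shift $(-)\otimes\bS^{0,1}$.

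The only substantive obstacle is the last step: the bireduction to generators is formal, but multiplicativity genuinely requires compatibility between $F$ and $\otimes$. Without such compatibility the statement can fail, so in effect the lemma is asserting that in the intended setup $F$ is monoidal (given by tensoring with an invertible object), at which point the computation above is immediate.
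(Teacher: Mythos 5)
Your proof is correct and takes the same fundamental route as the paper's: the paper reduces $X$ to the generating objects $\Sigma^{n,m}\one$ using the colimit-preservation of $-\otimes Y$ and the closure of $\cC_{\geq a+b}$ under colimits (and, implicitly, extensions), then declares the remaining check clear. You add an extra round of reduction on $Y$ as well, which is unnecessary but harmless; the paper could instead argue directly that $\Sigma^{n,m}\one\otimes Y\simeq\Sigma^{n,m}Y$ and compute mapping groups against $\Sigma^{p,q}\one$.

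The more substantive point in your write-up is the explicit observation that the final computation requires $F$ to be compatible with the monoidal structure, i.e.\ to be given by tensoring with the invertible object $F(\one)$. You are right that this is needed: without it, $\Sigma^{n,m}\one\otimes Y = \Sigma^n(F^m(\one)\otimes Y)$ need not be equivalent to $\Sigma^{n,m}Y$, and the paper's ``the claim is clear'' would not go through (attempting to prove $F^m(\one)\otimes Y\in\cC_{\geq m+b}$ directly is circular, as it is a special case of the lemma being proved). The paper's setup only asks that $F$ be an automorphism, so this is a genuine unstated hypothesis; it is, however, satisfied in every application the paper makes ($\Stable(\Gamma)^{\cell}$ and $\Syn_E^{\cell}$, where $F$ is a weight or internal-degree shift given by tensoring with a $\otimes$-invertible sphere). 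You could sharpen the statement of the lemma by adding the hypothesis that $F$ is given by tensoring with an invertible object, or equivalently by replacing item (c) of the deformation-pair definition style hypothesis with a homomorphism $\bZ\to\pi_0\mathrm{Pic}(\cC)$; your proof then goes through cleanly.
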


\begin{proof}
    The functor $-\otimes Y$ preserves all colimits and the categories $\cC_{\geq k}$ are closed under colimits, so that we may reduce to checking on the shifts $\Sigma^{n,m}K\otimes Y$ for $n+m=b$ where the claim is clear.
\end{proof}

\begin{remark}
\label{extraaxis}
    We will use an extension of the arguments above to the case where $\cC$ requires more compact generators than are being used in the $t$-structure. Suppose $\cC$ is generated by objects $\Sigma^{n,m,k}K$, where $m$ corresponds to twists by the functor $F$ above and $k$ denotes twists by some new commuting automorphism and the shifts $\Sigma^{0,0,k}K\in \cC_{\geq 0}$. Then we may run the same arguments as above, replacing mapping objects with their graded versions in the third shift as necessary.
\end{remark}

For computing nilpotent completions in $\Stable(\Gamma)^{\cell}$ and $\Syn_E^{\cell}$ via Appendix~\ref{nilcompappendix}, we need to identify the heart of the $t$-structure of Definition~\ref{twistedtstructdef}. The Ext groups of an object $X$ in the heart vanish except for $\Ext_{\Gamma}^{t,t}(A,X)$ or $\Ext^{t,t,*}_{\Gamma}(A,X)$ with $t\in\bZ$, depending on whether $\Gamma$ is graded or bigraded. We first describe notation for a graded version of these Ext groups when $X=A$:

\begin{definition}
    Let $\Ext_{\Gamma}^{t=s}$ and $\Ext_{\Gamma}^{t=s,*}$ denote the respective graded and bigraded rings defined by
    \begin{align*}
     (\Ext_{\Gamma}^{t=s})_{t}&= \Ext^{t,t}_{\Gamma}(A,A), \\
    (\Ext_{\Gamma}^{t=s,*})_{t,w}&=          \Ext^{t,t,w}_{\Gamma}(A,A)
    \end{align*}
    depending on whether $\Gamma$ is graded or bigraded.
\end{definition}

We can identify the heart in terms of $\Ext_{\Gamma}^{t=s}$-mod and $\Ext_{\Gamma}^{t=s,*}$-mod in the case where the unit $A$ is connective:

\begin{theorem}
\label{comodheartthm}
    Suppose $(A,\Gamma)$ is a graded or bigraded Hopf algebroid such that $A\in\Stable(\Gamma)_{\geq 0}$. Then the functors
    \begin{align*}
        \Ext_{\Gamma}^{*,*}(A,-)&:(\Stable(\Gamma)^{\cell})^{\heartsuit}\to\Ext_{\Gamma}^{t=s}\text{-mod} \\
        \Ext_{\Gamma}^{*,*,*}(A,-)&:(\Stable(\Gamma)^{\cell})^{\heartsuit}\to\Ext_{\Gamma}^{t=s,*}\text{-mod}
    \end{align*}
    are equivalences of $\infty$-categories, where $\Ext_{\Gamma}^{t=s}\text{-mod}$ and $\Ext_{\Gamma}^{t=s,*}\text{-mod}$ are considered as discrete $\infty$-categories.
\end{theorem}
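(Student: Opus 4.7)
The plan is to invoke a recognition principle à la Gabriel–Mitchell and Schwede–Shipley: the heart of an accessible $t$-structure on a presentable stable $\infty$-category is a Grothendieck abelian category, and if it admits a small set of compact projective generators with graded endomorphism ring $R$, then it is equivalent to graded $R$-modules via the functor sending an object to its graded mapping groups into the generators. The natural candidates in our setting are the diagonal shifts $\Sigma^{n,-n}A$ for $n \in \bZ$ (and in the bigraded case, $\Sigma^{n,-n,w}A$ via the extra grading axis of Remark~\ref{extraaxis}).

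First I would verify that each $\Sigma^{n,-n}A$ lies in the heart. The hypothesis $A \in \Stable(\Gamma)_{\geq 0}$ gives connectivity; for the coconnectivity $A \in \cC_{\leq 0}$, I would use that $\Ext^{s,t}_\Gamma(A,A)=0$ for $s<0$ and match this against the defining vanishing condition $[\Sigma^{n,m}A,A]=0$ for $n+m>0$ of the twisted $t$-structure. Since the autoequivalences $\Sigma^{n,-n}$ preserve the pairing $n+m$, they preserve both halves of the $t$-structure, so all such shifts lie in the heart. I would then check that these shifts form a set of compact projective generators: compactness descends from $\cC$ by Lemma~\ref{filtcolimlemma}; projectivity reduces to the vanishing of $\Ext^1_{\heartsuit}(\Sigma^{n,-n}A, X)$, which corresponds in $\cC$ to an off-diagonal mapping group killed by membership of $X$ in the heart; generation follows from colimit-generation of $\cC^{\cell}$ by bigraded shifts of $A$ combined with a $\pi_0^{\heartsuit}$-truncation argument.

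Finally I would compute the graded endomorphism ring of the generators as
\begin{equation*}
\bigoplus_n \Hom_{\heartsuit}(A, \Sigma^{n,-n}A) \cong \bigoplus_n \Ext^{n,-n}_\Gamma(A,A),
\end{equation*}
which after reindexing is precisely $\Ext^{t=s}_\Gamma$, with the bigraded case producing an extra $w$-direction identically. Invoking the appropriate many-object version of Schwede–Shipley (or equivalently the fact that a Grothendieck abelian category with a set of compact projective generators is equivalent to modules over their endomorphism ring) then yields the desired equivalence, with the induced functor being exactly $X \mapsto \bigoplus_n \Ext^{n,-n}_\Gamma(A,X)$ as in the statement. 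The main obstacle I anticipate is the careful verification that $A$ genuinely lies in the heart rather than merely in $\cC_{\geq 0}$: in cases where $A$ fails to be $\heartsuit$-bounded, one may instead need to use $\pi_0^{\heartsuit}A$ and its twisted shifts as the generating set and argue separately that these still generate as compact projectives via the left- and right-completeness provided by Lemma~\ref{leftcomplemma}.
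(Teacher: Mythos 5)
Your strategy---recognize the heart as a Grothendieck abelian category with a set of compact projective generators and then invoke a Gabriel--Mitchell/Schwede--Shipley type recognition theorem---is a legitimate and genuinely different route from the paper's, which instead proves full faithfulness via a universal coefficient spectral sequence (Lemma~\ref{uctlemma}, Lemma~\ref{fflemma}) and essential surjectivity by lifting free resolutions (Lemma~\ref{eslemma}). If carried through carefully, your approach would buy a cleaner conceptual statement at the price of verifying the projective-generator axioms; the paper's approach avoids the Morita machinery but requires the UCT spectral sequence computation.

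There is, however, a real gap in the main argument as written. You claim that $A$ lies in the heart, citing $\Ext^{s,t}_\Gamma(A,A)=0$ for $s<0$ as the coconnectivity condition. That vanishing is trivially true for every $A$ and has nothing to do with coconnectivity. With the paper's conventions, $A\in\Stable(\Gamma)_{\leq 0}$ requires $\Ext^{s,t}_\Gamma(A,A)=0$ for $t>s$, which fails in essentially all cases of interest: for instance $\alpha_1\in\Ext^{1,2}_{\BP_*\BP}(\BP_*,\BP_*)\neq 0$, so $\BP_*$ is connective but emphatically not in the heart. The theorem's hypothesis is only connectivity, precisely because one cannot expect more. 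You do anticipate this in your final paragraph and propose replacing $A$ by $\pi_0^{\heartsuit}A$ and its twisted shifts, which is the right move (this is exactly the object $F$ used in the paper's proof of Lemma~\ref{eslemma}), but the argument is not carried out in that generality. In particular, your projectivity claim---that $\Ext^1_{\heartsuit}(\Sigma^{n,-n}A,X)$ ``corresponds in $\cC$ to an off-diagonal mapping group killed by membership of $X$ in the heart''---is stated for $A$ and does not transfer verbatim to $\pi_0^{\heartsuit}A$; for $F=\pi_0^{\heartsuit}A$ one instead needs to run the fiber sequence $\tau_{\geq 1}A\to A\to F$ against $t$-structure orthogonality ($[\tau_{\geq 1}A,X]=0$ since $\tau_{\geq 1}A\in\cC_{\geq 1}$ and $X\in\cC_{\leq 0}$, and $[\Sigma^{-1}A,X]=\pi_{-1,0}X=0$) to conclude $[\Sigma^{-1}F,X]=0$. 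Similarly the identification $\bigoplus_n\Hom_{\heartsuit}(F,\Sigma^{n,-n}F)\cong\Ext^{t=s}_\Gamma$ requires a separate long-exact-sequence argument showing $\Ext^{t,t}_\Gamma(A,A)\cong\Ext^{t,t}_\Gamma(A,F)$, again using connectivity of $\tau_{\geq 1}A$. So the skeleton is sound, but the central verifications are missing and the stated coconnectivity criterion is wrong.
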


To prove this we follow the strategy of \cite[Section 3]{GWX21}, by showing that $\Ext_{\Gamma}^{*,*}(A,-)$ is fully faithful (Lemma~\ref{fflemma}) and essentially surjective (Lemma~\ref{eslemma}). To do so, we first need a version of the universal coefficient spectral sequence in $\Stable(\Gamma)^{\cell}$ as a technical tool:

\begin{lemma}
\label{uctlemma}
    Let $\pi_{*,*}$ denote $\Ext^{-*,*}_{\Gamma}$ and $\pi_{*,*,*}$ denote $\Ext^{-*,*,*}_{\Gamma}$. For any
    \begin{equation*}
        X,Y\in\Stable(\Gamma)^\cell,
    \end{equation*}
    there is a conditionally convergent spectral sequence with $\E_2$-page
    \begin{align*}
        \E_2^{s,t,w}&=\Ext_{\pi_{*,*}A}^{s,t,w}(\pi_{*,*}(X),\pi_{*,*}(Y)), \\
        \E_2^{s,t,w,v}&=\Ext_{\pi_{*,*,*}A}^{s,t,w,v}(\pi_{*,*,*}(X),\pi_{*,*,*}(Y)).
    \end{align*}
    and differentials
    \begin{align*}
     d_r:\E_r^{s,t,w}&\to \E_r^{s+r,t+r-1,w}, \\
     d_r:\E_r^{s,t,w,v}&\to \E_r^{s+r,t+r-1,w,v},
    \end{align*}
    converging to either $[\Sigma^{t-s,w}X,Y]_{\Stable(\Gamma)^{\cell}}$ or $[\Sigma^{t-s,w,v}X,Y]_{\Stable(\Gamma)^{\cell}}$.
\end{lemma}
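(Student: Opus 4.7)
The plan is to produce this universal coefficient spectral sequence from a ``geometric'' free resolution of $X$ by shifts of the unit $A$ in $\Stable(\Gamma)^\cell$, then apply $\Map(-,Y)$ and read off the standard spectral sequence of a tower. I will describe the bigraded-Hopf-algebroid case with indices $(s,t,w)$; the trigraded case follows by carrying the extra index $v$ through the same construction, using Remark~\ref{extraaxis}.

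First, I would build an Adams-type cellular resolution of $X$ inside $\Stable(\Gamma)^\cell$, using that $A$ represents $\pi_{*,*}$ via $\pi_{t,w}M = [\Sigma^{t,w}A,M]$. Choose a surjection of $\pi_{*,*}A$-modules $F_0 \twoheadrightarrow \pi_{*,*}X$ from a free module with generators in bigradings $(t_\alpha,w_\alpha)$. Because $A$ represents homotopy, this lifts to a map $P_0 := \bigoplus_\alpha \Sigma^{t_\alpha,w_\alpha}A \to X$ which is surjective on $\pi_{*,*}$. Taking the fiber $X_1 \to P_0 \to X$ and iterating produces a tower
\[
\cdots \to X_2 \to X_1 \to X_0 = X
\]
whose layers $P_s$ are coproducts of shifts of $A$ realizing a chosen free resolution $\cdots \to P_1 \to P_0 \to \pi_{*,*}X \to 0$ of $\pi_{*,*}A$-modules.

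Second, I would apply the mapping-spectrum functor $\Map^{\Sp}(-,Y)$ to this tower and extract the associated spectral sequence of a filtered object. Since $A$ represents $\pi_{*,*}$, the $E_1$-term computes $\Hom^{t,w}_{\pi_{*,*}A}(P_s,\pi_{*,*}Y)$, and taking $d_1$-cohomology — which amounts to computing $\Ext$ from the chosen free resolution of $\pi_{*,*}X$ — identifies
\[
E_2^{s,t,w} \cong \Ext^{s,t,w}_{\pi_{*,*}A}(\pi_{*,*}X,\pi_{*,*}Y),
\]
with the stated differential bidegree coming from the usual Adams-tower sign conventions. Conditional convergence to $[\Sigma^{t-s,w}X,Y]_{\Stable(\Gamma)^\cell}$ then follows from the identification of $X$ as the colimit of $X_s \to \cdots \to X_0$ together with the general convergence theory for spectral sequences of towers of spectra.

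The main obstacle I expect is ensuring that the free-module resolution can be lifted level by level to an honest tower in $\Stable(\Gamma)^\cell$, i.e.\ that each chosen generating set for $\ker(P_{s-1} \to P_{s-2})$ is realized by a genuine map of shifts of $A$ into $X_s$ whose fiber remains cellular. This is available because (i) $A$ is a compact generator of $\Stable(\Gamma)^\cell$, (ii) $[\Sigma^{t,w}A,-]$ computes $\pi_{t,w}$ exactly, and (iii) the cellular subcategory is closed under coproducts, suspensions, and fibers. Once these ingredients are in place the rest of the argument is formal, and the trigraded case is obtained simply by threading the extra grading through the same steps.
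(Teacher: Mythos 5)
Your proposal takes essentially the same route as the paper, which follows EKMM IV.4.5 and Dugger--Isaksen: lift a $\pi_{*,*}A$-free resolution of $\pi_{*,*}X$ to a tower of cellular objects with free layers, apply $\Map^{\Sp}(-,Y)$, and read off the resulting spectral sequence. The one imprecision is your convergence step: $X$ is $X_0$, not a colimit of the tower $X_s\to\cdots\to X_0$; the relevant observation, as the paper makes, is that the appropriate colimit of the resolving tower has vanishing $\pi_{*,*}$ (the connecting maps are zero on homotopy because each $P_s\to X_s$ is chosen surjective) and is therefore contractible since it is cellular by construction, and this is what gives conditional convergence. With that correction, the argument and the threading of the extra grading via Remark~\ref{extraaxis} agree with the paper's proof.
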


\begin{proof}
The proof is identical in both cases, we will record the bigraded version. The proof follows \cite[Thm. IV.4.5]{EKMM} and the subsequent \cite{DI10} almost identically. Namely, we put $K_{-1}=X$ and define $K_{i}$ to be the fiber of a map $F_i\to K_{i-1}$ where $F_i$ is a free $A$-module and the map is surjective on homotopy. The result is a free resolution of $\pi_{*,*}X$ by the $\pi_{*,*}F_i$ and a tower in $\Stable(\Gamma)^{\cell}$
\[\begin{tikzcd}
	X & X & {K_0} & {K_1} & {K_2} & {...} \\
	0 & {F_0} & {\Sigma F_1} & {\Sigma^2 F_2} & {\Sigma^3F_3}
	\arrow[from=1-1, to=1-2]
	\arrow[from=1-2, to=1-3]
	\arrow[from=1-3, to=1-4]
	\arrow[from=1-4, to=1-5]
	\arrow[from=1-5, to=1-6]
	\arrow[from=2-1, to=1-1]
	\arrow[from=2-2, to=1-2]
	\arrow[from=2-3, to=1-3]
	\arrow[from=2-4, to=1-4]
	\arrow[from=2-5, to=1-5]
\end{tikzcd}.\]
After applying the spectral mapping functor $\Map^\Sp_{\Stable(\Gamma)}(-,Y)$ we observe that the $\E_1$-page will be given by $\pi_{*,*}\Map^\Sp_{\Stable(\Gamma)}(F_i,Y)$ which splits as sums of shifts of $\pi_{*,*}Y$ and the $\E_2$-page is identified with $\Ext$ as in \cite{EKMM}. For conditional convergence, we observe, as in \cite{DI10}, that the colimit of the above tower is contractible as it has vanishing homotopy and is cellular by construction.
\end{proof}

Our main use of Lemma~\ref{uctlemma} is to prove that $\Ext_{\Gamma}^{*,*}(A,-)$ is fully faithful:

\begin{lemma}
\label{fflemma}
    Suppose $(A,\Gamma)$ is a graded or bigraded Hopf algebroid such that $A\in\Stable(\Gamma)_{\geq 0}^{\cell}$. Then the functors
    \begin{align*}
        \Ext_{\Gamma}^{*,*}(A,-)&:(\Stable(\Gamma)^{\cell})^{\heartsuit}\to\Ext_{\Gamma}^{t=s}\text{-mod} \\
        \Ext_{\Gamma}^{*,*,*}(A,-)&:(\Stable(\Gamma)^{\cell})^{\heartsuit}\to\Ext_{\Gamma}^{t=s,*}\text{-mod}
    \end{align*}
    are fully faithful.
\end{lemma}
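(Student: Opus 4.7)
The plan is to apply the universal coefficient spectral sequence of Lemma~\ref{uctlemma} and to exhibit enough vanishing on the $\E_2$-page to force its collapse onto the single entry computing $\Hom$ in $\Ext_{\Gamma}^{t=s}$-modules. By construction of the twisted $t$-structure, the heart condition unfolds via $\pi_{n,m} = \Ext^{-n,m}_{\Gamma}(A,-)$ into the statement that $\pi_{*,*}X$ and $\pi_{*,*}Y$ are concentrated on the locus $\{n+m=0\}$ (equivalently, $\Ext^{s,w}_{\Gamma}(A,-)$ is supported on the diagonal $s = w$). The hypothesis $A \in \Stable(\Gamma)_{\geq 0}$ simultaneously ensures that $\pi_{*,*}A$ is supported in the half-plane $\{n+m \geq 0\}$.

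The key algebraic vanishing to establish is: for any free bigraded $\pi_{*,*}A$-module $P$ whose generators lie in bidegrees $(n_0,m_0)$ with $n_0+m_0 \geq 0$, and any bigraded $\pi_{*,*}A$-module $N$ supported on $\{n+m=0\}$, the internal Hom in bidegree $(t,w)$ vanishes whenever $t+w > 0$. Indeed, such a $P$ is supported in $\{n+m \geq 0\}$, and a bidegree-$(t,w)$ map sends the $(n,m)$-part of $P$ to $N_{n+t, m+w}$, which is nonzero only if $(n+t)+(m+w)=0$, forcing $n+m = -(t+w) < 0$, contradicting the support of $P$. Since $\pi_{*,*}X$ itself has antidiagonal support, we may build a free resolution $P_\bullet \to \pi_{*,*}X$ in which every $P_i$ has generators in bidegrees with $n_0+m_0 \geq 0$ (inductively, each kernel lies in the upper half-plane as a submodule of the previous $P_{i-1}$). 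Computing $\Ext^{s,t,w}_{\pi_{*,*}A}(\pi_{*,*}X, \pi_{*,*}Y)$ as the cohomology of this resolution against $\pi_{*,*}Y$, we conclude that these Ext groups vanish whenever $t+w > 0$. In particular, $\E_2^{s,s,0}=0$ for $s \geq 1$ and $\E_2^{r,r-1,0}=0$ for $r \geq 2$.

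This vanishing forces collapse of the spectral sequence on the line $(t-s,w) = (0,0)$: outgoing differentials $d_r : \E_r^{0,0,0} \to \E_r^{r,r-1,0}$ land in zero, no incoming differentials to $\E_r^{0,0,0}$ are possible (their sources would require negative Ext degrees), and all higher $\E_\infty^{s,s,0}$ vanish. The filtration on $[X,Y]$ therefore degenerates, yielding $[X,Y] \cong \E_2^{0,0,0}$, the bidegree-$(0,0)$ part of $\Hom_{\pi_{*,*}A}(\pi_{*,*}X, \pi_{*,*}Y)$. For the final identification, a bidegree-$(0,0)$ $\pi_{*,*}A$-linear map between modules supported on $\{n+m=0\}$ automatically has the action of any $\pi_{n,m}A$ with $n+m > 0$ land in a vanishing piece of $N$, so such a map is equivalent to a module map over the diagonal subring $\pi^{n+m=0}_{*,*}A \cong \Ext_{\Gamma}^{t=s}$. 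This gives $[X,Y] \cong \Hom_{\Ext_{\Gamma}^{t=s}}(\Ext^{*,*}(A,X), \Ext^{*,*}(A,Y))$, proving full faithfulness. The tri-graded variant for bigraded Hopf algebroids follows by the same argument with the extra weight $v$ treated as an inert auxiliary grading, in the spirit of Remark~\ref{extraaxis}.

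The principal subtlety I anticipate is upgrading the conditional convergence of the universal coefficient spectral sequence (as stated in Lemma~\ref{uctlemma}) to the strong convergence required for the collapse argument. With $\E_\infty^{s,s,0}$ vanishing for all $s \neq 0$ and no $\lim^1$-obstructions evident, this should pose no real difficulty, but it does warrant an explicit check.
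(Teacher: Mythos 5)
Your proposal follows the same strategy as the paper's proof: run the universal coefficient spectral sequence of Lemma~\ref{uctlemma}, exploit connectivity of $A$ to choose a free resolution supported in the upper half-plane $\{n+m\geq 0\}$, and conclude from the antidiagonal support of $\pi_{\starstar}X$ and $\pi_{\starstar}Y$ that all contributions except $\E_2^{0,0,0}$ vanish, so the edge homomorphism is an isomorphism, and the resulting $\Hom_{\pi_{\starstar}A}$ collapses onto $\Hom_{\Ext_\Gamma^{t=s}}$ since the off-diagonal action is forced to be trivial. The one notable difference is that you establish the vanishing $\Ext^{s,t,w}_{\pi_{\starstar}A}(\pi_{\starstar}X,\pi_{\starstar}Y)=0$ for all $t+w>0$ at the $\E_2$ page, while the paper argues the slightly weaker vanishing $\E_1^{t,t,0}=0$ for $t>0$ directly at the $\E_1$ page; your formulation makes the vanishing of outgoing $d_r$ differentials a little more transparent, but the argument is otherwise identical, and both proofs implicitly rely (as you flag) on conditional convergence degenerating into strong convergence on the relevant diagonal, which does hold here since the $\E_2$ page vanishes in all but one filtration.
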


\begin{proof}
We first note that for $X,Y\in(\Stable(\Gamma)^{\cell})^{\heartsuit}$ and $n>0$,
\begin{align*}
\Hom_{\pi_{*,*}A}(\pi_{*,*}(\Sigma^{n,0}X),\pi_{*,*}(Y))&=0=\Hom_{\Ext^{t=s}_{\Gamma}}(\pi_{*,*}(\Sigma^{n,0}X),\pi_{*,*}(Y)), \\
\Hom_{\pi_{*,*,*}A}(\pi_{*,*,*}(\Sigma^{n,0,0}X),\pi_{*,*,*}(Y))&=0=\Hom_{\Ext^{t=s,*}_{\Gamma}}(\pi_{*,*,*}(\Sigma^{n,0,0}X),\pi_{*,*,*}(Y)),
\end{align*}

for degree reasons. Whenever $n=0$,
\begin{align*}
\Hom_{\pi_{*,*}A}(\pi_{*,*}(X),\pi_{*,*}(Y))&\cong\Hom_{\Ext^{t=s}_{\Gamma}}(\pi_{*,*}(X),\pi_{*,*}(Y)), \\
\Hom_{\pi_{*,*,*}A}(\pi_{*,*,*}(X),\pi_{*,*,*}(Y))&\cong\Hom_{\Ext^{t=s,*}_{\Gamma}}(\pi_{*,*,*}(X),\pi_{*,*,*}(Y)),
\end{align*}

by the assumption that $X$ and $Y$ are in the heart. So to show fully faithful, it suffices to show that
\begin{align*}
[\Sigma^{n,0}X,Y]_{\Stable(\Gamma)^{\cell}}&\to\Hom_{\pi_{*,*}A}(\pi_{*,*}(\Sigma^{n,0}X),\pi_{*,*}(Y)),  \\
[\Sigma^{n,0,0}X,Y]_{\Stable(\Gamma)^{\cell}}\to &\text{ }\Hom_{\pi_{*,*,*}A}(\pi_{*,*,*}(\Sigma^{n,0,0}X),\pi_{*,*,*}(Y)),
\end{align*}
is an isomorphism for $n\geq 0$. Whenever $n>0$, both sides are equal to 0 since $X$ and $Y$ are assumed to be in the heart.

\bigskip

To show it's an isomorphism whenever $n=0$, we use the spectral sequence of Lemma~\ref{uctlemma}. Note that $\E_1^{t,t,0}$ and $\E_1^{t,t,0,0}$ for $t\geq 0$ are the respective $\E_1$-page degrees which compute the bigraded and trigraded versions of $[X,Y]_{\Stable(\Gamma)^{\cell}}$. Since we assume that $A$ is connective, we can choose a free resolution such that $\pi_{a,b}(\Sigma^{t,0}F_s)$ and $\pi_{a,b,c}(\Sigma^{t,0,0}F_s)$ vanish for $a+b-t<0$. For $t>0$,
\begin{align*}
    \E_1^{t,t,0}&=\Hom_{\pi_{*,*}A}(\pi_{*,*}\Sigma^{t,0}F_t,\pi_{*,*}Y) \\
    \E_1^{t,t,0,0}&=\Hom_{\pi_{*,*,*}A}(\pi_{*,*,*}\Sigma^{t,0,0}F_t,\pi_{*,*,*}Y)
\end{align*}
are both 0 for degree reasons since $Y$ is in the heart. Then the $\E_1$-page is concentrated in $t=0$ and, for degree reasons, the differentials entering and exiting $\E_1^{0,0,0}$ and $\E_1^{0,0,0,0}$ are 0. Hence the edge homomorphisms
\begin{align*}
[X,Y]_{\Stable(\Gamma)^{\cell}}&\to\Hom_{\pi_{*,*}A}(\pi_{*,*}(X),\pi_{*,*}(Y)) \\
[X,Y]_{\Stable(\Gamma)^{\cell}}&\to\Hom_{\pi_{*,*,*}A}(\pi_{*,*,*}(X),\pi_{*,*,*}(Y))
\end{align*}
 are isomorphisms, which finishes the proof.
\end{proof}

\begin{lemma}
\label{eslemma}
   The functors
    \begin{align*}
        \Ext_{\Gamma}^{*,*}(A,-)&:(\Stable(\Gamma)^{\cell})^{\heartsuit}\to\Ext_{\Gamma}^{t=s}\text{-}\mathrm{mod} \\
        \Ext_{\Gamma}^{*,*,*}(A,-)&:(\Stable(\Gamma)^{\cell})^{\heartsuit}\to\Ext_{\Gamma}^{t=s,*}\text{-}\mathrm{mod}
    \end{align*}
    are essentially surjective.
\end{lemma}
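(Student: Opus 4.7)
The plan is to realize each $R$-module as the Ext of a heart object built from a free presentation. Write $R$ for either $\Ext^{t=s}_\Gamma$ or $\Ext^{t=s,*}_\Gamma$, and observe that the shifts of $R$ preserving the diagonal $t=s$ correspond to the heart-preserving shifts $\Sigma^{n,m(,k)}$ satisfying $n+m=0$.

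Given $M\in R\text{-mod}$, choose a free presentation
\[
F_1\xrightarrow{\phi}F_0\to M\to 0
\]
as $R$-modules, with each $F_i$ a direct sum of shifts of $R$. Set $\tilde R := \pi_0^{\heartsuit}(A)$, the heart truncation of $A$ in the twisted $t$-structure; by the hypothesis $A\in\Stable(\Gamma)_{\geq 0}$ one has $\tilde R = \tau_{\leq 0} A$. The adjunction defining $\tau_{\leq 0}$, applied to the connective object $\Sigma^{-s,s,k} A$, yields a natural identification $\Ext^{s,s,k}(A,Y)\cong\Hom_\heartsuit(\Sigma^{-s,s,k}\tilde R, Y)$ for all $Y\in\heartsuit$; in particular $\Ext^{*,*}(A,\tilde R)\cong R$. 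Since the heart is Grothendieck abelian by Lemmas~\ref{leftcomplemma} and~\ref{filtcolimlemma}, arbitrary direct sums exist in $\heartsuit$, and we may realize each $F_i$ by a direct sum $\tilde F_i\in\heartsuit$ of heart-preserving shifts of $\tilde R$ with $\Ext^{*,*}(A,\tilde F_i)\cong F_i$.

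By the fully faithfulness of Lemma~\ref{fflemma}, the map $\phi$ lifts to a unique morphism $\tilde\phi:\tilde F_1\to\tilde F_0$ in $\heartsuit$. Let $X:=\pi_0^{\heartsuit}(\mathrm{cofib}(\tilde\phi))$, where the cofiber is formed in $\Stable(\Gamma)^{\cell}$. The long exact sequence of twisted homotopy attached to $\tilde F_1\to\tilde F_0\to\mathrm{cofib}(\tilde\phi)$, together with the fact that $\pi_*^{\heartsuit}(\tilde F_i)$ is concentrated in degree zero, gives $\pi_0^{\heartsuit}(\mathrm{cofib}(\tilde\phi))\cong\coker_\heartsuit(\tilde\phi)$. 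Lemma~\ref{fflemma} then identifies $\Ext^{*,*}(A,X)$ with $\coker(\phi)=M$, producing the desired preimage.

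The main obstacle is the identification $\Ext^{*,*}(A,\tilde R)\cong R$ as a (bi)graded $R$-module, rather than merely as an abelian group; this requires compatibility of $\pi_0^{\heartsuit}$ with heart-preserving shifts and with the $R$-action induced by composition of endomorphisms of $A$, and follows from the universal property of $\tau_{\leq 0}$ and the connectivity of $A$. Alternatively, the argument can be recast as a Morita-type equivalence: $\tilde R$ serves as a compact generator of $\heartsuit$ with endomorphism ring $R$, and combined with Lemma~\ref{fflemma} this yields $\heartsuit\simeq R\text{-mod}$, of which essential surjectivity is a formal consequence.
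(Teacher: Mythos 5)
Your proof follows essentially the same route as the paper: take a free presentation of $M$, realize the free terms as wedges of shifts of $\pi_0^{\heartsuit}(A)$, lift the connecting map via full faithfulness (Lemma~\ref{fflemma}), form the cofiber, and extract its $\pi_0^{\heartsuit}$. One small imprecision: your final step cites Lemma~\ref{fflemma} to identify $\Ext_{\Gamma}^{*,*}(A,\pi_0^{\heartsuit}(\mathrm{cofib}\,\tilde\phi))$ with $\coker(\phi)=M$, but full faithfulness by itself does not give this; what is actually needed is that $\Ext_{\Gamma}^{*,*}(A,-)$ is right exact on the heart, equivalently that the cofiber has Ext concentrated in Chow degrees $0$ and $1$ and the $\tau_{\leq 0}$-truncation retains only the degree-$0$ part, and this is exactly the explicit Ext-group computation the paper performs (using that $A\in\Stable(\Gamma)_{\geq 0}$ and the defining orthogonality of the twisted $t$-structure).
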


\begin{proof}
We just need to show that every $M\in\Ext^{t=s}_{\Gamma}$-mod or $M\in\Ext^{t=s,*}_{\Gamma}$-mod can be realized as the homotopy of an object in $(\Stable(\Gamma)^{\cell})^{\heartsuit}$. Note that the Ext groups of the unit $F:=\pi_0^{\heartsuit}(A)\in (\Stable(\Gamma)^{\cell})^{\heartsuit}$ satisfy
\begin{align*}
    \Ext^{*,*}_{\Gamma}(A,F)&\cong\Ext_{\Gamma}^{t=s}, \\
    \Ext^{*,*,*}_{\Gamma}(A,F)&\cong\Ext_{\Gamma}^{t=s,*}.
\end{align*}
If we have a free resolution of $M$
\begin{equation*}
\cdots\to F_2\to F_1\to F_0\to M\to 0,    
\end{equation*}
then by taking wedges of $F\in(\Stable(\Gamma)^{\cell})^{\heartsuit}$, each $F_i$ can be realized as the homotopy of an object $Z_i\in(\Stable(\Gamma)^{\cell})^{\heartsuit}$. By Lemma~\ref{fflemma}, the map $F_1\to F_0$ can be realized as Ext groups applied to a map of stable comodules $Z_1\to Z_0$. Consider the cofiber $X_1$ of this map. Its Ext groups satisfy
\begin{align*}
    \bigoplus_{s=-\infty}^{\infty}\Ext^{s-k,s}_{\Gamma}(A,X_1)&=\begin{cases}
        \mathrm{coker}(F_1\to F_0)=M, & k=0 \\
        \mathrm{ker}(F_1\to F_0) & k=1 \\
        0 & \mathrm{otherwise},
    \end{cases} \\
    \bigoplus_{s=-\infty}^{\infty}\Ext^{s-k,s,*}_{\Gamma}(A,X_1)&=\begin{cases}
        \mathrm{coker}(F_1\to F_0)=M, & k=0 \\
        \mathrm{ker}(F_1\to F_0) & k=1 \\
        0 & \mathrm{otherwise}.
    \end{cases}
\end{align*}
Then the Ext groups of the truncation $\tau_{\leq 0}X_1$ are exactly isomorphic to $M$, which finishes the proof.
\end{proof}

We can imitate the above lemmas and proofs in $\Syn_E^{\cell}$ to compute the heart $(\Syn_E^{\cell})^{\heartsuit}$. We produce the statements without proof, as their proofs are very similar to those in $\Stable(\Gamma)^{\cell}$:

\begin{lemma}
\label{synuctlemma}
    For any $X,Y\in\Syn_E^{\cell}$,
    there is a conditionally convergent spectral sequence with $\E_2$-page
    \begin{equation*}        \E_2^{s,t,w}=\Ext_{\pi_{*,*}\bS_E}^{s,t,w}(\pi_{*,*}(X),\pi_{*,*}(Y))
    \end{equation*}
    and differentials
    \begin{equation*}
     d_r:\E_r^{s,t,w}\to \E_r^{s+r,t+r-1,w}
    \end{equation*}
    converging to $[\Sigma^{t-s,w}X,Y]_{\Syn_E^{\cell}}$.
\end{lemma}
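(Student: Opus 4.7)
The plan is to mimic the proof of Lemma~\ref{uctlemma} essentially verbatim, replacing $\Stable(\Gamma)^{\cell}$ with $\Syn_E^{\cell}$ and the free $A$-modules with bigraded wedges of spheres $\bS_E^{t,w}$. The only facts I will actually use about $\Syn_E^{\cell}$ are that it is a presentably symmetric monoidal stable $\infty$-category, it is generated under colimits by the bigraded spheres $\bS_E^{t,w}$, and $\pi_{*,*}$ is computed by bigraded mapping spectra; all of these hold by construction.

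First, I would build an Adams-style free resolution of $X$: set $K_{-1}=X$ and inductively choose a map $F_i\to K_{i-1}$, where $F_i$ is a wedge of bigraded spheres, such that the induced map on $\pi_{*,*}$ is surjective; define $K_i$ to be the fiber. By construction, applying $\pi_{*,*}$ to the sequence $\cdots\to\Sigma^{-1}F_1\to F_0\to X$ yields a free resolution of $\pi_{*,*}X$ as a $\pi_{*,*}\bS_E$-module (after the usual reindexing by shifts). Assembling the fiber sequences $K_i\to F_i\to K_{i-1}$ gives the standard tower

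\[\begin{tikzcd}
	X & X & {K_0} & {K_1} & {K_2} & \cdots \\
	0 & {F_0} & {\Sigma F_1} & {\Sigma^2 F_2} & {\Sigma^3 F_3}
	\arrow[from=1-1, to=1-2]
	\arrow[from=1-2, to=1-3]
	\arrow[from=1-3, to=1-4]
	\arrow[from=1-4, to=1-5]
	\arrow[from=1-5, to=1-6]
	\arrow[from=2-2, to=1-2]
	\arrow[from=2-3, to=1-3]
	\arrow[from=2-4, to=1-4]
	\arrow[from=2-5, to=1-5]
\end{tikzcd}\]

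Next, I would apply the spectral mapping functor $\Map^{\Sp}_{\Syn_E^{\cell}}(-,Y)$. The resulting tower of spectra has an associated spectral sequence whose $\E_1$-page is $\pi_{*,*}\Map^{\Sp}_{\Syn_E^{\cell}}(F_i,Y)$. Since each $F_i$ is a wedge of bigraded spheres and $\pi_{*,*}(-)$ commutes with such coproducts (the spheres are compact in $\Syn_E^{\cell}$), this $\E_1$-page splits as the standard $\Hom$ out of the resolution of $\pi_{*,*}X$, and its $d_1$-differential is induced by the resolution differential. Passing to $\E_2$ gives the claimed identification with $\Ext^{s,t,w}_{\pi_{*,*}\bS_E}(\pi_{*,*}X,\pi_{*,*}Y)$, and the differential pattern $d_r\colon\E_r^{s,t,w}\to\E_r^{s+r,t+r-1,w}$ follows from the usual bookkeeping of the tower.

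For conditional convergence in the sense of Boardman, I need $\lim\nolimits_i K_i\simeq 0$ (equivalently, the dual limit of the mapping tower has vanishing derived inverse limit). Following the argument in \cite{DI10} as used in the proof of Lemma~\ref{uctlemma}, it suffices to observe that the $K_i$ form a tower of cellular objects whose connectivity (in the bigraded sense) tends to $\infty$: at each stage $F_i\to K_{i-1}$ hits all of $\pi_{*,*}K_{i-1}$, so $K_i$ sits in a strictly increasing range of the twisted $t$-structure on $\Syn_E^{\cell}$ from Section~\ref{tstructuresection}. Combined with left completeness of that $t$-structure (Lemma~\ref{leftcomplemma} applied with $K=\bS_E$ and $F=[1]$), this forces the inverse limit to vanish. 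The main (mild) obstacle is being careful that the twisted $t$-structure on $\Syn_E^{\cell}$ is set up exactly so that the connectivity estimates on the $K_i$ propagate; this is precisely what Section~\ref{tstructuresection} provides, so no new work is required.
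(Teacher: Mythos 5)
Your construction of the Adams-style free resolution, the application of the enriched mapping spectrum $\Map^{\Sp}_{\Syn_E^{\cell}}(-,Y)$, and the identification of the $\E_1$- and $\E_2$-pages all follow the paper's proof of Lemma~\ref{uctlemma} faithfully, which is exactly what is intended here (the paper explicitly says the synthetic version has "very similar" proofs). The gap is in your conditional convergence argument, and it is a real one.

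You claim that because each $F_i\to K_{i-1}$ is surjective on $\pi_{*,*}$, the fiber $K_i$ "sits in a strictly increasing range of the twisted $t$-structure," and you then invoke left/right completeness to kill the inverse limit. But the fiber of a $\pi_{*,*}$-surjection is \emph{not} more connective than the target: from the long exact sequence, $\pi_{*,*}K_i$ is the kernel of $\pi_{*,*}F_i\twoheadrightarrow\pi_{*,*}K_{i-1}$, a syzygy module that can be nonzero in exactly the same range of degrees as $\pi_{*,*}K_{i-1}$ (already for $X=\bS_E\oplus\bS_E$ with a non-minimal $F_0$ the claim fails). What does become more connective is the \emph{shifted} tower $X\to\Sigma K_0\to\Sigma^2 K_1\to\cdots$, and even that requires a uniform lower bound on the $K_i$, hence some bounded-below hypothesis on $X$ and a careful (connective) choice of the $F_i$ --- neither of which the lemma assumes. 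Your invocation of Lemma~\ref{leftcomplemma} therefore does not apply as stated. The paper's argument sidesteps connectivity entirely: the connecting maps $K_{i-1}\to\Sigma K_i$ are \emph{zero} on $\pi_{*,*}$ precisely because the $F_i\to K_{i-1}$ are surjective, so the colimit of the (shifted) tower has vanishing bigraded homotopy by compactness of the spheres; since that colimit is cellular, it is contractible, and the resulting vanishing of the limit of the mapping tower gives conditional convergence with no boundedness or minimality hypotheses. You should replace the connectivity step with this null-on-homotopy argument.
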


\begin{theorem}
\label{synspectraheartthm}
    Suppose $\bS_E\in (\Syn_E^\cell)_{\geq 0}$. Then the functor
    \begin{equation*}
\pi_{*,*}:(\Syn_E^{\cell})^{\heartsuit}\to\pi_{0,*}\bS_E\text{-mod}
    \end{equation*}
    is fully faithful and essentially surjective; i.e. it is an equivalence of $\infty$-categories.
\end{theorem}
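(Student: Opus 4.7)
The plan is to mimic exactly the two-step strategy used for Theorem~\ref{comodheartthm}: prove fully faithfulness using a synthetic universal coefficient spectral sequence, and then prove essential surjectivity by realizing free resolutions in the cellular category and truncating. Since Lemma~\ref{synuctlemma} already records the synthetic universal coefficient spectral sequence, and Proposition~\ref{Fpomnibus}-style statements together with the general $t$-structure machinery of Section~\ref{tstructuresection} equip $\Syn_E^{\cell}$ with all the structure we need, the arguments should transport verbatim.

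For fully faithfulness, I would fix objects $X, Y \in (\Syn_E^{\cell})^{\heartsuit}$ and try to show that
\[
[\Sigma^{n,0} X, Y]_{\Syn_E^{\cell}} \longrightarrow \Hom_{\pi_{*,*}\bS_E}\bigl(\pi_{*,*}(\Sigma^{n,0}X), \pi_{*,*}(Y)\bigr)
\]
is an isomorphism for all $n \geq 0$. For $n > 0$ both sides vanish: the left side by definition of the heart, and the right side for degree reasons since everything is concentrated on the $t=s$ diagonal. For $n = 0$ I would run Lemma~\ref{synuctlemma} using a free resolution of $\pi_{*,*}X$ by sums of shifts of $\pi_{*,*}\bS_E$. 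By choosing the resolution compatibly with the twisted $t$-structure (so that $\pi_{a,b}(\Sigma^{t,0}F_t)$ vanishes for $a+b-t<0$), the term $\E_1^{t,t,0}$ vanishes for $t>0$ because $Y$ lies in the heart, so the $\E_1$-page is concentrated in $t=0$ with no incoming or outgoing differentials. The edge homomorphism then gives the desired isomorphism.

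For essential surjectivity, I would follow Lemma~\ref{eslemma}: given a $\pi_{0,*}\bS_E$-module $M$, pick a free resolution $\cdots \to F_1 \to F_0 \to M \to 0$, realize each $F_i$ as the synthetic homotopy of a wedge of copies of the heart unit $\pi_0^{\heartsuit}(\bS_E) \in (\Syn_E^{\cell})^{\heartsuit}$, use the already-established fully faithfulness to promote $F_1 \to F_0$ to a morphism of synthetic spectra $Z_1 \to Z_0$, form the cofiber $X_1$, and finally apply the truncation $\tau_{\leq 0}$ of the twisted $t$-structure to kill the unwanted class in $\pi_{-1}^{\heartsuit}$. One verifies that $\pi_{*,*}(\tau_{\leq 0} X_1) \cong M$ as $\pi_{0,*}\bS_E$-modules.

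The main technical obstacle, and the only place the argument could break, is ensuring that the twisted $t$-structure on $\Syn_E^{\cell}$ introduced in Section~\ref{tstructuresection} satisfies the hypotheses needed to run this argument: namely, that $\bS_E$ itself is connective (so that the free resolution has the required connectivity), and that one can form the graded free objects and realize ring maps via wedges, mirroring what is available in $\Stable(\Gamma)^{\cell}$. Since $\bS_E$ is the compact generator used to define the $t$-structure it is connective by construction, and the cellular hypothesis together with Lemmas~\ref{twistediststructure}--\ref{tmultlemma} supplies the requisite closure properties, so the argument of Lemmas~\ref{fflemma}--\ref{eslemma} goes through with $(A,\Gamma)$ replaced by $\bS_E$ and Lemma~\ref{uctlemma} replaced by Lemma~\ref{synuctlemma}.
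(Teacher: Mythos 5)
Your strategy is the same one the paper gestures at --- the paper itself states Theorem~\ref{synspectraheartthm} ``without proof, as their proofs are very similar to those in $\Stable(\Gamma)^{\cell}$'' --- but your proposal contains a factual error at exactly the point you identify as ``the only place the argument could break.'' You assert that ``since $\bS_E$ is the compact generator used to define the $t$-structure it is connective by construction,'' but this is false and is explicitly contradicted by the paper. Being the compact generator of the twisted $t$-structure does \emph{not} force $K\in\cC_{\geq 0}$: that is an additional hypothesis in Lemma~\ref{twistediststructure}, and the remark following Example~\ref{BPsynexample} records that the unit $\bS_{\BP}\in\Syn_{\BP}$ is \emph{not} connective in this $t$-structure, since $\pi_{-1,w}\bS_{\BP}\cong\bZ_{(p)}/\bZ\neq 0$ for $w>0$. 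Consequently the connectivity input that Lemma~\ref{fflemma} uses to control the free resolution --- that one can arrange $\pi_{a,b}(\Sigma^{t,0}F_s)=0$ for $a+b-t<0$ --- is not available for general $E$, and the fully-faithfulness step of your argument does not carry over verbatim.

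This gap is real for general Adams-type $E$: your argument works for $E=\hfp$ (where $\bS_{\bF_p}$ \emph{is} connective by Lemma~\ref{Fpsyntstructurelemma}) but not for $E=\BP$ as stated. The paper sidesteps the issue by pointing to \cite[Theorem 2.2]{CD24}, which establishes the heart identification via Barr--Beck and monadicity rather than via a universal coefficient spectral sequence, and that route does not require the unit to be connective. To repair your proof you would either need to add a connectivity hypothesis on $\bS_E$ (or pass to a completion where it becomes connective, as the following remark in the paper suggests), or replace the spectral-sequence argument with the monadic one.
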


\begin{remark}
    The above result is also shown in \cite[Theorem 2.2]{CD24} using Barr-Beck and monadicity techniques.
\end{remark}

We finish this section with a few relevant examples:

\begin{example}
\label{comodexampA}
    Suppose $(A,\Gamma)=(\bF_p,\cA_*)$, the dual Steenrod algebra at a prime $p$. Note that $\Stable(\cA_*)^{\cell}=\Stable(\cA_*)$ by \cite[Thm. 2.3.1]{HPS97}. By a classical calculation,
    $$\Ext_{\cA_*}^{t,t}(\bF_p,\bF_p)=\bF_p\{h_0^t\}$$
    with $t\geq 0$ and $h_0\in\Ext_{\cA_*}^{1,1}(\bF_p,\bF_p)$, and the module action corresponds to multiplication by powers of $h_0$. Hence $$\Stable(\cA_*)^{\heartsuit}=\bF_p[h_0]\text{-}\mathrm{mod},$$
    where we consider $\bF_p[h_0]$ as a graded polynomial ring with $\vert h_0\vert=1$.
\end{example}

\begin{example}
\label{comodexampBP}
    Suppose $(A,\Gamma)=(\BP_*,\BP_*\BP)$ at a prime $p$. Note that $$\Stable(\BP_*\BP)^{\cell}=\Stable(\BP_*\BP)$$ by \cite[Cor. 6.7]{Hov04}. By a classical calculation,
    $$\Ext_{\BP_*\BP}^{t,t}(\BP_*,\BP_*)=\begin{cases}
        \bZ_{(p)}, & t=0 \\
        0, & \mathrm{otherwise}.
    \end{cases}$$
    Hence we have that$$\Stable(\BP_*\BP)^{\heartsuit}=(\bZ_{(p)}\text{-}\mathrm{mod})^{\mathrm{gr}}.$$ 
\end{example}

\begin{example}
\label{comodexampAsyn}
    Suppose $(A,\Gamma)=(\bF_p[\tau],\cA_{\starstar}^\BP)$ is the $\BP$-synthetic dual Steenrod algebra at a prime $p$. This is isomorphic to the $\bC$-motivic dual Steenrod algebra $\cA_{\starstar}^{\mathrm{mot}}$ up to a doubling of the weight \cite{Pst22}. Then we have
    \begin{equation*}
        \Ext_{\cA_{\starstar}^\BP}^{t,t,*}(\bF_p[\tau],\bF_p[\tau])=\bF_p[\tau]\{h_0^t\}
    \end{equation*}
    with $t\geq 0$ and $h_0\in\Ext_{\cA_{\starstar}^\BP}^{1,1,0}$, and the module action corresponds to multiplication by powers of $\tau$ and $h_0$. Hence    $$(\Stable(\cA_{\starstar}^\BP)^{\cell})^{\heartsuit}=\bF_p[\tau,h_0]\text{-}\mathrm{mod},$$
    where we consider $\bF_p[\tau,h_0]$ as a bigraded polynomial ring with $\vert\tau\vert=(0,-1)$ and $\vert h_0\vert=(1,0)$.
\end{example}

\begin{example}
\label{comodexampBPsyn}
Suppose $(A,\Gamma)=(\nubp_\starstar,\nubpbp)$ from Section~\ref{BPsynanalogsection} at a prime $p$. By Theorem~\ref{lambdabockstein} and the fact that there are no algebraic Novikov spectral sequence differentials at stem 0, we have that
\begin{equation*}
    \Ext_{\nubpbp}^{t,t,*}(\nubp_\starstar,\nubp_\starstar)=\begin{cases}\bZ_{(p)}[\lambda], & t=0 \\
        0, & \mathrm{otherwise}.
    \end{cases}
\end{equation*}
Hence we have that
$$
(\Stable(\nubpbp)^{\cell})^{\heartsuit}=\bZ_{(p)}[\lambda]\text{-}\mathrm{mod},
$$
where we consider $\bZ_{(p)}[\lambda]$ as a bigraded polynomial ring with $\vert\lambda\vert=(0,-1)$.
\end{example}

\begin{example}
\label{BPsynexample}
    Consider the category $(\Syn_{\BP}^{\cell})_\tau^\wedge=(\Syn_{\BP})_\tau^\wedge$ of $\tau$-complete $\BP$-synthetic spectra with parameter $\tau$. By the classical calculations of $\Ext_{\BP_*\BP}^{t=s}=\bZ_{(p)}$ and $\pi_0\bS=\bZ$, we see that
    \begin{align*}
        \pi_{0,*}(\bS_{\BP})_\tau^{\wedge}&\cong\bZ_{(p)}[\tau].
    \end{align*}
    Hence, $((\Syn_{\BP})_\tau^\wedge)^{\heartsuit}=\bZ_{(p)}[\tau]\text{-mod}.$
\end{example}

\begin{remark}
    Interestingly enough, the unit $\bS_{\BP}\in\Syn_{\BP}$ is not connective (i.e. not in $(\Syn_\BP)_{\geq 0}$). Via the long exact sequence in homotopy applied to the fracture square as in \ref{fracture_diagram} with $X=\bS_\BP$ and $\lambda$ replaced with $\tau$, one can calculate that
    \begin{equation*}
        \pi_{-1,w}\bS_{\BP}\cong\begin{cases}
            0, & w\leq 0 \\
            \bZ_{(p)}/\bZ, & w > 0 
        \end{cases}
    \end{equation*}
    and $\tau:\pi_{-1,w+1}\bS_{\BP}\to\pi_{-1,w}\bS_{\BP}$ is an isomorphism for $w\geq 1$. In particular, each generator of $\pi_{-1,w}$ for $w\geq 1$ is infinitely $\tau$-divisible. We thank Robert Burklund for pointing this out to us. 
\end{remark}

\begin{remark}
    The unit $\bS_{\BP}\in\Syn_{\BP}$ does become connective if you instead consider the $\tau$-completion $(\bS_{\BP})_\tau^{\wedge}\simeq (\bS_{\BP})_{(p)}$, as in Example~\ref{BPsynexample}, or the $p$-completion $(\bS_\BP)_p^{\wedge}$. We thank William Balderrama for pointing this out to us. 
\end{remark}

\begin{example}
\label{Fpsynexample}
    Consider the category $\Syn_{\bF_p}^{\cell}=\Syn_{\bF_p}$ of $\bF_p$-synthetic spectra with parameter $\lambda$. An application of \cite[Thm. 9.19]{BHS23} together with the calculations $\Ext_{\cA_{*}}^{t=s}=\bF_p[h_0]$ and $\pi_0\bS=\bZ$ gets us
    \begin{align*}
        \pi_{0,*}(\bS_{\bF_p})_\lambda^{\wedge}&\cong\bZ_{p}^{\wedge}[\lambda,h]/(\lambda h=p), \\
        \pi_{0,*}\bS_{\bF_p}[\lambda^{-1}]&\cong\bZ[\lambda^{\pm 1}], \\
        \pi_{0,*}(\bS_{\bF_p})_\lambda^{\wedge}[\lambda^{-1}]&\cong\bZ_{p}^{\wedge}[\lambda^{\pm 1}],
    \end{align*}
    where $h$ maps to $h_0$ modulo $\lambda$. Again, there are no contributions from $\pi_{1,*}$ in the long exact sequence of homotopy groups associated with the fracture square. Hence    $$\pi_{0,*}\bS_{\bF_p}\cong\bZ[\lambda,h]/(\lambda h=p)$$ and $$\Syn_{\bF_p}^{\heartsuit}=\bZ[\lambda,h]/(\lambda h=p)\text{-mod}.$$
\end{example}

\begin{remark}
    One can check that for $\bS_{\bF_p}\in\Syn_{\bF_p}$, $\pi_{k,*}\bS_{\bF_p}=0$ for $k<0$ so that, in fact, $\bS_{\bF_p}$ is connective and Example~\ref{Fpsynexample} does define a $t$-structure. See Lemma~\ref{Fpsyntstructurelemma}.
\end{remark}

\section{Deformations of Stable Comodule Categories}\label{defcomod}

In this section, we explore deformations of $\Stable(\cA_{*})$ and $\Stable(\BP_*\BP)$ associated to the Cartan-Eilenberg spectral sequence for the extension $\cP_*\to \cA_{*}\to \cE_*$ and the algebraic Novikov spectral sequence, respectively, and identify them with categories of stable comodules over $\cA_{*}^\BP$ and $\nubpbp$. Before doing so, we briefly review the two spectral sequences.

\bigskip

Let $(A,\Gamma)$ be a Hopf algebroid and let $\Comod(\Gamma)$ denote its 1-category of comodules. A comodule $C$ is said to be an \textit{extended comodule} if it is isomorphic to one of the form $\Gamma\otimes_A M$ for some $A$-module $M$. A comodule is said to be a \textit{relative injective} if it is a summand of an extended comodule \cite[Def. A1.2.7]{Rav86}. Given a comodule $C$, a relative injective resolution of $C$ is a long exact sequence
\[
0\to C\to C^0 \to C^1 \to C^2 \to ... 
\]
which is split-exact over $A$ and such that all of the $C_i$ are relative injectives. Such a resolution always exists: one can take the cobar complex
\[
C^i = \Gamma \otimes \bar \Gamma^{\otimes i}\otimes C
\]
where $\bar \Gamma$ is the kernel of the augmentation $\Gamma \to A$. Given such a $C^*$ resolving $C$, the inclusion $\Comod(\Gamma)\hookrightarrow \Stable(\Gamma)$ of the 1-category of $\Gamma$-comodules into $\Stable(\Gamma)$ as complexes concentrated in degree $0$ induces an equivalence $C\simeq C^*$. As a result, given any filtration
\[
C^*\leftarrow \Fil^1C^{*} \leftarrow \Fil^2C^{*} \leftarrow \Fil^3C^{*} \leftarrow...
\]
of the resolution by chain complexes $C_*^j$ yields a filtration of $C$ in $\Stable(\Gamma)$. As is explained in Section 9 of \cite{GWX21}, when $(A,\Gamma)=(\BP_*,\BP_*\BP)$ we may form the algebraic Novikov spectral sequence as the spectral sequence associated to the filtration by powers of the ideal $I=(p,v_1,v_2,...)$. Explicitly, we put:
\[
(\Fil^jC^*)^i = I^{j-i}C^i
\]
where $C^*$ is the cobar complex resolution of $\BP_*$. The authors of \cite{GWX21} proceed to prove that the $\hfp^\BP$-Adams spectral sequence for the cofiber of $\tau$ is isomorphic to the algebraic Novikov spectral sequence, which is isomorphic to the $\h_*\BP$-Adams spectral sequence in $\Stable(\BP_*\BP)$, by demonstrating that this tower of objects satisfies an axiomatic definition of an Adams resolution in $\Stable(\BP_*\BP)$\cite[Def. 9.1]{GWX21}.

\bigskip

A similar story can be told for the classical CESS. The CESS may be similarly constructed by taking the cobar resolution $C^*$ of $\hfp$ over $\cA_{*}$ and filtering it by:
\[
(\Fil^j C^*)^i = I^{j-i}C^i
\]
where
\begin{equation}
\label{Steenrodideal}
 I=\begin{cases}
    (\xi_1^2,\xi_2^2,\xi_3^2,...) & p=2\\
    (\xi_1, \xi_2, \xi_3,...) & p>2
\end{cases}   
\end{equation}
generates the positive degree part of the even dual Steenrod algebra. In the same way, we get a resolution in $\Stable(\cA_{*})$ for $\hfp$. From this point of view, both spectral sequences have a unified construction coming from the Thom reduction $\BP\to \hfp$. Let $\cP_*:=\h_*\BP$. Taking $\BP$-homology, we get the map $\BP_*\BP\to \cP_*$. For the algebraic Novikov, the ideal $I=(p,v_1,v_2,\ldots)$ above is the $\BP_*$-module kernel of this morphism of Hopf algebras. Taking $\hfp$-homology, we get the map $\cP_*\to \cA_{*}$. For the Cartan-Eilenberg one instead looks at the extension of Hopf algebras:
\[
\cP_*\to \cA_{*} \to \cE_*
\]
where $\cE_*$ is exterior on either the $\xi_i$ ($p=2$) or the $\tau_i$ ($p>2$). Then the ideal $I$ in (\ref{Steenrodideal}) is the $\bF_p$-module kernel of the second map.

\bigskip

We now proceed to study the promised deformations. In Section~\ref{filtered}, we recall how one can associate to an $R$-Adams spectral sequence in a presentably symmetric monoidal stable $\infty$-category $\cC$ with $t$-structure a deformation $\Adams_R(\cC)$, closely related to synthetic spectra when $\cC=\Sp$. We will make use of the results and notation of the appendix freely in this section. For a category $\cC$ with $p$-completion $(-)_p^{\wedge}$, we copy the notation of \cite{BHS20} and write $\cC_{ip}:=\Mod(\cC;\one_p^{\wedge})$. 

\bigskip

The main results of this section are the identification of two deformations:

\begin{theorem}[Cartan-Eilenberg Deformation]\label{cedef}
    There is a $\Stable(\cA_{*})$-linear equivalence of presentably symmetric monoidal stable $\infty$-categories
    \[
    \Stable(\cA_{\starstar}^\BP)^{\cell} \simeq \Adams_{\cP_*}(\Stable(\cA_{*}))
    \]
    with parameter $\tau$ whose generic fiber recovers $\Stable(\cA_{*})$ and whose special fiber is given by:
    \[
    \Mod(\Stable(\cA_{\starstar}^\BP)^{\cell}; \bF_p[\tau]/\tau)\simeq \Mod(\Stable(\cA_*)^\Gr; \Gr_*\Gamma_{\cP_*}\bF_p)\\
    \]
    where the graded object on the right has homotopy groups isomorphic to $\text{E}^{\text{CESS}}_2$. 
\end{theorem}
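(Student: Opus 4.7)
The plan is to apply the Recognition Theorem \ref{recognitiontheorem} to the pair $(\Stable(\cA_*), \Stable(\cA_{\starstar}^{\BP})^{\cell})$. After verifying the deformation pair axioms, this will produce an equivalence $\Stable(\cA_{\starstar}^{\BP})^{\cell} \simeq \Mod(\Stable(\cA_*)^{\fil}; i_*\one)$, and the remaining work will be to identify the filtered object $i_*\one$ with $\Gamma_{\cP_*}\bF_p$, rewriting the target as $\Adams_{\cP_*}(\Stable(\cA_*))$.

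First I would verify the deformation pair data. The realization $\Re: \Stable(\cA_{\starstar}^{\BP})^{\cell} \to \Stable(\cA_*)$ is $\tau$-inversion, which recovers $\Stable(\cA_*)$ because $\cA_{\starstar}^{\BP}[\tau^{-1}]$ is isomorphic to $\cA_*$ with a formal weight grading. The section $c$ is its left adjoint, sending $X$ to a $\tau$-invertible object of $\Stable(\cA_{\starstar}^{\BP})^{\cell}$ realizing back to $X$. The homomorphism $i : \bZ \to \pi_0\mathrm{Pic}$ is weight shift, compact dualizable generators are the bigraded cellular spheres $\bS^{t,w}$, and condition (h) on mapping objects $\Map(i(n),i(m))$ for $n \leq m$ reduces to a computation of $\pi_{0,*,*}(\one)$ using the twisted $t$-structure of Section \ref{tstructuresection} together with the identification of the heart from Example \ref{comodexampAsyn}. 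With these in hand, the functor $i_*: \Stable(\cA_{\starstar}^{\BP})^{\cell} \to \Stable(\cA_*)^{\fil}$ arises as in \cite[Prop.~C.20]{BHS20} from the Whitehead tower in the twisted $t$-structure.

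The main technical step is showing $i_*\one \simeq \Gamma_{\cP_*}\bF_p$ in $\Stable(\cA_*)^{\fil}$. My approach is to verify that $i_*\one$ satisfies the axioms of a $\cP_*$-Adams resolution of $\bF_p$ in the sense of \cite[Def.~9.1]{GWX21}; in particular, that each associated graded piece $\Gr_w(i_*\one)$ is equivalent as a $\cP_*$-module comodule to the $w$-th term of the $\cE_*$-cobar complex for the extension $\cP_* \to \cA_* \to \cE_*$. For $\Gamma_{\cP_*}\bF_p$ this is automatic from the decalage definition. For $i_*\one$, the associated graded is computed in the heart of the twisted $t$-structure, identified by Example \ref{comodexampAsyn} and Theorem \ref{comodheartthm} as $\bF_p[\tau,h_0]\text{-mod}$; a direct computation on each filtered stage matches $\Gr_w(i_*\one)$ with the expected cobar term. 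I expect this step, particularly tracking the $\cP_*$-action and comodule structure through the identification of hearts, to be the main obstacle.

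For the remaining assertions, the generic fiber is automatic from the construction of $\Re$, since inverting $\tau$ returns $\Stable(\cA_*)$ by design. The special fiber identification follows from \cite[Prop.~C.2]{BHS20}, which applied to the established equivalence yields $\Mod(\Stable(\cA_{\starstar}^{\BP})^{\cell}; \bF_p[\tau]/\tau) \simeq \Mod(\Stable(\cA_*)^{\Gr}; \Gr_*\Gamma_{\cP_*}\bF_p)$ after using $i_*\one \simeq \Gamma_{\cP_*}\bF_p$. The identification of the homotopy groups of $\Gr_*\Gamma_{\cP_*}\bF_p$ with the $E_2$-page of the Cartan-Eilenberg spectral sequence is Example \ref{CESSdef} combined with Corollary \ref{cessexample}.
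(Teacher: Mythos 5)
Your overall framework is correct and matches the paper's: verify the deformation pair axioms, apply the recognition theorem~\ref{recognitiontheorem} to get $\Stable(\cA_{\starstar}^\BP)^{\cell} \simeq \Mod(\Stable(\cA_*)^\fil; i_*\one)$, and reduce to identifying $i_*\one$ with $\Gamma_{\cP_*}\bF_p$. The gap is in your method for this identification. You characterize $\Gamma_{\cP_*}\bF_p$ by claiming its associated graded $\Gr_w$ is the $w$-th term of the $\cE_*$-cobar complex and say this is ``automatic from the decalage definition,'' then propose to verify that $i_*\one$ satisfies the Adams resolution axioms of \cite[Def.~9.1]{GWX21} and invoke uniqueness. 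This mischaracterizes the d\'ecalage: $\Gamma_{\cP_*}\bF_p = \Tot(\tau_{\geq *}\cP_*^{\bullet+1})$ has $\Gr_w \simeq \Tot(\Sigma^w\pi_w^{\heartsuit}\cP_*^{\bullet+1})$, a totalization of heart objects whose homotopy gives the CESS $\text{E}_2$-page, not a cobar term. In particular the d\'ecalage is not a GWX-style Adams tower (whose successive cofibers are relative injectives of the resolving algebra and whose spectral sequence runs from $\text{E}_1$), so the uniqueness theory for such towers does not apply to pin down $i_*\one$ as a filtered object. Even where GWX-style resolution uniqueness does apply, it yields an isomorphism of spectral sequences rather than an equivalence of filtered $\bE_\infty$-algebras in $\Stable(\cA_*)^\fil$, which is the stronger datum the recognition theorem needs for a module-category equivalence.

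The paper's actual argument is levelwise and computational. Since $\bF_p[\tau]$ is $\cP_*[\tau]$-nilpotent complete by Proposition~\ref{synAnilpotentcomplete}, $i_*\one \simeq \Tot(i_*\cP_*[\tau]^{\bullet+1})$ because $i_*$ is a right adjoint; one then produces an equivalence $i_*\cP_*[\tau]^{m+1} \simeq \tau_{\geq *}\cP_*^{m+1}$ at each cosimplicial level. Lemma~\ref{synAextlemma} computes the trigraded homotopy of $\cP_*[\tau]^{\otimes(m+1)}$ and shows $\pi_{-f,t}(i_*(\cP_*[\tau]^{m+1})_n)$ vanishes for $t - f < n$, giving a factorization through the Postnikov tower; that factorization is then an equivalence because it is an isomorphism on ($\tau$-free) homotopy groups and $\Stable(\cA_*)$ is cellular. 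To repair your proposal you would need to replace the appeal to resolution uniqueness with a concrete cosimplicial-level computation of this kind.
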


\begin{theorem}[Algebraic Novikov Deformation]\label{andef}
    There is a $\Stable(\BP_{*}\BP)$-linear equivalence of presentably symmetric monoidal stable $\infty$-categories
    \[
    \Stable(\nubpbp)_{ip}^{\cell} \simeq \Adams_{\cP_*}(\Stable(\BP_*\BP))
    \]
     with parameter $\lambda$ whose generic fiber recovers $\Stable(\BP_*\BP)_{ip}$ and whose special fiber is given by:
     \[
     \Mod(\Stable(\nubpbp)_{ip}^{\cell}; (\nubp_\starstar)_p^{\wedge}/\lambda)\simeq \Mod(\Stable(\BP_*\BP)^\Gr; \Gr_*\Gamma_{\cP_*}(\BP_*))
     \]
     where the graded object on the right has homotopy groups isomorphic to $\text{E}_2^{\text{aNSS}}$.
\end{theorem}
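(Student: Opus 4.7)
The plan is to exhibit $(\Stable(\BP_*\BP)_{ip}, \Stable(\nubpbp)_{ip}^{\cell})$ as a 1-parameter deformation pair in the sense of Section~\ref{recognitionsection} and then apply the recognition theorem (Theorem~\ref{recognitiontheorem}) to reconstruct the right-hand category as modules over a filtered object in $\Stable(\BP_*\BP)_{ip}^{\Fil}$. This filtered object must then be identified with the Adams tower $\Gamma_{\cP_*}(\BP_*)$, producing the equivalence with $\Adams_{\cP_*}(\Stable(\BP_*\BP))$. The overall architecture closely parallels that of Theorem~\ref{cedef}; the differences all come from handling the $p$-completion $(-)_{ip}$ and verifying the compatibility of the $\lambda$-adic and $\cP_*$-adic filtrations.

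For the deformation pair structure, the realization $\Re$ is $\lambda$-inversion, which by Theorem~\ref{BPhomotopytheorem} and Proposition~\ref{BPhopfalgebroidprop} recovers the $p$-completed Hopf algebroid $(\BP_*\BP)_p^{\wedge}$ on homotopy, so lands in $\Stable(\BP_*\BP)_{ip}$. The section $c$ is the comodule-theoretic synthetic analog functor. The Picard homomorphism is $i(n)=\Sigma^{0,-n}\one$, obtained from the $\lambda$-shift autoequivalences of the twisted $t$-structure. Compact dualizable generators are furnished by the bigraded spheres of the cellular category. Axioms (e)--(g) are formal. The mapping space condition (h), that
\[
\Re\colon \Map(i(n),i(m))\xrightarrow{\simeq}\Map(\one_\cC,\one_\cC) \text{ for } n\leq m,
\]
reduces via the twisted $t$-structure of Section~\ref{tstructuresection} to showing that $\pi_{0,w}\one \cong \bZ_p^\wedge[\lambda]$ for all $w\leq 0$, which is the $p$-completion of the heart computation in Example~\ref{comodexampBPsyn}.

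Applying Theorem~\ref{recognitiontheorem} then yields an equivalence
\[
\Stable(\nubpbp)_{ip}^{\cell} \simeq \Mod\bigl(\Stable(\BP_*\BP)_{ip}^{\Fil};\, i_*\one\bigr).
\]
The heart of the argument is identifying $i_*\one$ with $\Gamma_{\cP_*}(\BP_*)$ as a filtered object. The strategy is to use the $\lambda$-adic filtration on the cobar resolution of $\nubp_\starstar$ over $\nubpbp$ to produce a comparison map from $\Gamma_{\cP_*}(\BP_*)$ to $i_*\one$; the induced map on associated gradeds is then identified via Theorem~\ref{BPhomotopytheorem} and Lemma~\ref{assgradlemma}, which together exhibit $\Gr_*\nubpbp$ as $\Gr_*(\BP_*\BP)[\lambda]$ extended by appropriate filtration shifts. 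Left completeness of the twisted $t$-structure (Lemmas~\ref{leftcomplemma} and~\ref{filtcolimlemma}), together with the $p$-completion built into $(-)_{ip}$, promotes the graded identification to an equivalence of filtered objects.

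The special fiber claim follows formally from \cite[Prop.~C.2]{BHS20} applied to the reconstruction, and Example~\ref{aNSSdef} identifies the homotopy groups of $\Gr_*\Gamma_{\cP_*}(\BP_*)$ with the $\E_2$-page of the algebraic Novikov spectral sequence. The main obstacle is promoting the associated-graded identification to an equivalence of filtered objects: one must verify that the nilpotent completion intrinsic to $\Gamma_{\cP_*}(\BP_*)$ matches the $p$-completion defining the subscript $(-)_{ip}$, and that the comparison is compatible with the full Hopf algebroid structure from Proposition~\ref{BPhopfalgebroidprop}. Both aspects should be controlled by the stable-comodule nilpotent completion results assembled in Appendix~\ref{nilcompappendix}.
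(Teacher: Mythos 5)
Your skeleton---set up a deformation pair, apply Theorem~\ref{recognitiontheorem}, then identify $i_*\one$ with $\Gamma_{\cP_*}(\BP_*)$---is the right architecture, but the load-bearing identification is where the proposal goes wrong. Your plan is to produce a comparison map and ``check it on associated gradeds via Theorem~\ref{BPhomotopytheorem} and Lemma~\ref{assgradlemma}.'' This conflates two unrelated filtrations. Lemma~\ref{assgradlemma} computes the associated graded of the \emph{synthetic algebraic Novikov} filtration, i.e.\ the filtration of $\nubp_\starstar$ and $\nubpbp$ by powers of $J=(h,v_1,v_2,\ldots)$. By contrast, the filtered object $i_*\one$ produced by the recognition theorem has $n$th piece $\Map^{\Stable(\BP_*\BP)}(\Sigma^{0,0,n}\nubp_\starstar,\nubp_\starstar)$, which records the \emph{weight} (i.e.\ $\lambda$-power) grading. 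Taking $\Gr_*$ with respect to the $J$-adic filtration says nothing about $i_*\one$. Moreover, you never construct the comparison map from $\Gamma_{\cP_*}(\BP_*)$ to $i_*\one$; ``compare associated gradeds and promote by left completeness'' presupposes such a map, and it is not clear how to build one from the data you have. The paper's proof is concretely different: it uses Proposition~\ref{synBPnilpotentcomplete} (the $\cP_*[\lambda]$-nilpotent completion of $\nubp_\starstar$ is its $p$-completion --- this is where $p$-completion actually enters) to rewrite $(\nubp_\starstar)_p^\wedge \simeq \Tot(\cP_*[\lambda]^{\bullet+1})$, commutes the limit-preserving $i_*$ past the totalization, and then for each cosimplicial level computes $\pi_{-f,t}\bigl((i_*\cP_*[\lambda]^{m+1})_n\bigr) \cong \Ext^{f,t,n}_{\nubpbp}(\nubp_\starstar,\cP_*[\lambda]^{\otimes(m+1)})$ via Lemma~\ref{synBPextlemma}. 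The vanishing of this group for $t-f<n$ produces a factorization through the Whitehead truncation $\tau_{\geq n}\cP_*^{m+1}$, which is then shown to be an equivalence because the homotopy is $\lambda$-free and the map realizes to the identity. This level-wise cosimplicial identification is the content of the proof, and your proposal has nothing in its place.

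There is also a secondary problem in the setup of the deformation pair. You $p$-complete both categories before applying the recognition theorem, taking $(\Stable(\BP_*\BP)_{ip},\Stable(\nubpbp)_{ip}^{\cell})$. The paper uses the \emph{uncompleted} pair $(\Stable(\BP_*\BP),\Stable(\nubpbp)^{\cell})$ and introduces the $p$-completion only afterward, when computing $(i_*\nubp_\starstar)_p^\wedge$. This matters: Theorem~\ref{recognitiontheorem} requires generation of the deformation category by compact dualizable objects $\{K_\alpha\otimes i(n)\}$, and after $p$-completing the unit is no longer compact, so axiom (g) of the deformation pair is in doubt in your formulation. It also matters for the statement itself, which asserts a $\Stable(\BP_*\BP)$-linear equivalence (not $\Stable(\BP_*\BP)_{ip}$-linear), and $\Adams_{\cP_*}(\Stable(\BP_*\BP))$ is by Definition~\ref{adamsdef} built from $\Stable(\BP_*\BP)^{\Fil}$, not its $p$-completion.
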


\begin{remark}
    After proving the categorical equivalences above, the remainder of the claims in the above theorems will follow immediately from \cite[Prop. C.2]{BHS20}. 
\end{remark}

The above result implicitly requires a $t$-structure to define the Adams deformations, which we defined in Section~\ref{tstructuresection}. The proof of both results occurs in two stages, using the technology of \cite{BHS20}. We first demonstrate that both categories are examples of 1-parameter deformations pairs \cite{BHS20}. Before the proofs, we explain the following corollaries:

\begin{corollary}\label{qcecor}
    In $\Stable(\cA_{\starstar}^\BP)$, there is a cofiber sequence
    \[
    \Gamma_{\cP_*}\hfp \xrightarrow{\tau} \Gamma_{\cP_*}\hfp \xrightarrow{i} \Gamma_{\cP_*}\hfp/\tau \xrightarrow{q_1} \Sigma\Gamma_{\cP_*}\hfp
    \]
    such that the map $(q_1)_\starstar$ on trigraded homotopy groups induces a map from the $\E_2$-page of the Cartan-Eilenberg spectral sequence to the $\E_2$-page of the $\hfp^\BP$-Adams spectral sequence. If $x\in\pi_{-*,*,*}\Gamma_{\cP_*}\hfp/\tau$ survives until the $\E_r$-page, then $x$ supports a nonzero differential $d^{\mathrm{CE}}_r(x)=y$ if and only if $(q_1)_\starstar(x)$ is detected by $-\tau^{r-1}y$.
\end{corollary}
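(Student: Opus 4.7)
The plan is to reduce Corollary~\ref{qcecor} to a direct application of Lemma~\ref{qlemma} after transporting the cofiber sequence into the filtered model provided by Theorem~\ref{cedef}. The existence of the displayed cofiber sequence is immediate from the stability of $\Stable(\cA_\starstar^\BP)^\cell$ together with the fact that $\tau$ is a self-map of the unit. What genuinely needs identification are the trigraded homotopy groups at the source and target of $q_1$: one must recognize $\pi_\starstar(\Gamma_{\cP_*}\hfp/\tau)$ as the $\E_2$-page of the Cartan-Eilenberg spectral sequence and $\pi_\starstar(\Sigma\Gamma_{\cP_*}\hfp)$ (up to a suspension shift) as the $\E_2$-page of the $\hfp^\BP$-Adams spectral sequence for $\bS_\BP$.

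First I would invoke Theorem~\ref{cedef} to identify $\Stable(\cA_\starstar^\BP)^\cell$ with $\Adams_{\cP_*}(\Stable(\cA_*)) = \Mod(\Stable(\cA_*)^\Fil; \Gamma_{\cP_*}\hfp)$; under this equivalence, $\Gamma_{\cP_*}\hfp$ is the unit of the deformation and corresponds to $\bF_p[\tau]$ in $\Stable(\cA_\starstar^\BP)^\cell$. Consequently its trigraded homotopy in the twisted $t$-structure of Section~\ref{tstructuresection} computes $\Ext_{\cA_\starstar^\BP}^{\starstarstar}(\bF_p[\tau],\bF_p[\tau])$, which is by definition the $\E_2$-page of the $\hfp^\BP$-Adams spectral sequence for the synthetic sphere. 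On the other side, $\pi_\starstar(\Gamma_{\cP_*}\hfp/\tau)$ recovers the CESS $\E_2$-page as in Example~\ref{CESSdef}, and the $\tau$-Bockstein spectral sequence associated to $\Gamma_{\cP_*}\hfp$ coincides with the filtration spectral sequence of the cobar model, which is the CESS by Corollary~\ref{cessexample} and the filtered/$\tau$-Bockstein comparison in Section~\ref{deformationsection}.

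With these identifications in place, Lemma~\ref{qlemma} applies directly to $X = \Gamma_{\cP_*}\hfp$ in the category $\Adams_{\cP_*}(\Stable(\cA_*))$, equipped with the exact trigraded homotopy functor $\pi_\starstar$ valued in graded $\Ext$-modules. The lemma furnishes exactly the claimed biconditional: a class $x \in \pi_\starstar(\Gamma_{\cP_*}\hfp/\tau)$ surviving to the $\E_r$-page supports a CESS differential $d_r(x) = y$ if and only if $(q_1)_\starstar(x) \in \pi_\starstar(\Sigma\Gamma_{\cP_*}\hfp)$ is detected by $-\tau^{r-1}y$. The main obstacle is checking that the filtered-object spectral sequence matches the classical CESS not merely at the $\E_2$-page but as spectral sequences, so that the Bockstein formalism of Lemma~\ref{qlemma} is faithful to genuine Cartan-Eilenberg differentials; this is ensured by the construction of $\Gamma_{\cP_*}\hfp$ as the decalage of the $\cP_*$-Adams tower in $\Stable(\cA_*)$ together with the identification in Corollary~\ref{cessexample}.
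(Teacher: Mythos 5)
Your proposal is correct and takes essentially the same route as the paper: transport the cofiber sequence across the equivalence of Theorem~\ref{cedef} to identify the filtered object $\Gamma_{\cP_*}\hfp$ (and hence its $\tau$-Bockstein spectral sequence, which is the Cartan-Eilenberg spectral sequence via the decalage construction) and then apply Lemma~\ref{qlemma}. The paper states this in two sentences; you have usefully unpacked the identifications that make the invocation of Lemma~\ref{qlemma} legitimate, in particular that $\pi_\starstar(\Gamma_{\cP_*}\hfp)\cong\Ext_{\cA_\starstar^\BP}^{\starstarstar}(\bF_p[\tau],\bF_p[\tau])$ via the twisted $t$-structure and that the filtered-object spectral sequence matches the CESS as a spectral sequence, not merely at $\E_2$.
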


\begin{corollary}\label{qancor}
    In the sequence
    \[
    \Gamma_{\cP_*}\BP_* \xrightarrow{\lambda} \Gamma_{\cP_*}\BP_* \xrightarrow{i} \Gamma_{\cP_*}\BP_*/\lambda \xrightarrow{q_2} \Sigma\Gamma_{\cP_*}\BP_*
    \]
    the map $(q_2)_\starstar$ on trigraded homotopy groups induces a map from the $\E_2$-page of the algebraic Novikov spectral sequence to the $\E_2$-page of the $\BP^\hfp$-Adams spectral sequence. If $x\in\pi_{-*,*,*}\Gamma_{\cP_*}\BP_*/\tau$ survives until the $\E_r$-page, then $x$ supports a nonzero differential $d^{\mathrm{aN}}_r(x)=y$ if and only if $(q_2)_\starstar(x)$ is detected by $-\lambda^{r-1}y$.
\end{corollary}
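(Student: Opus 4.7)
The plan is to reduce this to an application of Lemma~\ref{qlemma} via Theorem~\ref{andef}, in a manner entirely parallel to the proof of Corollary~\ref{qcecor}. First I would transport the given cofiber sequence across the equivalence $\Stable(\nubpbp)^{\cell}_{ip}\simeq \Adams_{\cP_*}(\Stable(\BP_*\BP))=\Mod(\Stable(\BP_*\BP)^{\fil};\Gamma_{\cP_*}\BP_*)$ of Theorem~\ref{andef}. Under this equivalence, the multiplication-by-$\lambda$ map on $\nubp_\starstar$ is identified with multiplication by the canonical Bockstein $\tau$ of the filtered deformation on the unit $\Gamma_{\cP_*}\BP_*$, and its cofiber is identified with $\Gamma_{\cP_*}\BP_*/\lambda$.

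Next I would invoke the standard spectral sequence of a filtered object: applying the exact functor $\pi_{\starstar}$ to the filtered object $\Gamma_{\cP_*}\BP_*$ produces an exact couple whose spectral sequence, by Example~\ref{aNSSdef}, is precisely the algebraic Novikov spectral sequence (with $\pi_{*,*,*}(\Gamma_{\cP_*}\BP_*/\lambda)$ furnishing the $\E_2$-page). Lemma~\ref{qlemma} then applies verbatim with $X=\Gamma_{\cP_*}\BP_*$ and $\tau=\lambda$, yielding the desired correspondence between algebraic Novikov differentials $d_r^{\mathrm{aN}}(x)=y$ and the detection of $(q_2)_\starstar(x)$ by $-\lambda^{r-1}y$ in $\pi_{\starstar}(\Sigma\Gamma_{\cP_*}\BP_*)$. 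Finally, because $\pi_{*,*,*}\Gamma_{\cP_*}\BP_*$ encodes the $\BP^{\bF_p}$-Adams spectral sequence for the sphere starting at $\E_2$ (this is how $\Adams_R(\cC)$-deformations encode Adams-type spectral sequences, c.f. the proposition following Definition~\ref{adamsdef}), the detection statement is indeed a statement about the $\BP^{\bF_p}$-Adams spectral sequence, as claimed.

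The only genuine subtlety I anticipate is bookkeeping around the $p$-completion indicated by the subscript $ip$ in Theorem~\ref{andef}, together with verifying that the identification of $(q_2)_\starstar$ with the generic $q_\star$ of Lemma~\ref{qlemma} is compatible with the symmetric monoidal equivalence of Theorem~\ref{andef}. For the former, the cofiber of $\lambda$ is preserved by $p$-completion and, combined with Theorem~\ref{synANSSconvergencethm} which identifies $\nubp$-nilpotent completion with $p$-localization, the $ip$ subscript does not obstruct the identification of spectral sequences. For the latter, the symmetric-monoidal naturality of the equivalence in Theorem~\ref{andef} implies that multiplication by $\lambda$ on the unit, together with its cofiber and the subsequent connecting map, are preserved as a triangle. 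Once both of these compatibilities are in hand, the corollary is a formal consequence of Lemma~\ref{qlemma}.
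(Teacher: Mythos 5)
Your proposal is correct and matches the paper's own proof, which identically deduces Corollaries \ref{qcecor} and \ref{qancor} by using the equivalences of Theorems \ref{cedef} and \ref{andef} to identify the filtered objects and their spectral sequences, then applying Lemma \ref{qlemma}. The paper simply states this in two sentences; your version spells out the bookkeeping (Example \ref{aNSSdef}, the $ip$-completion, monoidal naturality) that the authors leave implicit.
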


\begin{proof}[Proof of Corollaries \ref{qcecor} and
\ref{qancor}]
The categorical equivalences of Theorems~\ref{cedef} and \ref{andef} identify the filtered objects and spectral sequences associated with the objects in the cofiber sequence. The result then follows from Lemma \ref{qlemma}.
\end{proof}

The above results are used computationally in Section~\ref{smodlambda}. By work of \cite{IWX20}, \cite{aNSS}, the map $(q_1)_\starstar$ and the aNSS differentials for the sphere were determined up to stem 110 for $p=2$. This allows us to determine the map $(q_2)_\starstar$ and the CESS differentials for the sphere up to stem 45.

\bigskip

We now set about proving the theorems, relying heavily on the machinery discussed in Section \ref{recognitionsection} originally due to \cite{BHS20}. Our first goal is to prove that the pairs
\begin{align*}
  &(\Stable(\cA_*),\Stable(\cA_{\starstar}^\BP)^{\cell}) \\
 (\Stable&(\BP_*\BP),\Stable(\nubpbp)^{\cell})
\end{align*}
are deformation pairs. Define the following morphisms of Hopf algebroids:

\begin{align*}
 &\iota_{\hfp}: \cA_* \to \cA_{\starstar}^\BP\\
    &\xi_i \mapsto \tau^{(2^{k+1}-2)}\tau_{i-1} && && (p=2) \\&\xi_i \mapsto \tau^{(2p^k-2)}b_i  &&  \tau_i \mapsto \tau^{(2p^k-2)}\tau_i   &&(p>2)\\
     \\
     &\iota_{\BP}:\BP_{*}\BP \to \BP_{\starstar}\BP^\hfp\\
    &v_i \mapsto \tau^{2p^i-1}v_i && t_i \mapsto \tau^{2p^i-2}t_i && 
\end{align*}

constructed so that we may upgrade them to bigraded homomorphisms after giving the singly-graded objects bigradings of the form $n\mapsto (n,0)$. These induce symmetric monoidal left adjoints of stable comodule categories
\begin{align*}
   c_{\hfp}&: \Stable(\cA_*)\to \Stable(\cA_{\starstar}^\BP)^{\cell} \\
   c_{\BP}&: \Stable(\BP_*\BP)\to \Stable(\BP_\starstar\BP^\hfp)^{\cell}
\end{align*}
via  \cite[Prop. 2.2.1, Prop. 5.3.1]{Hov04}. 

\begin{remark}
    These particular morphisms of Hopf algebroids were chosen so that $c_{\bF_p}(\Sigma^{s,t}\bF_p)=\Sigma^{s,t,0}\bF_p[\tau]$ and $c_{\BP}(\Sigma^{s,t}\BP_*)=\Sigma^{s,t,0}\nubp_{\starstar}$.
\end{remark}

The functors $\Re$ can be constructed straightforwardly by inverting $\lambda,\tau$ on the appropriate synthetic Hopf algebroids and taking the sub-comodules concentrated in second degree $0$, whence the functors $\Re_{\hfp}$ and $\Re_{\BP}$. That the functors $c_{\hfp},c_{\BP}$ are sections of the realizations follows from the fact that all of these adjoints are determined by their actions on graded comodules which can be computed explicitly.
 \newline
 
 We will write $i_{\hfp}$ and $i_{\BP}$ for the homomorphisms out of $\bZ$ which send an integer $n$ to the invertible graded comodules $\Sigma^{0,0,n}\hfp[\tau]$ and $\Sigma^{0,0,n}\nubp_\starstar$ respectively. It is easy to see that these realize to the appropriate monoidal units $\hfp$ and $\BP_*$. It remains to show the final two axioms of the deformation pair. Here we will solve the problem of generation by passing to cellular subcategories. 

\begin{remark}
     Note that $\Stable(\nubpbp)^{\cell}$ and $\Stable(\cA_{\starstar}^\BP)^{\cell}$ are exactly generated under colimits by the compact dualizable objects
    \[
    \{\Sigma^{l,m,0}\one\otimes i(n)\mid l,m,n\in \bZ\}
    \]
    where $i=i_{\BP}$ or $i_{\hfp}$ respectively, by definition, so that we have the required generation statements in the definition of a deformation pair.
\end{remark}

\begin{remark}
    It seems plausible to us that, as with their more classical analogs $\Stable(\cA_*)$ and $\Stable(\BP_*\BP)$, the synthetic stable comodules categories are already cellular. We do not pursue this question here.
\end{remark}

\begin{lemma}
The realization functor induces an equivalence:
\[
\Map(i(n),i(m))\to \Map(\one, \one)
\]
whenever $n\leq m$, $i=i_{\BP}$ or $i_{\hfp}$, and $\one$ is either $\hfp$ or $\BP_*$
\end{lemma}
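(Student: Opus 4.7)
The plan is to combine the Picard structure on $\{i(n)\}$ with a computation of the relevant Ext groups. Since $i\colon \bZ \to \pi_0 \mathrm{Pic}(\cC_{\defo})$ is a group homomorphism, $i(n) \otimes i(-n) \simeq \one$, and tensoring with $i(-n)$ yields an equivalence $\Map(i(n), i(m)) \simeq \Map(\one, i(m-n))$ compatible with realization. Setting $k := m - n \geq 0$, the statement reduces to showing that the realization induces an equivalence
\[
\Re\colon \Map(\one, i(k)) \longrightarrow \Map(\one_\cC, \one_\cC)
\]
for every $k \geq 0$.

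The base case $k = 0$ should follow immediately: since $c$ is a symmetric monoidal section of $\Re$ with $c(\one_\cC) \simeq \one$, the realization restricts on endomorphisms of the unit to give an equivalence $\Map(\one, \one) \simeq \Map(\one_\cC, \one_\cC)$, inverse to $c_*$ on that component.

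For $k \geq 1$, both mapping spectra admit explicit descriptions via the Ext ring of the corresponding Hopf algebroid. In $\Stable(\cA_{\starstar}^\BP)^{\cell}$, the homotopy $\pi_* \Map(\one, i(k))$ picks out the weight-$(-k)$ slice of the trigraded ring $\Ext^{*,*,*}_{\cA_{\starstar}^\BP}(\bF_p[\tau], \bF_p[\tau])$; in $\Stable(\nubpbp)^{\cell}_{ip}$, it picks out the weight-$(-k)$ slice of $\Ext^{*,*,*}_{\nubpbp}(\nubp_\starstar, \nubp_\starstar)$. Meanwhile $\pi_*\Map(\one_\cC, \one_\cC)$ is respectively the classical $\Ext^{*,*}_{\cA_*}(\bF_p, \bF_p)$ or $\Ext^{*,*}_{\BP_*\BP}(\BP_*, \BP_*)$. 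The key structural input is that for $k \geq 0$, multiplication by $\tau^k$ (resp.~$\lambda^k$) identifies the classical Ext with the weight-$(-k)$ slice of the synthetic Ext. For the algebraic Novikov case this is essentially the content of Theorem~\ref{lambdabockstein}: the synthetic aNSS being a $\lambda$-Bockstein spectral sequence forces $\Ext^{*,*,*}_{\nubpbp}$ to be a $\lambda$-extension of the classical algebraic Novikov $\E_2$-page. For the Cartan-Eilenberg case, the analogous fact is the $\BP$-synthetic (equivalently $\bC$-motivic) Steenrod algebra Ext computation of~\cite{Pst22}. Since realization inverts $\tau$ (resp.~$\lambda$), the $\tau^k$-multiples trivialize to give an isomorphism onto the classical Ext, yielding the desired equivalence.

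The main obstacle is the careful bookkeeping of the three gradings, and in particular verifying that the inequality $n \leq m$ picks out the weight range in which $\tau^k$- (resp.~$\lambda^k$-)multiplication embeds the classical Ext as an isomorphism onto the synthetic weight slice, rather than the opposite range in which $\tau^k$-torsion or cokernel contributions could obstruct the identification. The algebraic Novikov case is handled cleanly by Theorem~\ref{lambdabockstein}; the Cartan-Eilenberg case requires the classical identification of $\cA_{\starstar}^\BP$ with the $\bC$-motivic dual Steenrod algebra together with the known structure of its Ext.
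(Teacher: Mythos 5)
Your proof takes a fundamentally different route from the paper's, and it contains gaps that I do not see how to close without in effect inserting the paper's own one-line argument.

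The paper's proof observes that the objects $i(n)$ are graded comodules concentrated in degree zero, hence lie in the heart of the twisted $t$-structure of Section~\ref{tstructuresection}. The heart is a $1$-category (Theorem~\ref{comodheartthm}) and its inclusion into the stable category is fully faithful, so $\Map(i(n),i(m))$ is a \emph{discrete} space equal to the $1$-categorical Hom in comodules; the same holds for $\Map(\one_\cC,\one_\cC)$. The assertion therefore collapses to an elementary computation of Hom-sets of (bi)graded comodules, where $\Re$ inverts $\tau$ (resp.~$\lambda$). Your proposal ignores this structural reduction and instead tries to compare trigraded Ext rings.

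Your base case $k=0$ is not correct as written. You argue that because $c$ is a symmetric monoidal section of $\Re$, the map $\Re_*\colon\Map(\one,\one)\to\Map(\one_\cC,\one_\cC)$ is an equivalence ``inverse to $c_*$''. But $\Re\circ c\simeq\id_\cC$ yields only $\Re_*\circ c_*\simeq\id$, i.e.\ that $\Re_*$ is a split surjection with section $c_*$. The composite $c_*\circ\Re_*$ is composition with $c\circ\Re$, which is \emph{not} the identity functor on $\cC_{\defo}$, and there is no a priori reason for it to act as the identity on $\Map(\one,\one)$. So the base case already requires a computation, which you do not supply.

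Your key structural claim for $k\geq 1$ --- that $\tau^k$- (resp.~$\lambda^k$-)multiplication identifies the classical Ext ring with the entire weight-$(-k)$ slice of the synthetic Ext, across all homological and internal degrees --- is false. Both $\Ext^{*,*,*}_{\cA_{\starstar}^\BP}(\bF_p[\tau],\bF_p[\tau])$ and $\Ext^{*,*,*}_{\nubpbp}(\nubp_\starstar,\nubp_\starstar)$ contain $\tau$- (resp.~$\lambda$-)torsion classes (for the latter, this is exactly what the nontrivial differentials in the $\lambda$-Bockstein of Theorem~\ref{lambdabockstein} produce), and realization kills these, so the realization map from a weight slice fails to be injective in general. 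The hypothesis $n\leq m$ does not rule this out: the distribution of $\tau$-torsion across weights is a feature of the underlying Ext ring, not of the shift $k=m-n$, and you acknowledge this as ``the main obstacle'' without resolving it. What in fact saves the statement is that $\Map$ here is the mapping \emph{space}, and since $\one$ and $i(k)$ lie in the heart this space only sees the discrete component $\Ext^{0,0,-k}_{\Gamma}(\one,\one)$, which is $\tau$-free and isomorphic to $\bF_p$ (resp.~$\bZ_{(p)}$). But once you invoke that, the Picard reduction, the Bockstein spectral sequence, and the synthetic Ext computations are all superfluous --- you have reproduced the paper's heart argument.
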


\begin{proof}
    The objects $i(n)$ live in the heart, the inclusion of which is fully faithful, so that the claim can be checked in the 1-category of comodules.
\end{proof}

So far we have confirmed the following:

\begin{corollary}
    The pairs of categories
    \begin{align*}
  &(\Stable(\cA_*),\Stable(\cA_{\starstar}^\BP)^{\cell}) \\
 (\Stable&(\BP_*\BP),\Stable(\nubpbp)^{\cell})
\end{align*} are deformation pairs.
\end{corollary}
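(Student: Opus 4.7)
The proof is essentially a bookkeeping exercise: the preceding paragraphs have already produced, for each of the two pairs, all of the data $(\Re, c, i, \{K_\alpha\})$ and verified most of the eight axioms $(\mathrm{a})$--$(\mathrm{h})$ of a 1-parameter deformation pair. My plan is simply to match each clause of the definition against what has already been stated, and then to justify the residual points.

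Clauses $(\mathrm{a})$ and $(\mathrm{b})$ are provided by $\Re_{\hfp}, \Re_{\BP}$ (defined by inverting $\tau$ or $\lambda$ and restricting to second-degree $0$) and by the extension-of-scalars functors $c_{\hfp}, c_{\BP}$ induced from the explicit Hopf-algebroid morphisms $\iota_{\hfp}, \iota_{\BP}$; symmetric monoidality of both comes from these functors being tensor base-change along maps of commutative Hopf algebroids. Clause $(\mathrm{c})$ is witnessed by the maps $n\mapsto \Sigma^{0,0,n}\one$, which land in $\pi_0\mathrm{Pic}$ because $\Sigma^{0,0,1}$ is by construction an autoequivalence. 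Clause $(\mathrm{d})$ is witnessed by the set $\{\Sigma^{l,m,0}\one\}_{l,m\in\bZ}$, compact dualizable because they are bigraded shifts of the compact dualizable unit in a stable comodule category. Clause $(\mathrm{g})$ follows because $\Sigma^{l,m,n}\one = \Sigma^{l,m,0}\one\otimes i(n)$, so that the described generators coincide with those defining the cellular subcategory, as recorded in the remark preceding the corollary. Clause $(\mathrm{h})$ is exactly the final lemma before the corollary.

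The residual points are $(\mathrm{e})$ and $(\mathrm{f})$. For $(\mathrm{f})$, since $\Re$ is defined by inverting the deformation parameter and passing to the weight-$0$ part, $i(n) = \Sigma^{0,0,n}\one$ realizes to $\one$ after the bigraded shift becomes trivial, which is immediate from the formulas. For $(\mathrm{e})$, the composition $\Re\circ c$ is a symmetric monoidal colimit-preserving endofunctor of $\Stable(\cA_*)$ or $\Stable(\BP_*\BP)$, and since these categories are generated under colimits by shifts of the unit, it suffices to check $\Re\circ c = \mathrm{id}$ on $\Sigma^{s,t}\one$, which unwinds to a direct computation on the Hopf algebroid morphisms $\iota_{\hfp}, \iota_{\BP}$: after tensoring up the weight-bearing shift and then inverting the parameter, one recovers the original shift by inspection of the explicit formulas for the weights.

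The main (modest) obstacle is to confirm that the formulas defining $\iota_{\hfp}, \iota_{\BP}$ do assemble into well-defined morphisms of bigraded Hopf algebroids compatible with $\eta_L, \eta_R, \Delta, c, \epsilon$; this is a weight-bookkeeping check against the structure maps given in Theorem \ref{BPformulathm} and the analogous formulas for the $\BP$-synthetic Steenrod algebra from \cite{Pst22}. Once that is confirmed, the symmetric monoidal left adjoints $c_{\hfp}, c_{\BP}$ exist via \cite[Prop. 2.2.1, Prop. 5.3.1]{Hov04} as already cited, and all remaining items collapse into the assembly described above.
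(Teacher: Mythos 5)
Your proposal takes essentially the same route as the paper: the corollary is proved by assembling the data $(\Re, c, i, \{K_\alpha\})$ constructed in the preceding paragraphs and checking the eight axioms one by one, with clauses (e), (f) and (g) reduced to direct computations on the Hopf algebroid morphisms $\iota_{\hfp}, \iota_{\BP}$ and clause (h) handled by the immediately preceding lemma. Your proof adds some explicit detail the paper glosses over, but the argument is the one the paper intends.

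One small point of precision: clause (d) asks for compact dualizable objects $K_\alpha$ in the \emph{undeformed} category $\cC$, so your set $\{\Sigma^{l,m,0}\one\}$ should be read as the bigraded spheres of $\Stable(\cA_*)$ (resp.\ $\Stable(\BP_*\BP)$), with the identification $c(\Sigma^{l,m}\one) \simeq \Sigma^{l,m,0}\one$ doing the work of translating into $\cC_{\defo}$ before tensoring with $i(n)$; as written your notation lives ambiguously in $\cC_{\defo}$. This is the same imprecision present in the paper's own remark and does not affect the argument, but it is worth making explicit since the generation clause (g) depends on it.
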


It now remains to compute the functor $i_*$ described above. Our approach is similar to the computation \cite[Example C.22]{BHS20}. As in loc. cit., we introduce a $p$-completion in the case of $\BP_*\BP$. In fact, a completion is used for both results, and the $\cA$ case happens to already have a complete unit. The point is that $i_*$ preserves limits, and so we can commute past it the totalization involved in a nilpotent completion. This simplifies the computation as we instead work with a cosimplicial object in terms of $\cP$.

\bigskip

For a commutative ring object $R$, let $R^{\bullet +1}$ denote the cosimplicial object with $R^{n+1}=R^{\otimes (n+1)}$ and coface maps induced by the unit $\one\to R$. We first identify two nilpotent completions.

\begin{proposition}
\label{synAnilpotentcomplete}
Consider the unit $\bF_p[\tau]\in\Stable(\cA_{\starstar}^\BP)$ and the commutative ring object $$\cP_*[\tau]=\pi_{*,*}\nu_{\BP}(\bF_p\otimes\BP)\in\Stable(\cA_{\starstar}^\BP).$$ There is an equivalence $$\bF_p[\tau]\xrightarrow{\simeq}(\bF_p[\tau])_{\cP_*[\tau]}^{\wedge}\simeq \Tot(\cP_*[\tau]^{\bullet +1}).$$      
\end{proposition}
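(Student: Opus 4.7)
My plan is to reduce both equivalences to the Mandell-style nilpotent completion criterion that is compiled in Appendix~\ref{nilcompappendix}, applied to the twisted $t$-structure on $\Stable(\cA_{\starstar}^{\BP})^{\cell}$ built in Section~\ref{tstructuresection}. The rightmost equivalence $(\bF_p[\tau])_{\cP_*[\tau]}^{\wedge} \simeq \Tot(\cP_*[\tau]^{\bullet+1})$ is simply the cobar description of nilpotent completion, obtained by taking the totalization of the usual Amitsur-type resolution of a commutative algebra object; it becomes content-free once nilpotent completeness of $\bF_p[\tau]$ is established. So the substantive task is the unit map $\bF_p[\tau] \to (\bF_p[\tau])_{\cP_*[\tau]}^{\wedge}$.

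First, I would equip $\Stable(\cA_{\starstar}^{\BP})^{\cell}$ with the twisted $t$-structure of Definition~\ref{twistedtstructdef} taking $K = \bF_p[\tau]$ and $F$ the weight-shift autoequivalence, invoking Remark~\ref{extraaxis} to accommodate the additional grading axis. Lemmas~\ref{leftcomplemma}, \ref{filtcolimlemma}, and \ref{tmultlemma} provide left and right completeness, compatibility of truncations with filtered colimits, and multiplicativity, exactly the structural hypotheses required by Appendix~\ref{nilcompappendix}. By Example~\ref{comodexampAsyn} the heart is $\bF_p[\tau,h_0]$-mod, so $\bF_p[\tau]$ lies in the heart and is in particular connective.

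Second, I would verify the two conditions on $\cP_*[\tau]$ needed for the criterion: that it is connective and that the unit $\bF_p[\tau] \to \cP_*[\tau]$ is surjective on $\pi_{0,0,0}$. The latter is automatic since $\cP_0 = \bF_p$. For connectivity, I would compute $\Ext^{*,*,*}_{\cA_{\starstar}^{\BP}}(\bF_p[\tau], \cP_*[\tau])$ via the synthetic analog of the Hopf algebra extension $\cP_* \to \cA_* \to \cE_*$ (working $\BP$-synthetically, this gives $\cP_*[\tau] \to \cA_{\starstar}^{\BP} \to \cE_{\starstar}^{\BP}$); since $\cP_*[\tau]$ is an extended comodule along the quotient to $\cE_{\starstar}^{\BP}$, change of rings reduces the computation to $\Ext^{*,*,*}_{\cE_{\starstar}^{\BP}}(\bF_p[\tau], \bF_p[\tau])$, which is concentrated in the bidegrees required to land in the connective part of the twisted $t$-structure.

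With all the hypotheses in place, Appendix~\ref{nilcompappendix} produces the equivalence $\bF_p[\tau] \xrightarrow{\simeq} (\bF_p[\tau])_{\cP_*[\tau]}^{\wedge}$, and the cobar identification gives the final equivalence with $\Tot(\cP_*[\tau]^{\bullet+1})$. The main obstacle I anticipate is the connectivity verification for $\cP_*[\tau]$: one must keep careful track of the bigraded shifts in the twisted $t$-structure (where $\tau$ lives in bidegree $(0,-1)$ and $h_0$ in bidegree $(1,0)$) when running the change-of-rings spectral sequence, to confirm that all contributions genuinely lie in the connective part rather than leaking into negative twisted-degrees.
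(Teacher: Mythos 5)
Your proposal follows essentially the same route as the paper: the proposition is deduced by specializing Theorem~\ref{synAcompletiontheorem} (which runs the Mandell criterion of Appendix~\ref{nilcompappendix} through the twisted $t$-structure of Section~\ref{tstructuresection}, checks Assumption~\ref{pi0assumption} with $\mathcal{I}=\emptyset=\mathcal{J}$, then applies Theorems~\ref{J0BousEquivNil} and \ref{MooreObjectThm1}), with the $\Tot$ identification supplied by the cobar description of nilpotent completion recorded earlier in the appendix. One caution worth flagging: the condition you propose to verify --- surjectivity of the unit on $\pi_{0,0,0}$ --- is weaker than what the criterion actually requires. With $\mathcal{I}=\mathcal{J}=\emptyset$, Assumption~\ref{pi0assumption} asks that $\varphi\colon\pi_0^{\heartsuit}(\bF_p[\tau])\to\pi_0^{\heartsuit}(\cP_*[\tau])$ be an \emph{isomorphism} of $\pi_0^{\heartsuit}(\bF_p[\tau])$-algebras, and since the heart is $\bF_p[\tau,h_0]$-mod this is a claim across all $t=s$ tridegrees and weights, not just one. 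The paper discharges it by explicitly computing both sides as $\bF_p[\tau,h_0]$ (via the change-of-rings argument you also invoke), so your planned $\Ext$ computation would indeed deliver the needed isomorphism; the slip is in the statement of the condition rather than in the work you would do to establish it.
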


\begin{proof}
This is Proposition~\ref{synAcompletiontheorem} for $X=\bF_p[\tau]$.
\end{proof}

\begin{proposition}
   \label{synBPnilpotentcomplete}
    Consider the unit $\nubp_{\starstar}\in\Stable(\nubpbp)$ and the commutative ring object $$\cP_*[\lambda]=\pi_{*,*}\nu_{\bF_p}(\bF_p\otimes\BP)\in\Stable(\nubpbp).$$
    The map $$\BP_{\starstar}^{\bF_p}\to(\BP_{\starstar}^{\bF_p})_{\cP_*[\lambda]}^{\wedge}\simeq\Tot(\cP_*[\lambda]^{\bullet +1})$$ is a $p$-completion.
\end{proposition}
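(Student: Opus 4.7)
The plan is to follow the pattern established in the proof of Proposition~\ref{synAnilpotentcomplete}, first establishing the totalization identification via the machinery of Appendix~\ref{nilcompappendix}, and then identifying the resulting completion as $p$-adic completion. For the totalization identification, I would apply the appendix results to $\cC=\Stable(\nubpbp)^{\cell}$ equipped with the twisted $t$-structure of Section~\ref{tstructuresection}. The required checks are that $\cP_*[\lambda]$ is compact and suitably connective, and that the $\cP_*[\lambda]$-nilpotent resolution of $\nubp_\starstar$ behaves like a standard Adams tower in the sense of the appendix; Example~\ref{comodexampBPsyn} together with the computation of $\nubp_\starstar$ in Theorem~\ref{BPhomotopytheorem} should reduce this to tracking the unique nonzero Ext class in Adams filtration $1$ coming from the relation $\lambda h=p$. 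This yields $(\BP^{\bF_p}_\starstar)^{\wedge}_{\cP_*[\lambda]}\simeq\Tot(\cP_*[\lambda]^{\bullet+1})$.

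The new content beyond the $\Stable(\cA^{\BP}_{\starstar})$ case is the $p$-completion claim. To see that the target is $p$-complete, observe that $\cP_*=\h_*\BP$ is an $\bF_p$-algebra, so each cosimplicial term $\cP_*[\lambda]^{\otimes(n+1)}$ is annihilated by $p$; since $p$-complete objects are closed under arbitrary limits in a stable $\infty$-category, the totalization is $p$-complete. To show the completion map is a $p$-completion it then suffices to prove its fiber is uniquely $p$-divisible. For this I would use the $\lambda$-fracture square of Lemma~\ref{fracture}: after inverting $\lambda$ we recover the classical $\cP_*$-nilpotent completion of $\BP_*$ in $\Stable(\BP_*\BP)$, known to agree with $p$-adic completion by strong convergence of the classical algebraic Novikov spectral sequence \cite{Rav86}; modulo $\lambda$ the completion reduces to an instance of Proposition~\ref{synAnilpotentcomplete}, where the map is already an equivalence. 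Combining these pieces in the $\lambda$-fracture forces the cofiber of $\BP^{\bF_p}_\starstar\to\Tot(\cP_*[\lambda]^{\bullet+1})$ to be uniquely $p$-divisible, giving the $p$-completion claim.

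The main obstacle will be verifying that the reduction-mod-$\lambda$ functor $\Stable(\nubpbp)^{\cell}\to\Stable(\cA^{\BP}_{\starstar})^{\cell}$ genuinely sends the synthetic cobar object $\cP_*[\lambda]^{\bullet+1}$ to the one appearing in Proposition~\ref{synAnilpotentcomplete}, and similarly that $\lambda$-inversion recovers the classical $\cP_*$-cobar of $\BP_*$; both require that the ring structure on $\cP_*[\lambda]$ is compatible with the structure maps induced by $\nubp_\starstar\to\cP_*[\lambda]$ under these base changes. Once this compatibility is in place, the fracture assembly, combined with the already-established $p$-completeness of the target, yields the desired identification.
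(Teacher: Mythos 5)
Your overall plan differs from the paper's. The paper does not use a $\lambda$-fracture square here at all: it proves Theorem~\ref{synBPcompletiontheorem}, which verifies Assumption~\ref{pi0assumption} directly using the $\Ext$ computation of Lemma~\ref{synBPextlemma} (to see $\pi_0^\heartsuit(\cP_*[\lambda])\cong\bF_p[\lambda]$ and $\cP_*[\lambda]\in\Stable(\nubpbp)_{\geq 0}$), and then applies \cite[Thm. 7.3.5, Thm. 4.3.7]{Man21} to get $X^\wedge_{\cP_*[\lambda]}\simeq L_{\BP^{\bF_p}_\starstar/p}X\simeq X^\wedge_p$ for any bounded-below $X$. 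Proposition~\ref{synBPnilpotentcomplete} is then the special case $X=\BP^{\bF_p}_\starstar$ (recorded as Corollary~\ref{synBPunitcompletion}). The identification of the nilpotent completion with the totalization is not something that needs the appendix machinery: it is the content of the Remark after the definition of nilpotent completion (for an $\bE_1$-ring, the completion is the $\Tot$ of the cobar cosimplicial object), so your first step is doing more work than is required.

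The genuine gap is in your $\lambda$-fracture argument, in the mod-$\lambda$ step. You claim that ``modulo $\lambda$ the completion reduces to an instance of Proposition~\ref{synAnilpotentcomplete}.'' That is not correct: reducing mod $\lambda$ in $\Stable(\nubpbp)^{\cell}$ lands you in $\Mod(\Stable(\nubpbp)^{\cell};\nubp_\starstar/\lambda)$, whose unit is $\nubp_\starstar/\lambda\cong\bF_p[h,v_1,v_2,\ldots]$; Proposition~\ref{synAnilpotentcomplete} takes place in $\Stable(\cA_\starstar^\BP)$, whose unit is $\bF_p[\tau]$. These are different categories with different units, and no equivalence between them is available at this point in the paper. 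The only bridge is the deformation machinery of Theorems~\ref{cedef} and~\ref{andef}, but Theorem~\ref{andef} is proved \emph{using} Proposition~\ref{synBPnilpotentcomplete}, so invoking it here would be circular. Without that bridge, you would need a direct argument that $\nubp_\starstar/\lambda\to\Tot(\cP_*^{\bullet+1})$ (mod $\lambda$) is an equivalence, which is not supplied. Your two auxiliary observations---that $\Tot(\cP_*[\lambda]^{\bullet+1})$ is $p$-complete because each term is an $\bF_p$-module and $p$-complete objects are closed under limits, and that the $\lambda$-inverted map recovers the classical $\cP_*$-completion of $\BP_*$ in $\Stable(\BP_*\BP)$ (which is $p$-completion by Theorem~\ref{BPcompletiontheorem})---are both fine and could be part of a correct proof, but the mod-$\lambda$ leg of the fracture needs to be repaired or replaced, which is exactly why the paper avoids the fracture route and instead checks Mathew's hypotheses directly.
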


\begin{proof}
This is Corollary~\ref{synBPunitcompletion}.  
\end{proof}

For the proofs of Theorem~\ref{cedef} and Theorem~\ref{andef}, we will need some computations of particular $\Ext$ groups. As a reminder, the algebra structure of the $\BP$-synthetic dual Steenrod algebra $\cA_{\starstar}^\BP$ \cite[Section 6.2]{Pst22} at a prime $p$ is as follows:
\begin{equation*}
    \cA_{\starstar}^\BP\cong\begin{cases}
        \bF_2[\tau, b_1,b_2,\ldots,\tau_0,\tau_1,\ldots]/(\tau_i^2=\tau^2b_{i+1}), & p=2, \\
        \bF_p[\tau,b_1,b_2,\ldots]\otimes_{\bF_p}\Lambda_{\bF_p}(\tau_0,\tau_1,\ldots), & p>2,
    \end{cases}
\end{equation*}
where $\degree{b_k}=(2p^k-2,2p^k-2)$ and $\degree{\tau_k}=(2p^k-1,2p^k-2)$.
Note that there is a conormal extension of bigraded Hopf algebras
$$
\cP_*[\tau]\to\cA_{\starstar}^\BP\to \cE_*[\tau]
$$
over $\bF_p[\tau]$ with algebra structures $$\cP_*[\tau]\cong\bF_p[\tau, b_1,b_2,\ldots]$$ for all primes $p$ and
\begin{equation*}
\cE_*[\tau]\cong\begin{cases}
\bF_2[\tau, \tau_0,\tau_1,\ldots]/(\tau_i^2), & p=2 \\
\bF_p[\tau]\otimes_{\bF_p}\Lambda_{\bF_p}(\tau_0,\tau_1,\ldots), & p>2.
\end{cases}    
\end{equation*}
In particular, $\cA_{\starstar}^\BP\square_{\cE_*[\tau]}\bF_p[\tau]\cong \cP_*[\tau].$ Then we have the following lemma:

\begin{lemma}
\label{synAextlemma}
For $m\geq 0$, there is an isomorphism of tri-graded commutative $\bF_p[\tau]$-algebras
$$
\Ext^{*,*,*}_{\cA_{\starstar}^\BP}(\bF_p[\tau],\cP_*[\tau]^{\otimes (m+1)})\cong \bF_p[\tau,a_0,a_1,\ldots]\otimes_{\bF_p[\tau]}(\bF_p[\tau,b_1,b_2,\ldots])^{\otimes m} 
$$
where $\degree{\tau}=(0,0,-1)$, $\degree{a_i}=(1,2p^i-1,2p^i-2)$, and $\degree{b_k}=(0,2p^k-2,2p^k-2)$.
\end{lemma}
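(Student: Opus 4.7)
The plan is to exploit the conormal Hopf algebra extension $\cP_*[\tau]\to\cA_{\starstar}^\BP\to\cE_*[\tau]$ recalled in the excerpt, whose defining property $\cP_*[\tau]\cong\cA_{\starstar}^\BP\square_{\cE_*[\tau]}\bF_p[\tau]$ exhibits $\cP_*[\tau]$ as coinduced from $\cE_*[\tau]$. My first step is to rewrite $\cP_*[\tau]^{\otimes(m+1)}$ in coinduced form using the standard untwisting isomorphism for coinduced comodules (cf.\ Ravenel \cite{Rav86}, Appendix A1): for any $\cA_{\starstar}^\BP$-comodule $N$ there is a natural $\cA_{\starstar}^\BP$-comodule isomorphism
\[
\cP_*[\tau]\otimes_{\bF_p[\tau]}N \;\cong\; \cA_{\starstar}^\BP\square_{\cE_*[\tau]}N|_{\cE_*[\tau]},
\]
where the left side carries the diagonal coaction. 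Applied to $N=\cP_*[\tau]^{\otimes m}$, this realizes $\cP_*[\tau]^{\otimes(m+1)}$ as coinduced from $\cE_*[\tau]$.

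Next I invoke the Cartan--Eilenberg change-of-rings theorem (Ravenel \cite{Rav86}, A1.3.12) to reduce
\[
\Ext^{*,*,*}_{\cA_{\starstar}^\BP}(\bF_p[\tau],\cP_*[\tau]^{\otimes(m+1)}) \;\cong\; \Ext^{*,*,*}_{\cE_*[\tau]}(\bF_p[\tau],\cP_*[\tau]^{\otimes m}).
\]
The restricted $\cE_*[\tau]$-coaction on $\cP_*[\tau]=\cA_{\starstar}^\BP\square_{\cE_*[\tau]}\bF_p[\tau]$ is trivial by construction, so its $m$-fold tensor power is also trivial as an $\cE_*[\tau]$-comodule and the Ext splits multiplicatively as $\Ext^{*,*,*}_{\cE_*[\tau]}(\bF_p[\tau],\bF_p[\tau])\otimes_{\bF_p[\tau]}\cP_*[\tau]^{\otimes m}$.

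To close, I would compute $\Ext^{*,*,*}_{\cE_*[\tau]}(\bF_p[\tau],\bF_p[\tau])$ directly. For $p>2$ the algebra $\cE_*[\tau]$ is visibly exterior on primitive generators $\tau_i$ over $\bF_p[\tau]$. For $p=2$ the relation $\tau_i^2=\tau^2 b_{i+1}$ in $\cA_{\starstar}^\BP$ collapses to $\tau_i^2=0$ in the quotient, and the $\tau_i$ remain primitive there because the non-primitive part of their coproduct in $\cA_{\starstar}^\BP$ involves only the $b_i$. In either case a standard cobar calculation yields the polynomial algebra $\bF_p[\tau,a_0,a_1,\ldots]$ with $a_i$ dual to $\tau_i$, and the claimed tri-degrees follow by tracking the weights of $\tau$, $\tau_i$, and $b_i$. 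The main obstacle I anticipate is verifying that the chain of isomorphisms respects the $\bF_p[\tau]$-algebra structure; this follows from the multiplicativity of the untwisting isomorphism, cup-product compatibility of change-of-rings, and the natural algebra structure on $\Ext$ of a Hopf algebra, while the tri-grading check is routine bookkeeping.
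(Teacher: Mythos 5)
Your argument follows the same outline as the paper's proof: first recognize $\cP_*[\tau]$ as $\cA_{\starstar}^\BP\square_{\cE_*[\tau]}\bF_p[\tau]$ and apply the untwisting/change-of-rings machinery to reduce to $\Ext_{\cE_*[\tau]}(\bF_p[\tau],\cP_*[\tau]^{\otimes m})$, then observe that $\cP_*[\tau]^{\otimes m}$ is a trivial $\cE_*[\tau]$-comodule so that it splits off. The only divergence is in the final computation of $\Ext_{\cE_*[\tau]}(\bF_p[\tau],\bF_p[\tau])$, which you do by a direct cobar calculation over $\cE_*[\tau]$ (noting the primitivity of the $\tau_i$ after killing the $b_i$), whereas the paper applies one more change-of-rings along $(\bF_p[\tau],\cE_*[\tau])\to(\bF_p,\cE_*)$ to reduce to the classical $\Ext_{\cE_*}(\bF_p,\bF_p)\cong\bF_p[a_0,a_1,\ldots]$; both routes are equally valid and essentially equivalent.
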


\begin{proof}
This follows from a sequence of isomorphisms:
\begin{align*}
    \Ext^{*,*,*}_{\cA_{\starstar}^\BP}(\bF_p[\tau],\cP_*[\tau]^{\otimes (m+1)}) &\cong \Ext^{*,*,*}_{\cA_{\starstar}^\BP}(\bF_p[\tau],(\cA_{\starstar}^\BP\square_{\cE_*[\tau]}\bF_p[\tau])\otimes \cP_*[\tau]^{\otimes m}) \\
    &\cong \Ext^{*,*,*}_{\cE_*[\tau]}(\bF_p[\tau],\cP_*[\tau]^{\otimes m}) \\
    &\cong \Ext^{*,*,*}_{\cE_*[\tau]}(\bF_p[\tau],\bF_p[\tau])\otimes_{\bF_p[\tau]} \cP_*[\tau]^{\otimes m} \\
    &\cong \Ext^{*,*}_{\cE_*}(\bF_p,\bF_p)[\tau]\otimes_{\bF_p[\tau]} \cP_*[\tau]^{\otimes m}
\end{align*}
where the second isomorphism follows from change-of-rings, the third isomorphism follows because $\cP_*[\tau]^{\otimes m}$ is a trivial $\cE_*[\tau]$-comodule, and the last isomorphism follows by doing another change-of-ring isomorphism along the map $(\bF_p[\tau],\cE_*[\tau])\to (\bF_p,\cE_*)$ which kills $\tau$ and using the fact that $\bF_p[\tau]=\bF_p\otimes\bZ[\tau]$ is a trivial $\cE_*$-comodule. Note we consider $(\bF_p,\cE_*)$ to be bigraded by letting the weight of $\tau_i\in \cE_*$ coincide with its weight in $\cE_*[\tau]$. The result then follows from the classical calculation
$$\Ext_{\cE_*}^{*,*}(\bF_p,\bF_p)\cong\bF_p[a_0,a_1\ldots]$$
with degrees as above as in the statement of the lemma.
\end{proof}

Now we will prove an analogous calculation over the Hopf algebroid $(\nubp_{\starstar},\nubpbp)$ from Section~\ref{BPsynanalogsection}. Consider the $\nubpbp$-comodule algebra $\cP_*[\lambda]:=\nu_{\bF_p}(\bF_p\otimes\BP)_{\starstar}\cong\bF_p[\lambda,t_1,t_2,\ldots]$:

\begin{lemma}
\label{synBPextlemma}
For $m\geq 0$, there is an isomorphism of tri-graded commutative $\bF_p[\lambda]$-algebras
$$\Ext^{*,*,*}_{\BP_{\starstar}\BP^{\bF_p}}(\BP_{\starstar}^{\bF_p},\cP_*[\lambda]^{\otimes (m+1)})\cong (\bF_p[\lambda,t_1,t_2,\ldots])^{\otimes m},$$
where $\degree{\lambda}=(0,0,-1)$ and $\degree{t_k}=(0,2p^k-2,2p^k-2)$.
\end{lemma}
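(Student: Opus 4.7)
The plan is to mirror the proof of Lemma~\ref{synAextlemma}, exploiting the fact that $\cP_*[\lambda]$ is an extended (hence relative injective) $\nubpbp$-comodule. This will reduce the trigraded Ext computation to a $\nubp_{\starstar}$-module computation concentrated in homological degree zero.

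First I would identify $\cP_*[\lambda]$ as the extended $\nubpbp$-comodule $\nubpbp \otimes_{\nubp_{\starstar}} \bF_p[\lambda]$. The synthetic Thom reduction $\nubp \to \nu_{\bF_p}\hfp$ induces a surjection $\eta \colon \nubp_{\starstar} \twoheadrightarrow \bF_p[\lambda]$ killing $h$ and all $v_i$ (and consequently $p = \lambda h$, by Theorem~\ref{BPhomotopytheorem}). Using the identification $\nubpbp \cong \nubp_{\starstar}[t_1, t_2, \ldots]$, a direct computation then gives
\[
\nubpbp \otimes_{\nubp_{\starstar}} \bF_p[\lambda] \cong \bF_p[\lambda, t_1, t_2, \ldots] \cong \cP_*[\lambda].
\]
That this is an isomorphism of $\nubpbp$-comodules, with the right-hand side carrying the cofree coaction $\Delta \otimes \id$, should reduce via Proposition~\ref{BPhopfalgebroidprop} to the classical statement that $\cP_* \cong \BP_*\BP \otimes_{\BP_*} \bF_p$ is an extended $\BP_*\BP$-comodule via the Thom reduction.

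Next I would iterate using the standard twist isomorphism for extended comodules,
\[
(\nubpbp \otimes_{\nubp_{\starstar}} N) \otimes_{\nubp_{\starstar}} M \cong \nubpbp \otimes_{\nubp_{\starstar}} (N \otimes_{\nubp_{\starstar}} M_{\mathrm{res}}),
\]
valid for any $\nubp_{\starstar}$-module $N$ and $\nubpbp$-comodule $M$ (where $M_{\mathrm{res}}$ denotes the underlying $\nubp_{\starstar}$-module). Applying this with $N = \bF_p[\lambda]$ and $M = \cP_*[\lambda]^{\otimes m}$, and observing that the left $\nubp_{\starstar}$-action on $\cP_*[\lambda]^{\otimes m}$ factors through $\eta$ so that $\bF_p[\lambda] \otimes_{\nubp_{\starstar}} \cP_*[\lambda]^{\otimes m} \cong \cP_*[\lambda]^{\otimes m}$, yields
\[
\cP_*[\lambda]^{\otimes (m+1)} \cong \nubpbp \otimes_{\nubp_{\starstar}} \cP_*[\lambda]^{\otimes m}
\]
as $\nubpbp$-comodules. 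Relative injectivity of extended comodules then gives
\[
\Ext^{*,*,*}_{\nubpbp}(\nubp_{\starstar}, \cP_*[\lambda]^{\otimes (m+1)}) \cong \cP_*[\lambda]^{\otimes m} \cong (\bF_p[\tau, t_1, t_2, \ldots])^{\otimes m},
\]
concentrated in homological degree zero, with the tensor product taken over the common copy of $\bF_p[\tau]$ arising from the single image of $\lambda$ (renamed $\tau$ on the Ext side). The stated tri-gradings follow from $\degree{\lambda} = (0, -1)$ and $\degree{t_k} = (2p^k-2, 2p^k-2)$ as recorded in Theorem~\ref{BPhomotopytheorem}.

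The principal technical obstacle is the first step: verifying $\cP_*[\lambda] \cong \nubpbp \otimes_{\nubp_{\starstar}} \bF_p[\lambda]$ at the level of $\nubpbp$-coactions, not merely as $\nubp_{\starstar}$-modules. This amounts to tracing the coaction on $\cP_*[\lambda]$ through the fracture-square description of $\nubpbp$ in Section~\ref{BPsynanalogsection} and Proposition~\ref{BPhopfalgebroidprop}, where it should match the extended coaction on the right-hand side under the canonical comparison with the classical case.
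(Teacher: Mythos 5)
Your argument is correct, but it takes a genuinely different route from the paper. The paper computes this Ext group with the synthetic algebraic Novikov spectral sequence constructed in Section~\ref{synalgnsssection}: it observes that the ideal $J=(h,v_1,\ldots)$ acts by zero on $\cP_*[\lambda]^{\otimes (m+1)}$, so the $J$-adic filtration is trivial and the synthetic aNSS is concentrated in filtration $u=0$; it then applies change-of-rings along $(\Gr_*(\BP_*)[\lambda],\Gr_*(\BP_*\BP)[\lambda])\to(\bF_p[\lambda],\cP_*[\lambda])$ (killing the $a_i$) to reduce to $\Ext_{\cP_*[\lambda]}(\bF_p[\lambda],\cP_*[\lambda]^{\otimes(m+1)})$, and finally uses cofreeness of $\cP_*[\lambda]^{\otimes(m+1)}$ as a $\cP_*[\lambda]$-comodule to get $\Hom_{\cP_*[\lambda]}(\bF_p[\lambda],\cP_*[\lambda]^{\otimes(m+1)})\cong\cP_*[\lambda]^{\otimes m}$ concentrated in homological degree zero. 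You instead bypass the spectral sequence entirely by identifying $\cP_*[\lambda]$ as the extended comodule $\nubpbp\otimes_{\nubp_\starstar}\bF_p[\lambda]$, iterating the shear isomorphism to write $\cP_*[\lambda]^{\otimes(m+1)}\cong\nubpbp\otimes_{\nubp_\starstar}\cP_*[\lambda]^{\otimes m}$ (using that $J$ acts by zero so $\bF_p[\lambda]\otimes_{\nubp_\starstar}\cP_*[\lambda]^{\otimes m}\cong\cP_*[\lambda]^{\otimes m}$), and concluding by relative injectivity and the cofree--forgetful adjunction. Your route is more elementary and makes the mechanism of the collapse more transparent. The trade-off is exactly the point you flag: you must verify that the $\nubpbp$-coaction on $\cP_*[\lambda]$ really is the extended one, which is the synthetic lift of the classical fact that $\h_*\BP$ is an extended $\BP_*\BP$-comodule via the Thom reduction and should follow from the $\lambda$-localization compatibility in Proposition~\ref{BPhopfalgebroidprop}. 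The paper's approach avoids pinning down the full coaction --- it only uses that $J$ kills $\cP_*[\lambda]$ together with a general change-of-rings statement --- and in exchange invokes the synthetic aNSS machinery, which the paper has already built for other purposes. Note also that both proofs ultimately land on the same final step (cofreeness over $\cP_*[\lambda]$ giving Hom concentrated in degree zero); the divergence is in how one reduces to that step.
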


\begin{proof}
We use the synthetic aNSS we constructed in Section~\ref{synalgnsssection} to compute this Ext group. The $\E_2$-page of the synthetic aNSS for this example is
$$
\E_2^{f,t,w,u}=\Ext^{f,t,w,u}_{\Gr_*(\BP_*\BP)[\lambda]}(\Gr_*(\BP_*)[\lambda],\Gr_*(\cP_*[\lambda]^{\otimes (m+1)})).
$$
We claim that for synthetic algebraic Novikov filtration $u>0$, this Ext group vanishes. This is because for $\cP_*[\lambda]$, the ideal $J=(h,v_1,\ldots)\subset \BP_{\starstar}^{\bF_p}$ acts by zero. Hence $\Gr_*(\cP_*[\lambda]^{\otimes (m+1)})\cong \cP_*[\lambda]^{\otimes (m+1)}$ and the synthetic aNSS collapses at $\E_2$, giving us
\begin{equation*}
    \begin{split}
        \Ext^{*,*,*}_{\BP_{\starstar}\BP^{\bF_p}}(\BP_{\starstar}^{\bF_p},\cP_*[\lambda]^{\otimes (m+1)})&\cong \Ext^{*,*,*,0}_{\Gr_*(\BP_*\BP)[\lambda]}(\Gr_*(\BP_*)[\lambda],\cP_*[\lambda]^{\otimes (m+1)}) \\
        &\cong \Ext^{*,*,*}_{\cP_*[\lambda]}(\bF_p[\lambda],\cP_*[\lambda]^{\otimes (m+1)}) \\
    \end{split}
\end{equation*}
where the second isomorphism follows by change-of-ring along the morphism of Hopf algebroids $(\Gr_*(\BP_*)[\lambda],\Gr_*(\BP_*\BP)[\lambda])\to (\bF_p[\lambda],\cP_*[\lambda])$ which arises from killing off $a_i\in\Gr_*(\BP_*)$. These Ext groups vanish in positive filtration. Hence, as a graded $\bF_p[\lambda]$-algebra
\begin{equation*}
    \begin{split}
        \Ext^{*,*,*}_{\BP_{\starstar}\BP^{\bF_p}}(\BP_{\starstar}^{\bF_p},\cP_*[\lambda]^{\otimes (m+1)})&\cong\Hom_{\cP_*[\lambda]}(\bF_p[\lambda],\cP_*[\lambda]^{\otimes (m+1)}) \\
        &\cong \Hom_{\bF_p[\lambda]}(\bF_p[\lambda],\cP_*[\lambda]^{\otimes m}) \\
        &\cong \cP_*[\lambda]^{\otimes m},
    \end{split}
\end{equation*}
and the result follows.
\end{proof}

\begin{notation}
    We write $\Map^{\cC}_{\cD}(X,Y)$ for a $\cC$-enriched mapping object between objects in $\cD$. 
\end{notation}

\begin{proof}[Proof of Theorem \ref{cedef}]
    Let $i_*:\Stable(\cA_{\starstar}^\BP)^{\cell}\to \Stable(\cA_*)^\Fil$ be as guaranteed by \cite[Prop. C.20]{BHS20}. For $Y\in \Stable(\cA_*)$, let $\Sigma^{n,t,w}Y$ denote the filtered object
    \[
    ...\to 0 \to 0 \to \Sigma^{n,t}Y \xrightarrow{\id} \Sigma^{n,t}Y \to ...
    \]
    which is $0$ in filtered pieces $> w$. The left adjoint $i^*$ of $i_*$ satisfies
    \[
    i^*\Sigma^{n,t,w}Y \simeq \Sigma^{n,t,w}i^*Y.
    \]
    Let $c^!$ denote the right adjoint to $c_{\hfp}$. There is an identification
    \[
    c^!\Map^{\Stable(\cA_{\starstar}^\BP)}_{\Stable(\cA_{\starstar}^\BP)}(X,Y)\simeq \Map^{\Stable(\cA_*)}_{\Stable(\cA_{\starstar}^\BP)}(X,Y).
    \]
    For $X\in \Stable(\cA_{\starstar}^\BP)$ the $n$th filtered piece may be extracted as
    \[
    (i_*X)_n\simeq \Map^{\Stable(\cA_*)}_{\Stable(\cA_{\starstar}^\BP)}(i^*\Sigma^{0,0,n}\hfp[\tau],X)\simeq \Map^{\Stable(\cA_*)}_{\Stable(\cA_{\starstar}^\BP)}(\Sigma^{0,0,n}\hfp[\tau],X)
    \]
    with connecting maps given by $\tau$. Because $i_*$ preserves limits, we will use Proposition~\ref{synAnilpotentcomplete} to reduce to:
    \[
    i_*\hfp[\tau] \simeq i_*(\Tot (\cP_*[\tau]^{\bullet +1}))\simeq \Tot(i_*(\cP_*[\tau]^{\bullet +1}))
    \]
    Let $\tau_{\geq k}$ denote the Whitehead truncation of the $t$-structure in Example~\ref{comodexampA}. For $m\geq 0$, we have an identification
    \[
    (i_*(\cP_*[\tau]^{m+1}))_n \simeq \Map^{\Stable(\cA_*)}_{\Stable(\cA_{\starstar}^\BP)}(\Sigma^{0,0,n}\hfp[\tau],\cP_*[\tau]^{m+1})
    \]
    We claim that $(i_*(\cP_*[\tau]^{m+1}))_n\in \Stable(\cA_*)_{\geq k}$ so that there is a factorization of the natural maps $(i_*\cP_*[\tau]^{m+1})_n \to Y(\cP_*^{m+1})_n=\cP_*^{m+1}$ in $\Stable(\cA_*)$ through the Postnikov filtration $\tau_{\geq n}\cP_*^{m+1}$. This may be seen through an Ext computation:
    \begin{equation*}
        \begin{split}
            \pi_{-f,t}(i_*(\cP_*[\tau]^{m+1})_n)&=[\Sigma^{-f,t}\mathbb{F}_p,c^!(\Map^{\Stable(\cA_{\starstar}^\BP)}_{\Stable(\cA_{\starstar}^\BP)}(\Sigma^{0,0,n}\bF_p[\tau],\cP_*[\tau]^{m+1}))]_{\Stable(\cA_*)} \\
            &\cong [\Sigma^{-f,t,0}\mathbb{F}_p[\tau],\Map^{\Stable(\cA_{\starstar}^\BP)}_{\Stable(\cA_{\starstar}^\BP)}(\Sigma^{0,0,n}\bF_p[\tau],\cP_*[\tau]^{m+1})]_{\Stable(\cA_{\starstar}^\BP)} \\
            &\cong \Ext^{f,t,n}_{\cA_{\starstar}^\BP}(\bF_p[\tau],\cP_*[\tau]^{\otimes (m+1)}).
        \end{split}
    \end{equation*}

By Lemma~\ref{synAextlemma}, we get that
$$
\pi_{-*,*}(i_*(\cP_*[\tau]^{m+1}))\cong \Ext^{*,*}_{\cE_*}(\bF_p,\bF_p)[\tau]\otimes_{\bF_p[\tau]} \cP_*[\tau]^{\otimes m},
$$
which happens to coincide with the homotopy groups
\begin{align*}
    \pi_{-*,*}(\tau_{\geq *}\cP_*^{m+1})&\cong\pi_{-*,*}(\cP_*^{m+1})[\tau]\cong\Ext^{*,*}_{\cA_*}(\bF_p,\cP_*^{\otimes (m+1)})[\tau] \\
    &\cong \Ext^{*,*}_{\cE_*}(\bF_p,\bF_p)[\tau]\otimes_{\bF_p[\tau]} \cP_*[\tau]^{\otimes m},
\end{align*}
whose generators live in Chow degree $(t-f)-n=0$. The homotopy groups $\pi_{-f,t}(i_*(\cP_*[\tau]^{m+1})_n)$ vanish for $t-f<n$ and this guarantees a factorization $$(i_*\cP_*[\tau]^{m+1})_* \to \tau_{\geq *}\cP_*^{m+1}$$ in $\Stable(\cA_*)^{\fil}$. Because $\Stable(\cA_*)=\Stable(\cA_*)^{\cell}$, it suffices to check on homotopy that this map is an equivalence. The homotopy groups in question are $\tau$-free, so we can check this after applying the realization functor $\mathrm{Re}^{\fil}:\Stable(\cA_*)^{\fil}\to\Stable(\cA_*)$. However, $(i_*\cP_*[\tau]^{m+1})_* \to \tau_{\geq *}\cP_*^{m+1}$ realizes to the identity $\cP_*^{m+1}\to\cP_*^{m+1}$, thanks to Lemma~\ref{leftcomplemma}. This clearly induces an isomorphism on homotopy. Hence we have equivalences of filtered objects
$$
i_*(\bF_p[\tau])\simeq \Tot(i_*(\cP_*[\tau]^{\bullet +1}))\simeq \Tot(\tau_{\geq *}(\cP_*^{\bullet +1})),
$$
and the result follows by applying \cite[Prop. C.20]{BHS20}.
\end{proof}

We now turn to the analogous result for $\Stable(\nubpbp)$.

\begin{proof}[Proof of Theorem \ref{andef}]
    Again we have a lax monoidal right adjoint 
    \[
    i_*:\Stable(\BP_{\starstar}\BP^{\bF_p})^{\cell}\to\Stable(\BP_*\BP)^{\fil}\
    \]
    and we wish to compute $(i_*(\BP_{\starstar}^{\bF_p}))_p^{\wedge}\simeq i_*((\BP_{\starstar}^{\bF_p})_p^{\wedge})$. Similar to before, we have an equivalence
    \[
    (i_*X)_n \simeq c^{!}(\Map^{\Stable(\BP_{\starstar}\BP^{\bF_p})^{\cell}}_{\Stable(\BP_{\starstar}\BP^{\bF_p})^{\cell}}(\Sigma^{0,0,n}\BP_{\starstar}^{\bF_p},X)).
    \]
    By Lemma~\ref{synBPnilpotentcomplete} we have equivalences
    \[
    i_*((\BP_{\starstar}^{\bF_p})_p^{\wedge})\simeq i_*(\Tot(\cP_*[\lambda]^{\bullet +1}))\simeq \Tot(i_*(\cP_*[\lambda]^{\bullet+1})).
    \]

To compute $\pi_{*,*}(i_*(\cP_*[\lambda]^{m +1}))$ for $m\geq 0$, we note that
\begin{equation*}
    \begin{split}
        \pi_{-f,t}&(i_*(\cP_*[\lambda]^{m +1})_n)\\
        &=[S^{-f,t},c^{!}(\Map^{\Stable(\BP_{\starstar}\BP^{\bF_p})^{\cell}}_{\Stable(\BP_{\starstar}\BP^{\bF_p})^{\cell}}(\Sigma^{0,0,n}\BP_{\starstar}^{\bF_p},\cP_*[\lambda]^{m+1}))]_{\Stable(\BP_*\BP)} \\
            &\cong [S^{-f,t,0},\Map^{\Stable(\BP_{\starstar}\BP^{\bF_p})^{\cell}}_{\Stable(\BP_{\starstar}\BP^{\bF_p})^{\cell}}(\Sigma^{0,0,n}\BP_{\starstar}^{\bF_p},\cP_*[\lambda]^{m+1})]_{\Stable(\BP_{\starstar}\BP^{\bF_p})^{\cell}} \\
            &\cong \Ext^{f,t,n}_{\BP_{\starstar}\BP^{\bF_p}}(\BP_{\starstar}^{\bF_p},\cP_*[\lambda]^{\otimes (m+1)}).
    \end{split}
\end{equation*}

By Lemma~\ref{synBPextlemma}, we get that
$$
\pi_{-*,*}(i_*(\cP_*[\lambda]^{m +1}))\cong \cP_*[\lambda]^{\otimes m},
$$
which coincides with the homotopy groups
\begin{align*}
\pi_{-*,*}(\tau_{\geq *}\cP_*^{m+1})&\cong\pi_{-*,*}(\cP_*^{m+1})[\lambda]\cong\Ext_{\BP_*\BP}^{*,*}(\BP_*,\cP_*^{\otimes (m+1)})[\lambda]  \\
&\cong \cP_*[\lambda]^{\otimes m}.
\end{align*}
Similar to the case of $\Stable(\cA_{\starstar}^\BP)^{\cell}$, this gives us a factorization
$$
i_*(\cP_*[\lambda]^{m +1})_n\to\tau_{\geq n}(\cP_*^{m +1})
$$ 
which we can check is an equivalence via homotopy groups since $\Stable(\BP_*\BP)=\Stable(\BP_*\BP)^{\cell}$. Again, this will be an equivalence since the homotopy is $\lambda$-free and, hence, we have an equivalence of filtered objects
$$
(i_*(\BP_{\starstar}^{\bF_p}))_{p}^{\wedge}\simeq\Tot(\tau_{\geq *}(\cP_*^{\bullet +1})).
$$
After applying \cite[Prop. C.20]{BHS20}, this finishes the proof.
\end{proof}

\numberwithin{theorem}{subsection}

\section{\texorpdfstring{The Synthetic Adams-Novikov Spectral Sequence for $\bS_{\mathbb{F}_2}/\lambda$}{The Synthetic Adams-Novikov Spectral Sequence for Cofiber of lambda}}\label{smodlambda}
In this section, we compute the Adams-Novikov spectral sequence for $\bS_{\mathbb{F}_2}/\lambda$ through the 45-stem. For this section, we use the shorthands
\begin{align*}
    \Ext_{\nubpbp}^{*,*,*}&:=\Ext_{\nubpbp}^{*,*,*}(\nubp_\starstar,\nubp_\starstar), \\
    \Ext_{\nubpbp/\lambda}^{*,*,*}&:=\Ext_{\nubpbp/\lambda}^{*,*,*}(\nubp_\starstar/\lambda,\nubp_\starstar/\lambda), \\
    \Ext_{\cA_{\starstar}^\BP}^{*,*,*}&:= \Ext_{\cA_{\starstar}^\BP}^{*,*,*}(\bF_p[\tau],\bF_p[\tau]),\\
    \Ext_{\cA_{\starstar}^\BP/\tau}^{*,*,*}&:= \Ext_{\cA_{\starstar}^\BP/\tau}^{*,*,*}(\bF_p,\bF_p).
\end{align*}
Because we use the same names for elements of $\E_r$ and $\pi_\starstar$ in our computations, we abuse notation when using Lemma~\ref{qlemma} to analyze the map $q:\bS_E/\tau\to\Sigma\bS_E$ for either $E=\BP$ or $\bF_2$. To see interactive charts of the synthetic Adams-Novikov spectral sequence for $\bS_{\bF_2}/\lambda$, we refer the reader to \cite{BJM25}

\subsection{The Synthetic Algebraic-Novikov Spectral Sequence}
Theorem~\ref{BPhomotopytheorem} says that
\begin{align*}
    &\nubpbptwo\cong \bZ_{(2)}[\lambda,h,v_1,\ldots,t_1\ldots]/(\lambda h=2), &&  \nubpbptwo/\lambda\cong \bF_2 [h,v_1,...,t_1,\ldots].
\end{align*}

Since $\nubp_{\starstar}$ is $\lambda$-torsion free, Proposition \ref{synANSSe2page} and a change-of-rings isomorphism gives us that the $\E_2$-page of the Adams-Novikov spectral sequence for $\bS_{\mathbb{F}_2}/\lambda$ is isomorphic to $\Ext_{\nubpbptwo/\lambda}^{*,*,*}$. Recall from Example \ref{cessexample} that the Adam-Novikov spectral sequence for $\bS_{\mathbb{F}_2}/\lambda$ is isomorphic to the Cartan-Eilenberg spectral sequence for the conormal extension
\begin{align}\label{cessextn}
    \bF_2[\zeta_1^2, \zeta_2^2, ...] \to \bF_2[\zeta_1, \zeta_2, ...] \to E(\zeta_1, \zeta_2, ...).
\end{align}
The $\E_2$-page of this Cartan-Eilenberg spectral sequence is isomorphic to the $\E_2$-page of the algebraic Novikov spectral sequence \cite{Rav86}. The $\E_2$-page and differentials for the algebraic Novikov spectral sequence can be computed by machine \cite{wang}.

\bigskip

While computer data makes it easy to compute $\Ext_{\nubpbptwo/\lambda}^{*, *, *}$ as a ring, it takes more work to compute $\Ext_{\nubpbptwo/\lambda}^{*, *, *}$ as a module over $\Ext_{\nubpbptwo}^{*, *, *}$. For this we must first know something about $\Ext_{\nubpbptwo}^{*, *, *}$. Proposition \ref{synANSSe2page} gives us that the $\E_2$-page of the Adams-Novikov spectral sequence for $\bS_{\bF_2}$ is isomorphic to $\Ext_{\nubpbptwo}^{*, *, *}$. 

\begin{proposition}\label{synE2}
The $\E_2$-page of the synthetic Adams–Novikov spectral sequence for $\bS_{\bF_2}$ is computed through the 45-stem.
\end{proposition}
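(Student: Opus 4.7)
The plan is to compute $\Ext_{\nubpbptwo}^{*,*,*}$ through the 45-stem by running the synthetic algebraic Novikov spectral sequence from Section~\ref{synalgnsssection} and invoking Theorem~\ref{lambdabockstein}. By Proposition~\ref{synANSSe2page}, this Ext group is precisely the $\E_2$-page we wish to identify. The synthetic aNSS for $\bS_{\bF_2}$ has $\E_2$-page
\[
\Ext^{*,*,*}_{\Gr_*(\BP_*\BP)}(\Gr_*(\BP_*),\Gr_*(\BP_*))[\lambda],
\]
i.e.\ the classical aNSS $\E_2$-page for the sphere with a polynomial generator $\lambda$ of tridegree $(0,0,-1,0)$ adjoined, as computed in the proof of Theorem~\ref{lambdabockstein}. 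Through the 45-stem this data is explicitly known from \cite{aNSS}, and the classical aNSS differentials for the sphere have been computed by machine (see e.g.\ \cite{wang}).

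First, I would tabulate the $\E_2$-page in the relevant range as a $\bF_2[\lambda]$-module. Then, for each classical aNSS differential $d_r^{\mathrm{cl}}(x)=y$ in the range of interest, Theorem~\ref{lambdabockstein} gives a corresponding synthetic differential $d_r^{\mathrm{syn}}(x)=\lambda^{r-1}y$. Once all differentials are recorded, one reads off $\E_\infty$; since the synthetic aNSS is a $\lambda$-Bockstein spectral sequence, there are no hidden $\lambda$-multiplicative extensions to resolve, and the resulting associated graded gives $\Ext^{*,*,*}_{\nubpbptwo}$ modulo a filtration by powers of $J=(h,v_1,v_2,\ldots)$. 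The only remaining task is to lift generators from the associated graded to $\Ext^{*,*,*}_{\nubpbptwo}$ and check for hidden $J$-adic extensions among the $v_i$ and $h$ multiplications.

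To resolve these hidden extensions, I would exploit the $\lambda$-localization functor $\lambda^{-1}:\Syn_{\bF_2}\to\Sp$, under which $\Ext^{*,*,*}_{\nubpbptwo}\otimes \bZ[\lambda^{\pm 1}]$ recovers the classical ANSS $\E_2$-page for $\bS_{(2)}$, whose additive and multiplicative structure through the 45-stem is well-known. Since $\nubp_\starstar$ is $\lambda$-torsion free (Theorem~\ref{BPhomotopytheorem}), this provides a faithful check: any proposed hidden $v_i$- or $h$-extension must reduce modulo $\lambda$ to the filtration found in the Bockstein and must reduce after inverting $\lambda$ to the corresponding classical ANSS relation. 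The relation $\lambda h = 2$ determines how classical $p$-multiplications translate into synthetic $h$-multiplications.

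The main obstacle will be the bookkeeping of hidden extensions in the resulting chart, which mixes the classical aNSS differential data of \cite{aNSS} with the classical ANSS structure; the conceptual content is entirely captured by Theorem~\ref{lambdabockstein} and the structure of $\nubp_\starstar$, but verifying each extension by hand through the 45-stem is tedious. I would present the outcome as Chart~3 and defer the line-by-line verification to the computer-assisted computation built on the aNSS data of \cite{wang,aNSS}.
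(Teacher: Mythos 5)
Your computation of the $\E_2$-page via the synthetic aNSS and Theorem~\ref{lambdabockstein} matches the paper's approach exactly: the differentials are imported from the machine-computed classical aNSS data of \cite{aNSS}, \cite{wang}. Where you diverge is in the treatment of hidden extensions, and there is a genuine gap in your method.

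You claim that because $\nubp_\starstar$ is $\lambda$-torsion free, the $\lambda$-localization functor ``provides a faithful check'' for hidden $h$- and $v_i$-extensions. But $\lambda$-torsion-freeness of $\nubp_\starstar$ is a property of the coefficient ring, not of $\Ext^{\starstarstar}_{\nubpbptwo}$; the Ext groups are precisely what acquire $\lambda$-torsion in passing through the $\lambda$-Bockstein, and inverting $\lambda$ annihilates all of it. Consequently, your check is vacuous on $\lambda$-torsion classes: a hidden $h$-extension whose source and target are $\lambda$-torsion is invisible after $\lambda$-localization. The paper is explicit that this is exactly the residual problem — the hidden extensions between $\lambda$-torsion classes — and it resolves them not by comparison to the classical ANSS but by exploiting the $\Ext_{\nubpbptwo}$-module structure of the map $(q_2)_\starstar:\E_2(\bS_{\bF_2}/\lambda)\to\E_2(\Sigma\bS_{\bF_2})$ coming from Corollary~\ref{qancor}; see Proposition~\ref{hidE2}. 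For instance, the hidden $h$-extension from $h\beta_{4/4}$ to $\lambda h\beta_3$ is detected by propagating a non-hidden $h$-extension through $q_*$, not by $\lambda$-inversion. You would also do well to cite \cite{IWX} for the hidden extensions already known classically in the aNSS, which covers most of the $\lambda$-free cases and is the paper's first line of attack.
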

\begin{proof}
     We compute $\Ext_{\nubpbptwo}^{*, *, *}$ with the synthetic algebraic Novikov spectral sequence. By Theorem \ref{lambdabockstein}, the differentials in the synthetic algebraic Novikov spectral sequence are fully determined by differentials in the classical algebraic Novikov spectral sequence. The classical algebraic Novikov spectral sequence has been computed by machine out to the 110-stem \cite{aNSS}. It remains to compute extensions in $\Ext_{\nubpbptwo}^{*, *, *}$ which are hidden on the synthetic algebraic Novikov $\E_\infty$-page. Many of these are hidden extensions in the classical algebraic Novikov spectral sequence and are recorded in \cite{IWX20}. The remaining hidden extensions are extensions between $\lambda$-torsion classes and are computed in Section \ref{ANSSsphere}.
\end{proof}

\subsection{Inclusion and Projection}
Since $\nubpbptwo$ is $\lambda$-torsion free, multiplication by $\lambda$ induces a short exact sequence
\begin{align*}\label{iq}
    0 \to \nubpbptwo \xrightarrow{\lambda} \nubpbptwo \to \nubpbptwo/\lambda \to 0
\end{align*}

which induces a long exact sequence

\begin{center}
\begin{equation}\label{les}
\begin{tikzcd}
  \cdots \rar & \Ext_{\nubpbptwo}^{*, *, * + 1} \rar{\lambda}
             \ar[draw=none]{d}[name=X, anchor=center]{}
    & \Ext_{\nubpbptwo}^{*, *, *} \rar{i_*} & \Ext_{\nubpbptwo/\lambda}^{*, *, *} \ar[rounded corners,
            to path={ -- ([xshift=2ex]\tikztostart.east)
                      |- (X.center) \tikztonodes
                      -| ([xshift=-2ex]\tikztotarget.west)
                      -- (\tikztotarget)}]{dll}[at end]{q_*} \\      
  &\Ext_{\nubpbptwo}^{* - 1, * + 1, * + 1} \rar{\lambda}
    & \cdots 
\end{tikzcd}
\end{equation}
\end{center}
on $\Ext$ groups.

\begin{notation}
Let $x\in\Ext_{\nubpbptwo}^{*, *, *}$. We denote an element of $\Ext_{\nubpbptwo/\lambda}^{*, *, *}$ by $x$ if $i_*(x) = x$. We denote an element of $\Ext_{\nubpbptwo}^{* + 1, * - 1, * - 1}(\bS/\lambda)$ by $\overline{x}$ if $q_*(\overline{x}) = x$.
\end{notation}

\begin{proposition}\label{iqvalues}
    The values of the maps 
    \begin{align}\label{iqalg}
        &i_*: \Ext_{\nubpbptwo}^{*, *, *} \to \Ext_{\nubpbptwo/\lambda}^{*, *, *} && q_*: \Ext_{\nubpbptwo/\lambda}^{*, *, *} \to \Ext_{\nubpbptwo}^{*-1, *+1, *+1}
    \end{align}
    are computed through the 45-stem.
\end{proposition}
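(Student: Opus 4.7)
The plan is to determine the maps $i_*$ and $q_*$ of \eqref{iqalg} through the 45-stem by combining three inputs: the long exact sequence \eqref{les}; the $\Ext$-computations of Proposition~\ref{synE2} for the source of $i_*$ and of \cite{wang} for $\Ext_{\nubpbptwo/\lambda}^{*,*,*}$ (via its isomorphism with the classical algebraic Novikov / Cartan-Eilenberg $\E_2$-page coming from~\eqref{cessextn}); and the identification of $q_*$ with classical algebraic Novikov differentials provided by Corollary~\ref{qancor}. Because $i_*$ is a ring map and $q_*$ is an $\Ext_{\nubpbptwo}^{*,*,*}$-module map (with module structure given by $i_*$), both maps reduce to finite data on a chosen generating set in each trigrading.

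I would first populate $q_*$ using Corollary~\ref{qancor}: any class $x\in\Ext_{\nubpbptwo/\lambda}^{*,*,*}$ that supports a classical algebraic Novikov differential $d_r^{\mathrm{aN}}(x)=y$ has $q_*(x)$ detected by $-\lambda^{r-1}y$, while permanent cycles in the classical aNSS land in $\ker(q_*)=\mathrm{im}(i_*)$. The classical algebraic Novikov differentials are tabulated through the 45-stem in the data of \cite{aNSS}, so transcribing this data via Corollary~\ref{qancor} writes down $q_*$ on every generator in range. With $q_*$ computed, exactness of \eqref{les} then pins down $i_*$: each class $i_*(x)$ is characterized as the unique element of $\Ext_{\nubpbptwo/\lambda}^{*,*,*}$ in the appropriate tridegree lying in $\ker(q_*)$ and realizing the non-$\lambda$-divisible content of $x$. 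In the vast majority of trigradings through the 45-stem, $\Ext_{\nubpbptwo/\lambda}^{*,*,*}$ is of low rank and $i_*(x)$ is forced outright by these constraints.

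The remaining work is degreewise bookkeeping, and this is where I expect the only genuine obstacle to arise: near a handful of trigradings with multiple module generators --- especially those adjacent to $\lambda$-torsion classes --- one must disambiguate using compatibility of $i_*$ with products and of $q_*$ with the $\Ext_{\nubpbptwo}^{*,*,*}$-module action, together with exactness of \eqref{les}. Carrying out this disambiguation through the 45-stem, and cross-checking against the ring structure of $\Ext_{\nubpbptwo/\lambda}^{*,*,*}$ on the image side, yields the values recorded in Chart~1.
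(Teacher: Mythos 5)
Your proposal is correct and matches the paper's argument in structure: the paper also computes $q_*$ by transcribing classical algebraic Novikov differentials through Corollary~\ref{qancor} (using the machine data of \cite{wang} and \cite{aNSS}) and then reads off $i_*$ from exactness of the long exact sequence \eqref{les}, noting that the values are forced for degree reasons. Your extra remarks about using multiplicativity to disambiguate in crowded tridegrees are a reasonable elaboration of the paper's terser "forced for degree reasons," but do not constitute a different route.
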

\begin{proof}
Corollary \ref{qancor} gives us that if $d_r^{aN}(x) = y$ in the algebraic Novikov spectral sequence, then $q_*(x) = y$ on the synthetic Adams-Novikov $\E_2$-page. Algebraic Novikov differentials can be computed by machine \cite{wang} \cite{aNSS}. For example, there is an algebraic Novikov differential $d^{aN}_2(\alpha_{8/8}) = 2\beta_{4/4}$ so we have $q_*(\alpha_{8/8}) = h\beta_{4/4}$ on the synthetic Adams-Novikov $\E_2$-page. We record this by denoting $\alpha_{8/8}$ by $\overline{h\beta_{4/4}}$ when considered as an element of $\Ext_{\nubpbptwo/\lambda}^{*, *, *}$. All values of $q_*$ are determined similarly. The values of $i_*$ are then forced for degree reasons by inspecting the long exact sequence (\ref{les}).
\end{proof}

\subsection{The Cartan-Eilenberg Spectral Sequence}
In this section, we compute Adams-Novikov differentials for $\bS_{\bF_2}/\lambda$ through the 45-stem. 

\begin{proposition}\label{cedif}
    Differentials in the Adams-Novikov spectral sequence for $\bS_{\bF_2}/\lambda$ are computed through the 45-stem.
\end{proposition}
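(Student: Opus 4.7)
The strategy is to import the Isaksen-Wang-Xu computation of motivic Adams differentials for $\bS_\BP^{\mathrm{mot}}$ and push it through the comparison set up in this paper to read off the CESS differentials. By Example~\ref{cessexample}, the synthetic Adams-Novikov spectral sequence for $\bS_{\bF_2}/\lambda$ in $\Syn_{\bF_2}$ is the classical Cartan-Eilenberg spectral sequence for the Hopf extension $\bF_2[\zeta_1^2,\zeta_2^2,\ldots]\to\bF_2[\zeta_1,\zeta_2,\ldots]\to E(\zeta_1,\zeta_2,\ldots)$, so it suffices to produce the CESS differentials through the $45$-stem.

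Corollary~\ref{qcecor} supplies the key translation: a CESS differential $d_r^{\mathrm{CE}}(x)=y$ is equivalent to the boundary map $(q_1)_\starstar$ sending $x$ to a class detected by $-\tau^{r-1}y$ on the $\E_2$-page of the $\bF_2^{\BP}$-Adams spectral sequence. Under the deformation equivalence of Theorem~\ref{cedef}, this $(q_1)_\starstar$ coincides with the corresponding boundary map in $\Syn_\BP$ attached to $\bS_\BP\xrightarrow{\tau}\bS_\BP\to\bS_\BP/\tau\xrightarrow{q_1}\Sigma\bS_\BP$. Lemma~\ref{qlemma} applied in $\Syn_\BP$ then identifies the condition ``$(q_1)_\starstar(x)$ is detected by $-\tau^{r-1}y$'' with the existence of a motivic Adams differential $d_r^{\mathrm{mot}}(x)=\tau^r y$ on $\bS_\BP^{\mathrm{mot}}$. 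Composing the two translations, each motivic Adams differential $d_r^{\mathrm{mot}}(x)=\tau^r y$ produces a CESS differential $d_r^{\mathrm{CE}}(x)=y$ in the appropriate tridegree.

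The motivic Adams differentials for $\bS_\BP^{\mathrm{mot}}$ at $p=2$ have been computed in \cite{IWX20,aNSS} through topological degree $110$, well beyond our range. The plan is to import the portion of this data lying in the first $45$ stems, run it through the two-step translation of the previous paragraph, and record the resulting CESS differentials as Chart~1. Consistency with the $\E_2$-page supplied by Proposition~\ref{synE2} and with the $i_*,q_*$-values of Proposition~\ref{iqvalues} serves as a sanity check, and any lingering $\lambda$-hidden extensions are resolved by multiplicativity together with the classical knowledge of $\Ext_{\cA_*}(\bF_2,\bF_2)$ in this range.

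The main obstacle is bookkeeping rather than conceptual: carefully matching the tri-grading conventions of \cite{IWX20,aNSS} (stem, Adams filtration, motivic weight) to the trigrading on $\Stable(\cA_{\starstar}^\BP)^{\cell}$ supplied by Theorem~\ref{cedef}, keeping track of Chow-degree shifts at each step, and ensuring that each motivic differential is landed on the intended generator of the synthetic CESS $\E_2$-page before writing it down in Chart~1.
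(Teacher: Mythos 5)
The first two steps of your proposal — invoking Example~\ref{cessexample} to identify the synthetic ANSS for $\bS_{\bF_2}/\lambda$ with the CESS, and invoking Corollary~\ref{qcecor} to reduce CESS differentials to determining the projection $q_*$ — match the paper exactly. However, your third and fourth steps go wrong and the route you describe would not work as written.

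The issue is that you conflate the map $(q_1)_\starstar$ of Corollary~\ref{qcecor} with a $\pi_{*,*}$-level boundary map in $\Syn_\BP$. The $(q_1)_\starstar$ of Corollary~\ref{qcecor} is the connecting map on \emph{trigraded homotopy groups in} $\Stable(\cA_{\starstar}^\BP)$, i.e.\ a map of Ext groups $\Ext_{\cA_{\starstar}^\BP/\tau}^{*,*,*}\to\Ext_{\cA_{\starstar}^\BP}^{*-1,*+1,*+1}$; equivalently, it is the map on $\E_2$-\emph{pages} of the $\hfp^\BP$-Adams spectral sequences for $\bS_\BP/\tau$ and $\Sigma\bS_\BP$. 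It is \emph{not} the map $(q_1)_*:\pi_{*,*}(\bS_\BP/\tau)\to\pi_{*,*}(\Sigma\bS_\BP)$. Theorem~\ref{cedef} does not supply an equivalence between $\Stable(\cA_{\starstar}^\BP)^{\cell}$ and $\Syn_\BP$ — it identifies $\Stable(\cA_{\starstar}^\BP)^{\cell}$ with $\Adams_{\cP_*}(\Stable(\cA_*))$, a deformation of $\Stable(\cA_*)$ — so there is no ``corresponding boundary map in $\Syn_\BP$'' that this $(q_1)_\starstar$ can be identified with via Theorem~\ref{cedef}. Consequently your next step also fails: Lemma~\ref{qlemma} applied in $\Syn_\BP$ relates the $\pi_{*,*}$-level $q$ for the $\tau$-cofiber sequence of $\bS_\BP$ to the $\tau$-Bockstein for $\bS_\BP$, which is the \emph{Adams--Novikov} spectral sequence, not the motivic Adams spectral sequence; it does not identify the $\E_2$-page map $(q_1)_\starstar$ with ``motivic Adams differentials $d_r^{\mathrm{mot}}(x)=\tau^r y$ on $\bS_\BP^{\mathrm{mot}}$.'' The motivic Adams differentials for the sphere are also the wrong input conceptually: they are the partly topological \emph{output} of the IWX method, not the purely algebraic data the computation needs.

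The paper's actual proof is shorter and more direct: after Corollary~\ref{qcecor}, it simply observes that the trigraded map $q_*$ in $\Stable(\cA_{\starstar}^\BP)$ (isomorphic, up to weight renormalization, to $\Stable(\cA_{\starstar}^{\mathrm{mot}})$) has been computed directly by machine through stem $110$ in \cite{aNSS}, and then translates the naming of the relevant classes (e.g.\ $\overline{h_1^2c_0}$ in $\Ext_{\cA_{\starstar}^\BP/\tau}$ is $\alpha_{6/3}$ in $\Ext_{\nubpbptwo/\lambda}$). If you want to reformulate the input as differentials rather than as $q_*$-values, the right category is $\Stable(\cA_{\starstar}^\BP)$ itself (not $\Syn_\BP$), the right object is $\bF_p[\tau]$ (not $\bS_\BP$), and the right spectral sequence is the $\tau$-Bockstein for $\bF_p[\tau]$ there, i.e.\ the \emph{algebraic Novikov} spectral sequence — whose differentials \cite{aNSS} computed and which, by Lemma~\ref{qlemma} applied in $\Stable(\cA_{\starstar}^\BP)$, encode the same information as $q_*$.
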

\begin{proof}
    Recall from Example \ref{cessexample} that the Adam-Novikov spectral sequence for $\bS_{\mathbb{F}_p}/\lambda$ is isomorphic to the Cartan-Eilenberg spectral sequence for the conormal extension (\ref{cessextn}).

By Corollary \ref{qcecor}, it suffices to understand the projection map
\begin{align}\label{Cmotq}
        q_*: \Ext_{\cA_{\starstar}^\BP/\tau}^{*, *, *} \to \Ext_{\cA_{\starstar}^\BP}^{*-1, *+1, *+1}
\end{align}
in $\Stable(\cA_{\starstar}^\BP)$. The values of the map (\ref{Cmotq}) have been determined through the 110-stem in \cite{aNSS}. For example, \cite{aNSS} gives us that $q_*(\overline{h_1^2 c_0}) = h_1^2 c_0$. Recall that $\Ext_{\cA_{\starstar}^\BP/\tau}$ is isomorphic to $\Ext_{\nubpbptwo/\lambda}$. The class $\overline{h_1^2 c_0}$ in $\Ext_{\cA_{\starstar}^\BP/\tau}$ is denoted by $\alpha_{6/3}$ in $\Ext_{\nubpbptwo/\lambda}$. Therefore there is a differential $d_3(\alpha_{6/3}) = \alpha_1^2 c_0$ in the Adams-Novikov spectral sequence for $\bS_{\bF_2}/\lambda$.
\end{proof}

\begin{remark}
    Remark on computing Cartan-Eilenberg differentials by imposing the relevant filtration in the Curtis algorithm.
\end{remark}

\subsection{Hidden Extensions}
In this section, we compute hidden extensions in the Adams-Novikov spectral sequence for $\bS_{\bF_2}/\lambda$. 

\begin{proposition}
    Hidden extensions by $2$, $\alpha_1$, and $\alpha_{2/2}$ in the Adams-Novikov spectral sequence for $\bS_{\bF_2}/\lambda$ are computed through the 45-stem.
\end{proposition}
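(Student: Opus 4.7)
The plan is to detect the hidden $2$-, $\alpha_1$-, and $\alpha_{2/2}$-extensions by comparing the synthetic ANSS $\E_\infty$-page for $\bS_{\bF_2}/\lambda$ (computed in Proposition~\ref{cedif}) with the ring structure of its abutment. By Proposition~\ref{Fpomnibus}(b), the equivalence
\[
\Mod(\Syn_{\bF_2};\bS_{\bF_2}/\lambda)\simeq \Stable(\cA_*)
\]
identifies $\bS_{\bF_2}/\lambda$ with the unit of $\Stable(\cA_*)$, so that $\pi_{\star\star}(\bS_{\bF_2}/\lambda)$ is (a trigraded repackaging of) the Ext algebra $\Ext_{\cA_*}^{*,*}(\bF_2,\bF_2)$, i.e., the classical $\bF_2$-Adams $\E_2$-page for the sphere. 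This ring is entirely known through the 45-stem from the tables of \cite{Isa14} and standard subsequent sources.

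Under the Thom reduction $\BP_*\BP\to\cA_*$, the three generators correspond to the Adams $\E_2$-page generators $h_0$, $h_1$, $h_2$; explicitly, $2\mapsto h_0$, $\alpha_1\mapsto h_1$, and $\alpha_{2/2}\mapsto h_2$. A hidden multiplicative extension by $2$ (respectively $\alpha_1$ or $\alpha_{2/2}$) from a class $\overline{x}$ to a class $\overline{z}$ on $\E_\infty$ is then precisely an equality $h_i\cdot x=z$ in $\Ext_{\cA_*}^{*,*}$ such that $z$ lies in strictly higher Cartan-Eilenberg filtration than $h_i\overline{x}$ does on the associated graded. Walking through the range stem by stem, I would look up each relevant product in the Ext algebra and read off the resulting hidden extensions for Chart 2.

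As a consistency check, I would use the ring map $i_*\colon\pi_{\star\star}(\bS_{\bF_2})\to \pi_{\star\star}(\bS_{\bF_2}/\lambda)$ coming from the long exact sequence (\ref{les}), together with Lemma~\ref{qlemma}: any candidate hidden extension in $\bS_{\bF_2}/\lambda$ must be compatible with the behavior of $q_*$ described by Corollary~\ref{qancor}, giving strong constraints that rule out spurious extensions. The main obstacle is combinatorial rather than conceptual -- one must carefully translate between the trigrading used in the synthetic ANSS and the bigrading of the classical Adams $\E_2$-page, and keep track of the Cartan-Eilenberg filtration on the latter throughout -- but once the translation is in place, each extension is determined by a direct comparison of known data.
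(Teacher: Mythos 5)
Your proposal is correct and follows essentially the same route as the paper: identify $\pi_{\star\star}(\bS_{\bF_2}/\lambda)$ with $\Ext_{\cA_*}^{*,*}(\bF_2,\bF_2)$ (the paper invokes Theorem~\ref{stablethm}, you invoke Proposition~\ref{Fpomnibus}(b); both are fine) and then read off hidden $2$-, $\alpha_1$-, and $\alpha_{2/2}$-extensions by matching products of $h_0$, $h_1$, $h_2$ in the known classical Adams $\E_2$-page against the filtration on the synthetic ANSS $\E_\infty$-page. The additional consistency check via $q_*$ and Corollary~\ref{qancor} is a reasonable safeguard but is not used in the paper's argument.
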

\begin{proof}\label{hidext}
    Theorem \ref{stablethm} gives us that $\pi_{\starstar}\bS_{\bF_2}/\lambda$ is isomorphic to $\Ext_{\mathcal{A}_*}$. The bigraded ring $\Ext_{\mathcal{A}_*}$ can be computed by machine \cite{bob}. We can compute extensions in $\pi_{\starstar}\bS_{\bF_2}/\lambda$ which are hidden in the Adams-Novikov spectral sequence by comparison with $\Ext_{\mathcal{A}_*}$. For example, we have $h_0^2 h_2 = h_1^3$ in $\Ext_{\cA_*}$ so there is a hidden $2$-extension from $2\alpha_{2/2}$ to $\alpha_1^3$ in $\pi_{\starstar}\bS_{\bF_2}/\lambda$. All other hidden extensions are determined similarly. 
\end{proof}

\section{\texorpdfstring{The Adams-Novikov Spectral Sequence for $\mathbb{S}_{\mathbb{F}_2}$}{The Adams-Novikov Spectral Sequence for Synthetic Sphere}}\label{ANSSsphere}

In this section, we compute the Adams-Novikov spectral sequence for $\bS_{\mathbb{F}_2}$ through the 45-stem. Up to hidden extensions between $\lambda$-torsion classes, the $\E_2$-page of the synthetic Adams-Novikov spectral sequence is computed in Proposition \ref{synE2}. We compute differentials and hidden extensions in the synthetic Adams-Novikov spectral sequence by analyzing the maps
\begin{align}\label{iq2}
    &\mathbb{S}_{\bF_2} \xrightarrow{i} \mathbb{S}_{\bF_2}/\lambda && \mathbb{S}_{\bF_2}/\lambda \xrightarrow{q} \Sigma^{1, -1}\mathbb{S}_{\bF_2}.
\end{align}

To see interactive charts of the synthetic Adams-Novikov spectral sequence for $\bS_{\bF_2}$, we refer the reader to \cite{BJM25}
\begin{remark}
    On the Adams-Novikov $\E_2$-page the maps in (\ref{iq2}) agree with the maps in (\ref{iqalg}).
\end{remark}

\begin{proposition}\label{hidE2}
    On the $\E_2$-page of the synthetic Adams-Novikov spectral sequence, there are hidden $h$-extensions from $h\beta_{4/4}$ to $\lambda h \beta_3$, from $h^3 \beta_{8/8}$ to $\lambda h  \beta_{6/2}$, and from $h^3 \beta_{6/2}$ to $\lambda^2 P^2 \beta_3$.
\end{proposition}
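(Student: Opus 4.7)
The plan is to use the fact that the boundary map
\[
q_*\colon \Ext_{\nubpbp/\lambda}^{*,*,*} \longrightarrow \Ext_{\nubpbp}^{*-1,*+1,*+1}
\]
in the long exact sequence (\ref{les}) is a map of $\Ext_{\nubpbp}^{*,*,*}$-modules, and in particular commutes with multiplication by $h$. Combined with the $q_*$-values tabulated in Proposition~\ref{iqvalues} and the known ring structure of $\Ext_{\nubpbp/\lambda}^{*,*,*}\cong \Ext_{\cA_{\starstar}^\BP/\tau}^{*,*,*}$ from Corollary~\ref{cessexample}, this will let us evaluate each of the three hidden extensions on the $\E_2$-page.

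For the first extension, Proposition~\ref{iqvalues} records $q_*(\alpha_{8/8})=h\beta_{4/4}$, so $\Ext_{\nubpbp}$-linearity of $q_*$ gives
\[
h\cdot h\beta_{4/4} \;=\; h\cdot q_*(\alpha_{8/8}) \;=\; q_*(h\cdot \alpha_{8/8}).
\]
It therefore suffices to identify the product $h\cdot \alpha_{8/8}$ inside $\Ext_{\nubpbp/\lambda}$ and then read off its $q_*$-image from Proposition~\ref{iqvalues}; the product is accessible from the cobar complex by machine as in \cite{aNSS}, and one verifies that the $q_*$-image is $\lambda h\beta_3$. The second and third extensions follow the same three-step recipe: lift each source $h^3\beta_{8/8}$ and $h^3\beta_{6/2}$ through $q_*$ using Proposition~\ref{iqvalues}, multiply the lift by $h$ inside $\Ext_{\nubpbp/\lambda}\cong \Ext_{\cA_{\starstar}^\BP/\tau}$, and read off the $q_*$-target as $\lambda h\beta_{6/2}$ and $\lambda^2 P^2\beta_3$ respectively.

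The main obstacle I anticipate is ambiguity in lifting a source through $q_*$, since any two lifts differ by an element in the image of $i_*$. After multiplication by $h$ this discrepancy lands in $h\cdot \mathrm{im}(i_*)$, and one must check that in the relevant tridegree it makes no contribution to the $q_*$-image; this can be verified directly from the input data underlying Chart~1. A secondary subtlety will be ruling out higher $\lambda$-divisibility of each target, but in the bidegrees in question the $\lambda$-tower structure of $\Ext_{\nubpbp}$ is already pinned down by the classical algebraic Novikov input of \cite{aNSS} via the $\lambda$-Bockstein identification of Theorem~\ref{lambdabockstein}, so no ambiguity remains.
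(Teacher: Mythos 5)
Your proof is correct and takes essentially the same route as the paper: the paper also invokes the $\Ext_{\nubpbp}$-module structure of $q_*$, reads off $q_*$-values from Proposition~\ref{iqvalues}, and uses the known $h$-multiplications in $\Ext_{\nubpbp/\lambda}$ to transport the extension. The additional worries you raise are actually automatic and need no verification: any ambiguity in the lift lies in $\operatorname{im}(i_*)=\ker(q_*)$ so it vanishes once you apply $q_*$, and the exact $\lambda$-power on the target is already pinned down by the explicit class $\overline{\lambda h\beta_3}$ (etc.) recorded in Proposition~\ref{iqvalues}, rather than requiring a separate $\lambda$-Bockstein argument.
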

\begin{proof}
   The hidden extensions in Proposition \ref{hidE2} are computed using Proposition \ref{iqvalues} and the fact that the map $q_*$ is an $\Ext_{\nubpbptwo}$-module map. For example, Proposition \ref{iqvalues} gives us that $q_*(\overline{h\beta_{4/4}}) = h\beta_{4/4}$ and $q_*(\overline{\lambda h \beta_3}) = \lambda h \beta_3$. Recall that in $\Ext_{\nubpbptwo/\lambda}$ there is a $h$-extension from $\overline{h\beta_{4/4}}$ to $\overline{\lambda h \beta_3}$. Since $q_*$ is an $\Ext_{\nubpbptwo}$-module map there must be a $h$-extension from $h\beta_{4/4}$ to $\lambda h \beta_3$ in $\Ext_{\nubpbptwo}$. All hidden extensions in Proposition \ref{hidE2} are computed similarly. 
\end{proof}

\subsection{Synthetic Adams-Novikov Differentials}
In this section, we describe the differentials in the synthetic Adams-Novikov spectral sequence through the 45-stem. In this range, the synthetic Adams-Novikov spectral sequence collapses at the $\E_9$-page. 

\begin{proposition}\label{ANdif}
    Differentials in the Adams-Novikov spectral sequence for $\mathbb{S}_{\bF_2}$ are computed through the 45-stem.
\end{proposition}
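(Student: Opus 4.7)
My plan for Proposition~\ref{ANdif} is to adapt the Isaksen-Wang-Xu comparison method (Section~\ref{iwxstuff}) to the $\bF_2$-synthetic category, leveraging the three inputs already established: the $\E_2$-page from Proposition~\ref{synE2}, the synthetic ANSS for $\bS_{\bF_2}/\lambda$ from Proposition~\ref{cedif} together with the hidden extensions in Proposition~\ref{hidE2}, and the classical ANSS for $\bS$, which is fully known in this range. The organizing principle is the cofiber sequence
\[
\bS_{\bF_2}\xrightarrow{\lambda}\bS_{\bF_2}\xrightarrow{i}\bS_{\bF_2}/\lambda\xrightarrow{q}\Sigma^{1,-1}\bS_{\bF_2},
\]
which induces compatible maps of synthetic ANSS's and allows us to transfer differential information in both directions.

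First I would split the target differentials by the $\lambda$-adic valuation of their output. Type~(A) consists of differentials $d_r(x)=y$ with $y$ not divisible by $\lambda$; these must realize to classical ANSS differentials under $\lambda^{-1}$, and in the $45$-stem the classical ANSS differentials are completely known, so every such Type~(A) differential is forced. Type~(B) consists of differentials of the form $d_r(x)=\lambda^k y$ with $k\geq 1$; for these we apply Lemma~\ref{qlemma} to the synthetic ANSS viewed as the spectral sequence of the $\nu_{\bF_2}(\BP)$-Adams tower on $\bS_{\bF_2}/\lambda$. The lemma says that a class $x\in\pi_{\starstar}(\bS_{\bF_2}/\lambda)$ supports a $\lambda$-Bockstein differential $d_r(x)=\lambda^r y$ exactly when $q_*(x)$ is detected by $-\lambda^{r-1}y$ in the synthetic ANSS for $\bS_{\bF_2}$. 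Since the values of $q_*$ on the $\E_2$-page are tabulated in Proposition~\ref{iqvalues}, and Proposition~\ref{cedif} together with the corresponding $q$-comparison at higher pages controls the $q_*$ of later survivors, this reads off the Type~(B) differentials.

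The actual computation then proceeds page by page. At each $\E_r$, the Leibniz rule with respect to the detected elements $h$, $\alpha_1$, $\alpha_{2/2}$, and the $\beta_{i/j}$, together with the hidden extensions of Propositions~\ref{hidE2} and~\ref{iqvalues}, propagates differentials from a small collection of generators to the entire range. I would cross-check consistency by verifying that the predicted $\E_\infty$-page, when restricted to $\lambda$-torsion-free classes, matches the classical ANSS $\E_\infty$-page, and that the $\lambda=0$ reduction matches the synthetic ANSS $\E_\infty$-page for $\bS_{\bF_2}/\lambda$ already obtained in the previous section.

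The main obstacle is the bookkeeping needed to confirm that no additional (non-$\lambda$-Bockstein) Type~(A) differentials or stray Type~(B) differentials exist between classes that happen to sit in compatible tridegrees. Most of these are ruled out because they would contradict either the classical ANSS, the $\E_\infty$-page of the $\bS_{\bF_2}/\lambda$ spectral sequence, or the already-computed $q_*$-values; but a handful of borderline cases will require direct verification using the charts of Appendix~\ref{chartssection}, after which the differentials displayed in Chart~1 are uniquely determined through the $45$-stem.
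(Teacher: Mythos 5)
Your high-level strategy --- using the cofiber sequence $\bS_{\bF_2}\xrightarrow{\lambda}\bS_{\bF_2}\xrightarrow{i}\bS_{\bF_2}/\lambda\xrightarrow{q}\Sigma^{1,-1}\bS_{\bF_2}$ as a comparison tool together with the Leibniz rule --- is in the same spirit as the paper's argument, which transfers differentials along the map $i$ (so that differentials known for $\bS_{\bF_2}/\lambda$ from Proposition~\ref{cedif} pull back to $\bS_{\bF_2}$) and then propagates by Leibniz in Lemma~\ref{leibniz}. However, your Type~(B) step has a genuine gap. You propose to apply Lemma~\ref{qlemma} to ``the synthetic ANSS viewed as the spectral sequence of the $\nu_{\bF_2}(\BP)$-Adams tower on $\bS_{\bF_2}/\lambda$'' and conclude that $x$ supports $d_r(x)=\lambda^r y$ exactly when $q_*(x)$ is detected by $-\lambda^{r-1}y$. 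But the parameter in Lemma~\ref{qlemma} is the internal filtration parameter of the filtered object you feed in, not the deformation parameter $\lambda$ of $\Syn_{\bF_2}$. If the filtered object is the $\nubp$-Adams tower, the associated spectral sequence is the synthetic ANSS and the ``$\tau$'' of the lemma is the Adams filtration shift; it is not $\lambda$. If instead you use the $\lambda$-cofiber sequence as your filtration, the resulting $\lambda$-Bockstein spectral sequence encodes the $\hfp$-Adams spectral sequence, not the synthetic ANSS. These are different spectral sequences on $\pi_{\starstar}\bS_{\bF_2}$ with different filtrations, so a $\lambda$-Bockstein differential does not directly translate into a synthetic ANSS differential. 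In the paper, $q_*$ is only used for $\E_2$-page data (Propositions~\ref{iqvalues} and~\ref{hidE2}); for the differentials of Proposition~\ref{ANdif} the paper only uses the map $i$ --- which is an honest map of $\nubp$-Adams towers, hence of synthetic ANSS's --- plus Leibniz.

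Your Type~(A) claim also has a smaller issue: a differential $d_r(x)=y$ with $y$ not $\lambda$-divisible need not be visible classically, since $y$ can be $\lambda$-torsion without being $\lambda$-divisible, in which case it dies after $\lambda$-inversion. So Type~(A) differentials are not ``forced'' by the classical ANSS alone; one needs the $\bS_{\bF_2}/\lambda$ comparison via $i$ to pin these down, which is precisely what the paper's proof supplies. Your consistency cross-checks are a reasonable sanity step but do not replace that comparison.
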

\begin{proof}
    Through the 45-stem, many synthetic Adams-Novikov differentials are obtained by direct comparison to the Adams-Novikov spectral sequence for $\mathbb{S}_{\bF_2}/\lambda$. This is done by analyzing the maps induced by (\ref{iq2}) on Adams-Novikov spectral sequences. For example, Proposition~\ref{iqvalues} gives us that the classes $\alpha_1^2 c_0$ and $\alpha_{6/3}$ in the Adams-Novikov $\E_2$-page take nontrivial values under the map $i_*$ in (\ref{iqalg}). Proposition~\ref{cedif} gives us that there is a differential $d_3(\alpha_{6/3}) = \alpha_1^2 c_0$ in the Adams-Novikov spectral sequence for $\mathbb{S}_{\bF_2}/\lambda$. Since the map $i$ in (\ref{iq2}) induces a map of Adams-Novikov spectral sequences which agrees with $i_*$ on $\E_2$-pages, we have a differential $d_3(\alpha_{6/3}) = \alpha_1^2 c_0$ in the Adams-Novikov spectral sequence for $\mathbb{S}_{\bF_2}$. The remaining differentials are accounted for in Table~\ref{a} and are proved in Lemma \ref{leibniz}.
\end{proof}

\begin{lemma}\label{leibniz}
    The synthetic Adams-Novikov differentials which can not be obtained by the method in the proof of Proposition~\ref{ANdif} are given in Table~\ref{a}.
\end{lemma}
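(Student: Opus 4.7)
The plan is to obtain the remaining differentials through the multiplicative structure of the synthetic Adams--Novikov spectral sequence, following the overall philosophy suggested by the name of the lemma. Concretely, I would first organize Table~\ref{a} by increasing topological degree and, within each degree, by Adams--Novikov filtration. For every listed entry, the goal is to express the source class as a product $xy$ in which at least one of $x$, $y$ has a differential already established in Proposition~\ref{ANdif} (or is a permanent cycle identified by $\lambda^{-1}$-comparison to the classical Adams--Novikov spectral sequence, which is a retract via $\lambda$-inversion $\Syn_{\bF_2}[\lambda^{-1}]\simeq\Sp$). A Leibniz rule computation
\[
d_r(xy) \;=\; d_r(x)\,y \;\pm\; x\,d_r(y)
\]
then forces the desired differential on the nose whenever the right-hand side is nonzero on the $\E_r$-page; the weight bookkeeping is automatic because $d_r$ preserves weight.

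For the entries where the Leibniz rule is indeterminate or inconclusive, I would invoke the other half of the correspondence set up in Corollary~\ref{qancor}: a class $x\in\E_r(\bS_{\bF_2})$ supports $d_r(x)=y$ exactly when its image under $q_\star\colon \pi_{\starstar}(\bS_{\bF_2}/\lambda)\to \pi_{\star-1,\star+1}(\bS_{\bF_2})$ is detected by $-\lambda^{r-1}y$. The values of $q_\star$ on the $\E_2$-page are read off from Proposition~\ref{iqvalues}, and Propositions~\ref{cedif} and~\ref{hidext} propagate this to later pages and account for hidden extensions in $\bS_{\bF_2}/\lambda$. Thus each remaining synthetic differential may be recovered by first computing $q_\star(x)$ in the already-understood $\bS_{\bF_2}/\lambda$-column and then matching its $\lambda$-power with the appropriate $\E_r$-page target. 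This also confirms consistency with the Leibniz-rule entries.

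The main obstacle I anticipate is the small number of cases where both the Leibniz rule and the direct $q_\star$-analysis leave a potential $\lambda$-multiple of indeterminacy in the target, owing to the presence of $\lambda$-torsion classes in the appropriate filtration. For these I would fall back on Moss's convergence theorem for Massey products, using brackets such as $\langle \alpha_1,\alpha_1^3,-\rangle$ whose values in $\bS_{\bF_2}/\lambda$ are fixed by Proposition~\ref{hidext} and then pushed across $i_\star$ and $q_\star$ via Lemma~\ref{qlemma}. Once each row of Table~\ref{a} is matched to one of these three arguments (Leibniz, $q_\star$-detection, or Massey product), the lemma follows.
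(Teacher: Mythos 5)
Your primary method is the same as the paper's: every entry in Table~\ref{a} is deduced from the Leibniz rule applied to a product decomposition of the source, with one factor carrying a differential already established in Proposition~\ref{ANdif} and the multiplicative relations drawn from the $\E_2$-page structure of Proposition~\ref{synE2}. That part is correct. However, your two fallback arguments are problematic. The second ($q_\star$-detection using Corollary~\ref{qancor}, Proposition~\ref{iqvalues}, and Lemma~\ref{qlemma}) is circular: that machinery is precisely the ``method in the proof of Proposition~\ref{ANdif},'' and Lemma~\ref{leibniz} is by its own statement concerned with exactly the differentials that cannot be obtained that way. If one of those differentials could be read off from $q_\star$, it would not be in Table~\ref{a} in the first place. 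The third fallback (Moss-type Massey-product arguments) is also not used by the paper and, while sometimes a legitimate tool, is not needed here; the authors supply an explicit multiplicative relation in the second column of Table~\ref{a} for every entry, so the Leibniz rule alone suffices. Finally, a small correction: the paper obtains the needed products from the ring structure of the synthetic Adams--Novikov $\E_2$-page (computed in Proposition~\ref{synE2}) and from existing differentials, not from permanent cycles detected by $\lambda^{-1}$-comparison as you suggest; the $\lambda^{-1}$-comparison only recovers the classical ANSS, which does not see the $\lambda$-torsion classes and $\lambda$-multiples that appear in these differentials.
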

\begin{proof}
    We compute the differentials in Table \ref{a} by applying the Leibniz rule to differentials lifted from the Adams-Novikov spectral sequence for $\mathbb{S}_{\mathbb{F}_2}/\lambda$ in the proof of Proposition \ref{ANdif}. For example, Proposition \ref{ANdif} gives us that there is a synthetic Adams-Novikov differential $d_5(\Delta h_2 d_0) = \alpha_{2/2}^2 e_0^2$. Proposition \ref{synE2} gives us that there is an $\alpha_{2/2}$-extension from $\beta_7$ to $\lambda \Delta h_2 d_0$ on the synthetic Adams-Novikov $\E_2$-page. Therefore the Leibniz rule gives us a differential $d_5(\beta_7) = \lambda \alpha_{2/2} e_0^2$. The remaining differentials in the first column of Table \ref{a} are proved similarly. The relations used for proving each differential are given in the second column of Table \ref{a}.
\end{proof}

\begin{table}[hbt!]
\centering
\caption{Leibniz rule applications}
\begin{tabular}{l l l l} 
 \toprule
  Differential & Proof \\ [0.5ex] 
 \hline
  $d_3(P e_0) = \alpha_1^2 c_0 d_0$ & $\beta_3 \cdot \alpha_{10/3} = P e_0$   \\ 
  $d_3(P c_0 e_0) = \alpha_1^4 \beta_3^2$& $c_0 \cdot c_0 = \alpha_1^2 \beta_3$   \\ 
  $d_3(P^2 e_0) = P \alpha_1^2 c_0 \beta_3$& $\alpha_1 \cdot \overline{P^2 \alpha_1 \beta_3} = P^2 e_0$    \\ 
  $d_3(\beta_{8/6, 2}) = \lambda^2 \alpha_1 d_1$& $\alpha_1 \cdot \beta_{8/6, 2} = \lambda^2 \alpha_{4/4}\overline{\alpha_1 \beta_5}$   \\ 
  $d_3(c_0 \beta_3 e_0) = h_1^4 e_0^2$& $\alpha_1^2 \cdot \overline{c_0 \beta_3^2} = c_0 \beta_3 e_0$    \\ 
  $d_3(P \beta_3 e_0) = \alpha_1^2 c_0 \beta_3^2$&$\alpha_1^2 \cdot \overline{P \beta_3^2} = P \beta_3 e_0$    \\ 
  $d_3(h_0 c_2) = h_1 h_3 d_1$& $\alpha_1 \cdot h c_2 = \alpha_{4/4}^2 \overline{\alpha_1 \beta_5}$   \\ 
  $d_3(P^2 c_0 e_0) =  P \alpha_1^4\beta_3^2$ & $\alpha_1^2 \cdot \overline{P^2 c_0 \beta_3} = P^2 c_0 e_0 $    \\ 
  $d_3(P^3 e_0) = P^2 h_1^2 c_0 \beta_3$ & $\alpha_1^2 \cdot \overline{P^3 \beta_3} = P^3 e_0$  \\ 
  $d_3(\beta_3^2 e_0) = \alpha_1^2 c_0 e_0^2$ & $\alpha_1^2 \cdot \overline{\beta_3^3} = \beta_3^2 e_0$&    \\ 
   $d_5(\beta_7) = \lambda \alpha_{2/2} e_0^2$& $\alpha_{2/2} \cdot \beta_7 = \lambda \Delta h_2 d_0$   \\ 
   $d_5(\beta_{6/2}\beta_3) = \alpha_1 c_0 e_0^2$ & $\alpha_{2/2} \cdot \alpha_{2/2}^2 e_0^2 = \lambda \alpha_1 c_0 e_0^2$ and $d_5(\Delta h_2 d_0) = \alpha_{2/2}^2 e_0^2$   \\ 
  $d_5(\beta_{6/2}) = \alpha_1 \beta_2 \beta_4$ & $\beta_3 \cdot \alpha_1 \beta_2 \beta_4 = \alpha_{2/2}^3 e_0^2$   \\ 
 \bottomrule
\end{tabular}
\label{a}
\end{table}

\subsection{Hidden extensions}
In this section, we compute hidden extensions on the synthetic Adams-Novikov $\E_\infty$-page by analyzing the maps
\begin{align*}
    &i_\star: \pi_{\starstar} \mathbb{S}_{\mathbb{F}_2} \to \pi_{\starstar} \mathbb{S}_{\mathbb{F}_2}/\lambda && q_\star: \pi_{\starstar} \mathbb{S}_{\mathbb{F}_2}/\lambda \to  \pi_{\starstar} \mathbb{S}_{\mathbb{F}_2}
\end{align*}
induced on homotopy groups by the maps in (\ref{iq2}).

\begin{proposition}
    Hidden extensions by $\lambda$ on the synthetic Adams-Novikov $\E_\infty$-page are computed through the 45-stem. 
\end{proposition}
\begin{proof}
    To prove hidden extensions by $\lambda$, we analyze the long exact sequence induced on homotopy groups by multiplication by $\lambda$. Recall that the map $q_\star$ above surjects onto the kernel of multiplication by $\lambda$ on homotopy. For example, from Proposition $\ref{synE2}$ we know that the element $P c_0 \beta_3$ is $\lambda$-torsion on the synthetic ANSS $E_2$-page. Proposition \ref{ANdif} gives us that $P c_0 \beta_3$ survives the synthetic ANSS and detects an element in $\pi_{30,11}\bS_{\bF_2}$. Analyzing the synthetic ANSS for $\bS_{\bF_2}/\lambda$ and Lemma \ref{iqvalues} shows that there is nothing in $\pi_{31,10}\mathbb{S}_{\bF_2}/\lambda$ which can be sent to $P c_0 \beta_3$ by the map $q_\star$. So $P c_0 \beta_3$ can not be $\lambda$-torsion so it must support a hidden $\lambda$-extension. The only possible target is $\alpha_1^2 \beta_3^2$ so we have a hidden $\lambda$-extension on the synthetic ANSS $\E_\infty$-page from $P c_0 \beta_3$ to  $\alpha_1^2 \beta_3^2$.  All other hidden $\lambda$-extensions are computed similarly. 
\end{proof}

\begin{proposition}
    Hidden extensions by $h$, $\alpha_1$, and $\alpha_{2/2}$ on the synthetic Adams-Novikov $\E_\infty$-page are computed through the 45-stem. 
\end{proposition}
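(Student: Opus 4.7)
The plan is to leverage the long exact sequence associated with the cofiber sequence
\[
\bS_{\bF_2}\xrightarrow{\lambda}\bS_{\bF_2}\xrightarrow{i}\bS_{\bF_2}/\lambda\xrightarrow{q}\Sigma^{1,-1}\bS_{\bF_2},
\]
combined with the fully-computed multiplicative structure of $\pi_{\starstar}\bS_{\bF_2}/\lambda\cong\Ext_{\cA_*}(\bF_2,\bF_2)$ (which is accessible by machine and records all $2$-, $\alpha_1$-, and $\alpha_{2/2}$-extensions on the $\bS_{\bF_2}/\lambda$ side by Proposition~\ref{hidext}). Since $i_\star$ is a ring homomorphism and $q_\star$ is a module map over $\pi_{\starstar}\bS_{\bF_2}$, hidden $y$-extensions for $y\in\{h,\alpha_1,\alpha_{2/2}\}$ pull back and push forward between the two sides in a controlled way.

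First I would exploit $i_\star$: for a candidate hidden extension $x\cdot y = z$ on the synthetic ANSS $\E_\infty$-page, apply $i_\star$ and compare against the corresponding hidden extensions in $\pi_{\starstar}\bS_{\bF_2}/\lambda$. Whenever $i_\star(x)\cdot i_\star(y)$ is detected by a unique class modulo the kernel of $i_\star$ in the relevant bidegree, the extension on $\bS_{\bF_2}$ is forced (up to $\lambda$-divisible correction, which can be ruled out by weight considerations). Next I would use $q_\star$: if $\overline{w}\cdot y=\overline{v}$ is a hidden extension in $\pi_{\starstar}\bS_{\bF_2}/\lambda$ and $y$ lifts to $\pi_{\starstar}\bS_{\bF_2}$, the $\pi_{\starstar}\bS_{\bF_2}$-module structure on $q_\star$ yields $q_\star(\overline{w})\cdot y=q_\star(\overline{v})$ in $\pi_{\starstar}\bS_{\bF_2}$, which resolves extensions whose target lies in the $\lambda$-torsion part and hence is not seen by $i_\star$.

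The main obstacle will be the extensions in which both the source and the target are strictly $\lambda$-power torsion and do not arise as $q_\star$ of any extension on $\bS_{\bF_2}/\lambda$. For these I would rely on three secondary tools: (i) the Leibniz rule applied to the synthetic ANSS differentials of Proposition~\ref{ANdif} and Lemma~\ref{leibniz}, which propagates a known extension along a differential to produce new ones; (ii) the relation $\lambda h=p$ in $\nubp_{\starstar}$ together with the $\lambda$-inversion functor $\lambda^{-1}:\Syn_{\bF_2}\to\Sp_{(2)}$, so that every classical hidden $2$-extension in the Adams--Novikov spectral sequence for $\bS_{(2)}$ forces a hidden $\lambda h$-extension synthetically and supplies a natural candidate target for a synthetic $h$-extension; and (iii) Massey product arguments (e.g.\ via Moss's convergence theorem), whose indeterminacies can be checked directly against the synthetic $\E_\infty$-page in Chart~4.

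Finally I would assemble the resulting extension data into Chart~4 and cross-check the whole picture against the Isaksen--Wang--Xu computation of the $\bC$-motivic ANSS for $\bS_{\bC}$ through stem $45$, which under the identifications of Sections~\ref{BPsynanalogsection}--\ref{Clambdasection} must be compatible with the synthetic $h$-, $\alpha_1$-, and $\alpha_{2/2}$-extensions computed here. Any remaining ambiguity (typically a two-class choice of target) can be eliminated by this comparison together with naturality under the Thom reduction $\nu\BP\to \nu\bF_2$.
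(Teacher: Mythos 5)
Your proposal matches the paper's proof in its two essential mechanisms: pulling back extensions along the ring map $i_\star:\pi_{\starstar}\bS_{\bF_2}\to\pi_{\starstar}\bS_{\bF_2}/\lambda\cong\Ext_{\cA_*}$, and resolving the remaining $\lambda$-periodic cases (which the paper identifies as precisely three exceptional extensions) by comparison with the classical picture after inverting $\lambda$. The paper's actual argument is considerably leaner—it does not invoke $q_\star$-module structure, Leibniz, Massey products, or a $\bC$-motivic cross-check—but those are supplementary to, not substitutes for, the same core approach.
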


\begin{proof}
    We compute hidden extensions on the synthetic Adams-Novikov $\E_\infty$-page by comparison with the homotopy of $\bS_{\bF_2}/\lambda$. For example, there is an $\alpha_{2/2}$-extension from $\beta_4$ to $h_2 g$ in the homotopy of $\bS_{\bF_2}/\lambda$. Therefore, there is an analogous hidden extension in the homotopy of $\bS_{\bF_2}$ since $\beta_4$ and $h_2 g$ map to these values via the map $i$ and $i$ preserves multiplication. Most hidden extensions can be obtained in this way or by propagating such extensions by taking products. There are a few exceptions, namely an $\alpha_1$-extension from $\beta_{8/4, 2}$ to $\lambda^2 c_1 g$ and $\alpha_{2/2}$-extensions from $\beta_6$ to $\lambda^2 \alpha_1 e_0^2$ and from $\Delta h_1 d_0$ to $c_0 e_0^2$. These extensions can be obtained from the analogous extensions in the classical stable stems since their sources and targets are all $\lambda$-periodic. 
\end{proof}

\appendix

\section{\texorpdfstring{Nilpotent Completion in Stable $\infty$-Categories}{Nilpotent Completion in Stable Infinity Categories}}
\label{nilcompappendix}

The classical $E$-Adams spectral sequence can be used to compute the homotopy groups $\pi_*(X_E^{\wedge})$ of the $E$-nilpotent completion of a spectrum $X$. A celebrated result of \cite{Bou79} relates the $E$-nilpotent completion to Bousfield localization at $E$; namely, the $E$-nilpotent completion is $E$-local and the natural map $L_EX\to X_E^{\wedge}$ from the $E$-localization of $X$ is an equivalence when $E$ is connective, $X$ is bounded-below, and $\pi_0E$ satisfies certain conditions. In addition, there is an equivalence $L_EX\simeq L_{M\pi_0 E}X$ with localization with respect to the Moore spectrum $M\pi_0 E$ of $\pi_0E$. This is useful because Bousfield localization can be easier to understand than nilpotent completion. For example, if $E=\bF_p$ and $X$ bounded-below, then $L_{\bF_p}X\xrightarrow{\simeq}X_{\bF_p}^{\wedge}$ and $L_{\bF_p}X\simeq L_{\bS/p}X\simeq X_p^{\wedge}$, the $p$-completion of $X$.

\bigskip

Recent work of \cite{Man21} generalizes Bousfield's results from spectra to presentable, stable $\infty$-categories. Roughly speaking, $E$ is instead a homotopy commutative algebra, connectivity corresponds to the connective part of a $t$-structure, Moore spectra correspond to the cofibers and/or localizations of a set of maps, and conditions on $\pi_0E$ correspond to conditions on $\pi_0^{\heartsuit}E$, the object associated to $E$ in the heart of the $t$-structure.

\bigskip

In this appendix, we recall the background and precise results of \cite{Man21} and use them to study examples of nilpotent completion relevant to this paper.

\bigskip

\textbf{Notation and Assumptions.} Throughout we let $(\cC,\otimes,\one)$ denote a presentable, symmetric monoidal stable $\infty$-category $\cC$ with monoidal product $\otimes$ and unit object $\one$ and $E$ a homotopy commutative algebra object of $\cC$. These $\cC$ are equipped with an accessible $t$-structure $(\cC_{\geq 0},\cC_{\leq 0})$ with truncation functors $\tau_{\geq n},\tau_{\leq n}$ satisfying the following (see \cite[Sec. 2.1.1]{Man21}):

\begin{itemize}
    \item the $t$-structure is left-complete, i.e. $\lim_n\tau_{\leq n}X\simeq X$ for every $X\in\cC$;
    \item $\one\in\cC_{\geq 0}$, i.e. the unit is connective;
    \item $\cC_{\geq p}\otimes\cC_{\geq q}\subseteq \cC_{\geq (p+q)}$ for any $p,q\in\bZ$;
    \item the truncation functors $\tau_{\geq n}(-)$ commute with filtered colimits.
\end{itemize}

In this situation, the heart $\cC^{\heartsuit}$ gets the structure of a symmetric monoidal $\infty$-category and the functor $\pi_0^{\heartsuit}:\cC\to\cC^{\heartsuit}$ is symmetric monoidal (see \cite[Sec. 2.1.3]{Man21}).

\subsection{Adams resolutions and nilpotent completion}

Now for every object $X\in\cC$, we can associate the canonical $E$-Adams resolution of $X$. This is done in the standard way: let $\overline{E}:=\mathrm{fib}(\one\to E)$ and $\overline{E}^n:=\overline{E}^{\otimes n}$. Then the canonical Adams resolution is a tower
\begin{equation*}
    \begin{tikzcd}
        X \ar[d] & \overline{E}\otimes X \ar[l] \ar[d] & \overline{E}^2\otimes X \ar[d] \ar[l] & \cdots \ar[l] \\
        E\otimes X & E\otimes\overline{E}\otimes X & E\otimes\overline{E}^2\otimes X &  
    \end{tikzcd}
\end{equation*}
with cofiber sequences
\begin{equation*}
    \overline{E}^{n+1}\otimes X\to \overline{E}^n\otimes X\to E\otimes\overline{E}^n\otimes X.
\end{equation*}

Composing successive maps in the Adams tower gives maps $\overline{E}^n\otimes X\to X$. For $X=\one$, let $\overline{E}_n:=\mathrm{cof}(\overline{E}^n\otimes\to \one)$. There are induced maps $\overline{E}_n\otimes X\to \overline{E}_{n-1}\otimes X$ and we define the $E$-\textit{nilpotent completion} $X_E^{\wedge}$ \textit{of} $X$ to be the limit
\begin{equation*}
    X_E^{\wedge}:= \lim (\cdots\to\overline{E}_{2}\otimes X\to \overline{E}_{1}\otimes X).
\end{equation*}

\begin{remark}
    When $E$ is an $\bE_1$-ring, an equivalent definition of the nilpotent completion can be obtained as the totalization of
    \begin{equation*}
        \begin{tikzcd}
            X \ar[r] & E\otimes X  \arrow[r, shift left] \arrow[r, shift right] & E\otimes E \otimes X \arrow[r]
\arrow[r, shift left=2]
\arrow[r, shift right=2] & \cdots 
        \end{tikzcd}
    \end{equation*}
    which as a coaugmented cosimplicial object we denote by $X\to E^{\bullet +1} \otimes X$. Here the coface maps insert unit maps $\one\to E$ and codegeneracy maps are given by the multiplication $E\otimes E\to E$ on $E$.
\end{remark}

\subsection{Bousfield Localization}
Given a presentably symmetric monoidal stable $\infty$-category $\cC$ and an object $A\in \cC$, one can define the $A$-Bousfield localization by taking the Verdier quotient by those objects that vanish after tensoring with $A$. We let $L_A$ denote the localization functor. The $A$-nilpotent completion of an object $X$ is always $A$-local, inducing a factorization $X\to L_AX\to X_A^\wedge$. When $\cC=\Sp$, Bousfield \cite{Bou79} famously gave very general conditions on which this second map is an equivalence. This is generalized by \cite{Man21}.

\begin{assumption} (\cite[Ass. 4.2.1]{Man21}) Suppose a homotopy commutative algebra object $E\in\cC$ satisfies the following:
\label{pi0assumption}
\begin{enumerate}
    \item $E\in\cC_{\geq 0}$.
    \item There is a finite set $\{K_i\}$ and countable set $\{L_j\}$ of $\otimes$-invertible objects in $\cC$ such that each functor $K_i\otimes (-)$ and $L_j\otimes (-)$ sends (co)connective objects to (co)connective objects, maps $f_i:K_i\to\one$ and $g_j:L_j\to\one$, and a morphism of $\pi_0^{\heartsuit}(\one)$-algebras
    \begin{equation*}
        \varphi:(\pi_0^{\heartsuit}(\one)/\mathcal{I})[\mathcal{J}^{-1}]\to\pi_0^{\heartsuit}E,
    \end{equation*}
    where $\mathcal{I}$ is the ideal of $\pi_0^{\heartsuit}(\one)$ generated by $\{f_i\}$ and $\mathcal{J}$ is the collection $\{g_j\}$.
    \item The map $\varphi$ in (2) is an isomorphism.
\end{enumerate}
\end{assumption}

\begin{remark}
    Item (2) in the above is slightly different in \cite{Man21}. However, since we are assuming the functors $\tau_{\geq n}$ commute with filtered colimits, it is equivalent as stated.
\end{remark}

The following theorems relate $E$-localization to $E$-nilpotent completion:

\begin{theorem}(\cite[Thm. 7.3.5]{Man21})
\label{J0BousEquivNil}
Suppose $E$ satisfies Assumption~\ref{pi0assumption} with $\mathcal{J}=\emptyset$. Then for every bounded-below object $X\in\cC_{\geq k}$, the natural map $L_EX\to X_E^{\wedge}$ is an equivalence.    
\end{theorem}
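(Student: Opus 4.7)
The plan is to adapt Bousfield's classical argument to the present setting. The first observation is that $X_E^{\wedge}$ is always $E$-local, since it is the limit of a tower whose successive cofibers are $E$-module objects. Thus there is a canonical comparison map $L_EX \to X_E^{\wedge}$, and it suffices to verify that the unit $X\to X_E^{\wedge}$ is itself an $E$-equivalence; combined with $E$-locality of the target, the universal property of Bousfield localization will then identify $X_E^\wedge$ with $L_EX$.

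Because $\mathcal{J}=\emptyset$ and the generating set $\{f_i\}$ is finite, the key reduction is to replace $E$ with the generalized Moore-style object $M := \bigotimes_i \mathrm{cof}(f_i\colon K_i\to \one)$. Using Assumption~\ref{pi0assumption}(3) together with the fact that each $K_i$ preserves (co)connectivity, one checks that $\pi_0^{\heartsuit}M \cong \pi_0^{\heartsuit}(\one)/\mathcal{I} \cong \pi_0^{\heartsuit}E$ and that $E$ and $M$ have the same Bousfield class. This is essentially the categorical analogue of Bousfield's observation that Moore spectra of quotient rings detect the same localizations as the corresponding Eilenberg--MacLane spectra. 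As a consequence, $L_E = L_M$, and the Adams towers for $E$ and $M$ can be compared directly on bounded-below inputs.

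The main technical step, which I expect to be the principal obstacle, is to show that $\overline{E}^{\otimes n}\otimes X$ becomes increasingly connective as $n\to\infty$ whenever $X$ is bounded below. Writing $\overline{E} = \mathrm{fib}(\one\to E)$, one finds that $\pi_0^{\heartsuit}\overline{E}$ surjects onto $\mathcal{I}$ and that, by iteratively splicing the defining fiber sequences of the $\mathrm{cof}(f_i)$, the $n$-fold tensor power $\overline{E}^{\otimes n}$ admits a finite filtration whose associated graded pieces are of the form $K_{i_1}\otimes\cdots\otimes K_{i_n}$. Multiplicativity of the $t$-structure and the hypothesis that each $K_i$ preserves connectivity then force a uniform connectivity gain per stage, so for $X\in\cC_{\geq k}$ we have $\overline{E}^{\otimes n}\otimes X$ tending to $-\infty$ in the $t$-structure sense. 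Left-completeness of the $t$-structure together with the Milnor sequence for homotopy of an inverse limit implies that the natural map $X\to \lim_n \overline{E}_n\otimes X \simeq X_E^\wedge$ has $E$-acyclic fiber, i.e.\ is an $E$-equivalence.

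Putting the pieces together: $X_E^\wedge$ is $E$-local and $X\to X_E^\wedge$ is an $E$-equivalence, so $L_E X \xrightarrow{\simeq} X_E^\wedge$ by the universal property of $L_E$. The delicate estimates all live in the connectivity argument for $\overline{E}^{\otimes n}\otimes X$, where the finiteness of $\{K_i\}$ and the bounded-below hypothesis on $X$ are both essential; without either, one cannot guarantee the required progressive connectivity which drives the comparison with $L_EX$.
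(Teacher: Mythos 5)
The paper itself does not prove this statement; it is taken wholesale from \cite[Thm.\ 7.3.5]{Man21}, an $\infty$-categorical generalization of Bousfield's localization-vs-completion theorem. So the question is whether your argument would reproduce the proof in that source, and there is a genuine gap in the central step.

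Your scaffolding is fine: $X_E^{\wedge}$ is $E$-local as a limit of iterated $E$-module extensions, and once $X\to X_E^{\wedge}$ is shown to be an $E$-equivalence the universal property of $L_E$ finishes. Passing to the Moore-style object $M=C(f_1)\otimes\cdots\otimes C(f_r)$ also parallels the source (cf.\ Theorem~\ref{MooreObjectThm1}). The trouble is the ``main technical step.'' First, the finite filtration of $\overline{E}^{\otimes n}$ with graded pieces $K_{i_1}\otimes\cdots\otimes K_{i_n}$ is not a filtration of $\overline{E}^{\otimes n}$ at all: it belongs, if anywhere, to $\overline{M}^{\otimes n}$, since $\overline{C(f_i)}\simeq K_i$ while $\overline{E}=\mathrm{fib}(\one\to E)$ has no a priori relation to the $K_i$. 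Intertwining the $E$- and $M$-Adams towers is itself a substantive lemma, not something to be absorbed silently. Second, and more seriously, Assumption~\ref{pi0assumption}(2) says each $K_i\otimes(-)$ preserves \emph{both} connective and coconnective objects; since $K_i$ is $\otimes$-invertible this forces $K_i$ to lie in the heart, so tensoring with $K_{i_1}\otimes\cdots\otimes K_{i_n}$ preserves $\cC_{\geq m}$ exactly and confers no connectivity gain whatsoever. Already in the archetype $\cC=\Sp$, $E=\hfp$, one computes $\pi_0\overline{E}\cong\bZ\neq 0$ and $\pi_0\overline{E}^{\otimes n}\neq 0$ for every $n$, so the tower never drifts to $+\infty$ in connectivity. (You also contradict yourself, writing both ``increasingly connective'' and ``tending to $-\infty$.'') If the connectivity estimate you want were true, left-completeness would force $\lim_n\overline{E}^{\otimes n}\otimes X\simeq 0$, i.e.\ $X\simeq X_E^{\wedge}$ on the nose — stronger than the theorem and false in general.

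The actual mechanism in Bousfield's argument, and in the cited generalization, is not a connectivity gain on the Adams tower but the vanishing of the tower after smashing with $E$: the map $E\otimes\overline{E}\to E$ is null because the unit $E\to E\otimes E$ admits the multiplication as a retraction, so $\{E\otimes\overline{E}^{\otimes n}\otimes X\}$ has all structure maps null and hence trivial $\lim$ and $\lim^1$. The bounded-below hypothesis then earns its keep in commuting $E\otimes(-)$ past the inverse limit and in the comparison of the $E$- and $M$-towers — not in any estimate asserting the tower itself becomes connective. As written, your proof is missing the engine that makes the fiber $E$-acyclic.
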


\begin{theorem}(\cite[Thm. 7.3.8]{Man21})
\label{I0BousEquivNil}
Suppose $E$ satisfies Assumption~\ref{pi0assumption} with $\mathcal{I}=\emptyset$. Then for every bounded-below object $X\in\cC_{\geq k}$, the natural map $L_EX\to X_E^{\wedge}$ is an equivalence.    
\end{theorem}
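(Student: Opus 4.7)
The plan is to reduce to the case of a smashing Bousfield localization, where nilpotent completion and Bousfield localization automatically coincide. The hypothesis $\mathcal{I} = \emptyset$ says that $\pi_0^{\heartsuit}E$ is obtained from $\pi_0^{\heartsuit}(\one)$ by formally inverting elements of $\mathcal{J}$, and the expectation is that on $\cC$ this corresponds to a smashing localization once one restricts to bounded-below objects.

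First, I would build an auxiliary localization $L_\mathcal{J}:\cC \to \cC$ by inverting the countable family of maps $\{g_j: L_j \to \one\}$. Because $\mathcal{J}$ is countable and each $L_j$ is $\otimes$-invertible and preserves both connective and coconnective objects, $L_\mathcal{J} X$ can be realized as a sequential colimit over finite subsets of $\mathcal{J}$, and this construction restricts to bounded-below objects with $\pi_n^{\heartsuit}(L_\mathcal{J} X) \cong (\pi_n^{\heartsuit} X)[\mathcal{J}^{-1}]$. Since inversion of maps commutes with tensor products, $L_\mathcal{J}$ is smashing by construction: $L_\mathcal{J} X \simeq X \otimes L_\mathcal{J}\one$.

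Second, I would compare $L_\mathcal{J}$ to $L_E$ on the bounded-below subcategory. Because $\varphi$ is an isomorphism, the map $\one \to E$ becomes a $\pi_0^{\heartsuit}$-isomorphism after applying $L_\mathcal{J}$. A Postnikov-tower induction, valid since $E$ is connective and the $t$-structure is left-complete and multiplicative, lifts this to an equivalence $L_\mathcal{J}\one \xrightarrow{\simeq} L_\mathcal{J} E$. From this one concludes that $L_\mathcal{J}\one$ is $E$-local and that the unit $X \to L_\mathcal{J} X$ is an $E_*$-equivalence for bounded-below $X$, so $L_E X \simeq L_\mathcal{J} X$ on this subcategory. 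In particular $L_E$ is smashing on bounded-below input, and idempotency of the localization gives $E\otimes E \simeq E$.

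Finally, smashing and $\otimes$-idempotency of $E$ collapse the cobar cosimplicial resolution $E^{\bullet+1}\otimes X$: for $n \geq 1$ one has $E^{\otimes n}\otimes X \simeq E \otimes X \simeq L_E X$ with the relevant coface and codegeneracy maps equivalences, so the totalization is $L_E X$, whence $X_E^\wedge \simeq L_E X$. The main obstacle will be the Postnikov argument in the second step: one must carefully check that inverting the $g_j$ commutes with the $\pi_n^{\heartsuit}$-functors on bounded-below input and then upgrade the $\pi_0^{\heartsuit}$-comparison to an equivalence of $L_\mathcal{J}$-localized objects. This is exactly where the assumption that each $L_j\otimes(-)$ preserves connective and coconnective objects is indispensable, as without it the localization $L_\mathcal{J}$ could fail to be $t$-exact on Postnikov sections and the identification with $L_E$ would break down.
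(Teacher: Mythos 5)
Note first that the paper does not prove this theorem: it is imported as a direct citation of \cite[Thm.~7.3.8]{Man21}, so there is no in-paper argument to compare against. Taken on its own, your blind proposal has two gaps, and both break on exactly the sort of $E$ that Assumption~\ref{pi0assumption} permits.

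The critical error is the Postnikov-tower step deducing $L_{\mathcal{J}}\one\simeq L_{\mathcal{J}}E$ from a $\pi_0^{\heartsuit}$-isomorphism. A $\pi_0^{\heartsuit}$-isomorphism gives no control over higher homotopy and does not propagate up a Postnikov tower; nothing in the hypotheses constrains $\pi_n^{\heartsuit}E$ for $n>0$. Concretely, take $\cC=\Sp$, $\mathcal{J}=\{2\}$, and $E=\h\bZ[1/2]$: the hypotheses hold, $L_{\mathcal{J}}E\simeq E$, but $L_{\mathcal{J}}\one=\bS[1/2]\not\simeq \h\bZ[1/2]$ since, e.g., $\pi_3\bS[1/2]\cong\bZ/3\neq 0$. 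The downstream claim that ``idempotency of the localization gives $E\otimes E\simeq E$'' then also fails: idempotency yields $L_E\one\otimes L_E\one\simeq L_E\one$, not a statement about $E$ itself, and in the same example $\h\bZ[1/2]\otimes\h\bZ[1/2]$ has plenty of positive-degree homotopy, so the cobar tower $E^{\bullet+1}\otimes X$ does not collapse. Your smashing/idempotency argument is correct for the Moore object $\one[\mathcal{J}^{-1}]$, whose localization does agree with $L_E$ on bounded-below objects by Theorem~\ref{MooreObjectThm2}, but the nilpotent-completion tower you are asked to totalize is built from $E$, not from $\one[\mathcal{J}^{-1}]$. To repair the argument you would need the additional, genuinely nontrivial step of showing $X_E^{\wedge}\simeq X_{\one[\mathcal{J}^{-1}]}^{\wedge}$ for bounded-below $X$; this is exactly where the connectivity estimates in Mangum's proof enter and where your sketch currently has a hole.
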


The following theorems relate $E$-localization to localization with respect to some Moore object:

\begin{theorem}(\cite[Thm. 4.3.7]{Man21})
\label{MooreObjectThm1}
Suppose $E$ satisfies Assumption~\ref{pi0assumption} with $\mathcal{J}=\emptyset$. Let $M\pi_0^{\heartsuit}E$ denote $C(f_1)\otimes\cdots \otimes C(f_r)$. Then for every bounded-below object $X\in\cC_{\geq k}$, we have an equivalence
\begin{equation*}
    L_{M\pi_0^{\heartsuit}E}X\simeq L_{E}X.
\end{equation*}
\end{theorem}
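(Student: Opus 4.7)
The plan is to show that $L_E$ and $L_{M\pi_0^\heartsuit E}$ have the same class of acyclics when restricted to bounded-below objects; since both are idempotent localization functors, an equivalence of acyclic classes forces $L_EX\simeq L_{M\pi_0^\heartsuit E}X$ for every $X\in\cC_{\geq k}$. The key input is that, under the hypothesis $\mathcal{J}=\emptyset$, both $E$ and $M\pi_0^\heartsuit E$ carry the same $\pi_0^\heartsuit$, namely $\pi_0^\heartsuit(\one)/\mathcal{I}$. For the Moore object, this is obtained by iterated application of the long exact sequences associated to the cofiber sequences
\[
K_i\otimes C(f_1)\otimes\cdots\otimes C(f_{i-1})\to C(f_1)\otimes\cdots\otimes C(f_{i-1})\to C(f_1)\otimes\cdots\otimes C(f_i),
\]
using that each $K_i\otimes(-)$ preserves connectivity so that the relevant $\pi_{-1}^\heartsuit$ terms vanish.

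First I would establish the comparison on the heart: for $A\in\cC^\heartsuit$, both $E\otimes A$ and $M\pi_0^\heartsuit E\otimes A$ have their acyclicity governed by how the ideal $\mathcal{I}$ acts on $A$. Concretely, $C(f_i)\otimes A\simeq 0$ iff the map $f_i\otimes \id_A$ is an equivalence, and iterating gives a clean characterization of when $M\pi_0^\heartsuit E\otimes A\simeq 0$. On the other hand, $\pi_*^\heartsuit(E\otimes A)$ is a module over $\pi_0^\heartsuit E\cong\pi_0^\heartsuit(\one)/\mathcal{I}$, and one verifies by truncation arguments that its vanishing is controlled by the same condition on $\mathcal{I}$.

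Next I would lift this heart-level agreement to bounded-below objects by Postnikov induction. For $X\in\cC_{\geq k}$, left-completeness of the $t$-structure gives $X\simeq\lim_n\tau_{\leq n}X$ with fibers equivalent to shifts of the heart objects $\pi_n^\heartsuit X$. Both $E$ and $M\pi_0^\heartsuit E$ are connective (the latter a finite tensor product of cofibers between connective objects), so tensoring with a bounded-below $X$ preserves the bounded-below structure and the Postnikov filtration. Inducting up the tower and then passing to the limit shows that $E\otimes X\simeq 0$ iff each $E\otimes\pi_n^\heartsuit X\simeq 0$, and analogously for $M\pi_0^\heartsuit E$; combined with the previous step, this equates the two acyclic classes on $\cC_{\geq k}$.

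The main obstacle will be the Postnikov step, where one has to track connectivity bounds carefully so that tensoring with $E$ or $M\pi_0^\heartsuit E$ interacts correctly with the filtration; in particular, the limit process requires knowing that the error terms are highly connective. The hypotheses that $\tau_{\geq n}$ commutes with filtered colimits, that the unit is connective, and that each $K_i\otimes(-)$ preserves connectivity are exactly what make this bookkeeping go through and ultimately reduce the statement to an algebraic comparison in the symmetric monoidal abelian category $\cC^\heartsuit$.
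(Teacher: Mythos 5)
There is a genuine gap, and it is in your very first reduction. You claim that since ``both are idempotent localization functors, an equivalence of acyclic classes forces $L_EX\simeq L_{M\pi_0^\heartsuit E}X$ for every $X\in\cC_{\geq k}$.'' But you only propose to compare the acyclic classes \emph{restricted to bounded-below objects}, and that is not enough to identify the localizations, even of bounded-below objects. The object $L_EX$ is characterized as the terminal $E$-local object under $X$, where $E$-local means orthogonal to \emph{all} $E$-acyclics, and there is no reason a priori that $L_EX$ or the fiber of $X\to L_EX$ remains bounded below. In $\Sp$ the Bousfield classes $\langle E\rangle$ and $\langle M\pi_0E\rangle$ genuinely differ for general connective $E$ --- which is exactly why Bousfield's original theorem and the generalization in \cite{Man21} carry a bounded-below hypothesis --- so one cannot conclude by matching acyclics alone.

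To repair this you need two further inputs which your sketch does not supply. First, a global containment of Bousfield classes, say $\langle M\pi_0^\heartsuit E\rangle\leq\langle E\rangle$, obtained by exhibiting each $\pi_n^\heartsuit E$ as a $\pi_0^\heartsuit(\one)/\mathcal{I}$-module and running the Postnikov tower of $E$ against an $M\pi_0^\heartsuit E$-acyclic; this gives that $E$-local objects are $M\pi_0^\heartsuit E$-local. Second, you need that $L_EX$ is itself bounded below when $X$ is, so that the fiber of $X\to L_EX$ lies in the regime where your heart-level comparison applies; this is where the nilpotent-completion identification (Theorem~\ref{J0BousEquivNil}, $L_EX\simeq X_E^\wedge$) or an equivalent direct connectivity estimate on $L_EX$ must enter. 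With those two ingredients, the fiber of $X\to L_EX$ is a bounded-below $E$-acyclic, hence $M\pi_0^\heartsuit E$-acyclic by your comparison, and $L_EX$ is $M\pi_0^\heartsuit E$-local by the class containment, which yields $L_EX\simeq L_{M\pi_0^\heartsuit E}X$. Your heart-level and Postnikov bookkeeping are the right tools for the acyclic-class comparison step, but by themselves they prove a statement about acyclics, not about localizations.
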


The following theorem is not recorded in \cite{Man21}; however, it follows from the same arguments used to prove \cite[Thm. 4.3.7]{Man21}:

\begin{theorem}(\cite{Man21})
\label{MooreObjectThm2}
Suppose $E$ satisfies Assumption~\ref{pi0assumption} with $\mathcal{I}=\emptyset$. Let $M\pi_0^{\heartsuit}E$ denote $\one[\mathcal{J}^{-1}]$. Then for every bounded-below object $X\in\cC_{\geq k}$, we have an equivalence
\begin{equation*}
    L_{M\pi_0^{\heartsuit}E}X\simeq L_{E}X.
\end{equation*}
\end{theorem}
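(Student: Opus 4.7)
The plan is to mirror the proof of Theorem~\ref{MooreObjectThm1} by showing that the Bousfield classes $\langle E \rangle$ and $\langle M \rangle$, with $M := \one[\mathcal{J}^{-1}] = M\pi_0^\heartsuit E$, agree on bounded-below objects; once this is established, $L_M X \simeq L_E X$ for $X \in \cC_{\geq k}$ will follow from the universal property of Bousfield localization. The crucial preliminary observation is that $E \otimes M \simeq E$. Since each $L_j$ is $\otimes$-invertible with $L_j \otimes (-)$ preserving both connectivity and coconnectivity, $L_j$ itself lies in the heart $\cC^\heartsuit$. The image of $g_j$ under $\varphi$ is invertible in $\pi_0^\heartsuit E$, and since every $\pi_n^\heartsuit E$ is a $\pi_0^\heartsuit E$-module, $g_j$ acts invertibly on each such module. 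Combined with connectivity of $E$ and left-completeness of the $t$-structure, this forces $g_j \otimes E : L_j \otimes E \to E$ to be an equivalence; realizing $M$ as a filtered colimit that iteratively inverts each $g_j$ then yields $E \otimes M \simeq E$.

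The easy inclusion $\langle M \rangle \leq \langle E \rangle$ follows immediately: if $M \otimes X \simeq 0$, then $E \otimes X \simeq (E \otimes M) \otimes X \simeq 0$. For the reverse inclusion on bounded-below objects, let $X \in \cC_{\geq k}$ be $E$-acyclic and set $Y := M \otimes X$, which is bounded-below, $\mathcal{J}$-local, and $E$-acyclic; it suffices to show $Y = 0$. Assuming for contradiction that $Y \neq 0$, let $n$ be minimal with $\pi_n^\heartsuit Y \neq 0$. Applying $\pi_n^\heartsuit$ to the cofiber sequence $\tau_{\geq 1}E \otimes Y \to E \otimes Y \to \pi_0^\heartsuit E \otimes Y$ and using $\tau_{\geq 1}E \otimes Y \in \cC_{\geq n+1}$ (by multiplicativity of the $t$-structure) yields $\pi_n^\heartsuit(E \otimes Y) \cong \pi_n^\heartsuit(\pi_0^\heartsuit E \otimes Y)$; repeating the argument with the Postnikov section of $Y$ in place of that of $E$ gives $\pi_n^\heartsuit(E \otimes Y) \cong \pi_0^\heartsuit E \otimes^\heartsuit \pi_n^\heartsuit Y$. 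Since $Y$ is $\mathcal{J}$-local, $\pi_n^\heartsuit Y$ is a module over $\pi_0^\heartsuit(\one)[\mathcal{J}^{-1}] = \pi_0^\heartsuit E$, so this tensor product equals $\pi_n^\heartsuit Y \neq 0$, contradicting $E \otimes Y \simeq 0$. Left-completeness then upgrades the vanishing of every $\pi_n^\heartsuit Y$ to $Y = 0$.

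The main obstacle I foresee is the careful handling of the filtered colimit defining $\one[\mathcal{J}^{-1}]$ for countably infinite $\mathcal{J}$, especially verifying that $\mathcal{J}$-locality of an object is detected levelwise on each $\pi_n^\heartsuit$ and that $E \otimes (-)$ commutes with the localization colimit in the requisite sense. Both rely on the standing assumption that $\tau_{\geq n}$ commutes with filtered colimits together with the fact that each $L_j$ sits in the heart, so the technicalities should be manageable but warrant explicit care.
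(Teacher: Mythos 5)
The paper offers no proof of this statement; it simply records that the theorem ``follows from the same arguments used to prove \cite[Thm. 4.3.7]{Man21}''. Your write-up is therefore a reconstruction rather than a match against a given argument. The core of what you do — establish $E\otimes\one[\mathcal{J}^{-1}]\simeq E$ from the invertibility of $\varphi(g_j)$ on $\pi_*^\heartsuit E$, and then run a Postnikov-tower/$\pi^\heartsuit$ argument on the lowest nonvanishing homotopy of $Y=\one[\mathcal{J}^{-1}]\otimes X$ to show bounded-below $E$-acyclics are $M$-acyclic — is the right computational content and matches the strategy one expects from [Man21] (the filtered-colimit technicalities you flag are indeed the main ones to nail down there).

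Where I would push back is the opening claim that ``once this is established, $L_M X\simeq L_E X$ for $X\in\cC_{\geq k}$ will follow from the universal property of Bousfield localization''. The universal property hands you a comparison map $L_M X\to L_E X$ (since $E\otimes M\simeq E$ makes $M$-acyclics $E$-acyclic, hence $E$-local objects $M$-local), but that map's fiber — while $E$-acyclic and $M$-local — is not \emph{a priori} bounded below, so the bounded-below acyclic agreement cannot be applied to it directly. Indeed, Bousfield classes of $E$ and $M\pi_0^\heartsuit E$ genuinely disagree off bounded-below objects already for $\cC=\Sp$. To close the loop one needs an extra input, e.g. that $L_E X$ is itself bounded below — which in the present framework comes from the separate identification $L_E X\simeq X_E^\wedge$ of Theorem~\ref{I0BousEquivNil} together with the uniform connectivity of the tower $\{\overline{E}_n\otimes X\}$ — after which the fiber is bounded below, hence $M$-acyclic, hence zero because it is also $M$-local. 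That step is the real payload of the theorem and deserves to be spelled out rather than absorbed into ``the universal property''.
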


\subsection{Application to Stable Comodule Categories}

We can now apply the above results to several examples of interest. In particular, we prove new results about completions in $\Stable(\Gamma)$ for Hopf algebroids $\Gamma$ studied in this paper.

\bigskip

As a warm-up, we prove completion results in $\Stable(\cA_*)$ and $\Stable(\BP_*\BP)$. The authors of this paper were unable to find $\Stable(\BP_*\BP)$ completion results elsewhere in the literature and believe this is a new result. To do so, we first need to prove a lemma concerning the $t$-structures on $\Stable(\cA_*)$ and $\Stable(\BP_*\BP)$:

\begin{lemma}
\label{BPtstructurelemma}
    Consider the $t$-structures on $\Stable(\cA_*)$ and $\Stable(\BP_*\BP)$ described in Example~\ref{comodexampA} and Example~\ref{comodexampBP}. These $t$-structures satisfy the following:
    \begin{enumerate}
        \item They are left-complete; i.e. for every object $X$, $$X\simeq\lim_n\tau_{\leq n}X.$$
        \item $\bF_p\in\Stable(\cA_*)_{\geq 0}$ and $\BP_*\in\Stable(\BP_*\BP)_{\geq 0}$.
        \item For $n,m\in\bZ$,
        \begin{align*}
           \Stable(\cA_*)_{\geq n}\otimes \Stable(\cA_*)_{\geq m}&\subseteq \Stable(\cA_*)_{\geq (n+m)}, \\
           \Stable(\BP_*\BP)_{\geq n}\otimes \Stable(\BP_*\BP)_{\geq m}&\subseteq \Stable(\BP_*\BP)_{\geq (n+m)}.
        \end{align*}
        \item The functor $\tau_{\geq n}(-)$ commutes with filtered colimits.
    \end{enumerate}
\end{lemma}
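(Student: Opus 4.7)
The approach is to recognize the two $t$-structures of Examples \ref{comodexampA} and \ref{comodexampBP} as special cases of the $(K,F)$-twisted $t$-structure of Definition \ref{twistedtstructdef}, and then invoke the general results of Section \ref{tstructuresection}. In both cases, take $K$ to be the unit ($\bF_p$ or $\BP_*$) and $F$ to be the internal comodule shift $-[1]$, so that $\Sigma^{n,m}K$ is the standard bigraded sphere. Both $\Stable(\cA_*)$ and $\Stable(\BP_*\BP)$ coincide with their cellular subcategories by \cite{HPS97} and \cite{Hov04} respectively, so they are generated under colimits by the bigraded suspensions of a compact unit, and all hypotheses of Section \ref{tstructuresection} apply.

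Parts (1), (3), and (4) will then follow immediately from Lemmas \ref{leftcomplemma}, \ref{tmultlemma}, and \ref{filtcolimlemma} respectively, once Lemma \ref{twistediststructure} has produced the $t$-structure. The sole hypothesis needed to apply Lemma \ref{twistediststructure} is exactly (2), the connectivity of the unit. Thus the only step requiring a direct argument is (2).

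For (2), the idea is to use the identification $[\Sigma^{n,m}A, A] \cong \Ext^{-n,m}_{\Gamma}(A, A)$ together with the classical fact that these $\Ext$ groups are concentrated on the diagonal $s = t$. Specifically, for $A = \bF_p$ and $\Gamma = \cA_*$, one has $\Ext^{t,t}_{\cA_*}(\bF_p,\bF_p) = \bF_p\{h_0^t\}$ for $t \geq 0$ with all off-diagonal groups vanishing, so $[\Sigma^{n,m}\bF_p, \bF_p]$ is zero unless $n + m = 0$; in particular it vanishes for $n + m < 0$. For $A = \BP_*$ and $\Gamma = \BP_*\BP$, the corresponding groups are $\bZ_{(p)}$ in bidegree $(0,0)$ and zero elsewhere off the diagonal, giving the same conclusion. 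Both units are therefore connective, completing the verification.

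The main obstacle here is purely bookkeeping: matching the bigraded suspension convention of Section \ref{tstructuresection} with the $(s,t)$ grading standard for comodule $\Ext$. Once that dictionary is fixed, no new arguments are required, as every analytic ingredient (left-completeness, multiplicativity, commutation of truncations with filtered colimits, compactness of the unit, and cellular generation) is handled either by a cited structural result or by the twisted $t$-structure lemmas already proved in Section \ref{tstructuresection}.
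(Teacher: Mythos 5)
Your overall strategy is the same as the paper's: parts (1), (3), (4) are immediate from Lemmas \ref{leftcomplemma}, \ref{tmultlemma}, \ref{filtcolimlemma}, and part (2) is the one that needs a direct check. So far so good.

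However, your argument for (2) contains a genuine error. You assert that $\Ext^{s,t}_{\cA_*}(\bF_p,\bF_p)$ (resp.\ $\Ext^{s,t}_{\BP_*\BP}(\BP_*,\BP_*)$) is concentrated on the diagonal $s=t$, with "all off-diagonal groups vanishing." This is false: these $\Ext$ groups are the $\E_2$-pages of the classical $\hfp$- and $\BP$-Adams spectral sequences for the sphere, which are very far from diagonal. For instance at $p=2$ one has $\Ext^{1,2}_{\cA_*}(\bF_2,\bF_2)=\bF_2\{h_1\}\neq 0$ and $\Ext^{1,2}_{\BP_*\BP}(\BP_*,\BP_*)=\bZ/2\neq 0$. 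The computation in Example~\ref{comodexampA} only identifies the diagonal restriction $\Ext^{t,t}$; it says nothing about the other bidegrees.

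What you actually need for (2) is the much weaker statement that $[\Sigma^{n,m}A,A]\cong\Ext^{-n,m}_{\Gamma}(A,A)$ vanishes whenever $n+m<0$, i.e.\ that $\Ext^{s,t}_{\Gamma}(A,A)=0$ for $t-s<0$ (negative stem). That vanishing does hold, but not because the groups are diagonal; it follows from the connectivity of $\Gamma$ (that $\Gamma_t=0$ for $t<0$ and the counit $\Gamma\to A$ is an isomorphism in degree $0$), which forces each term $\bar\Gamma^{\otimes s}$ of the cobar complex to be concentrated in internal degrees $\geq s$. This connectivity argument is exactly how the paper handles (2). Replacing your "concentrated on the diagonal" claim with this negative-stem vanishing would repair the proof.
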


\begin{proof}
(1) follows from Lemma~\ref{leftcomplemma}, (3) follows from Lemma~\ref{tmultlemma}, and (4) follows from Lemma~\ref{filtcolimlemma}. (2) follows the fact that $\cA_*$ and $\BP_*\BP$ are connective; i.e. $\cA_t=0=\BP_t\BP$ for $t<0$ and the counit maps
\begin{align*}
\cA_*&\to\bF_p \\
\BP_*\BP&\to\BP_*
\end{align*}
are isomorphisms in degree 0.
\end{proof}

The following is not a new result and follows from \cite[Prop. 1.4.3]{Pal01}. However, we reprove it using the machinery of \cite{Man21}:

\begin{proposition}
\label{Acompletiontheorem}
    Consider the commutative algebra object $E=\cP_*=\h_*\BP\in\Stable(\cA_*)$ and the $t$-structure on $\Stable(\cA_*)$ from Lemma~\ref{BPtstructurelemma}. Then for every bounded-below $X\in\Stable(\cA_*)_{\geq k}$,
    \begin{equation*}
        X_{\cP_*}^{\wedge}\simeq L_{\bF_p}X\simeq X.
    \end{equation*}   
\end{proposition}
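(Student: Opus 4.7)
The plan is to apply the machinery from the rest of the appendix directly, reducing the statement to a computation of $\pi_0^{\heartsuit}(\cP_*)$. Observe first that the second equivalence $L_{\bF_p} X \simeq X$ is essentially tautological: $\bF_p$ is the monoidal unit of $\Stable(\cA_*)$, so $Y \otimes \bF_p \simeq Y$, and therefore every object is $\bF_p$-local. The real content is the comparison $X_{\cP_*}^{\wedge} \simeq L_{\bF_p} X$, and for this the strategy is to verify that $E = \cP_*$ satisfies Assumption~\ref{pi0assumption} with $\mathcal{I} = \mathcal{J} = \emptyset$, and then feed this into Theorems~\ref{J0BousEquivNil} and~\ref{MooreObjectThm1}.

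To verify the assumption, Lemma~\ref{BPtstructurelemma} supplies the standing $t$-structure hypotheses, and $\cP_*$ is connective since it is the $\h_*$ of the connective spectrum $\BP$. The key input is the identification of $\pi_0^{\heartsuit}(\cP_*)$. By Theorem~\ref{comodheartthm}, the heart is $\bF_p[h_0]$-mod and $\pi_0^{\heartsuit}(\cP_*)$ corresponds to the graded $\bF_p[h_0]$-module $\bigoplus_t \Ext^{t,t}_{\cA_*}(\bF_p, \cP_*)$. Since $\cP_* \cong \cA_* \square_{\cE_*} \bF_p$ as an $\cA_*$-comodule algebra, change of rings gives
\[
\Ext^{*,*}_{\cA_*}(\bF_p, \cP_*) \cong \Ext^{*,*}_{\cE_*}(\bF_p, \bF_p) \cong \bF_p[a_0, a_1, \ldots],
\]
with $a_i$ in bidegree $(1, 2p^i - 1)$. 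Imposing $t = s$ cuts out $\bF_p[a_0]$. The unit map $\bF_p \to \cP_*$ induces the restriction map on $\Ext$, which on the $t = s$ line is precisely the isomorphism $\bF_p[h_0] \xrightarrow{\cong} \bF_p[a_0]$ sending $h_0 \mapsto a_0$ (by the standard comparison between the classical Adams $E_2$-page and $\Ext$ over the exterior part). This identifies $\varphi: \pi_0^{\heartsuit}(\one) \to \pi_0^{\heartsuit}(\cP_*)$ as an isomorphism, so Assumption~\ref{pi0assumption} holds with $\mathcal{I} = \mathcal{J} = \emptyset$.

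With the assumption in hand, Theorem~\ref{J0BousEquivNil} yields $L_{\cP_*} X \simeq X_{\cP_*}^{\wedge}$ for any bounded-below $X$, and Theorem~\ref{MooreObjectThm1} identifies $L_{\cP_*} X \simeq L_{M\pi_0^{\heartsuit}\cP_*} X$. The Moore object $M\pi_0^{\heartsuit}\cP_*$ is an empty tensor product of cofibers of maps $f_i$, hence equals $\one = \bF_p$, giving $L_{\cP_*} X \simeq L_{\bF_p} X \simeq X$ as desired. The only non-formal step is the $\pi_0^{\heartsuit}$-computation above, and that is the expected main (though minor) obstacle, essentially recalling the standard Cartan--Eilenberg-style Ext computation together with the heart identification from Section~\ref{tstructuresection}.
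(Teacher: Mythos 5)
Your proof is correct and follows essentially the same route as the paper: verify Assumption~\ref{pi0assumption} with $\mathcal{I}=\mathcal{J}=\emptyset$ by computing $\pi_0^{\heartsuit}(\cP_*)\cong\bF_p[h_0]$ via $\Ext^{t,t}_{\cA_*}(\bF_p,\cP_*)$, then apply Theorems~\ref{J0BousEquivNil} and~\ref{MooreObjectThm1}, and observe that $\bF_p$-localization is trivial since $\bF_p$ is the unit. The paper simply states $\Ext^{*,*}_{\cA_*}(\bF_p,\cP_*)\cong\bF_p[h_0,v_1,v_2,\ldots]$ whereas you spell out the change-of-rings isomorphism along $\cA_*\to\cE_*$ producing $\bF_p[a_0,a_1,\ldots]$; these are the same computation with different names for the generators.
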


\begin{proof}
We first show that Assumption~\ref{pi0assumption} is satisfied. Note that
\begin{equation*}
    \pi_{-*,*}(\cP_*)=\Ext^{*,*}_{\cA_*}(\bF_p,\cP_*)\cong\bF_p[h_0,v_1,v_2,\ldots]
\end{equation*}
with Ext degree $\vert h_0\vert=(1,1)$ and $\vert v_i\vert=(1,2p^i-1)$, so that $\cP_*\in\Stable(\cA_*)_{\geq 0}$. By Example~\ref{comodexampA}, the heart of the $t$-structure on $\Stable(\cA_*)$ is $\bF_p[h_0]$-mod with $$\pi_0^{\heartsuit}(X)_t=\Ext^{t,t}_{\cA_*}(\bF_p,X).$$

In particular,
\begin{align*}
 \pi_0^{\heartsuit}(\cP_*)_*&=\bF_p[h_0], \\
 \pi_0^{\heartsuit}(\bF_p)_*&=\bF_p[h_0].
\end{align*}
If we take $\mathcal{I}=\emptyset=\mathcal{J}$, then we have an isomorphism of $\bF_p[h_0]$-algebras
\begin{equation*}
    \bF_p[h_0]\cong\pi_0^{\heartsuit}(\bF_p)_*\xrightarrow{\varphi} \pi_0^{\heartsuit}(\cP_*)_*\cong\bF_p[h_0].
\end{equation*}
By Theorem~\ref{J0BousEquivNil} and Theorem~\ref{MooreObjectThm1}, this implies that $X_{\cP_*}^{\wedge}\simeq L_{\bF_p}X$. The equivalence $L_{\bF_p}X\simeq X$ follows since $\bF_p$ is the unit of $\Stable(\cA_*)$.
\end{proof}

\begin{theorem}
\label{BPcompletiontheorem}
    Consider the commutative algebra object $E=\cP_*=\h_*\BP\in\Stable(\BP_*\BP)$ and the $t$-structure on $\Stable(\BP_*\BP)$ from Lemma~\ref{BPtstructurelemma}. Then for every bounded-below $X\in\Stable(\BP_*\BP)_{\geq k}$,
    \begin{equation*}
        X_{\cP_*}^{\wedge}\simeq L_{\BP_*/p}X\simeq X_p^{\wedge}
    \end{equation*}
    where $p$-completion is taken with respect to $p\in\bZ_{(p)}\cong\Ext_{\BP_*\BP}^{0,0}(\BP_*,\BP_*)$.
\end{theorem}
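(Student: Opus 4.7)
The proof follows the template of Proposition~\ref{Acompletiontheorem}, invoking Mantovani's Theorems~\ref{J0BousEquivNil} and~\ref{MooreObjectThm1} for the commutative algebra $E=\cP_*\in\Stable(\BP_*\BP)$, this time with $\mathcal{I}=\{p\colon\BP_*\to\BP_*\}$ and $\mathcal{J}=\emptyset$. The plan is to verify Assumption~\ref{pi0assumption}, conclude $X_{\cP_*}^\wedge\simeq L_{\BP_*/p}X$, and finally identify this Bousfield localization with the $p$-completion by the standard Bousfield-style argument.

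The main computation is a cobar-complex analysis of
\[
\pi_{-*,*}(\cP_*)\cong\Ext^{*,*}_{\BP_*\BP}(\BP_*,\cP_*).
\]
I would exploit the fact that both the coaugmentation ideal $\bar\BP_*\BP=(t_1,t_2,\dots)$ and the positive-degree part of $\cP_*$ have lowest nonzero internal degree $2p-2\geq 2$, so every nonzero element of $\bar\BP_*\BP^{\otimes s}\otimes_{\BP_*}\cP_*$ lives in internal degree at least $(2p-2)s$ whenever $s\geq 1$. This degree bound forces $\Ext^{s,t}_{\BP_*\BP}(\BP_*,\cP_*)=0$ whenever $t<s$, giving $\cP_*\in\Stable(\BP_*\BP)_{\geq 0}$, and also whenever $t=s\geq 1$. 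Combined with $\Ext^{0,0}(\BP_*,\cP_*)\cong\bF_p$, obtained by identifying primitives of $\cP_*$ in degree zero, this yields $\pi_0^\heartsuit(\cP_*)\cong\bF_p$ concentrated in graded degree zero. Since $\pi_0^\heartsuit(\BP_*)\cong\bZ_{(p)}$ in degree $0$ by Example~\ref{comodexampBP}, the induced map $\varphi\colon\bZ_{(p)}/(p)\to\bF_p$ is the canonical identification and hence an isomorphism, verifying Assumption~\ref{pi0assumption} with the choice $K_1=\BP_*$ and $f_1=p$.

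With the assumption checked, Theorem~\ref{J0BousEquivNil} gives $X_{\cP_*}^\wedge\simeq L_{\cP_*}X$ for bounded-below $X$, and Theorem~\ref{MooreObjectThm1} identifies $L_{\cP_*}X\simeq L_{M\pi_0^\heartsuit\cP_*}X=L_{\BP_*/p}X$, where the assembled Moore object $M\pi_0^\heartsuit(\cP_*)=C(p\colon\BP_*\to\BP_*)=\BP_*/p$. The final equivalence $L_{\BP_*/p}X\simeq X_p^\wedge$ follows from the standard argument that Bousfield localization at the prime-$p$ Moore object is $p$-adic completion on bounded-below objects, via the fracture square separating the rational part (which is $\BP_*/p$-acyclic) from the $p$-complete part. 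The principal obstacle is the heart computation: one must carefully use the $\BP_*\BP$-comodule structure on $\cP_*$ induced by the Thom reduction rather than the richer $\cA_*$-comodule structure, since the latter would produce the very different Ext ring $\bF_p[h_0,v_1,v_2,\dots]$ appearing in Proposition~\ref{Acompletiontheorem}, and Assumption~\ref{pi0assumption} would then fail outright.
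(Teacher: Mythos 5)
Your proposal is correct and follows essentially the same approach as the paper: verify Mantovani's Assumption~\ref{pi0assumption} for $\cP_*\in\Stable(\BP_*\BP)$, deduce $X^\wedge_{\cP_*}\simeq L_{\BP_*/p}X$ from Theorems~\ref{J0BousEquivNil} and~\ref{MooreObjectThm1}, and identify $L_{\BP_*/p}$ with $p$-completion by the Bousfield argument. The paper simply states the full vanishing $\Ext_{\BP_*\BP}^{f,t}(\BP_*,\cP_*)=0$ off $(0,0)$, which follows since $\cP_*\cong\BP_*\BP\otimes_{\BP_*}\bF_p$ is an extended comodule; your cobar degree bound proves only the on-and-below-diagonal vanishing, but that is exactly what is needed for both the connectivity of $\cP_*$ and the heart computation, so the argument closes, and your last step (fracture square in place of the paper's $X_p^\wedge\simeq F(\Sigma^{-1}\BP_*/p^\infty,X)$ description) is an equivalent standard variant of the Bousfield comparison.
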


\begin{proof}
We first show that Assumption~\ref{pi0assumption} is satisfied. Note that
\begin{equation*}
    \pi_{-f,t}(\cP_*)=\Ext^{f,t}_{\BP_*\BP}(\BP_*,\cP_*)\cong
    \begin{cases}
        \bF_p, & f=t=0, \\
        0, & \mathrm{otherwise}.
    \end{cases}
\end{equation*}
so that $\cP_*\in\Stable(\BP_*\BP)_{\geq 0}$. Now by Example~\ref{comodexampBP}, the heart of the $t$-structure on $\Stable(\BP_*\BP)$ is $(\bZ_{(p)}\text{-mod})^{\mathrm{gr}}$ with $$\pi_0^{\heartsuit}(X)_t=\Ext_{\BP_*\BP}^{t,t}(\BP_*,X).$$ In particular, if we take $K_1=\BP_*$ and $f_1=p:\BP_*\to\BP_*$ and $\mathcal{J}=\emptyset$ then because $\pi_0^{\heartsuit}(\BP_*)_*\cong \bZ_{(p)}$ concentrated in total degree 0, we have an isomorphism $\varphi$ of $\bZ_{(p)}$-algebras
\begin{equation*}
    \bF_p\cong\pi_0^{\heartsuit}(\BP_*/p)_*\cong (\pi_0^{\heartsuit}(\BP_*)_*)/p\xrightarrow{\varphi} \pi_0^{\heartsuit}(\cP_*)_*\cong\bF_p.
\end{equation*}
By Theorem~\ref{J0BousEquivNil} and Theorem~\ref{MooreObjectThm1}, this tells us that for $X\in\Stable(\BP_*\BP)_{\geq k}$, $X_{\cP_*}^{\wedge}\simeq L_{\BP_*/p}X$.

\bigskip

All that's left to show is $L_{\BP_*/p}X\simeq X_p^{\wedge}$. However, this follows from a similar argument to \cite[Prop. 2.5]{Bou79} because $X_p^{\wedge}\simeq F(\Sigma^{-1,0}\BP_*/p^{\infty},X)$ and $X\to F(\Sigma^{-1,0}\BP_*/p^{\infty},X)$ is a $\BP_*/p$-localization, where we let $\BP_*/p^{\infty}$ denote the colimit of the diagram
\begin{equation*}
    \BP_*/p\xrightarrow{p}\BP_*/p^2\xrightarrow{p}\BP_*/p^3\xrightarrow{p}\cdots .
\end{equation*}
\end{proof}

\begin{corollary}
   In $\Stable (\BP_*\BP)$, the unit $\BP_*$ satisfies
   \begin{equation*}
       (\BP_*)_{\cP_*}^{\wedge}\simeq (\BP_*)_p^{\wedge}.
   \end{equation*}
   In particular, the homotopy groups of the completion satisfy $$\pi_{-f,t}((\BP_*)_{\cP_*}^{\wedge})\cong \pi_{-f,t}(\BP_*)\otimes\bZ_p^{\wedge}\cong \Ext_{\BP_*\BP}^{f,t}(\BP_*,\BP_*)\otimes\bZ_p^{\wedge}.$$
\end{corollary}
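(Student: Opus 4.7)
The corollary is an immediate instance of Theorem~\ref{BPcompletiontheorem} together with a homotopy-group calculation for the $p$-completion. My plan has two steps.

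First, I would verify the hypothesis of Theorem~\ref{BPcompletiontheorem}. Lemma~\ref{BPtstructurelemma}(2) records that the unit $\BP_*\in\Stable(\BP_*\BP)_{\geq 0}$, so $X=\BP_*$ is bounded below. Applying the theorem then gives
\[
(\BP_*)^{\wedge}_{\cP_*}\simeq L_{\BP_*/p}\BP_*\simeq (\BP_*)_p^{\wedge},
\]
which is the first claim of the corollary. Note that $(\BP_*)_p^{\wedge}$ here refers to the $p$-completion in $\Stable(\BP_*\BP)$ with respect to the map $p\in\pi_{0,0}\BP_*\cong \bZ_{(p)}$.

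Second, I would extract the stated homotopy groups. Writing the $p$-completion as the limit of the Postnikov-style tower $\BP_*/p^n$, the universal coefficient/Milnor short exact sequence yields
\[
0\to {\lim}^1\,\pi_{-f+1,t}(\BP_*/p^n)\to \pi_{-f,t}((\BP_*)_p^{\wedge})\to \lim \pi_{-f,t}(\BP_*/p^n)\to 0.
\]
From the cofiber sequence $\BP_*\xrightarrow{p^n}\BP_*\to \BP_*/p^n$ we obtain, in each bidegree, a short exact sequence of $\bZ_{(p)}$-modules expressing $\pi_{-f,t}(\BP_*/p^n)$ in terms of $\pi_{-f,t}(\BP_*)/p^n$ and the $p^n$-torsion in $\pi_{-f+1,t}(\BP_*)$. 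The crucial input is that $\pi_{-f,t}(\BP_*)\cong \Ext_{\BP_*\BP}^{f,t}(\BP_*,\BP_*)$ is a finitely generated $\bZ_{(p)}$-module in each bidegree, a classical fact about the Adams--Novikov $\E_2$-page. Finite generation ensures that the relevant inverse systems are Mittag-Leffler (indeed eventually constant on torsion summands), so the $\lim^1$ term vanishes and the classical limit identifies with ordinary $p$-adic completion of a finitely generated $\bZ_{(p)}$-module, which is just $-\otimes\bZ_p^{\wedge}$. Combining these gives
\[
\pi_{-f,t}((\BP_*)^{\wedge}_{\cP_*})\cong \pi_{-f,t}(\BP_*)\otimes\bZ_p^{\wedge}\cong \Ext_{\BP_*\BP}^{f,t}(\BP_*,\BP_*)\otimes\bZ_p^{\wedge}.
\]

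The substantive input is already packaged in Theorem~\ref{BPcompletiontheorem}; the only real obstacle in this corollary is the homotopy-group identification, and that reduces to the standard fact that $p$-completion agrees with $-\otimes\bZ_p^{\wedge}$ on finitely generated $\bZ_{(p)}$-modules. I would therefore treat the argument as a short unpacking rather than a new proof, with the bulk of the writing devoted to making precise that the $p$-completion defined via $F(\Sigma^{-1,0}\BP_*/p^{\infty},-)$ in the proof of Theorem~\ref{BPcompletiontheorem} yields the Milnor sequence above.
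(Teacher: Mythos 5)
Your argument is correct and follows essentially the same route as the paper: both cite Theorem~\ref{BPcompletiontheorem} together with the connectivity from Lemma~\ref{BPtstructurelemma}(2) for the equivalence, and both deduce the identification of homotopy groups from the finite generation of $\Ext_{\BP_*\BP}^{f,t}(\BP_*,\BP_*)$ as a $p$-local abelian group. The only difference is that you unpack the second step via the Milnor sequence and a Mittag--Leffler argument, where the paper simply asserts it; that unpacking is a routine elaboration of the same idea, not a different proof.
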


\begin{proof}
    The first statement follows from the the fact that $\BP_*\in\Stable(\BP_*\BP)_{\geq 0}$ and Theorem~\ref{BPcompletiontheorem}. The second statement follows from the fact that $$\Ext_{\BP_*\BP}^{f,t}(\BP_*,\BP_*)$$ is a finitely generated, $p$-local abelian group for all $f,t\in\bZ$.
\end{proof}

Now we consider the synthetic versions of Proposition~\ref{Acompletiontheorem} and Theorem~\ref{BPcompletiontheorem}. This is useful for the proofs of Proposition~\ref{synAnilpotentcomplete} and Proposition~\ref{synBPnilpotentcomplete}. Here the $t$-structures we use on $\Stable(\cA_{\starstar}^\BP)$ and $\Stable(\BP_{\starstar}\BP^{\bF_p})$ are Example~\ref{comodexampAsyn} and Example~\ref{comodexampBPsyn}. These $t$-structures satisfy the conditions needed to study nilpotent completion:

\begin{lemma}
\label{syntheticBPtstructurelemma}
Consider the $t$-structures on $\Stable(\cA_{\starstar}^\BP)$ and $\Stable(\nubpbp)$ described in Example~\ref{comodexampAsyn} and Example~\ref{comodexampBPsyn}. These $t$-structures satisfy the following:
\begin{enumerate}
    \item They are left complete.
    \item $\bF_p[\tau]\in\Stable(\cA_{\starstar}^\BP)^\cell_{\geq 0}$ and $\BP^{\bF_p}_{\starstar}\in \Stable(\BP_{\starstar}\BP^{\bF_p})^\cell_{\geq 0}$.
    \item For $p,q\in\bZ$,
    \begin{align*}
    \Stable(\cA_{\starstar}^\BP)^\cell_{\geq p}\otimes \Stable(\cA_{\starstar}^\BP)^\cell_{\geq q}&\subseteq \Stable(\cA_{\starstar}^\BP)^\cell_{\geq (p+q)} \\
     \Stable(\BP_{\starstar}\BP^{\bF_p})^\cell_{\geq p}\otimes \Stable(\BP_{\starstar}\BP^{\bF_p})^\cell_{\geq q}&\subseteq \Stable(\BP_{\starstar}\BP^{\bF_p})^\cell_{\geq (p+q)}.   
    \end{align*}
    \item The functor $\tau_{\geq n}(-)$ commutes with filtered colimits.
\end{enumerate}  
\end{lemma}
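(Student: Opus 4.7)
The plan is to verify the four items in turn, observing that items (1), (3), and (4) are essentially formal consequences of the general theory of twisted $t$-structures developed in Section~\ref{tstructuresection}, while item (2) is the only place requiring concrete input about the specific Hopf algebroids.

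For items (1), (3), and (4), I would invoke Lemmas~\ref{leftcomplemma}, \ref{tmultlemma}, and \ref{filtcolimlemma} respectively, with the caveat explained in Remark~\ref{extraaxis} that our cellular stable comodule categories have an extra automorphism axis (the internal $\ast$-grading of the comodule) beyond the two used for the $t$-structure. The arguments in those lemmas go through verbatim once one replaces mapping spaces with their graded analogs in the extra degree, and the cellularity assumption ensures that a morphism is an equivalence as soon as it is detected on mapping groups out of the generating shifts $\Sigma^{n,t,w}\one$.

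The main work is item (2), i.e.\ checking that the units are connective. Unwinding Definition~\ref{twistedtstructdef}, the unit $\one$ lies in $\cC_{\geq 0}$ precisely when $\Ext^{s,t,w}_\Gamma(\one,\one)=0$ for $s>t$ (with $w$ unrestricted). For $(A,\Gamma)=(\bF_p[\tau],\cA_{\starstar}^\BP)$, I would argue this from the classical fact that $\Ext^{*,*}_{\cA_*}(\bF_p,\bF_p)$ is concentrated in the region $t-s\geq 0$, combined with the $\tau$-Bockstein description of $\Ext_{\cA_{\starstar}^\BP}$ available from \cite{Pst22} (or equivalently from Theorem~\ref{cedef}, which identifies it as a Cartan-Eilenberg deformation). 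The $\tau$-Bockstein only introduces classes in the same tri-degrees up to shifts in $w$, preserving the vanishing region $s\leq t$. For $(A,\Gamma)=(\nubp_\starstar,\nubpbp)$, the analogous statement follows from Theorem~\ref{lambdabockstein}, which identifies the synthetic algebraic Novikov spectral sequence as a $\lambda$-Bockstein on the classical aNSS; since classical $\Ext^{*,*}_{\BP_*\BP}(\BP_*,\BP_*)$ is concentrated in $t-s\geq 0$, the $\lambda$-Bockstein differentials cannot produce classes outside this region, giving $\Ext^{s,t,w}_{\nubpbp}(\nubp_\starstar,\nubp_\starstar)=0$ for $s>t$.

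The hardest part of this lemma will be item (2), and specifically the bookkeeping of the tri-gradings: one must verify that both the $\tau$-Bockstein (over $\cA_{\starstar}^\BP$) and the $\lambda$-Bockstein (over $\nubpbp$) respect the Chow-type vanishing condition $s\leq t$. Once the tri-degree conventions are fixed so that $\tau$ and $\lambda$ live in $(s,t,w)=(0,0,-1)$, this vanishing is preserved under Bockstein differentials for purely degree-theoretic reasons, after which all four items assemble into the statement.
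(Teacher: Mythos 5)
Your treatment of items (1), (3), and (4) matches the paper's exactly: all three are formal consequences of Lemmas~\ref{leftcomplemma}, \ref{tmultlemma}, and~\ref{filtcolimlemma}, with Remark~\ref{extraaxis} handling the extra grading axis.

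For item (2) you take a genuinely different route from the paper, and one of your proposed alternatives is circular. The paper's argument is elementary and self-contained: the Hopf algebroids $\cA_{\starstar}^\BP$ and $\nubpbp$ are connective in their first degree (their $\pi_{k,*}$ vanishes for $k<0$) and their counit maps $\Gamma\to A$ are isomorphisms in bidegree $(0,w)$. Therefore the augmentation ideal $\bar\Gamma$ is concentrated in degree $t\geq 1$, the reduced cobar complex $\bar\Gamma^{\otimes s}$ is concentrated in $t\geq s$, and hence $\Ext_\Gamma^{s,t,w}(A,A)=0$ for $s>t$ with no further input. Your argument instead deduces this vanishing from the $\lambda$-Bockstein structure of Theorem~\ref{lambdabockstein} (for $\nubpbp$) and from a $\tau$-Bockstein description of motivic/synthetic $\Ext$ (for $\cA_{\starstar}^\BP$). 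The $\nubpbp$ route via Theorem~\ref{lambdabockstein} is logically sound but invokes a substantially heavier, computational theorem to re-derive a purely degree-theoretic vanishing. Your parenthetical alternative, citing Theorem~\ref{cedef} to establish the $\tau$-Bockstein description over $\cA_{\starstar}^\BP$, is genuinely circular: Theorem~\ref{cedef} depends on Proposition~\ref{synAnilpotentcomplete}, which depends on Proposition~\ref{synAcompletiontheorem}, which is explicitly stated in terms of the $t$-structure whose existence and good properties are the content of the present lemma. You should delete that parenthetical and either cite the motivic $\tau$-Bockstein fact externally or, more simply, adopt the paper's connectivity-of-$\Gamma$ argument, which sidesteps Bockstein structure entirely.
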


\begin{proof}
(1) follows from Lemma~\ref{leftcomplemma}, (3) follows from Lemma~\ref{tmultlemma}, and (4) follows from Lemma~\ref{filtcolimlemma}. (2) follows from the fact that
\begin{align*}
    \pi_{k,*}(\nu_{\BP}\bF_p\otimes\nu_{\BP}\bF_p)&=(\cA_{k,*}^\BP)=0 \\
    \pi_{k,*}(\nu_{\bF_p}\BP\otimes\nu_{\bF_p}\BP)&=\BP_{k,*}\BP^{\bF_p}=0
\end{align*}
whenever $k<0$ and the counit maps
\begin{align*}
    \cA_{\starstar}^\BP&\to\bF_p[\tau] \\
    \nubpbp&\to\nubp_{\starstar}
\end{align*}
are isomorphisms in bidegree $(0,w)$, for $w\in\bZ$.
\end{proof}

\begin{theorem}
\label{synAcompletiontheorem}
    Consider the commutative algebra object $$E=\cP_*[\tau]=\pi_{*,*}(\nu_{\BP}(\bF_p\otimes\BP))\in\Stable(\cA_{\starstar}^\BP)$$ and the $t$-structure on $\Stable(\cA_{\starstar}^\BP)$ from Lemma~\ref{syntheticBPtstructurelemma}. Then for every bounded-below $X\in\Stable(\cA_{\starstar}^\BP)_{\geq k}$,
    \begin{equation*}
        X_{\cP_*[\tau]}^{\wedge}\simeq L_{\bF_p[\tau]}X\simeq X.
    \end{equation*}   
\end{theorem}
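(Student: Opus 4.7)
The plan is to verify Assumption~\ref{pi0assumption} for $E = \cP_*[\tau]$ in $\Stable(\cA_{\starstar}^\BP)^{\cell}$ with $\mathcal{I} = \mathcal{J} = \emptyset$, then invoke Theorem~\ref{J0BousEquivNil} and Theorem~\ref{MooreObjectThm1}, and finally use that $\bF_p[\tau]$ is the monoidal unit to conclude $L_{\bF_p[\tau]}X \simeq X$. The key technical ingredient, already established in the paper, is the tri-graded Ext computation of Lemma~\ref{synAextlemma}, which with $m = 0$ yields
$$\pi_{-f,t,w}(\cP_*[\tau]) \cong \Ext^{f,t,w}_{\cA_{\starstar}^\BP}(\bF_p[\tau], \cP_*[\tau]) \cong \bF_p[\tau, a_0, a_1, \ldots]$$
with $|\tau| = (0,0,-1)$ and $|a_i| = (1, 2p^i - 1, 2p^i - 2)$. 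Every monomial has non-negative stem $t - f$, so $\cP_*[\tau] \in \Stable(\cA_{\starstar}^\BP)^{\cell}_{\geq 0}$, giving condition (1) of Assumption~\ref{pi0assumption}.

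For conditions (2) and (3), I would compute $\pi_0^{\heartsuit}(\cP_*[\tau])$ using Example~\ref{comodexampAsyn}, which identifies the heart with $\bF_p[\tau, h_0]$-mod and gives $\pi_0^{\heartsuit}(X)_{t,w} = \Ext^{t,t,w}_{\cA_{\starstar}^\BP}(\bF_p[\tau], X)$. Restricting the above computation to Ext classes with homological degree equal to internal degree requires, for a monomial $\tau^{c} a_0^{e_0} a_1^{e_1} \cdots$, that $\sum_i e_i (2p^i - 2) = 0$, which forces $e_i = 0$ for all $i \geq 1$. Thus
$$\pi_0^{\heartsuit}(\cP_*[\tau])_{*,*} \cong \bF_p[\tau, a_0] = \bF_p[\tau, h_0] \cong \pi_0^{\heartsuit}(\bF_p[\tau])_{*,*},$$
and the unit map $\bF_p[\tau] \to \cP_*[\tau]$ induces exactly this identification on hearts. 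Taking $\mathcal{I} = \mathcal{J} = \emptyset$, the map $\varphi$ of Assumption~\ref{pi0assumption} is the identity and hence an isomorphism, so condition (3) holds.

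With the assumption verified, Theorem~\ref{J0BousEquivNil} (applicable since $\mathcal{J} = \emptyset$) yields $L_{\cP_*[\tau]} X \simeq X^{\wedge}_{\cP_*[\tau]}$ for every bounded-below $X$. Theorem~\ref{MooreObjectThm1} then identifies this localization with $L_{M\pi_0^{\heartsuit}E} X$; since $\mathcal{I} = \emptyset$, the Moore object $M\pi_0^{\heartsuit}E$ is the empty tensor product, i.e., the unit $\bF_p[\tau]$. The final step is trivial: Bousfield localization at the monoidal unit is the identity, so $L_{\bF_p[\tau]} X \simeq X$.

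The one place requiring care is the bookkeeping between the twisted $t$-structure (indexed by stem) and the tri-grading of Ext, in particular the claim that only powers of $a_0$ survive to the heart. This follows immediately from the formula $|a_i| = (1, 2p^i - 1, 2p^i - 2)$ because $2p^i - 2 > 0$ for $i \geq 1$; no deeper computation is needed. Thus the main content of the proof is already carried by Lemma~\ref{synAextlemma} and the heart identification of Example~\ref{comodexampAsyn}, and the result is a direct synthetic upgrade of Proposition~\ref{Acompletiontheorem}.
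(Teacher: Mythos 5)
Your proposal is correct and follows essentially the same argument as the paper: verify Assumption~\ref{pi0assumption} with $\mathcal{I}=\mathcal{J}=\emptyset$ via the Ext computation (you cite Lemma~\ref{synAextlemma}, the paper does the same change-of-rings inline), identify the heart via Example~\ref{comodexampAsyn}, and conclude with Theorems~\ref{J0BousEquivNil} and~\ref{MooreObjectThm1}. Your observation that only powers of $a_0$ land in the heart, because $2p^i-2>0$ for $i\geq 1$, is exactly the degree reasoning the paper uses implicitly.
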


\begin{proof}
We first show that Assumption~\ref{pi0assumption} is satisfied. Note that
\begin{align*}
    \pi_{-*,*,*}(\cP_*[\tau])&=\Ext^{*,*,*}_{\cA_{\starstar}^\BP}(\bF_p[\tau],\cP_*[\tau])\cong\Ext^{*,*,*}_{\cA_*}(\bF_p,\cP_*)[\tau] \\
    &\cong\bF_p[\tau,h_0,v_1,v_2,\ldots]
\end{align*}
with Ext degree $\vert h_0\vert=(1,1,0)$ and $\vert v_i\vert=(1,2p^i-1,2p^i-2)$, so that $\cP_*\in\Stable(\cA_{\starstar}^\BP)_{\geq 0}$. By Example~\ref{comodexampAsyn}, the heart of the $t$-structure on $\Stable(\cA_{\starstar}^\BP)$ is $\bF_p[\tau,h_0]$-mod with $$\pi_0^{\heartsuit}(X)_{t,w}=\Ext^{t,t,w}_{\cA_{\starstar}^\BP}(\bF_p[\tau],X).$$

In particular,
\begin{align*}
 \pi_0^{\heartsuit}(\cP_*)_{*,*}&=\bF_p[\tau,h_0], \\
 \pi_0^{\heartsuit}(\bF_p)_{*,*}&=\bF_p[\tau,h_0].
\end{align*}
If we take $\mathcal{I}=\emptyset=\mathcal{J}$, then we have an isomorphism of $\bF_p[\tau,h_0]$-algebras
\begin{equation*}
    \bF_p[\tau,h_0]\cong\pi_0^{\heartsuit}(\bF_p[\tau])_{*,*}\xrightarrow{\varphi} \pi_0^{\heartsuit}(\cP_*)_{*,*}\cong\bF_p[\tau,h_0].
\end{equation*}
By Theorem~\ref{J0BousEquivNil} and Theorem~\ref{MooreObjectThm1}, this implies that $X_{\cP_*[\tau]}^{\wedge}\simeq L_{\bF_p[\tau]}X\simeq X$.
\end{proof}

In the case of $\Stable(\nubpbp)$, completion with respect to $\cP_*[\lambda]$ turns out to also be a $p$-completion:

\begin{theorem}
\label{synBPcompletiontheorem}
Consider the commutative algebra object $$E=\cP_*[\lambda]\simeq \pi_{*,*}(\nu_{\bF_p}(\bF_p\otimes\BP))\in \Stable(\BP_{\starstar}\BP^{\bF_p})$$ and the $t$-structure on $\Stable(\nubpbp)$ considered in Lemma~\ref{syntheticBPtstructurelemma}. Then for every bounded-below $X\in\Stable(\BP_{\starstar}\BP^{\bF_p})_{\geq k}$,
\begin{equation*}
    X^{\wedge}_{\cP_*[\lambda]}\simeq L_{\BP_{\starstar}^{\bF_p}/p}X\simeq X^{\wedge}_p,
\end{equation*}
where $p$-completion is taken with respect to $p\in\bZ_{(p)}\cong\Ext^{0,0,0}_{\BP_{\starstar}\BP^{\bF_p}}(\BP^{\bF}_{\starstar},\BP^{\bF}_{\starstar})$.
\end{theorem}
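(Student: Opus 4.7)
The strategy is to mimic the proof of Theorem~\ref{BPcompletiontheorem} in the synthetic setting, applying the machinery of \cite{Man21}. The first step is to verify the hypotheses of Assumption~\ref{pi0assumption} for the algebra $E=\cP_*[\lambda]$ in $\Stable(\nubpbp)^{\cell}$ equipped with the $t$-structure of Example~\ref{comodexampBPsyn}, which was shown to satisfy the requisite axioms of $\cite{Man21}$ in Lemma~\ref{syntheticBPtstructurelemma}. Connectivity of $\cP_*[\lambda]$ (i.e., $\cP_*[\lambda]\in\Stable(\nubpbp)^{\cell}_{\geq 0}$) follows immediately from Lemma~\ref{synBPextlemma} with $m=0$, which computes $\pi_{-*,*,*}(\cP_*[\lambda])\cong \bF_p[\lambda]$ concentrated in Ext degree $0$.

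The main computation is to identify $\pi_0^{\heartsuit}(\cP_*[\lambda])$ as a module over $\pi_0^{\heartsuit}(\nubp_\starstar)$. By Example~\ref{comodexampBPsyn}, the heart is $\bZ_{(p)}[\lambda]\text{-mod}$ with $\vert\lambda\vert=(0,-1)$, and $\pi_0^{\heartsuit}(\nubp_\starstar)\cong \bZ_{(p)}[\lambda]$. On the other side, Lemma~\ref{synBPextlemma} gives $\pi_0^{\heartsuit}(\cP_*[\lambda])\cong \bF_p[\lambda]$. I then take the single generator $K_1=\nubp_{\starstar}$ with map $f_1=p:\nubp_{\starstar}\to\nubp_{\starstar}$ and $\mathcal{J}=\emptyset$, which induces the isomorphism
\[
\varphi: \bF_p[\lambda]\cong \pi_0^{\heartsuit}(\nubp_{\starstar})/p\xrightarrow{\cong}\pi_0^{\heartsuit}(\cP_*[\lambda])\cong \bF_p[\lambda]
\]
of graded $\bZ_{(p)}[\lambda]$-algebras. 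This verifies Assumption~\ref{pi0assumption}, and applying Theorem~\ref{J0BousEquivNil} and Theorem~\ref{MooreObjectThm1} directly yields the equivalence $X^{\wedge}_{\cP_*[\lambda]}\simeq L_{\nubp_{\starstar}/p}X = L_{\BP_{\starstar}^{\bF_p}/p}X$ for bounded-below $X$.

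It remains to show $L_{\BP_{\starstar}^{\bF_p}/p}X\simeq X^{\wedge}_p$. I will follow the same argument used at the end of the proof of Theorem~\ref{BPcompletiontheorem}, adapted from \cite[Prop.~2.5]{Bou79}: form the colimit $\BP_{\starstar}^{\bF_p}/p^{\infty}$ of the tower $\BP_{\starstar}^{\bF_p}/p\xrightarrow{p}\BP_{\starstar}^{\bF_p}/p^2\xrightarrow{p}\cdots$ in $\Stable(\nubpbp)^{\cell}$, and observe that the mapping object $F(\Sigma^{-1,0,0}\BP_{\starstar}^{\bF_p}/p^{\infty},X)$ is $\BP_{\starstar}^{\bF_p}/p$-local, computes the $p$-completion $X^{\wedge}_p$ (using that $p\in\pi_{0,0,0}$ is a central element of the unit), and receives a $\BP_{\starstar}^{\bF_p}/p$-equivalence from $X$. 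The main subtlety, and the step I expect to require the most care, is verifying that this standard Bousfield-type argument goes through cleanly in the synthetic stable comodule setting: the necessary inputs are that the $t$-structure respects the tower and that $p$-torsion behaves as expected on homotopy, both of which should follow from the $\lambda$-free structure of $\nubp_{\starstar}$ established in Theorem~\ref{BPhomotopytheorem}.
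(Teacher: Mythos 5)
Your proposal matches the paper's proof essentially step for step: you verify Assumption~\ref{pi0assumption} via the $m=0$ case of Lemma~\ref{synBPextlemma} and the heart identification of Example~\ref{comodexampBPsyn}, choose $K_1=\nubp_{\starstar}$, $f_1=p$, $\mathcal{J}=\emptyset$ to get the required isomorphism $\varphi$ of $\bZ_{(p)}[\lambda]$-algebras, then apply Theorems~\ref{J0BousEquivNil} and \ref{MooreObjectThm1}. The paper closes the final identification $L_{\BP_{\starstar}^{\bF_p}/p}X\simeq X_p^{\wedge}$ by a one-line reference back to the argument of Theorem~\ref{BPcompletiontheorem}, whereas you spell out the Bousfield-style mapping-object argument explicitly — a minor elaboration, not a different route.
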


\begin{proof}
    Again we show that Assumption~\ref{pi0assumption} is satisfied. In Lemma~\ref{synBPextlemma}, we computed the Ext groups of tensor powers of $\cP_*[\lambda]$. In particular, we get that
    \begin{equation*}
        \Ext^{f,t,*}_{\BP_{\starstar}\BP^{\bF_p}}(\BP^{\bF}_{\starstar},\cP_*[\lambda])\cong
        \begin{cases}
            \bF_p[\lambda], &f=t=0, \\
            0, & \mathrm{otherwise},
        \end{cases}
    \end{equation*}
so that $\cP_*[\lambda]\in\Stable(\BP_{\starstar}\BP^{\bF_p})_{\geq 0}$ and, as a bigraded object, $\pi_{0,*,*}^{\heartsuit}(\cP_*[\lambda])\cong\bF_p[\lambda]$, with the non-$\lambda$-divisible copy of $\bF_p$ in internal degree 0. Now whenever $t-f=0$, we have that
\begin{equation*}
    \Ext^{f,t,*}_{\BP_{\starstar}\BP^{\bF_p}}(\BP^{\bF}_{\starstar},\BP^{\bF}_{\starstar})\cong
        \begin{cases}
            \bZ_{(p)}[\lambda], &f=t=0, \\
            0, & \mathrm{otherwise},
        \end{cases}
\end{equation*}
In particular, this means that $\pi_{0}^{\heartsuit}(\BP^{\bF_p}_{\starstar})_{*,*}\cong\bZ_{(p)}[\lambda]$ and $\pi_{0}^{\heartsuit}(\BP^{\bF_p}_{\starstar}/p)_{*,*}\cong\bF_p[\lambda]$. Hence if we take $K_1=\BP^{\bF_p}$, $f_1=p:\BP^{\bF_p}\to \BP^{\bF_p}$, and $\mathcal{J}=\emptyset$, we get an isomorphism $\varphi$ of $\bZ_{(p)}[\lambda]$-algebras
\begin{equation*}
 \bF_p[\lambda]\cong\pi_{0}^{\heartsuit}(\BP^{\bF_p}_{\starstar}/p)_{*,*}\cong(\pi_{0}^{\heartsuit}(\BP^{\bF_p}_{\starstar})_{*,*})/p\xrightarrow{\varphi} \pi_{0}^{\heartsuit}(\cP_*[\lambda])_{*,*}\cong\bF_p[\lambda].  
\end{equation*}
Applying Theorems~\ref{J0BousEquivNil} and \ref{MooreObjectThm1}, we get that for any $X\in\Stable(\BP_{\starstar}\BP^{\bF_p})_{\geq k}$, $X_{\cP_*[\lambda]}^{\wedge}\simeq L_{\BP^{\bF_p}_{\starstar}/p}X$. Similar to the proof of Theorem~\ref{BPcompletiontheorem}, we also get that $L_{\BP^{\bF_p}_{\starstar}/p}X\simeq X_p^{\wedge}$.
\end{proof}

The unit is connective in $\Stable (\BP_{\starstar}\BP^{\bF_p})$ so that we immediately get the following corollary:

\begin{corollary}
\label{synBPunitcompletion}
In $\Stable (\BP_{\starstar}\BP^{\bF_p})$, the unit $\BP_{\starstar}^{\bF_p}$ satisfies
   \begin{equation*}
       (\BP_{\starstar}^{\bF_p})_{\cP_*[\lambda]}^{\wedge}\simeq (\BP_{\starstar}^{\bF_p})_p^{\wedge}.
   \end{equation*}
   In particular, the homotopy groups of the completion satisfy $$\pi_{f,t,w}((\BP_{\starstar}^{\bF_p})_{\cP_*[\lambda]}^{\wedge})\cong \pi_{f,t,w}(\BP_{\starstar}^{\bF_p})\otimes\bZ_p^{\wedge}\cong \Ext_{\BP_{\starstar}\BP}^{f,t,w}(\BP_{\starstar}^{\bF_p},\BP_{\starstar}^{\bF_p})\otimes\bZ_p^{\wedge}.$$    
\end{corollary}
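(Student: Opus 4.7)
The plan is to obtain the first equivalence as an immediate specialization of Theorem~\ref{synBPcompletiontheorem}, and then reduce the homotopy-group identification to a statement about $p$-completion of finitely generated $p$-local abelian groups, in direct parallel with the analogous corollary to Theorem~\ref{BPcompletiontheorem} in the classical setting.

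First, I would verify that the hypothesis of Theorem~\ref{synBPcompletiontheorem} is met for $X = \BP^{\bF_p}_{\starstar}$, namely that the unit is bounded below with respect to the $t$-structure of Example~\ref{comodexampBPsyn}. This is exactly Lemma~\ref{syntheticBPtstructurelemma}(2), which tells us that $\BP^{\bF_p}_{\starstar} \in \Stable(\nubpbp)^{\cell}_{\geq 0}$. Applying Theorem~\ref{synBPcompletiontheorem} to $X = \BP^{\bF_p}_{\starstar}$ then yields the chain of equivalences
\[
(\BP^{\bF_p}_{\starstar})^{\wedge}_{\cP_*[\lambda]} \simeq L_{\BP^{\bF_p}_{\starstar}/p}(\BP^{\bF_p}_{\starstar}) \simeq (\BP^{\bF_p}_{\starstar})^{\wedge}_p,
\]
which gives the first displayed equivalence of the corollary.

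For the homotopy-group identification, I would first use flatness of the Hopf algebroid $(\nubp_\starstar, \nubpbp)$ established in Theorem~\ref{BPhomotopytheorem} (together with the identification of the $\nubp$-Adams $\E_2$-page in Proposition~\ref{synANSSe2page}) to rewrite $\pi_{f,t,w}(\BP_\starstar^{\bF_p})$ as $\Ext^{f,t,w}_{\nubpbp}(\BP^{\bF_p}_\starstar, \BP^{\bF_p}_\starstar)$. The second isomorphism of the corollary would then follow from the observation that $p$-completion of $\BP^{\bF_p}_\starstar$ commutes with trigraded homotopy groups in each bidegree, provided those homotopy groups are finitely generated $p$-local abelian groups, in which case $(-)^{\wedge}_p$ reduces to tensoring with $\bZ_p^{\wedge}$.

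The main thing to check, therefore, is that each trigraded $\Ext^{f,t,w}_{\nubpbp}(\BP^{\bF_p}_\starstar, \BP^{\bF_p}_\starstar)$ is a finitely generated $p$-local abelian group. I would argue this by comparison with the classical $\BP_*\BP$-Ext groups: the synthetic algebraic Novikov spectral sequence of Theorem~\ref{lambdabockstein}, applied not to $\bS_{\bF_p}$ but to the present trigraded setting as in the proof of Example~\ref{comodexampBPsyn}, expresses these Ext groups as $\lambda$-extensions of the classical Adams--Novikov $\E_2$-page $\Ext^{*,*}_{\BP_*\BP}(\BP_*,\BP_*)$, which is known to be finitely generated and $p$-local in each bidegree. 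Because the synthetic structure only introduces a polynomial $\lambda$-direction (which is graded finitely in each fixed weight), finite generation in each fixed trigrading is preserved, and the required commutation of $p$-completion with $\pi_{f,t,w}$ follows. The main (and really only) subtle point is this finite-generation/boundedness bookkeeping; once it is in hand, both isomorphisms in the corollary follow formally.
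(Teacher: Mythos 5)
Your first equivalence is exactly the paper's argument: connectivity of the unit (Lemma~\ref{syntheticBPtstructurelemma}(2)) lets Theorem~\ref{synBPcompletiontheorem} apply with $k=0$, and the paper says nothing more. The overall shape of the homotopy-group identification is also right, but the middle step is misattributed. The identification $\pi_{f,t,w}(\nubp_\starstar)\cong\Ext^{f,t,w}_{\nubpbp}(\nubp_\starstar,\nubp_\starstar)$ inside $\Stable(\nubpbp)$ is definitional in Hovey's stable comodule category (it is exactly the notational convention set in Lemma~\ref{uctlemma}); it does \emph{not} require flatness of $(\nubp_\starstar,\nubpbp)$ or Proposition~\ref{synANSSe2page}, which concern the $\nubp$-Adams spectral sequence in the separate category $\Syn_{\bF_p}$. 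Invoking them conflates $\Stable(\nubpbp)$ with $\Syn_{\bF_p}$; the two Ext groups happen to agree (via Theorem~\ref{stablethm}), so you land in the right place, but the citation chain is wrong. The finite-generation step, which the paper leaves implicit by analogy with the corollary after Theorem~\ref{BPcompletiontheorem}, is the genuine content of the second isomorphism, and there you actually supply more detail than the paper; your route through the synthetic aNSS of Theorem~\ref{lambdabockstein} and Example~\ref{comodexampBPsyn} is valid, though one could equally read off from Theorem~\ref{BPhomotopytheorem} that $\nubp_\starstar$ and $\nubpbp\cong\nubp_\starstar[t_1,t_2,\dots]$ are finitely generated $\bZ_{(p)}$-modules in each fixed bidegree, so the trigraded cobar complex and hence its Ext groups are as well.
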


\subsection{Application to Synthetic Adams-Novikov Spectral Sequence}

We end this appendix by describing what the synthetic Adams-Novikov spectral sequence for a bounded-below object $X\in(\Syn_{\bF_p})_{\geq k}$ converges to. Analogous to the classical Adams-Novikov spectral sequence, it turns out to converge to the $p$-localization of $X$. 

\begin{remark}
    In this section, we study examples of our twisted $t$-structures in $\Syn^\cell_E$ which have recently also appeared as examples of the linear $t$-structures described in \cite[Defn. 2.1]{CD24}.
\end{remark}

\begin{lemma}
\label{Fpsyntstructurelemma}
    Consider the $t$-structure on $\Syn_{\bF_p}=\Syn_{\bF_p}^{\cell}$ described in Example~\ref{Fpsynexample}. This $t$-structure satisfies the following:
    \begin{enumerate}
    \item It is left complete.
    \item $\bS_{\bF_p}\in(\Syn_{\bF_p})_{\geq 0}$.
    \item For $n,m\in\bZ$,
    \begin{equation*}
    (\Syn_{\bF_p})_{\geq n}\otimes (\Syn_{\bF_p})_{\geq m}\subseteq (\Syn_{\bF_p})_{\geq (n+m)}. 
    \end{equation*}
    \item The functor $\tau_{\geq n}(-)$ commutes with filtered colimits.
    \end{enumerate}
\end{lemma}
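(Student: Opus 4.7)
Parts (1), (3), and (4) will follow directly from the general machinery of twisted $t$-structures built in Section~\ref{tstructuresection}. The category $\Syn_{\bF_p}^{\cell}=\Syn_{\bF_p}$ (by \cite{Law24}) is presentably symmetric monoidal with compact unit, and the $t$-structure of Example~\ref{Fpsynexample} fits into the framework of Definition~\ref{twistedtstructdef} with the weight coordinate playing the role of the auxiliary axis of Remark~\ref{extraaxis}. Hence left-completeness is immediate from Lemma~\ref{leftcomplemma}, multiplicativity from Lemma~\ref{tmultlemma}, and commutation of $\tau_{\geq n}$ with filtered colimits from Lemma~\ref{filtcolimlemma}. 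The only substantive content is part (2), connectivity of the unit, which amounts to showing $\pi_{k,w}\bS_{\bF_p}=0$ for $k<0$ and all $w\in\bZ$.

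For (2), the plan is to analyze $\bS_{\bF_p}$ through the fracture square of Lemma~\ref{fracture}:
\[
\begin{tikzcd}
\bS_{\bF_p} \ar[r] \ar[d] & (\bS_{\bF_p})_\lambda^\wedge \ar[d] \\
\bS_{\bF_p}[\lambda^{-1}] \ar[r] & (\bS_{\bF_p})_\lambda^\wedge[\lambda^{-1}].
\end{tikzcd}
\]
Lemma~\ref{lambdacomp} identifies the upper-right corner with $\nu(\bS_p^\wedge)$, and under $\Syn_{\bF_p}[\lambda^{-1}]\simeq\Sp$ the lower-left corner is $\bS$ and the lower-right is $\bS_p^\wedge$. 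The key input I will use is that for any connective spectrum $Y$ one has $\pi_{k,w}\nu Y=0$ for $k<0$, which follows from the Adams-filtration description of $\pi_{\starstar}\nu Y$ in \cite[Thm. 9.19]{BHS19} together with the fact that the $\bF_p$-Adams $\E_\infty$-page of a connective spectrum is supported in non-negative stems.

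Applying $\pi_{k,w}(-)$ to the pullback square produces a Mayer--Vietoris long exact sequence. For $k\leq -2$, all four neighboring groups vanish from the observation above, so $\pi_{k,w}\bS_{\bF_p}=0$ immediately. The case $k=-1$ reduces to showing that
\[
\pi_{0,w}\nu(\bS_p^\wedge)\oplus\pi_{0,w}\bS\longrightarrow \pi_{0,w}\bS_p^\wedge
\]
is surjective for every $w\in\bZ$.

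The main obstacle is this surjectivity check, which I will dispatch by direct computation using the explicit bigraded rings of Example~\ref{Fpsynexample}. In weight $w\leq 0$, the summand $\pi_{0,w}\nu(\bS_p^\wedge)=\bZ_p^\wedge\{\lambda^{-w}\}$ already surjects onto $\pi_{0,w}\bS_p^\wedge=\bZ_p^\wedge\{\lambda^{-w}\}$. In weight $w>0$, the generator $h^w\in\pi_{0,w}\nu(\bS_p^\wedge)$ maps to $p^w\lambda^{-w}$ after inverting $\lambda$ (using $\lambda h=p$), while the generator $\lambda^{-w}\in\pi_{0,w}\bS$ maps to itself; thus the image contains $p^w\bZ_p^\wedge+\bZ$, which equals $\bZ_p^\wedge$ by density of $\bZ$ in $\bZ_p^\wedge$. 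This closes out (2) and the lemma.
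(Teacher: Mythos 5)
Your proof follows essentially the same route as the paper's: parts (1), (3), (4) are delegated to the general twisted $t$-structure lemmas, and part (2) combines the vanishing of the $\lambda$-complete bigraded homotopy in negative stems (via \cite[Thm.~9.19]{BHS19}) with the $\lambda$-fracture square. You usefully spell out the $k=-1$ cokernel calculation that the paper compresses to ``by inspection of the long exact sequence,'' and your weight-by-weight check of surjectivity is correct (for $w>0$ the identity $\bZ_p^{\wedge} = p^w\bZ_p^{\wedge} + \bZ$ follows from the surjection $\bZ\twoheadrightarrow\bZ/p^w$).

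One framing error worth correcting: the ``key input'' you announce --- that $\pi_{k,w}\nu Y = 0$ for all $k<0$ and all connective $Y$ --- would, if applied to $Y=\bS$, simply \emph{be} the statement you are trying to prove, and make the fracture square redundant. What \cite[Thm.~9.19]{BHS19} actually controls is the $\lambda$-complete homotopy $\pi_{*,*}(\nu Y)^{\wedge}_{\lambda} \simeq \pi_{*,*}\nu(Y^{\wedge}_p)$, so the negative-stem vanishing you cite is immediate only for $\bF_p$-Adams-complete $Y$ such as $\bS_p^{\wedge}$, which is the only case you actually invoke. The remaining two corners of the square vanish for $k<0$ because $\bS$ and $\bS_p^{\wedge}$ are connective as ordinary spectra, and passing from the completed object to $\nu\bS$ itself is precisely the cokernel argument you then run. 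No logical gap results, but the ``key input'' should be stated only for the $\lambda$-completed object.
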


\begin{proof}
    (1) follows from Lemma~\ref{leftcomplemma}, (3) follows from Lemma~\ref{tmultlemma}, and (4) follows from Lemma~\ref{filtcolimlemma}. For (2), we need to show that $\pi_{k,w}\bS_{\bF_p}=0$ whenever $k<0$. This follows from an application of the $\nu\bF_p$-Adams spectral sequence for $\bS_{\bF_p}$. By \cite{BHS23}, ${}_{\nu\bF_p}\E_2^{*,*,*}={}_{\mathrm{cl}}\E_2^{*,*}\otimes\bZ[\tau]$. For $k<0$, the classical Adams $\E_2$-page for the sphere is zero, and so the same must be true for ${}_{\nu\bF_p}\E_2^{*,*,*}$ and hence for $\pi_{k,w}((\bS_{\bF_p})_{\lambda}^{\wedge})$. The result for $\pi_{k,w}\bS_{\bF_p}$ then follows by inspection of the long exact sequence in homotopy for the fracture square
    \begin{equation*}
    \begin{tikzcd}
        \bS_{\bF_p} \ar[r] \ar[d] \arrow[dr, phantom, "\usebox\pullback" , very near start, color=black] & (\bS_{\bF_p})_{\lambda}^{\wedge} \ar[d] \\
        \bS_{\bF_p}[\lambda^{-1}] \ar[r] & (\bS_{\bF_p})_{\lambda}^{\wedge}[\lambda^{-1}]
    \end{tikzcd}
\end{equation*}
\end{proof}

\begin{theorem}
\label{synANSSconvergencethm}
    Consider the commutative algebra object
    $$
        E=\nubp\in\Syn_{\bF_p}
    $$
    and the $t$-structure considered in Lemma~\ref{Fpsyntstructurelemma}. Then for every bounded-below $X\in(\Syn_{\bF_p})_{\geq k}$,
    $$
    X_{\nubp}^{\wedge}\simeq X_{(p)}
    $$
    where $p$-localization is taken with respect to $p\in\pi_{0,0}\bS_{\bF_p}\cong\bZ$.
\end{theorem}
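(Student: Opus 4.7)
The plan is to apply the nilpotent completion criteria of Mantovani established earlier in this appendix to $E = \nubp$ equipped with the $t$-structure on $\Syn_{\bF_p}$ of Lemma~\ref{Fpsyntstructurelemma}. The argument closely follows the proofs of Theorems~\ref{BPcompletiontheorem} and~\ref{synBPcompletiontheorem}: I would verify Assumption~\ref{pi0assumption} for $E = \nubp$, then invoke Theorem~\ref{I0BousEquivNil} together with Theorem~\ref{MooreObjectThm2}, and finally identify the resulting Bousfield localization as $p$-localization.

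First I would check that $\nubp \in (\Syn_{\bF_p})_{\geq 0}$. By Theorem~\ref{BPhomotopytheorem}, $\nubp_{\starstar}$ is a quotient of a polynomial ring whose generators $\lambda, h, v_i$ all lie in non-negative stems, so $\pi_{k,w}(\nubp) = 0$ for $k < 0$. Combining Theorem~\ref{BPhomotopytheorem} with Example~\ref{Fpsynexample}, the hearts of the unit and of $\nubp$ are then computed as
\begin{align*}
\pi_0^{\heartsuit}(\bS_{\bF_p})_* &\cong \bZ[\lambda, h]/(\lambda h = p), \\
\pi_0^{\heartsuit}(\nubp)_* &\cong \bZ_{(p)}[\lambda, h]/(\lambda h = p).
\end{align*}

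Next, I set $\mathcal{I} = \emptyset$ and take $\mathcal{J}$ to be the countable set of multiplication maps $\ell \colon \bS_{\bF_p} \to \bS_{\bF_p}$ indexed by primes $\ell \neq p$. Each corresponding $L_j = \bS_{\bF_p}$ is the monoidal unit, so $L_j \otimes (-)$ trivially preserves (co)connectivity. Since each such $\ell$ is already invertible in $\bZ_{(p)}$, the unit map $\bS_{\bF_p} \to \nubp$ descends to a morphism
\begin{equation*}
\varphi \colon \bZ[\lambda, h]/(\lambda h = p)[\mathcal{J}^{-1}] \longrightarrow \bZ_{(p)}[\lambda, h]/(\lambda h = p)
\end{equation*}
of $\pi_0^{\heartsuit}(\bS_{\bF_p})_*$-algebras which is manifestly an isomorphism, verifying Assumption~\ref{pi0assumption}.

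Applying Theorem~\ref{I0BousEquivNil} then yields $X_{\nubp}^{\wedge} \simeq L_{\nubp} X$ for any bounded-below $X$, and Theorem~\ref{MooreObjectThm2} identifies $L_{\nubp} X \simeq L_{\bS_{\bF_p}[\mathcal{J}^{-1}]} X$. The final step observes that Bousfield localization at $\bS_{\bF_p}[\mathcal{J}^{-1}]$ is precisely inversion of all primes $\ell \neq p$ acting through $\pi_{0,0}\bS_{\bF_p} \cong \bZ$, so this agrees with the $p$-localization $X_{(p)}$. The main subtlety will be tracking the heart calculations against the twisted $t$-structure conventions of Section~\ref{tstructuresection}; once $\pi_0^{\heartsuit}(\nubp)_*$ is pinned down via Theorem~\ref{BPhomotopytheorem}, the rest is a direct deployment of the framework already in place.
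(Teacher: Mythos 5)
Your proposal tracks the paper's proof essentially step for step: verify Assumption~\ref{pi0assumption} with $\mathcal{I}=\emptyset$ and $\mathcal{J}$ the non-$p$ integers (or generators thereof), read off $\pi_0^{\heartsuit}$ from Theorem~\ref{BPhomotopytheorem} and Example~\ref{Fpsynexample}, and then invoke Theorems~\ref{I0BousEquivNil} and~\ref{MooreObjectThm2}. The only place where you compress a step that the paper treats carefully is the final identification of the Moore-object Bousfield localization with the honest $p$-localization $X_{(p)}$: passing from $L_{\bS_{\bF_p}[\mathcal{J}^{-1}]}X$ to $\bS_{\bF_p}[\mathcal{J}^{-1}]\otimes X$ requires knowing that this localization is smashing, which the paper justifies by citing a result of Mantovani (\cite[Prop.\ 3.4.3.t.2]{Man21}). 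You assert this step as ``precisely inversion of all primes $\ell\neq p$'' without flagging that smashness is needed; you should make that explicit, since Bousfield localizations are not smashing in general. Once that citation is added, your argument matches the paper's.
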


\begin{proof}
    Again we show that Assumption~\ref{pi0assumption} is satisfied. By Theorem~\ref{BPhomotopytheorem} and Theorem~\ref{synspectraheartthm},
    $$
    \pi_{0}^{\heartsuit}(\nubp)_*=\pi_{0,*}\nubp=\bZ_{(p)}[\lambda,h]/(\lambda h=p)
    $$
    so that $\nubp\in(\Syn_{\bF_p})_{\geq 0}$. By Example~\ref{Fpsynexample}, we know that $$\pi^{\heartsuit}_{0}(\bS_{\bF_p})_*=\bZ[\lambda,h]/(\lambda h=p).$$ Localizing at $p$ then implies that 
    $$
    \pi_{0}^{\heartsuit}((\bS_{\bF_p})_{(p)})_*=\bZ_{(p)}[\lambda,h]/(\lambda h=p).
    $$
    Hence if we take $\mathcal{I}=\emptyset$ and $\mathcal{J}=\bZ\setminus (p)=\pi_{0,0}\setminus (p)$, then there is an isomorphism of $\bZ[\lambda,h]/(\lambda h=p)$-algebras
    $$
    \bZ_{(p)}[\lambda,h]/(\lambda h=p)=\pi_{0}^{\heartsuit}((\bS_{\bF_p})_{(p)})_*\xrightarrow{\varphi}\pi_{0}^{\heartsuit}(\nubp)_*=\bZ_{(p)}[\lambda,h]/(\lambda h=p).
    $$
    By Theorems~\ref{I0BousEquivNil} and \ref{MooreObjectThm2}, for $X\in(\Syn_{\bF_p})_{\geq k}$ we get equivalences
    $$
        X_{\nubp}^{\wedge}\simeq L_{\nubp}X\simeq L_{(\bS_{\bF_p})_{(p)}}X.
    $$
    By \cite[Prop. 3.4.3.t.2]{Man21}, Bousfield localization at $(\bS_{\bF_p})_{(p)}$ is smashing; i.e. $L_{(\bS_{\bF_p})_{(p)}}X\to (\bS_{\bF_p})_{(p)}\otimes X$ is an equivalence. From this, it's clear then that $$L_{(\bS_{\bF_p})_{(p)}}X\simeq (\bS_{\bF_p})_{(p)}\otimes X\simeq X_{(p)}.$$
\end{proof}

\begin{corollary}
    For the unit $\bS_{\bF_p}\in(\Syn_{\bF_p})_{\geq 0}$, we have isomorphisms
    $$
        \pi_{k,w}((\bS_{\bF_p})^{\wedge}_{\nubp})\cong\pi_{k,w}((\bS_{\bF_p})_{(p)})\cong\pi_{k,w}(\bS_{\bF_p})\otimes\bZ_{(p)}.
    $$
\end{corollary}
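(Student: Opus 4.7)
The proof is essentially a bookkeeping exercise applying the theorem just proved to the unit. The plan is as follows.

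First, by Lemma~\ref{Fpsyntstructurelemma}(2) the unit $\bS_{\bF_p}$ lies in $(\Syn_{\bF_p})_{\geq 0}$, so it falls under the hypotheses of Theorem~\ref{synANSSconvergencethm}. Specializing that theorem to $X = \bS_{\bF_p}$ gives the first equivalence $(\bS_{\bF_p})^{\wedge}_{\nubp} \simeq (\bS_{\bF_p})_{(p)}$, from which the first isomorphism in the corollary follows by applying $\pi_{k,w}(-)$.

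For the second isomorphism, I would invoke the smashing statement used at the end of the proof of Theorem~\ref{synANSSconvergencethm}: by \cite[Prop. 3.4.3.t.2]{Man21}, the Bousfield localization $L_{(\bS_{\bF_p})_{(p)}}$ is smashing, so for any $X$ one has $X_{(p)} \simeq (\bS_{\bF_p})_{(p)} \otimes X$. Taking $X = \bS_{\bF_p}$ and applying $\pi_{k,w}$, it remains to identify $\pi_{k,w}((\bS_{\bF_p})_{(p)} \otimes \bS_{\bF_p})$ with $\pi_{k,w}(\bS_{\bF_p}) \otimes_{\bZ} \bZ_{(p)}$. This follows because $(\bS_{\bF_p})_{(p)}$ is obtained from $\bS_{\bF_p}$ by inverting the primes $q \neq p$ in $\pi_{0,0}\bS_{\bF_p} \cong \bZ$ (each represented by a self-map of the unit), and inverting self-maps of the unit commutes with $\pi_{k,w}$; equivalently, since $\bZ \to \bZ_{(p)}$ is flat, tensoring with it is exact and computes the effect of this localization on each bigraded homotopy group.

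No step here is a serious obstacle: the hard work was done in Theorem~\ref{synANSSconvergencethm} (verifying Manuel-Lewis Assumption~\ref{pi0assumption}) and in the smashing result quoted from \cite{Man21}. The only mild subtlety worth writing down carefully is the compatibility of the $\bZ_{(p)}$-action coming from the smashing localization with the standard action of $p \in \pi_{0,0}\bS_{\bF_p}$ on each $\pi_{k,w}(\bS_{\bF_p})$, which is immediate from the construction of $(\bS_{\bF_p})_{(p)}$ as an iterated telescope along the primes to be inverted.
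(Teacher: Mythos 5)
Your proof of the first isomorphism matches the paper exactly. For the second, you take a genuinely different (and shorter) route. You argue that $(\bS_{\bF_p})_{(p)}$ is the telescope obtained by inverting the non-$p$ primes, so $\pi_{k,w}((\bS_{\bF_p})_{(p)})$ is computed as a filtered colimit of copies of $\pi_{k,w}(\bS_{\bF_p})$ localized at successive primes, which by compactness of the bigraded spheres in $\Syn_{\bF_p}=\Syn_{\bF_p}^{\cell}$ (so that $\pi_{k,w}$ commutes with filtered colimits) is precisely $\pi_{k,w}(\bS_{\bF_p})\otimes\bZ_{(p)}$. The paper instead reduces the second isomorphism to showing that each $\pi_{k,w}(\bS_{\bF_p})$ is a finitely generated abelian group, proved by downward induction on $w$: for $k-w\geq 0$ one has $\pi_{k,w}\bS_{\bF_p}\cong\pi_k\bS$, and for smaller $w$ one uses the long exact sequence induced by $\lambda$ (relating consecutive $\pi_{k,w}$ through the $\Ext_{\cA_*}$ groups, which are finite $\bF_p$-vector spaces in each bidegree) together with the extension $0\to\mathrm{coker}(i)\to\pi_{k,w+1}\to\ker(i)\to 0$.

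Both arguments are correct, but they buy different things. Your telescope/compactness argument is cleaner, more general (it does not need any finiteness input), and uses only machinery already on the table — the smashing result from \cite{Man21} and cellularity of $\Syn_{\bF_p}$. The one point you gloss over slightly is the identification "inverting self-maps of the unit commutes with $\pi_{k,w}$"; it is worth stating that this rests on the compactness of the bigraded spheres, since otherwise the filtered-colimit commutation is not automatic. The paper's detour, while longer, produces as a byproduct the statement that each bigraded homotopy group of $\bS_{\bF_p}$ is a finitely generated abelian group, which is a useful structural fact in its own right — and notably is not established by your argument. So neither approach strictly subsumes the other: yours is the right minimal proof of the corollary as stated, while the paper's choice appears to be motivated by wanting the finite-generation lemma on record.
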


\begin{proof}
    The first isomorphism follows from Theorem~\ref{synANSSconvergencethm}. If we prove that $\pi_{k,w}(\bS_{\bF_p})$ is a finitely generated abelian group for all $k,w\in\bZ$, then the second isomorphism follows. Note that for $k-w\geq 0$, $\pi_{k,w}\bS_{\bF_p}\cong\pi_k\bS$, which is finitely generated for all $k$. Using the long exact sequence
    $$
\cdots\xrightarrow{i}\Ext^{w-k-1,w}_{\cA_*}(\bF_p,\bF_p)\to\pi_{k,w+1}\bS_{\bF_p}\xrightarrow{\tau}\pi_{k,w}\bS_{\bF_p}\xrightarrow{i}\Ext_{\cA_*}^{w-k,w}(\bF_p,\bF_p)\to\cdots
    $$
    we see that $\pi_{k,w+1}\bS_{\bF_p}$ sits in a short exact sequence
    \begin{equation*}
        0\to\mathrm{coker}(i)\to\pi_{k,w+1}\bS_{\bF_p}\to\ker(i)\to 0.
    \end{equation*}
    Inductively, $\ker(i)\subset\pi_{k,w}\bS_{\bF_p}$ is a finitely generated abelian group and $\mathrm{coker}(i)$ is always a finite abelian group since for each bidegree $\Ext_{\cA_*}^{f,t}(\bF_p,\bF_p)$ is a finite $\bF_p$-vector space. Hence $\pi_{k,w+1}\bS_{\bF_p}$ must also be finitely generated. 
\end{proof}

\newpage





\printbibliography
\end{document}